\def\dd{{\rm d}}
\def\eps{\varepsilon}
\def\e{{\rm e}} 
\def\de{{\partial}}
\def\RR {\mathbb{R}}
\def\TT {\mathbb{T}}
\def\ZZ {\mathbb{Z}}
\def\SS {\mathbb{S}}
\def\Re{{\rm Re}}
\newcommand{\norm}[1]{\left\lVert #1 \right\rVert}
\newcommand{\normL}[1]{\lVert #1 \rVert}
\newcommand{\jap}[1]{\left\langle #1 \right\rangle}
\newcommand{\abs}[1]{\left| #1 \right|}
\newcommand{\brak}[1]{\left\langle #1 \right\rangle}
\newcommand{\grad}{\nabla}
\def\bP {\boldsymbol{P}}
\newcommand{\sfm}{\mathsf{m}}
\newcommand{\sfe}{\mathsf{e}}
\newcommand{\cF}{\mathcal{F}}
\newcommand{\cL}{\mathcal{L}}
\newcommand{\cD}{\mathcal{D}}
\newcommand{\cI}{\mathcal{I}}
\newcommand{\cM}{\mathcal{M}}
\newcommand{\cA}{\mathcal{A}}
\newcommand{\cK}{\mathcal{K}}
\newcommand{\cQ}{\mathcal{Q}}
\newcommand{\cT}{\mathcal{T}}
\newcommand{\cR}{\mathcal{R}}
\newtheorem{proposition}{Proposition}[section]
\newtheorem{theorem}[proposition]{Theorem}
\newtheorem{lemma}[proposition]{Lemma}
\theoremstyle{definition}
\newtheorem{remark}[proposition]{Remark}
\theoremstyle{definition}
\numberwithin{equation}{section}
\title[Decay in the weakly collisional Boltzmann equation]{Taylor dispersion and phase mixing in the non-cutoff Boltzmann equation on the whole space} 
\author[J. Bedrossian]{Jacob Bedrossian}
\address{Department of Mathematics, University of Maryland, College Park, MD 20742, USA}
\email{jacob@math.umd.edu}
\author[M. Coti Zelati]{Michele Coti Zelati}
\address{Department of Mathematics, Imperial College London, London, SW7 2AZ, UK}
\email{m.coti-zelati@imperial.ac.uk}
\author[M. Dolce]{Michele Dolce}
\address{Institute of Mathematics, EPFL, Station 8, 1015 Lausanne, Switzerland}
\email{michele.dolce@epfl.ch}
\begin{document}
\maketitle
\begin{abstract}
In this paper we describe the long-time behavior of the non-cutoff Boltzmann equation with soft potentials 
near a global Maxwellian background on the whole space in the weakly collisional limit (i.e. infinite Knudsen number $1/\nu\to \infty$). 
Specifically, we prove that for initial data sufficiently small (independent of the Knudsen number), the solution 
displays several dynamics caused by the phase mixing/dispersive effects of the transport operator $v \cdot \grad_x$ and its interplay with the singular collision operator.
For $x$-wavenumbers $k$ with  $|k|\gg\nu$, one sees an \emph{enhanced dissipation} effect wherein the characteristic decay time-scale is accelerated 
to $O(1/\nu^{\frac{1}{1+2s}} \abs{k}^{\frac{2s}{1+2s}})$, where $s \in (0,1]$ is the singularity of the kernel ($s=1$ being the Landau collision operator, 
which is also included in our analysis); for $|k|\ll \nu$, one sees \emph{Taylor dispersion}, wherein the decay is accelerated to $O(\nu/\abs{k}^2)$. 
Additionally, we prove almost-uniform phase mixing estimates. For macroscopic quantities as the density $\rho$, these bounds imply almost-uniform-in-$\nu$ decay of $(t\grad_x)^\beta \rho$ in $L^\infty_x$ due to Landau damping and dispersive decay.
%Additionally, we prove almost-uniform phase mixing estimates which imply almost-uniform-in-$\nu$ decay of $(t\grad_x)^\beta \rho$ in $L^\infty_x$ due to Landau damping and dispersive decay.
\end{abstract}

\setcounter{tocdepth}{1}
\tableofcontents

\section{Introduction}
In this paper we consider the Boltzmann equation for a distribution function $F = F(t,x,v)$,
\begin{align}
	\label{eq:Bolt}
	\de_t F+v\cdot \nabla_xF=\nu \cQ(F,F), \qquad t\geq 0, \, x,v\in \RR^d \, (\text{or}\,  \TT^d),
\end{align}
where $1/\nu \geq 0$ is the Knudsen number or inverse collision frequency.  We are interested in understanding the long-time dynamics of this equation in the limit $\nu \to 0$ , i.e. the weakly collisional limit.
That is, we describe the dynamics simultaneously in $t \to \infty$ and $\nu \to 0$. 
The Boltzmann equation serves as the classical kinetic model for the dynamics of a rarefied, monoatomic gas with $\cQ$ accounting for elastic, binary collisions between the gas molecules. 
The collision operator $\cQ$ is 
\begin{align}
	\label{def:Q}
	\cQ(f,g)(v)=\frac12\int_{\RR^d}\int_{\SS^{d-1}} B(v-v_*,\sigma)(f'g'_*-fg_*)\dd v_*\dd \sigma,
\end{align}
with 
\begin{align}
	\label{def:v'v*}
	\begin{cases}v'=\displaystyle \frac{v+v_*}{2}+\frac{|v-v_*|}{2}\sigma,\\
		v'_*=\displaystyle \frac{v+v_*}{2}-\frac{|v-v_*|}{2}\sigma,
	\end{cases} \qquad f'=f(v'), \quad g'_*=g(v'_*), \quad f=f(v), \quad g_*=g(v_*).
\end{align}
The velocities $v',\, v'_*$ are the result of an elastic collision between particles with equal mass and velocities $v,v_*$. In particular, momentum and energy are conserved,  namely 
\begin{equation}
	\label{eq:consmomen}
	\begin{cases}v'+v'_*=v+v_*,\\
		|v'|^2+|v'_*|^2=|v|^2+|v_*|^2.	
	\end{cases}
\end{equation}
As in  \cite{alexandre2012boltzmann}, we consider the class of non-negative kernels $B=B(z,\sigma):\RR^d\times \SS^{d-1}\to\RR$  depending only on $|z|$ and the angle between the vectors $z$ and $\sigma$. In particular, we take
	\begin{equation}
		\label{hyp:kernel}B(z,\sigma)=\Phi(|z|)b(\cos(\theta)), \qquad \cos(\theta)=\frac{z}{|z|}\cdot \sigma, \quad 0\leq \theta \leq \frac{\pi}{2}.
	\end{equation}
where the kinetic factor is given by 
\begin{equation}
	\label{phi}
	\Phi(|z|)=|z|^\gamma,
\end{equation}
whereas the collision angle contains a singularity 
\begin{equation}
	\label{eq:propb}
	b(\cos(\theta))\approx K\theta^{1-(d+2s)}, \qquad \text{as } \theta \to 0^+, \qquad 0<s<1.
\end{equation}
More precisely, as in \cite{gressman2011sharp}, we assume no angular cutoff, namely for some $s \in (0,1)$ we have
\begin{equation}
	\label{eq:hypb}
	\sin(\theta)b(\cos(\theta))\approx \frac{1}{\theta^{d+2s}}.
\end{equation}
See \cite{cercignani2013mathematical} for more discussion about the derivation of the collision operator. 
The parameter $\gamma$ differentiates \emph{hard} from \emph{soft} potentials in the following manner: 
\begin{itemize}
\item Soft potentials: $-d\leq \gamma+2s\leq 0$ and $\gamma>\max\{-d,-d/2-2s\}$;
\item Moderately soft potentials: $-2s < \gamma < 0 $;
\item Hard potentials: $\gamma \geq 0$.
\end{itemize}
We will treat all of these cases, but mostly focus on the soft potentials, as these are the most difficult, since collisions have a weaker effect on high velocity particles. 
The singular limit $s \to 1$ reduces to the \emph{Landau collision operator}  \cite{BoydSanderson,villani2002review}.
In this limit, the collision operator becomes a second-order elliptic operator, which usually makes the analysis simpler.
This limit is important as it is a significantly more accurate model of charged particle collisions than Boltzmann, and hence is one of the standard collision models used in plasma physics. 
All our results apply to the Landau collision operator (see Remark \ref{rmk:Landau}), however, we focus our attention on the more difficult Boltzmann case.  

The thermal equilibria are described by the  global Maxwellians
\begin{align*}
\mu_{n,u,T}(v)=\frac{n}{(2\pi T)^{\frac{d}{2}}}\e^{-\frac{|v-u|^2}{2T}}, 
\end{align*}
with parameters $n,T > 0$, $u  \in \RR^d$ and all satisfy $\mathcal{Q}(\mu,\mu) = 0$. 
In this work we consider small (to be quantified), localized perturbations to a global thermal equilibrium, specifically we study solutions of the form 
\begin{equation}
	F=\mu+\sqrt{\mu}f, \qquad \mu(v)=\frac{1}{(2\pi)^{\frac{d}{2}}}\e^{-\frac{|v|^2}{2}}.
\end{equation}
The Maxwellian $\mu$ is a stationary solution to \eqref{eq:Bolt} thanks to \eqref{eq:consmomen}.
The equation for the perturbation $f$ is given by 
\begin{equation}
	\de_t f+v\cdot \nabla_x f+ \nu \cL f= \nu\Gamma(f,f), \label{eq:PertEqn}
\end{equation}
where 
\begin{align}
	\label{def:Gamma}
	\Gamma(g,h)&=\frac{1}{{\sqrt{\mu}}}\cQ\left({\sqrt{\mu}}g,{\sqrt{\mu}}h\right)\\
	\label{def:L}
	\cL f&=-\Gamma(\sqrt{\mu},f)-\Gamma(f,\sqrt{\mu}):=\cL_1f+\cL_2f.
\end{align}
In view of the conservation of the energy, we know $\mu \mu_*=\mu'\mu'_*$ and therefore 
\begin{align}
	\label{eq:Gamma}
	\Gamma(g,h)&=\frac12\iint_{\RR^d\times \SS^{d-1}}B(v-v_*,\sigma)\sqrt{\mu_*}\left(g'_*h'-g_*h\right)\dd v_*\dd \sigma,\\
	\label{eq:explL}
	(\cL f)(v)&=-\frac{1}{2\sqrt{\mu}}\iint_{\RR^d\times \SS^{d-1}}B(v-v_*,\sigma)\mu\mu_*\left(\frac{f'}{\sqrt{\mu'}}+\frac{f'_*}{\sqrt{\mu'_*}}-\frac{f}{\sqrt{\mu}}-\frac{f_*}{\sqrt{\mu_*}}\right)\dd v_*\dd \sigma.
\end{align}
From \eqref{eq:explL} and the conservation of mass, momentum and energy, it readily follows that 
\begin{equation}
	\label{eq:KerL}
	\operatorname{Ker}(\cL)=\mathrm{span}\left\{\sqrt{\mu},v\sqrt{\mu},|v|^2\sqrt{\mu}\right\}.
\end{equation}
We will denote the projection onto the kernel as $\bP $ and for the perturbation $f$ we denote
\begin{align}
\bP f(t,x,v) = \left(\rho(t,x) + \sfm(t,x) \cdot v  + \sfe(t,x) (\abs{v}^2-d) \right) \sqrt{\mu}; \label{def:rhoue}
\end{align}
the unknowns $(\rho,\sfm,\sfe)$ correspond respectively to the macroscopic, or hydrodynamic, quantities of density, momentum and energy. 

On $\mathbb T^d$, small data global existence near Maxwellians was proved for the Landau equations by Guo in \cite{guo2002landau} and for non-cutoff Boltzmann in independent works by Gressman and Strain \cite{gressman2011global} (extended to the whole space by Strain \cite{strain2012optimal}) and Alexandre et. al. in \cite{alexandre2010regularizing,alexandre2011global,alexandre2012boltzmann}.   
See \cite{ukai1974existence,ukai1982cauchy,caflisch1980boltzmann} for some of the earlier works on Boltzmann with angular cutoffs and  \cite{guo2004boltzmann,guo2006boltzmann,duan2009stability} for the corresponding stability and decay estimates.  
Many further extensions of these foundational works exist, for example, covering cases which include self-generated electric and/or magnetic fields such as \cite{guo2002vlasov,duan2013vlasov,guo2012vlasov,strain2013vlasov,duan2011optimal,duan2013noncutoff}. 

The study of the $\nu \to 0$ weakly collisional limit in kinetic theory is relatively new in mathematics. 
However, in plasmas, collisions are typically very weak, and hence there are many works in the physics literature considering how the collisions and the phase mixing due to the free transport will interact albeit mostly in the context of Landau or Fokker-Planck collisions  \cite{lenard1958plasma,su1968collisional,ng2006weakly,o1968effect}.
If one considers the toy model 
\begin{align}
\partial_t g + v \cdot \grad_x g = \nu \Delta_v g, \label{eq:ktm}
\end{align}
it is not hard to show that the $x$-Fourier modes $\hat{g}(t,k,v) = \frac{1}{(2\pi)^{d/2}} \int_{\mathbb R^d} \e^{-ik\cdot x} g(t,x,v) \dd x$ undergo
 \emph{enhanced dissipation} in the regime $\abs{k} \gg \nu$, namely for some $\delta > 0$ there holds
\begin{align}
\norm{\hat{g}(t,k)}_{L_v^2} \lesssim \e^{-\delta \nu \abs{k}^{2} t^3} \norm{\hat{g}(0,k)}_{L_v^2}. \label{ineq:EDintro}
\end{align}
This was essentially observed by Kelvin \cite{Kelvin87} for the 2D incompressible Navier-Stokes equations linearized near the Couette flow and was predicted to hold also in plasmas due to charged particle collisions in \cite{lenard1958plasma,su1968collisional}.
The effect here is that the free streaming $v\cdot \grad_x $ term creates large gradients in $v$ of the size $\sim |k|t$, which correspondingly enhance the effect of the $\Delta_v$ dissipation to $\sim |k|^2 t^2$, which explains form in \eqref{ineq:EDintro}. 
As the leading order singularity in the non-cutoff Boltzmann equation \eqref{eq:PertEqn} is similar to a fractional Laplacian of order $s\in(0,1)$ (see e.g. discussions in \cite{alexandre2011global,gressman2011global,silvestre2016new}), the rate of enhanced dissipation will depend on $s$ in the corresponding manner; see Remark \ref{rmk:Rate} below. 

Another important effect observed in kinetic theory with $0 \leq \nu \ll 1$ is \emph{phase mixing} and \emph{Landau damping}, which generally refers to the rapid damping of hydrodynamic fields such as the density. This was first observed in the linearized Vlasov--Poisson equations by Landau in \cite{Landau46} and is now considered a fundamental aspect of collisionless plasma physics (see e.g. \cite{Ryutov99,BoydSanderson}).  
For the kinetic transport equation \eqref{eq:ktm} Landau damping\footnote{Often the term ``Landau damping'' is reserved specifically for the effect in plasma physics, however we will sometimes abuse terminology and use it here as well, as the origin of the damping is the same.} is relatively simple to verify, leading to the following estimates: for all $\beta \geq 0$, $m > d/2$ and uniformly in $\nu$, there holds
\begin{align*} 
 \norm{\abs{t\grad_x}^\beta \rho}_{L^2_t(0,\infty;\dot{H}^{1/2}_x)} & \lesssim \norm{\brak{v}^{m} \brak{\grad_v}^\beta g(0)}_{L^2_{x,v}} \\
 \sup_{t \geq 0} \brak{t}^{d} \norm{ \abs{t \grad_x}^\beta \rho}_{L^\infty_x} & \lesssim \sup_k \norm{\brak{\grad_v}^{2m + \beta}\hat{g}(0,k)}_{L^1_v}.
\end{align*}
The first inequality follows from the Fourier transform formula for \eqref{eq:ktm}; see Lemma \ref{lem:LDLF} below for a proof of the second inequality.
In particular, we see that regularity in $v$ translates directly to decay of $\rho$.
Phase mixing refers to the fact that, provided there is some regularity of the initial distribution function, all of the particles are traveling at different velocities, which tends to correspondingly smooth out the density variations. 
On $\mathbb R^d$, the decay is due to a combination of phase mixing and dispersion.

Landau damping for all sufficiently regular (analytic and sufficiently high Gevrey regularity) initial conditions for $x\in \mathbb T^d$ was proved by Mouhot and Villani \cite{MV11}; the Gevrey classes conjectured to be sharp was obtained in a later simplified proof \cite{BMM13}.
The analogue on $\mathbb R^3$, Landau damping and dispersive decay, was proved in Sobolev spaces for the screened Vlasov-Poisson equations in \cite{BMM16}; see also \cite{han2021asymptotic,huang2022sharp,ionescu2022nonlinear,huang2022nonlinear} for related works. 
In \cite{B17} the enhanced dissipation effect was proved in the Vlasov--Poisson--Fokker--Planck equations on $\mathbb T^d$ near a global Maxwellian, along with uniform-in-$\nu$ Landau damping estimates on the density, provided the initial velocity-weighted $H^\sigma$ norm of the initial condition was $\lesssim \nu^{1/3}$ (see earlier work by Tristani \cite{tristani2017landau} that proves uniform Landau damping for the linearized problem).  
More recently, the case of Vlasov--Poisson--Landau equation on $\mathbb T^d$ was solved by Chaturvedi, Luk, and Nguyen \cite{CLN21}. 
Due to the nonlinear echo phenomenon, the $\mathcal{O}(\nu^{1/3})$ threshold seems potentially sharp in Sobolev spaces \cite{bedrossian2020nonlinear}.  
See also similar recent results on active suspension models for the collective motion of swimming of microorganisms \cite{albritton2022stabilizing,CZHGV22}. 
This kind of stability threshold problem mirrors the work on quantitative stability results based on enhanced dissipation in the incompressible Navier-Stokes equations (see e.g. \cite{BMV16,G18,WZ19,WZZ20,BGM20,CZEW20,CLWZ20}).
However, enhanced dissipation has probably been best studied mathematically in passive scalars (see e.g. \cite{CKRZ08,CZMD20,Z10,BW13,albritton2022enhanced,BCZ15,CZDE20,Vukadinovic} and the references therein). 
For passive scalars, another effect arises, known as \emph{Taylor dispersion}, wherein for $\abs{k} \ll \nu$ the effective dissipation rate for $\hat{f}(t,k)$ of a passive scalar in a shear flow (in say, a channel) would be $\mathcal{O}(\nu^{-1} \abs{k}^2)$.
This effect was first predicted by Taylor in \cite{Taylor53}; see also \cite{Aris59}. 
The work \cite{BCW20} provided the first mathematically rigorous proof using center manifold theory, whereas \cite{CZG21} showed that a unified energy method could be used to treat both $\abs{k} \gg \nu$ and $\abs{k} \ll \nu$. We will discuss this in more detail in Section \ref{sec:outLin} below.
This effect was also noted in the works on active suspensions in the linearized equations \cite{albritton2022stabilizing}.  
 
The goal of this work is to prove that for sufficiently small initial data (independent of $\nu$), the Boltzmann equation \eqref{eq:PertEqn} displays all of these effects: enhanced dissipation, Taylor dispersion, and a nearly-uniform-in-$\nu$ phase mixing/Landau damping (discussed more below).
A careful reading of the work of \cite{strain2012optimal} already shows that global existence of strong solutions holds for initial conditions small independent of $\nu$: 
there exists a $\nu$-independent $ \eps_0 > 0$ such that if the initial datum $f_{in}$ satisfies
\begin{align*}
\sum_{\abs{\alpha} + \abs{\beta} \leq \sigma} \norm{\brak{v}^{m} \partial_\alpha^\beta f_{in} }_{L^2_{x,v}} + \norm{\brak{v}^{m} \partial_\alpha^\beta f_{in} }_{L^2_v L^1_x}  \leq \eps_0,
\end{align*}
for sufficiently large $\sigma,m$, then the solution is global in time and vanish as $t \to \infty$.
Hence, the specific aim of this paper is to obtain the quantitative estimates in the limit $\nu \to 0$. 
Neither the methods of \cite{CLN21} nor \cite{strain2012optimal} apply here although ideas from both are utilized; see Section \ref{sec:Outline} for an outline of the proof. 
Note in particular that the Landau damping is not quite the same as \eqref{eq:ktm} for $t \gg \nu^{-1}$. 
More precise estimates that quantify the phase mixing are discussed in Section \ref{sec:Outline}. 
Below we denote the density by
\begin{align*}
\rho(t,x) = \int_{\mathbb R^d} f(t,x,v) \dd v. 
\end{align*}
Our main result reads as follows.
\begin{theorem} \label{thm:main}
Let $s \in (0,1)$ and $\gamma$ be either soft, moderately soft, or hard.
Let $\sigma > d$ and $M,M ' > d$ be integers. 
There exists $\eps_0 > 0$ (independent of $\nu$) such that if the initial datum $f_{in}$ satisfies
\begin{align*}
  \sum_{\abs{\alpha} + \abs{\beta} \leq \sigma} \norm{\brak{v}^{M+M'} \partial_\alpha^\beta f_{in} }_{L^2_{x,v}} + \norm{\brak{v}^{M+M'} \partial_\alpha^\beta f_{in} }_{L^2_v L^1_x} = \eps \leq \eps_0, 
\end{align*}
then the following holds for the corresponding solution $f$ to \eqref{eq:PertEqn} and for all $\nu \in (0,1)$:

\smallskip 
\noindent {\rm (i)}
For  $\delta_0,\delta_1 > 0$ determined by the proof (depending only on $\sigma,M,M'$), define  
\begin{equation}
  \label{def:lambdanuk_intro}
  \lambda_{\nu,k}=\lambda(\nu,k)=\delta_1\begin{cases}
		\displaystyle  \nu^{\frac{1}{1+2s}}|k|^{\frac{2s}{1+2s}}, \qquad &\nu/|k|\leq \delta_0,\\
		\displaystyle  \nu^{-1}|k|^{2}, \qquad &\nu/|k|> \delta_0.	
\end{cases}
\end{equation}
For an integer $J = J(\sigma,M,M',\gamma,s)$ there holds the Taylor dispersion/enhanced dissipation estimate
\begin{align*}
\sup_{t > 0} \norm{ \brak{v}^M \brak{\lambda(\nu,\grad_x) t}^J f(t)}_{L^2_{x,v}} \lesssim \eps.
\end{align*}
A formula for $J$ can be found in Theorem \ref{thm:LinDecEst}; importantly, for any $N$ we can choose $M,M'$ sufficiently large such that $J > N$. 

\smallskip 
\noindent {\rm (ii)}
If $\frac{2s}{1+2s} J > d$, we have the low frequency decay estimate 
\begin{align*}
\norm{ \brak{v}^{M/2} \brak{\lambda(\nu,\grad_x) t}^{J/2} f(t)}_{L^2_v L^\infty_{x}} \lesssim \left(\brak{\nu t^{1+2s}}^{-d/2s} + \nu^{d/2}\brak{t}^{-d/2} \right) \eps.
\end{align*}

\smallskip 
\noindent {\rm (iii)}
The density satisfies almost-uniform Landau damping, namely for $J$ as in {\rm (ii)} and all $\sigma - d > \beta > 0$, there holds 
\begin{align*}
\norm{\frac{\brak{t \grad_x}^\beta}{\brak{\nu t}^\beta} \brak{\lambda(\nu,\grad_x) t}^J \rho(t)}_{L^\infty_x} \lesssim \left(\brak{t}^{-d} +\nu^{d/2}\brak{t}^{-d/2} \right) \eps.
\end{align*}
\end{theorem}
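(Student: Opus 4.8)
The plan is to pass to the $x$-Fourier side, reducing \eqref{eq:PertEqn} to a $k$-indexed family of kinetic equations for the modes $\hat f(t,k,v)$, establish sharp mode-by-mode linear decay at rate $\lambda_{\nu,k}$ (the content of Theorem~\ref{thm:LinDecEst}), propagate the polynomial velocity weights $\brak{v}^M$ together with the $x,v$-derivatives, and then close a nonlinear bootstrap using the small-data hypothesis and the trilinear estimates for $\Gamma$ from non-cutoff theory. Since by \cite{strain2012optimal} global decay already holds for $\eps_0$ independent of $\nu$, the entire point is to quantify the rates as $\nu\to0$, uniformly in $k$ across the regimes $\nu/|k|\le\delta_0$ and $\nu/|k|>\delta_0$.

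\emph{Linear estimate.} Decompose $\hat f=\bP\hat f+(1-\bP)\hat f$ with $\bP$ as in \eqref{def:rhoue}. On $(\operatorname{Ker}\cL)^\perp$ the operator $\cL$ is coercive in the anisotropic norm of \cite{gressman2011sharp}, acting essentially as a $\brak{v}^\gamma$-weighted fractional Laplacian of order $s$ in $v$ that degenerates at large $|v|$ for soft potentials. Following the hypocoercivity philosophy — in particular the unified energy method of \cite{CZG21} for the passive-scalar toy model — I would construct, for each $k$, a time-dependent augmented functional $\Phi_k(t)\approx\norm{\hat f(t,k)}_{L^2_v}^2+(\text{corrections})$, the correction terms being carefully $k$- and $t$-weighted pairings of $\hat f$ with $ik\cdot v\,\hat f$ and with the macroscopic components $(\rho,\sfm,\sfe)$, chosen so that the transport operator $ik\cdot v$ transfers coercivity from $(1-\bP)\hat f$ onto $\bP\hat f$. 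When $\nu/|k|\le\delta_0$, transport generates $v$-gradients of size $\sim|k|t$, upgrading the collisional dissipation on the non-hydrodynamic part to order $\nu|k|^{2s}t^{2s}$ and producing (formally) super-exponential decay $\e^{-c\nu|k|^{2s}t^{1+2s}}$, hence the enhanced rate $\lambda_{\nu,k}\sim\nu^{\frac1{1+2s}}|k|^{\frac{2s}{1+2s}}$; because for soft potentials this gain must be extracted against the $\brak{v}^\gamma$-degeneracy, one keeps only its polynomial consequence $\brak{\lambda_{\nu,k}t}^{-J}$, whose power $J$ is limited by the number of available velocity weights. When $\nu/|k|>\delta_0$, collisions pin $\hat f$ close to its local equilibrium and the residual drift of the hydrodynamic part is a Taylor--Aris enhanced diffusion at rate $\nu^{-1}|k|^2$, which $\Phi_k$ captures provided the corrections also carry the hydrodynamic dissipation. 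The outcome is a family of differential inequalities which, by induction on the number of $\lambda_{\nu,k}t$-powers, yield $\norm{\brak{v}^M\hat f(t,k)}_{L^2_v}\lesssim\brak{\lambda_{\nu,k}t}^{-J}\norm{\brak{v}^{M+M'}\hat f(0,k)}_{L^2_v}$; running the same computation with $\brak{v}^M$ and $\partial_\alpha^\beta$ inserted — the commutator errors being controlled by the $\brak{v}^\gamma$-degeneracy and absorbed at the cost of $M'$ extra velocity weights and $\sigma$ derivatives, exactly as in \cite{strain2012optimal} — both provides part~(i) at the linear level and fixes $J=J(\sigma,M,M',\gamma,s)$, growing with $M,M'$.

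\emph{Nonlinear closure, parts (ii)--(iii).} The nonlinearity $\nu\Gamma(f,f)$ is handled perturbatively: set $X(T)=\sup_{t\le T}\norm{\brak{v}^M\brak{\lambda(\nu,\grad_x)t}^Jf(t)}_{L^2_{x,v}}$ together with the analogous $L^2_vL^1_x$-based low-frequency norm, and run a continuity argument. The trilinear estimates for $\Gamma$ (\cite{gressman2011sharp,alexandre2012boltzmann,strain2012optimal}) bound the nonlinear pairing by $\nu$ times products of exactly the anisotropic norms dissipated by $\nu\cL$ inside $\Phi_k$, so the nonlinear contribution is $\lesssim X(T)^2$ with a constant independent of $\nu$ — the $\nu$ multiplying $\Gamma$ precisely matching the $\nu$ in the dissipation, as in \cite{CLN21} — and smallness of $\eps$ closes the bootstrap, giving part~(i). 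For part~(ii) one writes $\norm{f(t)}_{L^2_vL^\infty_x}\le\int_{\RR^d}\norm{\hat f(t,k)}_{L^2_v}\,\dd k$, inserts the mode-wise bound $\norm{\hat f(t,k)}_{L^2_v}\lesssim\brak{\lambda_{\nu,k}t}^{-J}\eps$ (the uniform-in-$k$ control of the data coming from the $L^2_vL^1_x$ norm), and splits the $k$-integral at $|k|\sim\nu$: the region $\nu/|k|\le\delta_0$, where $\lambda_{\nu,k}\sim\nu^{\frac1{1+2s}}|k|^{\frac{2s}{1+2s}}$, contributes $\brak{\nu t^{1+2s}}^{-d/2s}$ once $\frac{2s}{1+2s}J>d$, while the region $\nu/|k|>\delta_0$, where $\lambda_{\nu,k}\sim\nu^{-1}|k|^2$, contributes $\nu^{d/2}\brak{t}^{-d/2}$; the loss $M\to M/2,\ J\to J/2$ is the price of interpolating the weights to leave room for this $k$-summation. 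For part~(iii) one combines the phase-mixing/Landau-damping mechanism of Lemma~\ref{lem:LDLF}, whereby $\brak{\grad_v}^\beta$ regularity of $f$ is converted into $\brak{t\grad_x}^\beta$ decay of $\rho$, with the $\brak{\lambda(\nu,\grad_x)t}^J$ gain, summing over $k$ as before; the denominator $\brak{\nu t}^\beta$ reflects that for $t\gtrsim\nu^{-1}$ collisions have destroyed the fine $v$-oscillations of size $\sim|k|t$ that drive further phase mixing, so only the $v$-regularity surviving at time $t$ is available, which yields $\brak{t\grad_x}^\beta/\brak{\nu t}^\beta$ rather than the collisionless $\brak{t\grad_x}^\beta$.

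\emph{Main obstacle.} The decisive difficulty is the linear estimate itself: designing a single augmented functional whose decay rate interpolates correctly between enhanced dissipation ($|k|\gg\nu$) and Taylor dispersion ($|k|\ll\nu$), while respecting the anisotropic, velocity-degenerate structure of the non-cutoff soft-potential collision operator and still carrying the polynomial velocity weights needed downstream. Getting the correction terms, their $k$- and $t$-dependent weights, and the matching at $|k|\sim\nu$ all mutually consistent — and counting precisely how many weights and derivatives are consumed, hence the value of $J$ — is where essentially all the work lies; once that is in place, the nonlinear bootstrap and the frequency-summation arguments for (ii)--(iii) are comparatively routine.
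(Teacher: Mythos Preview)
Your outline captures the architecture correctly, but there are two places where the proposal underestimates what is actually needed, and a third where a key tool is missing from the description.

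\textbf{The Taylor dispersion functional is not a CZG21-type variation.} You invoke the unified energy method of \cite{CZG21} for both regimes. That works in the enhanced-dissipation regime $|k|\gg\nu$, where the paper's functional $E^{e.d.}_{M,B}$ is indeed a hypocoercivity-type object with cross terms $\langle ik\hat f,\nabla_v\hat f\rangle$. But in the Taylor dispersion regime $|k|\ll\nu$ the passive-scalar mechanism fails: the kernel $\bP\hat f$ is not controlled by the $\nabla_x$-dissipation (since $|k|$ may be arbitrarily small), so one cannot simply absorb hydrodynamic error terms as in \cite{CZG21}. The paper instead builds $E^{T.d.}_{M,B}$ by combining hypocoercivity on $(I-\bP)\hat f$ with a Guo--Kawashima micro-macro energy: a mixed inner product $\mathcal M$ coupling $(\rho,\sfm,\sfe)$ to the higher moments $\Theta,\Lambda$ via the hydrodynamic system \eqref{eq:HydroEqns}, scaled by $\nu^{-1}$ to extract the rate $\nu^{-1}|k|^2$. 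The factor $\brak{\nu t}^{-2\beta}$ in the energy is essential here and arises for a reason different from the one you give for part (iii): it is needed to recover dissipation on $Z^\beta\bP\hat f$ from dissipation on $\bP\hat f$ alone (Lemma~\ref{lemma:mixed} and Remark~\ref{rem:nutbeta}).

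\textbf{The nonlinear closure on $\RR^d$ is not $\lesssim X(T)^2$.} On $\TT^d$ one does get $|\langle\Gamma,\cdot\rangle|\lesssim\sqrt{\mathcal E}\,\mathcal D$ and closes immediately; the paper says so explicitly. On $\RR^d$ this fails at low frequencies: when a very-low-frequency factor ($|k-\xi|\lesssim\nu$) enters through its macroscopic part, the available dissipation carries a factor $|k-\xi|$ which can be arbitrarily small, so one cannot reconstruct $\mathcal D$. The paper handles this by introducing \emph{four} coupled functionals $\mathcal E,\mathcal E_{mom},\mathcal E_{LF},\mathcal E_{mom,LF}$ (the $LF$ ones being $L^\infty_k$-based surrogates for your $L^2_vL^1_x$ norm, the $mom$ ones carrying extra velocity weights for the weak-Poincar\'e argument), and the actual nonlinear inequality is
\[
\tfrac{\dd}{\dd t}\mathcal E+\delta_\star\mathcal D\lesssim\sqrt{\mathcal E}\,\mathcal D+\bigl(\nu^d\mathbf 1_{t\le\nu^{-1}}+\nu^{d/2}\brak{t}^{-d/2}\mathbf 1_{t>\nu^{-1}}\bigr)\sqrt{\mathcal E_{LF}}\sqrt{\mathcal E}\sqrt{\mathcal D}.
\]
The second term is only time-integrable for $d\ge2$ and only because one uses the $L^\infty_k$ functional with built-in decay (Remark~\ref{remL2dgeq3}); without this the bootstrap does not close in $d=2$. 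Your proposal does mention the $L^2_vL^1_x$ norm but not that the time-decay itself must be spent inside the nonlinear estimate.

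\textbf{The vector field $Z=\nabla_v+t\nabla_x$.} All of the above is carried out not with $\partial_v^\beta$ but with $Z^\beta$, since $[Z,\partial_t+v\cdot\nabla_x]=0$. This is what makes the phase-mixing quantitative without the $z=x-tv$ change of coordinates (which is incompatible with the collision operator), and the commutators $[\cL,Z^\beta]$, $[\cL,\brak{v}^\ell]$ have to be established separately (Lemma~\ref{lemma:commutation}). Your description of part (iii) is correct in spirit but the mechanism runs through $Z$, not through raw $\nabla_v$.
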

\begin{remark}
Similar almost-uniform Landau damping estimates hold on other hydrodynamic moments of $f$. Almost-uniformity refers to the fact that one observes the same Landau damping as expected from the kinetic free transport until times $t \approx \nu^{-1}$, at which point the hydrodynamic fields are already $\mathcal{O}(\nu^{d})$. 
\end{remark}

\begin{remark} \label{rmk:Landau}
  All our results hold for the generalized Landau collision operator equation on $d = 3$ (setting $s=1$), i.e. the collision operator given by
\begin{align*}
\mathcal{Q}_L(F,G) = \grad_v \cdot \left( \int_{\mathbb R^3} \phi(v-v')\left[F(v') \grad_v G - G(v) \grad_{v'}F(v') \right] \dd v'\right), 
\end{align*}
where $\phi$ is the non-negative matrix given by
\begin{align*}
\phi_{ij}(v) = \left(I - \frac{v \otimes v}{\abs{v}^2}\right) \abs{v}^{2+\gamma}.
\end{align*}
The classical case is $\gamma = -3$ (which is included in our analysis).
This collision operator is generally easier for our analysis than non-cutoff Boltzmann due to the local nature of the derivatives, which allows for a more direct use of hypocoercivity and simpler commutator estimates for the derivatives. 
\end{remark}

\begin{remark} \label{rmk:Rate}
For $\abs{k} \gg \nu$, the form of $\lambda(\nu,k)$ can be guessed using the simple toy problem
\begin{align*}
\partial_t g + y \partial_x g = -\nu(-\Delta)^s g, 
\end{align*}
which can be solved explicitly using Fourier analysis on $(x,v) \in \mathbb T \times \mathbb R$, yielding for some $\delta > 0$, 
\begin{align*}
\norm{\hat{g}(t,k)}_{L_y^2} \lesssim \e^{- \delta \nu \abs{k}^{2s} t^{1+2s}}\norm{\hat{g}(0,k)}_{L_y^2}. 
\end{align*}
For $\abs{k} \ll \nu$ the rate is the same as the Taylor dispersion rate predicted for passive scalars by Taylor \cite{Taylor53}.
It is interesting to note that this rate does not depend on $s$ (or $\gamma$).   
\end{remark} 

The proof on $\mathbb T^d$ is significantly easier as one can adapt ideas from \cite{CLN21} to the Boltzmann collision operators (see especially Section \ref{sec:Lin} for what is required for this adaptation). 
We state the result for completeness but will not discuss the proof further.
Note that $d=1$ is possible in this theorem as the decay is not limited by low frequencies here. 
\begin{theorem}[Result on $\mathbb T^d$ for $d \geq 1$] \label{thm:Td}
If the initial datum $f_{in}$ satisfies
\begin{align}
\sum_{\abs{\alpha} + \abs{\beta} \leq \sigma} \norm{\e^{q \abs{v}^2} \partial_\alpha^\beta f_{in} }_{L^2_{x,v}} = \eps \leq \eps_0, \label{ineq:2xexp}
\end{align}
then the following holds for the corresponding solution $f$ to \eqref{eq:PertEqn} and for all $\nu \in (0,1)$:

\smallskip 
\noindent {\rm (i)}
There holds the enhanced dissipation estimate 
\begin{align*}
  \norm{ \brak{v}^M \left(f(t) - \int f(t,x,\cdot) \dd x \right) }_{L^2_{x,v}} & \lesssim \eps  \min\left\{\e^{- \delta_0\left(\nu^{\frac{1}{1+2s}} t\right)^{p_{_{\gamma,s}}}}, \e^{- \delta_0\left(\nu t\right)^{\frac{2}{2+|\gamma+2s|}}}\right\} \\
  \norm{ \brak{v}^M \int f(t,x,\cdot) \dd x }_{L^2_{x,v}} &\lesssim \eps  \e^{- \delta_0(\nu t)^{\frac{2}{2+|\gamma+2s|}}},
\end{align*}
where $$p_{_{\gamma,s}}=\frac{2(1+s)}{2(1+s)+|\gamma|(3-s)-\vartheta_s}$$ with $\vartheta_1=0$ and $\vartheta_s>0$  arbitrarily small for any $s\in(0,1)$;

\smallskip 
\noindent {\rm (ii)}
For all $\beta < \sigma-d$,  there holds the uniform Landau damping estimate 
\begin{align*}
\norm{ \brak{t \grad_x}^\beta \rho(t)}_{L^2} \lesssim \eps  \min\left\{\e^{- \delta_0\left(\nu^{\frac{1}{1+2s}} t\right)^{p_{_{\gamma,s}}}}, \e^{- \delta_0\left(\nu t\right)^{\frac{2}{2+|\gamma+2s|}}}\right\}. 
\end{align*}
\begin{remark}
The stretched exponential decay rates can be easily explained by looking at the linearized problem. In particular, as shown by Strain and Guo \cite{MR2366140}, this time-decay is a direct consequence of monotonicity estimates and properties of the collision operator. The bounds we state follows by a straightforward improvement (due to the Gaussian weight) of Lemma \ref{lem:splitdecay} and the proof of Theorem \ref{thm:LinDecEst}. The rate $2/(2+|\gamma+2s|)$ is the one that can be inferred for the Boltzmann equation\footnote{The stretched exponential rate obtained by Strain and Guo in \cite{MR2366140} is $2/(2+|\gamma|)$ for the Botlzmann equation and $2/3$ for the Landau collision operator (corresponding to $2/(2+|\gamma+2s|)$ when $\gamma=-3$ and $s=1$). This improvement in the rate for the Boltzmann case is due to better bounds on $\cL$ obtained in subsequent works \cite{gressman2011global,alexandre2011global}, see Proposition \ref{lemma:coercive} herein.} directly from the monotonicity estimate \eqref{bd:energyED} and the properties of $\cL$ given in Proposition \ref{lemma:coercive}.  Notice that for $s=1$ we have $p_{_{\gamma,1}}=2/(2+|\gamma|)$. Thus, for the Landau collision operator we get a power $p_{_{-3,1}}=2/5$ which is an improvement over the $1/3$ power given in \cite{CLN21}. We believe this improvement is related to the velocity weights we use in our energy for the hypocoercive scheme. In particular, we use a weight $\brak{v}^{-3/2}$ for terms involving $v$-derivatives whereas in \cite{CLN21} a weight $\brak{v}^{-4}$ is used.
\end{remark}
\end{theorem}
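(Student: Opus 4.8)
The plan is to reduce the torus case to the framework already developed on $\RR^d$ in the linear decay estimates (Theorem \ref{thm:LinDecEst}), exploiting the fact that on $\TT^d$ the $x$-frequencies are quantized, so $|k| \geq 1$ for every nonzero mode. First I would split the solution as $f = \brak{f}_x + f_{\neq}$, where $\brak{f}_x = \int_{\TT^d} f(t,x,\cdot)\dd x$ is the $x$-average and $f_{\neq}$ its orthogonal complement. The average $\brak{f}_x$ solves the purely collisional equation $\de_t \brak{f}_x + \nu \cL \brak{f}_x = \nu \brak{\Gamma(f,f)}_x$, and for this the stretched exponential decay $\e^{-\delta_0(\nu t)^{2/(2+|\gamma+2s|)}}$ follows from the monotonicity/energy estimate \eqref{bd:energyED} together with the coercivity properties of $\cL$ in Proposition \ref{lemma:coercive}, exactly as in the classical argument of Strain--Guo \cite{MR2366140}; the Gaussian weight in \eqref{ineq:2xexp} is what upgrades the polynomial velocity localization to the stretched-exponential-in-time rate (one trades factors of $\brak{v}^{k}$ against $\e^{-q|v|^2}$ at the cost of $k!$-type constants, and optimizes the resulting series). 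This gives the second bound in (i).

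For the nonzero modes $f_{\neq}$, I would run the same hypocoercive energy functional used in the whole-space analysis, but now with the crucial simplification that $|k|\geq 1$ uniformly. This removes the Taylor-dispersion regime entirely (that regime requires $\nu/|k| > \delta_0$, i.e. $|k| < \nu/\delta_0 < 1$ for small $\nu$, which is vacuous on $\TT^d$), so only the enhanced dissipation mechanism survives, yielding the rate $\nu^{1/(1+2s)}|k|^{2s/(1+2s)} \gtrsim \nu^{1/(1+2s)}$ from the toy model in Remark \ref{rmk:Rate}. The second rate, $(\nu t)^{2/(2+|\gamma+2s|)}$, comes again from the collisional monotonicity estimate applied to $f_{\neq}$ without using the transport term, so taking the minimum of the two is automatic. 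The Gaussian weight replaces the polynomial weights $\brak{v}^M$ throughout the hypocoercive scheme, improving every velocity-moment interpolation and producing the stretched exponential in place of algebraic decay; this is the "straightforward improvement of Lemma \ref{lem:splitdecay}" alluded to in the remark. The nonlinear term $\nu\Gamma(f,f)$ is handled by the same trilinear estimates as on $\RR^d$, closed by smallness of $\eps$, with the decay of $f$ feeding back into the Duhamel estimate.

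For part (ii), the Landau damping estimate on $\rho$, I would use that $\rho_{\neq}$ (the nonzero $x$-modes of $\rho$) is a hydrodynamic moment of $f_{\neq}$, so $\norm{\brak{t\grad_x}^\beta \rho_{\neq}}_{L^2_x}$ is controlled by $\norm{\brak{\grad_v}^\beta \brak{t\grad_x}^\beta f_{\neq}}_{L^2_{x,v}}$ via the phase-mixing mechanism built into the energy (the commutator $[\,t\grad_x\,,\, v\cdot\grad_x] $ generates $\grad_v$, trading $x$-decay for $v$-regularity, which is exactly why $\beta < \sigma - d$ is needed — one must have $\sigma$ derivatives in reserve). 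The zero mode contributes nothing to $\grad_x \rho$, so on $\TT^d$ one gets genuinely \emph{uniform} (not merely almost-uniform) Landau damping: there is no $\brak{\nu t}^\beta$ loss because there are no low frequencies to resolve the crossover at $t \approx \nu^{-1}$. Combining the $v$-regularity propagation with the enhanced dissipation bound from (i) gives the stated estimate.

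I expect the main obstacle to be the bookkeeping of the Gaussian velocity weight through the full hypocoercive energy estimate and the nonlinear trilinear bounds: one must verify that $\e^{q|v|^2}$ is compatible with the commutator structure of $v\cdot\grad_x$ and $\cL$ (in particular that the collision operator's natural weight $\brak{v}^{\gamma/2}$-type losses are absorbed by shrinking $q$ slightly), and that the $\Gamma(f,f)$ estimates still close with exponential weights on all three factors. This is essentially a matter of checking that the weights in Proposition \ref{lemma:coercive} and the estimates on $\Gamma$ propagate, which is why the authors defer the details — the mechanism is identical to $\RR^d$, only the weight and hence the final rate changes.
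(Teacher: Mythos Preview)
Your proposal is correct and matches the paper's sketched approach: the paper explicitly omits the proof, noting only that on $\TT^d$ the Taylor dispersion regime is vacuous (so the enhanced dissipation energy $E^{e.d.}_{M,B}$ alone suffices, with the $\brak{\nu t}^{-2\beta}$ factors unnecessary), and that Gaussian weights upgrade Lemma~\ref{lem:splitdecay} to stretched-exponential decay as in Strain--Guo. One minor slip: the commutator $[t\grad_x, v\cdot\grad_x]$ vanishes; the phase-mixing is encoded instead by the vector field $Z=\grad_v+t\grad_x$, which commutes with $\de_t+v\cdot\grad_x$ and is already built into $E^{e.d.}_{M,B}$.
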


\section{Outline of the proof}\label{sec:Outline}

\subsection{Linearized problem}
\label{sec:outLin}
The first step in the proof of Theorem \ref{thm:main} is  to understand the linearized problem
\begin{align*}
\de_t f+v\cdot \nabla_x f+ \nu \cL f= 0, 
\end{align*}
in the singular limit $\nu \to 0$. 
Due to  translation invariance, we Fourier transform in $x$, and obtain
\begin{align}
\de_t \hat{f} + ik \cdot v \hat{f}+ \nu \cL \hat{f} = 0,  \label{def:Link}
\end{align}
where
\begin{align*}
\hat{f}(t,k,v) := \frac{1}{(2\pi)^{d/2}} \int_{\mathbb R^d} \e^{-i k \cdot x} f(t,x,v) \dd x. 
\end{align*}
In analogy with the passive scalar problem \cite{CZG21}, the behavior of \eqref{def:Link} is different depending on the relationship between $\abs{k}$ and $\nu$. 
There are two distinct regimes:
\begin{align*}
& \abs{k} \gg \nu  \quad \textup{the enhanced dissipation regime}; \\ 
& \abs{k} \ll \nu  \quad \textup{the Taylor dispersion regime}. 
\end{align*}
The most significant difference between \eqref{def:Link} and the passive scalar problem (where $\cL$ is replaced by $\Delta_v$) lies  in the Taylor dispersion regime, which will require 
a more complicated energy method than the hypocoercivity employed in  \cite{CZG21}. This is also reflected in the quantification of phase mixing, which we carry out through 
the vector field method as done in several previous works (see e.g. \cite{WZZ20beta,C20,CLN21,CZHGV22}). 
Define the vector field
\begin{align*}
Z = \grad_v + t \grad_x, 
\end{align*}
and observe the commutation property
\begin{align*}
[Z,\partial_t + v \cdot \grad_x ] = 0. 
\end{align*}
This vector field is exactly equivalent to the $\grad_v$ derivatives used in previous works that make the coordinate change $z = x-tv$ (for example \cite{MV11,BMM13}), and so controlling $Z$ implies the Landau damping of $\rho$ and other hydrodynamic moments in exactly the same manner.
While it is convenient for treating Vlasov-Poisson in some contexts, the change of coordinates is not convenient for dealing with the collision operator, so it will be easier to work with $Z$ instead.

\smallskip 
\noindent $\diamond$ \textbf{Enhanced dissipation regime: $\nu/|k|\leq \delta_0$.}
In this range of $k,\nu$, we use the energy functional
\begin{align}
\label{def:Eed}
E^{e.d.}_{M, B}(t,k) := \frac12\sum_{\alpha + |\beta|\leq B}&\frac{2^{-\mathtt{C} \beta} \brak{k}^\alpha}{\brak{\nu t}^{2\beta}}\bigg(\normL{\jap{v}^{M} Z^\beta \hat{f}}^2_{L^2_v}+a_{\nu,k}\normL{\jap{v}^{M+q_{\gamma,s}} Z^\beta\nabla_v \hat{f}}^2_{L^2_v} \notag \\
		&\hspace{-3cm} + 2b_{\nu,k}\Re \jap{\jap{v}^{M+q_{\gamma,s}}Z^\beta ik \hat{f},\jap{v}^{M+q_{\gamma,s}}Z^\beta\nabla_v\hat{f}}_{L^2_v}\bigg).
\end{align}
where $\alpha\in \mathbb{N}$,  $\beta$ is a multi-index, $\mathtt{C}\geq 1$ is sufficiently large depending on the proof, 
\begin{equation}
\label{def:qgammas}
 \gamma/2s-\kappa_0<q_{\gamma,s}<\gamma/2s, \qquad 0<\kappa_0 \ll 1, 
 \end{equation}
 where $\kappa_0$ can be chosen arbitrarily small ($q_{\gamma,1}=\gamma/2$ for $s=1$ in the Landau case) and 
\begin{equation}
	\label{def:abnuk}
	a_{\nu,k}=a_0(\nu/|k|)^{\frac{2}{2s+1}},  \qquad b_{\nu,k}=b_0 
	(\nu/|k|)^{\frac{1}{2s+1}}|k|^{-1}, 
	\end{equation}
with $0 < a_0 \ll b_0 \ll 1$ fixed constants determined by the proof in Section \ref{sec:Lin}.
For the linearized problem \eqref{def:Link} the $\brak{\nu t}^{-2\beta}$ in the norm is only necessary in the Taylor dispersion regime (on $\mathbb T^d$ it is never needed). Its presence is associated with problems emanating from low frequencies that then cause issues also in the nonlinear interactions. 
Notice that
\begin{align*}
E^{e.d.}_{M, B}(t,k) \approx \sum_{\alpha +|\beta| \leq B}&\frac{2^{-\mathtt{C}\beta} \brak{k}^\alpha}{\brak{\nu t}^{2\beta}} \left( \normL{\jap{v}^{M} Z^\beta \hat{f}}^2_{L^2_v} + a_{\nu,k}\normL{\jap{v}^{M+q_{\gamma,s}} Z^\beta\nabla_v \hat{f}}^2_{L^2_v} \right), 
\end{align*}
where the implicit constants are independent of $k$ or $\nu$.  
There is no need to specifically treat the kernel of $\cL$ since ignoring $\mathbf{P} f$ is equivalent to get error terms scaling as $\normL{\mu^\delta f}_{L^2_v}$. In this regime, those are under control since a key point of the hypocoercivity scheme is to produce damping of $\grad_x f$ with the expected decay rate. This automatically implies the desired control over the hydrodynamic fields using that $\abs{k}$ is bounded below.
On $\mathbb T^d$, this energy is essentially all that is required to treat both the linear and nonlinear problem by Poincar\'e inequality; see \cite{CLN21} where essentially this method was used on the Vlasov-Poisson-Landau equations (i.e. $s=1$ case with nonlinear electrostatic interactions) on $\mathbb T^d$. 

The energy comes with a natural dissipation, namely, the negative-definite contributions that come from the time-derivative after the various coercivity properties have been used and the parameters $a_0,b_0$ have been suitably set. 
This quantity is given by
	\begin{align}\label{def:Ded}
	&\mathcal{D}^{e.d.}_{M,B}(t,k):= \sum_{\alpha+|\beta|\leq B}\frac{2^{-\mathtt{C}\beta} \brak{k}^\alpha}{\jap{\nu t}^{2\beta}}\bigg(\nu\cA\left[\jap{v}^{M}Z^\beta \hat{f}\right]+ \nu a_{\nu,k} \cA\left[\jap{v}^{M+q_{\gamma,s}}\nabla_vZ^\beta \hat{f}\right]\\
		\notag&\hspace{6cm}+ b_{\nu,k}|k|^2\norm{\jap{v}^{M+q_{\gamma,s}} Z^\beta \hat{f}}^2_{L^2_v}\bigg),
	\end{align}
    where $\cA[g]$ is an anisotropic norm naturally arising from the linearized operator $\cL$; see \eqref{def:A} below for the definition. 
    It is comparable in some sense to certain weighted Sobolev norms of the type $\normL{\brak{v}^{\gamma/2}g}_{H^s_v}$, see Proposition \ref{lemma:coercive} in Section \ref{sec:preliminaries} for more details.
More precisely, for the linearized problem \eqref{def:Link}, we obtain the monotonicity estimate (see Section \ref{sec:Lin}),  
\begin{align}
		\frac{\dd }{\dd t} E^{e.d.}_{M,B}+\delta_{e} \mathcal{D}^{e.d.}_{M,B}\leq 0,  \label{bd:energyED}
\end{align}
for a fixed universal constant $\delta_e > 0$.
In the case of hard potentials, i.e. $\gamma \geq 0$, one can show that
\begin{align*}
\mathcal{D}^{e.d.}_{M,B} \gtrsim \lambda_{\nu,k} E^{e.d.}_{M,B}, 
\end{align*}
and hence \eqref{bd:energyED} implies exponential decay of the type $\e^{-\lambda_{\nu,k} t}$ for the linearized problem.   
In the case of soft potentials ($\gamma < 0$), as is standard, one needs to a use a weak Poincar\'e-type approach, instead proving that for any $R> 0$ 
\begin{align*}
\mathcal{D}^{e.d.}_{M,B} \gtrsim R^{-q} \lambda_{\nu,k} E^{e.d.}_{M,B} - R^{-(q+2M')} E^{e.d.}_{M + M',B},
\end{align*}
for some $q>0$ depending on $\gamma, s$. Choosing $R$ depending on $t$ in an optimal way, one obtains then a suitable polynomial decay estimate
\begin{align*}
E^{e.d.}_{M,B} (t)\lesssim \brak{\lambda_{\nu,k} t}^{-J} E^{e.d.}_{M + M',B}(0).
\end{align*}
As usual, stretched-exponential decay can be obtained if one uses exponential or Gaussian weights; see Section \ref{subsec:decay} for more details.

\medskip
\noindent $\diamond$ \textbf{Taylor dispersion regime: $\nu/|k|> \delta_0$.}
One of the major challenges in the low frequency regime, relative to the passive scalar case or the enhanced dissipation regime considered above, 
is that we lose control on the kernel of the linearized operator $ \bP f$. Contrary to the passive scalar case \cite{CZG21}, where one can employ
a simple variation of the energy functional $E^{e.d.}_{M,B}$, we here have to combine hypocoercivity and a (quantitative) adaptation of the \textit{micro-macro} 
energy approach introduced by Guo \cite{guo2006boltzmann}, see also  Duan et al. \cite{duan2009stability,duan2011hypocoercivity} and Strain \cite{strain2012optimal}. An energy functional similar to $E^{e.d.}_{M,B}$ is still used to obtain information on microscopic quantities, i.e. $(I-\bP)f$, however a different energy yields estimates on $(\rho,\sfm,\sfe)$.  
Recall that $(\rho,\sfm,\sfe)$ (defined in \eqref{def:rhoue}) solve the following (non-closed) hydrodynamic system (see e.g. \cite{guo2006boltzmann,duan2011hypocoercivity}) 
\begin{subequations} \label{eq:HydroEqns}
\begin{align}
	\label{eq:detrho0}	&\de_t \rho + \nabla_x \cdot \sfm = 0, \\ 
	\label{eq:detm0}	&\de_t \sfm+\nabla_x\rho=-2\nabla_x\sfe-\nabla_x\cdot \Theta[(I-\bP)f],\\
	\label{eq:dte0}	&\de_t \sfe=-\frac13\nabla_x\cdot \sfm-\frac{1}{6}\nabla_x\cdot \Lambda[(I-\bP)f],
\end{align}
\end{subequations} 
where the high-order moment functions $\Theta[g]=(\Theta_{ij}[g])_{d\times d}$ and $\Lambda[g]=(\Lambda_1[g],\dots,\Lambda_d[g] )$ are defined as 
\begin{align}
	\label{def:Theta}
	&\Theta_{ij}[g]=\jap{(v_iv_j-1)\sqrt{\mu},g}_{L^2_v},\\
	\label{def:Lambda}&\Lambda_i[g]=\jap{(|v|^2-(d+2))v_i\sqrt{\mu},g}_{L^2_v}.
\end{align}
This system is used to design an appropriate energy that can transfer collisional damping from $(I - \bP )f$ to $(\rho,\sfm,\sfe)$.
Here we would like to obtain the sharp decay rate predicted by Taylor dispersion and also obtain as much Landau damping as possible. 
This motivates the introduction of the following energy functional 
\begin{align}
&E^{T.d.}_{M,B}(t,k) := \frac{1}{2}\normL{\hat{f}}^2_{L^2_v}\notag \\ & +\frac{1}{2}\sum_{|\beta|\leq B}\sum_{j=0}^1\frac{2^{-\mathtt{C}_{j}\beta} }{\brak{\nu t}^{2\beta}} \bigg(\frac{|k|}{\nu}\normL{Z^\beta \hat{f}}^2_{L^2_v}+c_1\normL{Z^\beta\nabla_v^j(I-\bP)\hat{f}}^2_{L^2_v} \notag \\ 
\label{def:ETd}
& \hspace{7cm}
+c_2\normL{\jap{v}^{M+jq_{\gamma,s}}Z^\beta \nabla_v^j(I-\bP)\hat{f}}^2_{L^2_v}\bigg)  \\
& +\frac{c_0}{\nu}\mathcal{M}+\frac{c_3}{\nu}\sum_{|\beta|\leq B}\frac{2^{-\mathtt{C}_0\beta}}{\brak{\nu t}^{2\beta}}\Re \jap{\jap{v}^{M+q_{\gamma,s}}Z^\beta (ik(I-\bP)\hat{f}),\jap{v}^{M+q_{\gamma,s}}Z^\beta\nabla_v (I-\bP)\hat{f}}_{L^2_v}\bigg), 
\end{align}
where $q_{\gamma,s}$ is defined in \eqref{def:qgammas}, $1\ll \mathtt{C}_0\ll  \mathtt{C}_{1}$, $0<c_{i+1}\ll c_i\ll 1$ with $i=0,\dots, 3$ are small universal constants determined by the proof; the term $\mathcal{M}$ is a mixed inner product involving the macroscopic variables which we define precisely below.
The energy functional in \eqref{def:ETd} contains the terms in the enhanced dissipation regime for the projection out of the kernel (namely the ones multiplied by $c_2$ and $c_3$). These terms do not give any information on  $\bP f$. To recover dissipation on $(\rho,\sfm,\sfe)$ through \eqref{eq:HydroEqns} we use 
	\begin{align}
\cM:=\,&\Re(\Lambda[(I-\bP)\hat{f}]\cdot  (ik\hat{\sfe}) +b_1\Re\big(\big(\Theta[(I-\bP)\hat{f}]+(2\hat{\sfe} I)\big):(ik\hat{\sfm}+(ik\hat{\sfm})^T)\big) \notag \\
	&+b_2\Re \big(\hat{\sfm}\cdot (ik\hat{\rho})\big), \label{def:Mkbeta}
\end{align}
where $0<b_2\ll b_1\ll 1$ are small universal constants determined by the proof. Notice that
\begin{align}
	\label{bd:equivETd}
	E^{T.d.}_{M,B}\approx\, &\normL{\hat{f}}^2_{L^2_v}+\sum_{|\beta|\leq B}\sum_{j=0}^1\frac{2^{-\mathtt{C}_{j}\beta}}{\brak{\nu t}^{2\beta}} \bigg(\frac{|k|}{\nu}\normL{Z^\beta \hat{f}}^2_{L^2_v}+\normL{Z^\beta(\nabla_v)^j(I-\bP)\hat{f}}^2_{L^2_v}\\
	&\hspace{5cm}+\normL{\jap{v}^{M+jq_{\gamma,s}}Z^\beta(\nabla_v)^j(I-\bP)\hat{f}}^2_{L^2_v}\bigg). 
	\end{align}
\begin{remark}
The idea of adding the mixed inner product \eqref{def:Mkbeta} in the energy to recover dissipation for $\bP f$ 
shares analogies with Kawashima-type energy arguments \cite{kawashima1984systems}; indeed one can use $\cM$ to explicitly build a \emph{Kawashima compensator}.
For the Boltzmann equation, this energy method was introduced by Guo  \cite{guo2006boltzmann}. 
\end{remark} 
As in the enhanced dissipation regime, for the linearized problem \eqref{def:Link}, we obtain a monotonicity estimate of the form
\begin{equation}
		\label{bd:energyTD}
		\frac{\dd }{\dd t} E^{T.d.}_{M,B}+\delta_d\mathcal{D}^{T.d.}_{M,B}\leq 0, 
	\end{equation}
for a fixed universal constant $\delta_d > 0$ and the dissipation functional 
	\begin{align}
	\notag	\mathcal{D}^{T.d.}_{M,B}:=\,&\nu\cA[(I-\bP)\hat{f}]+\sum_{|\beta|\leq B} \frac{2^{-\mathtt{C}_0\beta}}{\jap{\nu t}^{2\beta}} \nu^{-1}|k|^2\bigg(\norm{Z^\beta \bP \hat{f}}^2_{L^2_v}+\norm{\jap{v}^{M+q_{\gamma,s}}Z^\beta(I-\bP)\hat{f}}^2_{L^2_v}\bigg)\\
	\label{def:DTD}	&+\sum_{|\beta|\leq B}\sum_{j=0}^1\frac{2^{-\mathtt{C}_j\beta}}{\jap{\nu t}^{2\beta}}\bigg(|k|\cA[Z^\beta (I-\bP)\hat{f}]+\nu \cA[Z^\beta(\nabla_v)^j (I-\bP)\hat{f}]\\
		&\qquad \qquad \qquad + \nu\cA[\jap{v}^{M+jq_{\gamma,s}}Z^\beta(\nabla_v)^j (I-\bP)\hat{f}]\bigg).
	\end{align}

\begin{remark}
Notice the $\nu^{-1}$ prefactor in the last terms of \eqref{def:ETd}.
This is possible due to the power of $k$ and the uniform boundedness of $|k|/\nu$ in this range of $k$,$\nu$.
This scaling is key to obtain the Taylor dispersion estimates in Theorem \ref{thm:main}. 
\end{remark}

\begin{remark}
The factor $\jap{\nu t}^{-2\beta}$ is essential in our estimates in the regime $|k|\ll \nu$. As we show in Lemma \ref{lemma:mixed}, it allows us to obtain dissipation for $Z^\beta \bP \hat{f}$, whose control is necessary for the  `Landau damping'. Having the factor $\jap{\nu t}^{-2\beta}$ in the energy removes the effect of the phase mixing for $t\gg \nu^{-1}$, though this is a time-scale much larger than the one in which we see the dissipation enhancement. In Remark \ref{rem:nutbeta}, we comment about another natural strategy to get dissipation for $Z^\beta \bP \hat{f}$, which however still requires the factor $\jap{\nu t}^{-2\beta}$. We do not know if this is sharp, but it might be an effect related to frequencies $|k|\ll \nu$ where the phase mixing is very weak.
\end{remark}

\medskip
\noindent $\diamond$ \textbf{Linear decay estimates.}
In Section \ref{sec:Lin} we prove the following estimates on the linearized problem. 

\begin{theorem} \label{thm:LinDecEst}
Let $\hat{f}$ solve \eqref{def:Link} and $E^{e.d.}_{M,B}$, $E^{T.d.}_{M,N}$, $\lambda_{\nu,k}$, and $q_{\gamma,s}$ be defined as in \eqref{def:Eed}, \eqref{def:ETd}, \eqref{def:lambdanuk_intro} and \eqref{def:qgammas} respectively. 
Then, the monotonicity estimates \eqref{bd:energyED} and \eqref{bd:energyTD} both hold. 
Define,
\begin{align}
&\mathcal{E}_{M,B}(t,k):= \mathbbm{1}_{\nu/|k|\leq\delta_0}E^{e.d.}_{M,B}(t,k)+\mathbbm{1}_{\nu/|k|>\delta_0}E^{T.d.}_{M,B}(t,k), \label{def:linEnergy} \\  
&\mathcal{D}_{M,B}(t,k):= \mathbbm{1}_{\nu/|k|\leq\delta_0}\mathcal{D}^{e.d.}_{M,B}(t,k)+\mathbbm{1}_{\nu/|k|>\delta_0}\mathcal{D}^{T.d.}_{M,B}(t,k). \label{def:linDiss} 
\end{align}
Then, for any $M>2\max\{|q_{\gamma,s}|,|\gamma|+2s\}$ and $M' > 1$ there holds
 \begin{align}
 	\label{bd:lindecay} \mathcal{E}_{M,B}(t,k)\lesssim \e^{-\delta_p(\lambda_{\nu,k} t)^{1-p}}\mathcal{E}_{M,B}(0,k)+  \frac{1}{\brak{\lambda_{\nu,k} t}^{\widetilde{M}}}\mathcal{E}_{M+M',B}(0,k)
 \end{align}
where $p\in(0,1)$ is a given number, $0<\delta_p<1$ and $\displaystyle \widetilde{M}=\frac{p(1+s)(|\gamma+2s|+2M')}{|\gamma|(2-s)+2s|q_{\gamma,s}|}-p$.
\end{theorem}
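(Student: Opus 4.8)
The plan is to bootstrap the monotonicity estimates \eqref{bd:energyED} and \eqref{bd:energyTD} into the stated polynomial/stretched-exponential decay via a weak-Poincaré (weak-coercivity) argument, treating the two regimes $\nu/|k|\leq \delta_0$ and $\nu/|k|>\delta_0$ in parallel since in each case the dissipation controls the energy up to a velocity-weight loss. First I would establish the two differential inequalities themselves: this amounts to differentiating $E^{e.d.}_{M,B}$ (resp. $E^{T.d.}_{M,B}$) in time along \eqref{def:Link}, commuting $Z^\beta$ through $\partial_t + v\cdot\grad_x$ (which is free since $[Z,\partial_t+v\cdot\grad_x]=0$), handling the commutator $[Z^\beta,\cL]$ and the weight commutators, and then invoking the coercivity of $\cL$ from Proposition \ref{lemma:coercive} together with the careful choice of the small parameters $a_0,b_0$ (resp. $c_i,b_i$) so that the cross terms from $b_{\nu,k}\Re\langle\cdot,\cdot\rangle$ (resp. $\cM$) are absorbed. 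In the Taylor regime the extra ingredient is the micro-macro decomposition: one uses the hydrodynamic system \eqref{eq:HydroEqns} to extract dissipation for $Z^\beta\bP\hat f$ out of $\cM$ in the spirit of Lemma \ref{lemma:mixed}, the factor $\langle\nu t\rangle^{-2\beta}$ being precisely what makes this work. The $2^{-\mathtt{C}\beta}$ (resp. $2^{-\mathtt{C}_j\beta}$) weights are chosen large enough that the commutator terms coming from one higher $|\beta|$ are dominated.

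Next, with \eqref{bd:energyED}/\eqref{bd:energyTD} in hand, I would prove the weak-coercivity inequality
\begin{align*}
\mathcal{D}_{M,B}(t,k)\gtrsim R^{-q}\,\lambda_{\nu,k}\,\mathcal{E}_{M,B}(t,k)-R^{-(q+2M')}\,\mathcal{E}_{M+M',B}(t,k)
\end{align*}
for every $R>0$, with $q=q(\gamma,s)$ read off from the comparison of $\cA[\cdot]$ with $\langle v\rangle^{\gamma/2}H^s_v$ in Proposition \ref{lemma:coercive}. For hard potentials ($\gamma\geq 0$) this is the clean bound $\mathcal{D}_{M,B}\gtrsim\lambda_{\nu,k}\mathcal{E}_{M,B}$ (take $R$ fixed), giving genuine exponential decay; for soft potentials the weight loss is unavoidable and one splits the $v$-integral at $|v|\sim R$, using that on $|v|\lesssim R$ the kernel weight $\langle v\rangle^\gamma$ is bounded below by $R^\gamma$ while on $|v|\gtrsim R$ one pays $R^{-2M'}$ against the higher-weighted energy $\mathcal{E}_{M+M',B}$. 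The scaling of $\lambda_{\nu,k}$ — namely $\nu^{1/(1+2s)}|k|^{2s/(1+2s)}$ when $\nu/|k|\leq\delta_0$ (which is exactly the size of $\nu a_{\nu,k}^{-1}$, i.e. the rate the hypocoercive functional is engineered to produce) and $\nu^{-1}|k|^2$ when $\nu/|k|>\delta_0$ (from the $b_{\nu,k}|k|^2$ and $\nu^{-1}|k|^2$ terms in the dissipations) — must be matched term-by-term against $\mathcal{D}_{M,B}$.

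Then I would close the ODE. Writing $y(t)=\mathcal{E}_{M,B}(t,k)$ and $Y=\mathcal{E}_{M+M',B}(0,k)$ (the higher-regularity energy is itself monotone decreasing, so $\mathcal{E}_{M+M',B}(t,k)\leq Y$), combine the above to get $\dot y \leq -\delta R^{-q}\lambda_{\nu,k}\, y + C R^{-(q+2M')}\lambda_{\nu,k}\, Y$, and optimize the free parameter $R$ as a function of $t$: choosing $R$ so that $R^{-q}\lambda_{\nu,k}t$ grows like $t^{1-p}$ yields the stretched-exponential factor $\e^{-\delta_p(\lambda_{\nu,k}t)^{1-p}}$ on the $y$-part, while balancing the source term against $R^{-2M'}$ produces the polynomial tail $\langle\lambda_{\nu,k}t\rangle^{-\widetilde M}Y$ with the exponent $\widetilde M=\frac{p(1+s)(|\gamma+2s|+2M')}{|\gamma|(2-s)+2s|q_{\gamma,s}|}-p$ emerging from the algebra relating $q$, $M'$ and the parameters in $q_{\gamma,s}$. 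A Grönwall/integrating-factor step with the time-dependent rate $R(t)^{-q}\lambda_{\nu,k}$ then gives \eqref{bd:lindecay}; the condition $M>2\max\{|q_{\gamma,s}|,|\gamma|+2s\}$ is exactly what is needed for the weight bookkeeping in the coercivity estimate to leave a positive margin.

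The main obstacle I expect is the Taylor-dispersion regime: unlike in \cite{CZG21}, the energy does not control $\bP\hat f$ directly, so producing the dissipation term $\nu^{-1}|k|^2\|Z^\beta\bP\hat f\|_{L^2_v}^2$ in \eqref{def:DTD} requires a delicate, $\beta$-by-$\beta$ inductive use of the non-closed hydrodynamic system \eqref{eq:HydroEqns} through the compensator $\cM$, while simultaneously controlling the high-order moment functions $\Theta[(I-\bP)\hat f]$, $\Lambda[(I-\bP)\hat f]$ by the microscopic dissipation and keeping the $\langle\nu t\rangle^{-2\beta}$ weights consistent across all terms. Getting the constants $c_0,\dots,c_3,b_1,b_2$ ordered correctly so that every cross term is absorbed, and verifying that the resulting $\lambda_{\nu,k}=\nu^{-1}|k|^2$ rate is genuinely extracted (rather than a weaker one), is where the real work lies; the hard-potential case and the final ODE optimization are comparatively routine.
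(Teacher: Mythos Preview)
Your proposal is correct and follows essentially the same route as the paper: first establish the monotonicity estimates (Proposition \ref{prop:mono}), then derive a weak-coercivity inequality relating $\mathcal{D}_{M,B}$ to $\mathcal{E}_{M,B}$ and $\mathcal{E}_{M+M',B}$ with a free parameter $R$ (Lemma \ref{lem:splitdecay}), and finally choose $R=\langle\lambda_{\nu,k}t\rangle^{p/\varpi}$ and integrate with an integrating factor. One small refinement: the weak-coercivity bound the paper actually obtains has \emph{different} exponents on the two terms, namely $R^{-\varpi}$ on the gain and $R^{-(|\gamma+2s|+2M')}$ on the loss, because the enhanced-dissipation and Taylor-dispersion regimes produce different weight losses ($\varpi=(|\gamma|(2-s)+2s|q_{\gamma,s}|)/(1+s)$ versus $|\gamma+2s|$) and one takes the worst of each; this asymmetry is what produces the stated formula for $\widetilde{M}$ rather than the symmetric version your $R^{-q}$, $R^{-(q+2M')}$ ansatz would suggest.
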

\begin{remark}
By a straightforward variant, one can obtain stretched exponential decay estimates as in Theorem \ref{thm:Td} using Gaussian localization estimates as in \eqref{ineq:2xexp}.
These are omitted as this is not possible for $x \in \mathbb R^d$, which is the main focus of this work. 
\end{remark}

Before we continue to the nonlinear problem, let us briefly explain now how to pass from the distribution function estimates to the estimates on hydrodynamic quantities.
In particular, the decay estimate \eqref{bd:lindecay} implies all of the claims in Theorem \ref{thm:main} for the linearized problem \eqref{def:Link}. 
\begin{lemma} \label{lem:LDLF}
Let $\zeta \geq 0$, $\eta > d$, and $N$ be integers. 
For $j \leq \eta$, suppose that $b$ satisfies $\abs{\grad_v^j b(v)} \lesssim_j \brak{v}^N$.
Let $d \geq 2$ and let $f$ satisfy $\sup_{t \geq 0} \mathcal{E}_{N+\eta,\eta}(t,k) < \infty$.
Then, there holds 
\begin{align*}
\norm{\brak{\lambda(\nu,\grad_x) t}^\zeta \int f(t,\cdot,v) b(v) \dd v}_{L^\infty_x} \lesssim_{\zeta,\eta} \left(\frac{1}{\brak{t}^d} + \frac{\nu^{d/2}}{\brak{t}^{d/2}} \right) \sup_{t \geq 0} \sup_{k \in \mathbb R^d} \brak{\lambda_{\nu,k} t}^{\eta + \zeta} \sqrt{\mathcal{E}_{N+\eta,\eta}(t,k)}. 
\end{align*}
\end{lemma}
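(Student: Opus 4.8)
The plan is to prove Lemma~\ref{lem:LDLF} by passing to the $x$-Fourier side, where the relevant moment becomes $\hat{b}(t,k) := \int \hat{f}(t,k,v) b(v)\,\dd v$, and then using two complementary bounds depending on the size of $|k|$: a ``high-frequency'' bound coming from the decay of $\mathcal{E}_{M,B}$, and a ``low-frequency'' bound coming from the dispersive decay of the free transport. First I would note that, by Fourier inversion and $\norm{\cdot}_{L^\infty_x}\le (2\pi)^{-d/2}\norm{\hat{\cdot}}_{L^1_k}$, it suffices to bound $\int_{\mathbb R^d}\brak{\lambda_{\nu,k}t}^\zeta |\hat{b}(t,k)|\,\dd k$. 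To control $|\hat b(t,k)|$ pointwise in $k$, I would use the Landau-damping mechanism built into the vector field $Z$: since $\int \hat f b\,\dd v$ involves no $x$-derivatives but $Z = \grad_v + t\grad_x$, writing $b$ in terms of $Z$-derivatives produces factors of $t|k|$; more precisely, for any multi-index of order up to $\eta$, integrating by parts in $v$ trades a factor $\brak{t k}$ for a $Z$-derivative landing on $\hat f$ (modulo the weight growth $\brak{v}^N$ in $\grad_v^j b$, which is absorbed by the $\brak{v}^{N+\eta}$ weight in $\mathcal{E}_{N+\eta,\eta}$). Using Cauchy--Schwarz in $v$ against $\brak{v}^{-(N+\eta)}$ (integrable since $\eta > d$) and the definition of $\mathcal{E}$, this gives
\begin{align*}
\brak{tk}^\eta |\hat b(t,k)| \lesssim_\eta \sqrt{\mathcal{E}_{N+\eta,\eta}(t,k)} \lesssim_\eta \brak{\lambda_{\nu,k}t}^{-(\eta+\zeta)} \sup_{t,k}\brak{\lambda_{\nu,k}t}^{\eta+\zeta}\sqrt{\mathcal{E}_{N+\eta,\eta}(t,k)} =: A \,\brak{\lambda_{\nu,k}t}^{-(\eta+\zeta)}.
\end{align*}

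Next I would split the $k$-integral at a threshold $|k|\sim 1$ (or more carefully at $|k|\sim \min\{1,\brak{t}^{-1}\}$-type scales) and estimate each piece. On $|k|\gtrsim 1$ I use the $\brak{tk}^{-\eta}$ decay directly: $\int_{|k|\gtrsim 1}\brak{\lambda_{\nu,k}t}^\zeta|\hat b|\,\dd k \lesssim A\int_{|k|\gtrsim 1}\brak{\lambda_{\nu,k}t}^{-\eta}\brak{tk}^{-\eta}\,\dd k$, and since $\lambda_{\nu,k}\ge \delta_1 \min\{\cdots\}$ has a definite lower bound for $|k|$ bounded below while $\brak{tk}^{-\eta}$ is integrable in $k$ with a gain $\brak{t}^{-d}$ (as $\eta>d$) after rescaling $k\mapsto k/t$, one gets the $\brak{t}^{-d}$ contribution; here the $\brak{\lambda_{\nu,k}t}^\zeta$ is fully eaten by the spare $\brak{\lambda_{\nu,k}t}^{-\zeta}$ from $A$'s definition. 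On $|k|\lesssim 1$ we are in the Taylor dispersion regime where $\lambda_{\nu,k}=\delta_1\nu^{-1}|k|^2$; I would again use $\brak{tk}^{-\eta}|\hat b|\lesssim A$, but now the key is that $\lambda_{\nu,k}t = \delta_1\nu^{-1}|k|^2 t$, so $\brak{\lambda_{\nu,k}t}^{\zeta}\brak{\lambda_{\nu,k}t}^{-(\eta+\zeta)}=\brak{\nu^{-1}|k|^2 t}^{-\eta}$, and $\int_{|k|\lesssim 1}\brak{\nu^{-1}|k|^2t}^{-\eta}\brak{tk}^{-\eta}\,\dd k$ must be shown to be $\lesssim \brak{t}^{-d} + \nu^{d/2}\brak{t}^{-d/2}$. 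The $\brak{tk}^{-\eta}$ factor again yields $\brak{t}^{-d}$ when $|k|\gtrsim \brak{t}^{-1}$ after rescaling; the genuinely new contribution is $|k|\lesssim \brak{t}^{-1}$, where one drops $\brak{tk}^{-\eta}\le 1$ and is left with $\int_{|k|\lesssim \brak{t}^{-1}}\brak{\nu^{-1}|k|^2 t}^{-\eta}\,\dd k$; substituting $k = (\nu/t)^{1/2}\ell$ turns this into $(\nu/t)^{d/2}\int \brak{|\ell|^2}^{-\eta}\,\dd\ell$ over a suitable region, giving the $\nu^{d/2}\brak{t}^{-d/2}$ term (using $2\eta > d$). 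One subtlety worth flagging: since $d\ge 2$ is assumed, the volume element $|k|^{d-1}\dd|k|$ near $k=0$ makes the low-frequency region harmless; for $d=1$ the estimate would degrade, which is consistent with the paper restricting to $d\ge 2$.

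The main obstacle I anticipate is getting the low-frequency bookkeeping exactly right so that the two claimed terms $\brak{t}^{-d}$ and $\nu^{d/2}\brak{t}^{-d/2}$ emerge with no residual $\nu$-dependence in the wrong direction and no logarithmic losses — in particular, carefully matching the crossover scale $|k|\sim \brak{t}^{-1}$ against the Taylor-dispersion scale $|k|\sim (\nu/t)^{1/2}$, which coincide precisely when $t\sim\nu^{-1}$, the same time-scale at which ``almost-uniform'' Landau damping transitions. A secondary technical point is handling the weight growth $|\grad_v^j b(v)|\lesssim \brak{v}^N$ when integrating by parts: each of the $j\le\eta$ $v$-integrations by parts can hit $b$ rather than $\hat f$, so one should organize the Leibniz expansion so that the worst term has all derivatives on $\hat f$ with a leftover $\brak{v}^N b$-type factor, then pair against $\brak{v}^{-(N+\eta)}\in L^2_v$ using $M = N+\eta$ in $\mathcal{E}_{N+\eta,\eta}$ — this is routine but needs the weight exponents to line up, which is exactly why the statement carries $N+\eta$ in the weight and $\eta$ in the number of $Z$-derivatives. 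Finally, I would remark that the $\brak{\lambda(\nu,\grad_x)t}^\zeta$ Fourier multiplier commutes with everything since it acts only in $x$, so it can be carried through the whole argument as the scalar factor $\brak{\lambda_{\nu,k}t}^\zeta$ inside the $k$-integral, which is what makes the clean splitting above possible.
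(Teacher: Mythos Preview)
Your argument has a genuine gap rooted in the definition of $\mathcal{E}_{M,B}$: each $Z^\beta$ term in the energy carries a weight $\brak{\nu t}^{-2\beta}$ (see \eqref{def:Eed} and \eqref{def:ETd}). Consequently the integration-by-parts step yields not $\brak{tk}^\eta|\hat b|\lesssim\sqrt{\mathcal{E}_{N+\eta,\eta}}$ but rather
\[
\frac{\brak{tk}^\eta}{\brak{\nu t}^\eta}\,|\hat b(t,k)|\;\lesssim\;\sqrt{\mathcal{E}_{N+\eta,\eta}(t,k)},
\]
so you incur an extra factor $\brak{\nu t}^\eta$. This is not cosmetic: on the region $|k|\lesssim\nu$ one has $\lambda_{\nu,k}=\delta_1\nu^{-1}|k|^2\le\delta_1\nu$, so $\brak{\lambda_{\nu,k}t}^\eta$ cannot absorb $\brak{\nu t}^\eta$, and your low-frequency integral picks up a growing factor $\brak{\nu t}^\eta$ that destroys the claimed $\nu^{d/2}\brak{t}^{-d/2}$ bound for $t\gg\nu^{-1}$. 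Relatedly, your statement that ``on $|k|\lesssim 1$ we are in the Taylor dispersion regime'' is incorrect: the Taylor dispersion regime is $|k|\le\delta_0^{-1}\nu$, which for small $\nu$ is a much smaller ball than $|k|\lesssim 1$.

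The fix is exactly what the paper does: split at $|k|\sim\nu$ rather than $|k|\sim 1$. For $|k|\gtrsim\nu$ one is in the enhanced dissipation regime, where $\lambda_{\nu,k}\gtrsim\nu$ and hence $\brak{\nu t}^\eta\lesssim\brak{\lambda_{\nu,k}t}^\eta$; the $Z$-integration-by-parts loss is then absorbed and one is left with $\int_{|k|\gtrsim\nu}\brak{tk}^{-\eta}\,\dd k\lesssim\brak{t}^{-d}$. For $|k|\lesssim\nu$ one does \emph{not} integrate by parts at all---one simply uses $|\hat b|\lesssim\sqrt{\mathcal{E}_{N+\eta,0}}$ (the $\beta=0$ term carries no $\brak{\nu t}^{-\beta}$ penalty) and bounds $\int_{|k|\lesssim\nu}\brak{\nu^{-1}|k|^2 t}^{-\eta}\,\dd k\lesssim\nu^{d/2}\brak{t}^{-d/2}$ via the substitution you described. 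The moral is that the $\brak{\nu t}^{-\beta}$ weight in the energy makes the $Z$-IBP useful only where $\lambda_{\nu,k}\gtrsim\nu$, i.e.\ precisely for $|k|\gtrsim\nu$, which dictates the correct splitting scale.
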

\begin{proof}
The case $\zeta=0$ can easily be extended to handle $\zeta > 0$, and hence we only consider the following 
\begin{align*}
\norm{\int f(t,\cdot,v) b(v) \dd v}_{L^\infty_x} & \lesssim \int_{\mathbb R^d} \abs{\int\hat{f}(t,k,v)b(v) \dd v} \dd k \\
& \lesssim \int_{\abs{k} \leq \delta_0^{-1} \nu } \abs{\int \hat{f}(t,k,v) b(v) \dd v} \dd k + \int_{\abs{k} > \delta_0^{-1} \nu } \abs{\int\hat{f}(t,k,v) b(v) \dd v} \dd k \\
& \lesssim \left(\int_{\abs{k} \leq \delta_0^{-1} \nu } \frac{1}{\brak{ \nu^{-1} \abs{k}^2 t }^{\eta} } \dd k \right) \sup_{k \in \mathbb R^d} \brak{\lambda_{\nu,k} t}^{\eta} \sqrt{\mathcal{E}_{M+\eta,0}(t,k)} \\ 
& \quad + \int_{\abs{k} > \delta_0^{-1} \nu } \frac{\brak{\nu t}^\eta}{\abs{tk}^{\eta} \brak{\lambda_{\nu,k} t}^\eta }\abs{\int \brak{\lambda_{\nu,k} t}^\eta \frac{Z^\eta}{\brak{\nu t}^\eta} \left( \hat{f}(t,k,v) b(v) \right) \dd v} \dd k \\
& \lesssim \left(\frac{\nu^{d/2}}{\brak{t}^{d/2}} + \frac{1}{\brak{t}^d}\right) \sup_{k \in \mathbb R^d} \brak{\lambda_{\nu,k} t}^\eta \sqrt{\mathcal{E}_{M+\eta,\eta}(t,k)}, 
\end{align*}
where in the last line we used that $\lambda_{\nu,k} \gtrsim \nu$ for $\abs{k} \gtrsim \nu$ and we used $\eta > d$. 
\end{proof}

In a similar vein, let us record the following $ L^2_v L^\infty_x$ decay estimate which follows from Theorem \ref{thm:LinDecEst}. 
\begin{lemma}
  Let $\zeta \geq 0$, $\eta > d$, and $N$ be integers. 
For $j \leq \eta$, suppose that $b$ satisfies $\abs{\grad_v^j b(v)} \lesssim_j \brak{v}^N$.
Let $d \geq 2$ and let $f$ satisfy $\sup_{t \geq 0} \mathcal{E}_{N+d,\zeta}(t,k) < \infty$.
Suppose further that $\frac{2s}{1+2s} \eta > d$. Then, 
\begin{align*}
  \norm{\brak{\lambda(\nu,\grad_x)}^\zeta b f}_{L^2_v L^\infty_x } \lesssim \left(\frac{\nu^{d/2}}{\brak{t}^{d/2}} + \frac{1}{\brak{ \nu t^{1+2s} }^{d/2s}} \right) 
  \sup_{k \in \mathbb R^d} \brak{\lambda_{\nu,k} t}^{\eta+\zeta}  \sqrt{\mathcal{E}_{N+\eta}(t,k)}.  
\end{align*}
\end{lemma}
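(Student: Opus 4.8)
The plan is to mimic the proof of Lemma \ref{lem:LDLF} almost verbatim, but now keeping the $v$-norm inside rather than integrating it out, and splitting the $k$-integral at $\abs{k} \sim \nu$ according to the two regimes. First I would write
\begin{align*}
\norm{\brak{\lambda(\nu,\grad_x) t}^\zeta b f(t)}_{L^2_v L^\infty_x} \lesssim \norm{ \int_{\mathbb R^d} \brak{\lambda_{\nu,k} t}^\zeta \abs{ b(v) \hat{f}(t,k,v)} \dd k }_{L^2_v} \leq \int_{\mathbb R^d} \brak{\lambda_{\nu,k} t}^\zeta \norm{ b(v)\hat{f}(t,k,v)}_{L^2_v} \dd k,
\end{align*}
using Hausdorff--Young / Minkowski's integral inequality in $x$ and then in $v$. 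The bound $\abs{\grad_v^j b} \lesssim \brak{v}^N$ with $j \le \eta$ lets me absorb $b$ (and, for the high-frequency piece, the $Z^\eta$ derivatives landing on $b$) into the velocity weight at the cost of raising the index from $N$ to $N + \eta$, which is why the final constant involves $\sqrt{\mathcal{E}_{N+\eta}}$; the $\brak{v}^d$ needed to make a crude $L^1_v$-in-velocity bound summable explains the $N+d$ in the hypothesis on $f$.

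Next I would split $\int_{\mathbb R^d} = \int_{\abs{k} \le \delta_0^{-1}\nu} + \int_{\abs{k} > \delta_0^{-1}\nu}$. On the low-frequency part $\abs{k} \le \delta_0^{-1}\nu$ we are in the Taylor-dispersion regime, $\lambda_{\nu,k} = \delta_1 \nu^{-1}\abs{k}^2$, so from Theorem \ref{thm:LinDecEst} (in the polynomial-decay form \eqref{bd:lindecay}) one gets $\norm{b\hat{f}(t,k)}_{L^2_v} \lesssim \brak{\nu^{-1}\abs{k}^2 t}^{-\eta}\sup_k \brak{\lambda_{\nu,k}t}^{\eta+\zeta}\sqrt{\mathcal{E}_{N+\eta}(t,k)}$ after also using $\brak{\lambda_{\nu,k}t}^\zeta$ against $\brak{\lambda_{\nu,k}t}^{\eta+\zeta}$. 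Then
\begin{align*}
\int_{\abs{k}\le \delta_0^{-1}\nu} \frac{\dd k}{\brak{\nu^{-1}\abs{k}^2 t}^{\eta}} \lesssim \nu^{d/2} t^{-d/2}
\end{align*}
by the change of variables $k = (\nu/t)^{1/2} w$, valid because $2\eta > d$ (which follows from $\frac{2s}{1+2s}\eta > d$ since $\frac{2s}{1+2s} \le \frac12$). On the high-frequency part $\abs{k} > \delta_0^{-1}\nu$ we are in the enhanced-dissipation regime, $\lambda_{\nu,k} = \delta_1 \nu^{1/(1+2s)}\abs{k}^{2s/(1+2s)}$, and by the same mechanism as in Lemma \ref{lem:LDLF} — inserting $\brak{\lambda_{\nu,k}t}^\eta$ to borrow decay, and noting the energy controls the $Z^\eta/\brak{\nu t}^\eta$-derivatives — one finds $\int_{\abs{k} > \delta_0^{-1}\nu}\brak{\lambda_{\nu,k}t}^{-\eta}\dd k$; substituting $k = (\nu t^{1+2s})^{-1/(2s)}w\,$-type scaling turns this into $\brak{\nu t^{1+2s}}^{-d/2s}$, again using $\frac{2s}{1+2s}\eta > d$ to make the integral converge.

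The main obstacle I anticipate is purely bookkeeping, not conceptual: making sure the velocity weights close. The difficulty is that $b$ carries growth $\brak{v}^N$, the $Z = \grad_v + t\grad_x$ derivatives used to gain $x$-decay differentiate $b$ as well as $\hat f$, and the energy $\mathcal{E}$ only controls weighted norms of $Z^\beta (I-\bP)\hat f$ (plus lower-order control on $\bP \hat f$), so one must check that after Leibniz-expanding $Z^\eta(b \hat f)$ every term is dominated by $\brak{v}^{N+\eta}$ times something the energy sees, and that the $\brak{\nu t}^{-2\beta}$ weights built into $\mathcal{E}_{M,B}$ exactly cancel the $\brak{\nu t}^{+\eta}$ factors produced when we trade $Z^\eta$ for $x$-decay — this is the same cancellation that makes Lemma \ref{lem:LDLF} work, so it goes through, but it must be tracked carefully. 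A secondary point is that $\brak{\lambda_{\nu,k}t}^\zeta$ must genuinely be absorbable, i.e. we need $\eta + \zeta \le \widetilde{M}$ in \eqref{bd:lindecay} for $M' $ chosen large enough; since $\widetilde M$ can be made arbitrarily large by taking $M'$ large, this is not a real constraint, but it is the reason the statement asks for $N + \eta$ (rather than just $N$) moments and is phrased in terms of $\sqrt{\mathcal{E}_{N+\eta}}$.
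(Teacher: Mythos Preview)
Your overall approach --- pass to $L^1_k$ via Hausdorff--Young/Minkowski, split at $|k|\sim\nu$, and integrate $\brak{\lambda_{\nu,k}t}^{-\eta}$ over the two regimes --- is exactly what the paper does. But there is one genuine gap.

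The high-frequency integral $\int_{|k|>\delta_0^{-1}\nu}\brak{\lambda_{\nu,k}t}^{-\eta}\,\dd k$ does \emph{not} evaluate to $\brak{\nu t^{1+2s}}^{-d/(2s)}$ for all $t$; the change of variables $k\mapsto (\nu t^{1+2s})^{1/(2s)}k$ produces $(\nu t^{1+2s})^{-d/(2s)}$ without the Japanese bracket, and this blows up as $t\to 0$. Concretely, for $t\lesssim\nu^{-1/(1+2s)}$ one has $\lambda_{\nu,k}t\lesssim 1$ on a $k$-ball of volume $\sim(\nu t^{1+2s})^{-d/(2s)}\gg 1$, so the integral is large even though the left-hand side $\norm{bf}_{L^2_vL^\infty_x}$ is perfectly bounded. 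The paper handles this by treating the short-time range $t\lesssim\nu^{-1/(1+2s)}$ separately: there the target right-hand side satisfies $\brak{\nu t^{1+2s}}^{-d/(2s)}\gtrsim 1$, so a direct Sobolev embedding $L^\infty_x\hookleftarrow H^{d/2+}_x$ (using the $x$-regularity already present in $\mathcal{E}$) gives the estimate with no time decay at all. You need to insert this step.

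Two smaller points. First, your aside that $\frac{2s}{1+2s}\le\frac12$ is false for $s>\tfrac12$ (the supremum is $\tfrac23$ at $s=1$), though your conclusion $2\eta>d$ still follows since $\frac{2s}{1+2s}<2$. Second, the reference to ``$Z^\eta/\brak{\nu t}^\eta$-derivatives'' in the high-frequency piece is a leftover from Lemma~\ref{lem:LDLF} and is not used here: in that lemma the $Z^\eta$ trick produced $|tk|^{-\eta}$ and the decay rate $\brak{t}^{-d}$, whereas in the present lemma the decay comes purely from integrating $\brak{\lambda_{\nu,k}t}^{-\eta}$ over $k$, which is precisely why the stronger hypothesis $\frac{2s}{1+2s}\eta>d$ is required and why the rate is $\brak{\nu t^{1+2s}}^{-d/(2s)}$ rather than $\brak{t}^{-d}$.
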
 
\begin{proof}
The $\zeta > 0$ case is a trivial extension of the $\zeta = 0$ case, and so we just consider this one. 
For $t \lesssim \nu^{-\frac{1}{1+2s}}$, we may simply use Sobolev embedding in $x$ and obtain an estimate with no time-decay.
Therefore, consider  $t \gtrsim \nu^{-\frac{1}{1+2s}}$.
By Cauchy-Schwarz and Fubini's theorem, 
\begin{align*}
\norm{b f}_{L^2_v L^\infty_x }^2 & \lesssim  \int_{\mathbb R^d} \abs{ \int_{\mathbb R^d} b(v) \hat{f}(t,k,v) \dd k }^2 \dd v  \\ 
& \lesssim \int_{\mathbb R^d} \int _{\mathbb R^d} \brak{v}^{2N+\eta}\abs{\hat{f}(t,k,v)}^2 \dd v \dd k \\  
& \lesssim \left(\int_{\abs{k} <  \nu/\delta_0} \frac{1}{\brak{\nu^{-1} |k|^2 t}^\eta} \dd k + \int_{\abs{k} \geq  \nu/\delta_0} \frac{1}{\brak{\nu^{\frac{1}{1+2s}} |k|^{\frac{2s}{1+2s}} t}^\eta} \dd k \right)\sup_{k \in \mathbb R^d}\brak{\lambda_{\nu,k} t}^\eta 
\sqrt{\mathcal{E}_{N+\eta}(t,k)}, 
\end{align*}
from which the desired estimate follows. 
\end{proof}

\subsection{Nonlinear problem}

For the nonlinear problem, we need also to estimate $\nu \Gamma(f,f)$, which now couples all of the $x$-frequencies together.
The norms we will use to treat the nonlinear problem are the following: for integers $M,M',M_{J'} > 2d$, $B > B' + d > 3d$  we define
\begin{subequations} \label{def:E}
\begin{align}
\mathcal{E}(t) & := \int_{\abs{k} \leq \delta_0^{-1} \nu} \brak{\lambda_{\nu,k} t}^{2J} E^{T.d.}_{M,B}(t,k) \dd k +  \int_{\abs{k} > \delta_0^{-1} \nu} \brak{\lambda_{\nu,k} t}^{2J} E^{e.d.}_{M,B}(t,k) \dd k   \\ 
\mathcal{E}_{mom}(t) & := \int_{\abs{k} \leq \delta_0^{-1} \nu} E^{T.d.}_{M+M',B}(t,k) \dd k +  \int_{\abs{k} > \delta_0^{-1} \nu} E^{e.d.}_{M+M',B}(t,k) \dd k  \\ 
\mathcal{E}_{LF}(t) & := \sup_{k : \abs{k} \leq \delta_0^{-1} \nu} \brak{\lambda_{\nu,k} t}^{2J'} E^{T.d.}_{M',B'}(t,k) + \sup_{k : \abs{k} > \delta_0^{-1} \nu} \brak{\lambda_{\nu,k} t}^{2J'} E^{e.d.}_{M',B'}(t,k)  \\
\mathcal{E}_{LF,mom}(t) & := \sup_{k : \abs{k} \leq \delta_0^{-1} \nu} E^{T.d.}_{M' + M_{J'} ,B'}(t,k) + \sup_{k : \abs{k} > \delta_0^{-1} \nu} E^{e.d.}_{M' + M_{J'} ,B'}(t,k), 
\end{align}
\end{subequations}
where the parameters are set satisfying certain conditions determined by the proof, namely
\begin{align}
\notag
&M, M'> 2\max\{ (|\gamma|(2-s)+2s|q_{\gamma,s}|)/(1+s), B+|\gamma|+2s\},\\
& M_{J}, M_{J'}> 2\max\{ (|\gamma|(2-s)+2s|q_{\gamma,s}|)/(1+s), B+|\gamma|+2s\}(J+1), \\
 \label{eq:constMJB} & M'+M_{J'}\leq M, \quad 2d<B'< B-1, \quad d<J'<J-1.  
\end{align}
As above, each of these energies are also associated with a natural dissipation functional $\mathcal{D}_\ast$; see \eqref{def:Diss} and Section \ref{sec:Lin}.
The energy $\mathcal{E}$ represents the fundamental $L^2_xL^2_v$ decay estimates we obtain, while the energy $\mathcal{E}_{mom}$ controls higher moments in $v$ but with no decay estimates (to use a weak Poincar\'e approach in the treatment of soft potentials) and $\mathcal{E}_{LF}$ obtains better estimates at low frequencies using $L^\infty_k L^2_v$ (i.e. pointwise-in-frequency) as a more convenient surrogate for $L^1_xL^2_v$-type estimates. This allows to obtain the sharp decay estimates for both $f$ and $\rho$; these sharp decay estimates are also crucial to close the nonlinear argument in $d=2$ (however in $d \geq 3$, they can be obtained a posteriori). 

The majority of the paper is devoted to the following bootstrap estimate, which implies Theorem \ref{thm:main} by a straightforward regularization argument. 
\begin{proposition}
\label{prop:bootstrap}
Let $f$ be a classical solution to \eqref{eq:PertEqn} such that $\brak{v}^N \partial_{x}^\alpha \partial_v^\beta f \in L^2_{x,v}$ for all $t \in [0,T]$ and $N,\alpha,\beta \geq 0$.
Suppose that for $t \in [0,T]$ there holds
\begin{align}
\label{bd:bootEd}\mathcal{E}(t) + \nu\int_0^t \mathcal{D}(\tau) \dd\tau & \leq 4\mathsf{B}_0 \eps^2 \\
\label{bd:bootEmomD}\mathcal{E}_{mom}(t) + \nu\int_0^t \mathcal{D}_{mom}(\tau) \dd\tau & \leq 4 \mathsf{B}_1 \eps^2 \\
\label{bd:bootELF}\mathcal{E}_{LF}(t) + \nu\int_0^t \mathcal{D}_{LF}(\tau) \dd\tau & \leq 4\mathsf{B}_2 \eps^2 \\ 
\label{bd:bootELFmom}\mathcal{E}_{mom,LF}(t) + \nu\int_0^t \mathcal{D}_{mom,LF}(\tau) \dd\tau & \leq 4\mathsf{B}_3 \eps^2,
\end{align}
for universal constants $\mathsf{B}_j$ set by the proof.
Then for $\eps$ sufficiently small (not depending on $t$ or $\nu$), the same estimates hold with $4$ replaced with $2$.
Furthermore, the quantities on the left-hand side take values continuously in time, and therefore, these estimates hold for all $t \in [0,\infty)$. 
\end{proposition}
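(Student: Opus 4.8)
The plan is to run a standard continuity/bootstrap argument. First I would set up the continuity-in-time of the left-hand sides of \eqref{bd:bootEd}--\eqref{bd:bootELFmom}: since $f$ is a classical solution with all velocity-weighted Sobolev norms finite on $[0,T]$, each of $\mathcal{E},\mathcal{E}_{mom},\mathcal{E}_{LF},\mathcal{E}_{mom,LF}$ and the time integrals of the dissipations are continuous functions of $t$; this is a routine (if slightly tedious) verification using dominated convergence in the $k$-integral together with the uniform-in-$t$ control provided by the bootstrap hypotheses. Thus the set of $t$ where the ``$4$'' bounds hold is closed, and it is clearly relatively open once we prove the improvement to ``$2$''; since it is nonempty at $t=0$ (by the smallness hypothesis $\eps\le\eps_0$ on the initial data, which controls each energy at $t=0$ by $\mathsf{B}_j\eps^2$ after fixing the constants $\mathsf{B}_j$ appropriately in terms of the implicit constants in \eqref{bd:lindecay} and the embeddings), the improvement propagates the estimates to all of $[0,\infty)$.

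Next, the core of the argument: improving the constant from $4$ to $2$. For each energy I would differentiate in time and use the linear monotonicity estimates \eqref{bd:energyED}, \eqref{bd:energyTD} (equivalently Theorem \ref{thm:LinDecEst}), which produce $-\delta_\ast\mathcal{D}_\ast$ up to the nonlinear contribution $\nu\Gamma(f,f)$ and the time-derivative falling on the weights $\brak{\lambda_{\nu,k}t}^{2J}$ and $\brak{\nu t}^{-2\beta}$. The weight derivatives are handled exactly as in the linear decay proof: $\frac{\dd}{\dd t}\brak{\lambda_{\nu,k}t}^{2J}\lesssim \lambda_{\nu,k}\brak{\lambda_{\nu,k}t}^{2J-1}$, and this is absorbed into $\delta_\ast\mathcal{D}_\ast$ (possibly after splitting into the region where $\lambda_{\nu,k}$ dominates and a weak-Poincar\'e region paid for by the moment energy $\mathcal{E}_{mom}$, $\mathcal{E}_{mom,LF}$, precisely the reason those norms were introduced). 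The genuinely new work is the nonlinear term: one needs commutator estimates for $Z^\beta\Gamma(f,f)$ and trilinear estimates of the form
\begin{align*}
\nu\abs{\int_0^t\!\!\int_{\RR^d}\brak{\lambda_{\nu,k}t}^{2J}\brak{\nu t}^{-2\beta}\jap{\jap{v}^{2M}Z^\beta \widehat{\Gamma(f,f)},Z^\beta\hat f}_{L^2_v}\,\dd k\,\dd\tau}\lesssim (\text{energies})\cdot\nu\!\int_0^t\!\mathcal{D}\,\dd\tau,
\end{align*}
using the coercivity/upper bounds on $\Gamma$ and $\cL$ from Proposition \ref{lemma:coercive}, Young's convolution inequality in $k$ to decouple the frequency convolution (here the $L^\infty_kL^2_v$-type energies $\mathcal{E}_{LF}$, $\mathcal{E}_{mom,LF}$ play the role of the $L^1_x$-type control needed to make the convolution close), and distributing the weights $\brak{\lambda_{\nu,k}t}^J$, $\brak{v}^M$ over the two factors. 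The outcome is a bound of the shape $\frac{\dd}{\dd t}\mathcal{E}+\delta\nu\mathcal{D}\lesssim \eps\,\nu\mathcal{D}$ (and similarly for the other three), so that for $\eps$ small the right side is absorbed and integrating gives $\mathcal{E}(t)+\nu\int_0^t\mathcal{D}\le C(\eps^2+\eps\cdot\mathsf{B}_j\eps^2)$, which is $\le 2\mathsf{B}_j\eps^2$ once $\eps\le\eps_0$ is small enough.

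I expect the main obstacle to be the nonlinear frequency-convolution estimates in the Taylor dispersion regime $\abs{k}\ll\nu$, where we do not control $\bP\hat f$ through the microscopic dissipation and must instead rely on the hydrodynamic system \eqref{eq:HydroEqns} and the mixed term $\cM$; one must check that the nonlinear source terms generated in \eqref{eq:detm0}--\eqref{eq:dte0} (which involve $\Theta,\Lambda$ of nonlinear quantities) are compatible with the $\nu^{-1}\abs{k}^2$ scaling of $\mathcal{D}^{T.d.}$, and that convolving a low-frequency factor with a high-frequency factor does not destroy the weight $\brak{\lambda_{\nu,k}t}^{2J}$ — this requires the elementary but essential inequality $\lambda_{\nu,k}\lesssim \lambda_{\nu,k-\ell}+\lambda_{\nu,\ell}$ (up to constants, valid since $\lambda_{\nu,\cdot}$ is subadditive in each regime) and careful case analysis when $k-\ell$ and $\ell$ lie in different regimes relative to $\nu$. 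A secondary technical point is the $Z$-commutators with $v\cdot\grad_x$ (which vanish) versus with $\cL$ and $\Gamma$ (which do not): one pays factors of $\brak{v}$ and loses moments, which is exactly why the hierarchy $M'+M_{J'}\le M$, $B'<B-1$, $J'<J-1$ in \eqref{eq:constMJB} is imposed, and one must verify these inequalities suffice to close every commutator. Modulo these, the argument is the standard energy-method bootstrap, and Theorem \ref{thm:main} follows from Proposition \ref{prop:bootstrap} by the regularization argument (approximate $f_{in}$ by smooth compactly-supported-in-$v$ data, apply the bootstrap uniformly, pass to the limit).
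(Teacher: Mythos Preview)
Your bootstrap architecture is right, and you correctly identify the roles of the linear weight-derivative terms (absorbed via the weak-Poincar\'e argument, paid for by $\mathcal{E}_{mom}$, $\mathcal{E}_{mom,LF}$) and of the $L^\infty_k$ energies in closing the convolution. But the claimed outcome
\[
\frac{\dd}{\dd t}\mathcal{E}+\delta\,\mathcal{D}\lesssim \eps\,\mathcal{D}
\]
is \emph{not} achievable on $\RR^d$, and this is exactly the point where the paper's argument diverges from a $\TT^d$-style proof. The actual nonlinear estimate the paper proves (Proposition~\ref{prop:NLerrorL2}) is
\[
\nu\!\int_{\RR^d}\!\brak{\lambda_{\nu,k}t}^{2J}\,NL_{M,B}\,\dd k
\;\lesssim\;\sqrt{\mathcal{E}}\,\mathcal{D}\;+\;\Bigl(\nu^d\mathbbm{1}_{t\le\nu^{-1}}+\brak{\nu/t}^{d/2}\mathbbm{1}_{t>\nu^{-1}}\Bigr)\sqrt{\mathcal{E}_{LF}}\,\sqrt{\mathcal{E}}\,\sqrt{\mathcal{D}}.
\]
The second term carries only a single $\sqrt{\mathcal{D}}$ and cannot be absorbed into the dissipation by smallness of $\eps$. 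It originates from the very-low-frequency ($|k-\xi|\lesssim\nu$) macroscopic contribution: when the $\cA$-norm in the trilinear estimate lands on $\bP\hat f$ at such frequencies, one is left with $\sqrt{\nu}\,\normL{P_{\le\delta_0^{-1}\nu}\hat f}_{L^2_v}$, and there is no $|k|$-factor available to rebuild $\mathcal{D}^{T.d.}$ (whose macroscopic part scales like $\nu^{-1}|k|^2$). The paper instead bounds $\normL{P_{\le\delta_0^{-1}\nu}\hat f}_{L^1_kL^2_v}$ by $\sqrt{\mathcal{E}_{LF}}$ times the time-decaying prefactor coming from $\int_{|k|\lesssim\nu}\brak{\nu^{-1}|k|^2t}^{-J'}\dd k$. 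Closing then requires Young in time,
\[
\int_0^t g(\tau)\sqrt{\mathcal{E}}\sqrt{\mathcal{D}}\,\dd\tau
\;\le\;\widetilde C_1\int_0^t g(\tau)^2\,\mathcal{E}\,\dd\tau
\;+\;\widetilde C_2\int_0^t\mathcal{D}\,\dd\tau,
\]
and the crucial fact that $\int_0^\infty g(\tau)^2\,\dd\tau<\infty$ uniformly in $\nu$, which holds precisely for $d\ge2$ (for $d=2$ this is borderline and is the reason $\mathcal{E}_{LF}$ rather than $\mathcal{E}$ must be used here; see Remark~\ref{remL2dgeq3}).

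A secondary point: the paper does not use the subadditivity $\lambda_{\nu,k}\lesssim\lambda_{\nu,k-\ell}+\lambda_{\nu,\ell}$ you propose for distributing the $\brak{\lambda_{\nu,k}t}^J$ weight. Instead it employs a Littlewood--Paley paraproduct in $k$ and the regime-comparison inequalities of Lemma~\ref{lem:freqex} (in particular $\mathbbm{1}_{|k|\lesssim\nu}\brak{\lambda_{\nu,k}t}\lesssim\mathbbm{1}_{|\xi|\gtrsim\nu}\brak{\lambda_{\nu,\xi}t}$), together with case analysis on whether each factor lies above or below the threshold $\delta_0^{-1}\nu$. Your subadditivity idea would distribute the weight, but you would still face the same obstruction above: the low-frequency macroscopic piece does not produce a full $\mathcal{D}$, and the time-decaying factor is what saves the estimate.
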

Moving from the linearized to the nonlinear problem is not too difficult on $\mathbb T^d$, as the nonlinearity $\nu \Gamma(f,f)$ has a power of $\nu$ in front  admits estimates of the general form
\begin{align}
\abs{\brak{Z^{\beta} \partial_x^\alpha \Gamma, Z^{\beta} \partial_x^\alpha f}} \lesssim \sqrt{\mathcal{E}} \mathcal{D}, \label{ineq:EDest}
\end{align}
where $\mathcal{E}$ is one of the energy functionals and $\mathcal{D}$ denotes one the associated dissipation operators. 
This allows it to be absorbed by the dissipation in a relatively straightforward manner.
However, on $\mathbb R^d$, several new difficulties arise connected with the low frequencies if one wants to obtain the sharp decay rates and almost-uniform Landau damping. 
Instead of \eqref{ineq:EDest}, we eventually reduce ourselves to an estimate of the form 
\begin{align*}
\frac{\dd}{\dd t}\mathcal{E} + \delta_\star \mathcal{D} \lesssim \sqrt{\mathcal{E}} \mathcal{D} + \left(\nu^d \mathbf{1}_{t \leq \nu^{-1} }+ \frac{\nu^{d/2}}{\brak{t}^{d/2}} \mathbf{1}_{t > \nu^{-1}}\right) \sqrt{\mathcal{E}_{LF}} \sqrt{\mathcal{E}} \sqrt{\mathcal{D}}. 
\end{align*}
Coupled with suitable estimates on $\mathcal{E}_{LF}$ and $\mathcal{E}_{mom}$ provided $d \geq 2$, this implies Proposition \ref{prop:bootstrap} and hence Theorem \ref{thm:main}.

\subsection{Notation}
Let $\beta=[\beta_1,\dots,\beta_d]$ be a multi-index with $\beta_i \in \mathbb{N}$ and $|\beta|=\beta_1+\dots +\beta_d$. We define
\begin{align}
\label{def:Zbetamulti}
&Z^\beta=(\de_{v_1}+t\de_{x_1})^{\beta_1}\dots (\de_{v_d}+t\de_{x_d})^{\beta_d}, \\
&(\nabla_v)^\beta=\de_{v_1}^{\beta_1}\dots \de_{v_d}^{\beta_d},\\
&(\nabla_x)^\beta=\de_{x_1}^{\beta_1}\dots \de_{x_d}^{\beta_d}.
\end{align}
We denote $\brak{\nabla_x}$ as the operator whose symbol in the Fourier space is $\brak{k}=(1+|k|^2)^\frac12$. For a given constant $c>0$ and a multi-index $\beta$, we will slightly abuse in notation by writing $c^\beta$ instead of $c^{|\beta|}$.

The $L^2_v(\mathbb{R}^d)$ inner product inner product and norm are
\begin{equation}
\brak{g,h}_{L^2_v}=\int_{\mathbb{R}^d}g(v)\bar{h}(v)\dd v, \qquad \norm{g}^2_{L^2_v}=\brak{g,g}_{L^2_v}.
\end{equation}
We use the following notation for weighted Sobolev spaces 
\begin{equation}
\norm{g}_{H^s_{v,q}}=\norm{\brak{v}^qg}_{H^s_v},
\end{equation}
where $H^s_v$ is the usual $H^s(\mathbb{R}^d)$ Sobolev space. When no confunsion arise, we will omit the subscript $v$ for the weighted spaces. 

	We denote the $x$-Fourier transform as 
	\begin{equation}
		\cF(g)(k)=\hat{g}(k)=\frac{1}{(2\pi)^{d/2}}\int_{\RR^d}\e^{-ix\cdot k}g(x)\dd x.
	\end{equation}
	Given $\chi \in C^{\infty}_c(B_2(0))$ with $\chi(k)=1$ for $|k|\leq 1$ we define the homogeneous Littlewood-Paley decomposition in $\mathbb{R}^d$ as 
	\begin{equation}
	\label{def:para0}
		g=\sum_{N\in 2^{\ZZ}}g_N
	\end{equation}
	where $2^{\ZZ}=\{2^{j}:\,j\in \mathbb{Z}\}$ is the set of dyadic numbers and 
	\begin{equation}
		\widehat{g_N}(k)=(\chi(k/N)-\chi(2k/N))\hat{g}(k).
	\end{equation}
	We denote
	\begin{align}
	&	\widehat{P_{\leq N} g}(k)=\widehat{g_{\leq N}}(k)=\chi(k/N)\hat{g}(k),\\	&\widehat{P_{> N} g}(k)=\widehat{g_{> N}}(k)=(1-\chi(k/N))\hat{g}(k).
\end{align}
Moreover 
\begin{equation}
	\label{def:para1}
\norm{g}_{L^2_x}=\norm{\hat{g}}_{L^2_k}\approx \sum_{N\in 2^{\ZZ}}\norm{\widehat{g_N}}_{L^2_k}.
\end{equation}

\section{Preliminaries}
\label{sec:preliminaries}
In this section, we recall some known results and we prove basic estimates that play a crucial role in our subsequent analysis.

\subsection{Linearized operator}
A well-known coercive estimate for the linearized operator, given in \cite{mouhot2006explicit}, is
\begin{equation}
	\jap{\cL g,g}_{L^2_v}\geq C\norm{(I-\bP)g}_{L^2_{v,\gamma/2}}^2.
\end{equation}
This estimate shows a dissipative mechanism of the linearized operator for elements outside the kernel. However, it does not quantify any regularization property, which are in fact carried over by $\cL_1=-\Gamma(\sqrt{\mu},\cdot)$.
In particular, by standard symmetrization arguments one can show
\begin{align}
	\label{eq:L1ff}
 \jap{\cL_1g,g}_{L^2_v}=\frac12 \iint_{\RR^{2d}\times \SS^{d-1}}B\left(\sqrt{\mu_* \mu'_*}\left(g'-g\right)^2+g^2_*\left(\sqrt{\mu}-\sqrt{\mu'}\right)^2\right)\dd v_*\dd v \dd \sigma.
\end{align}
This identity suggests the following splitting, used for instance in \cite{alexandre2011global,alexandre2012boltzmann}, 
\begin{equation}
	\label{eq:splitcL}
	\jap{\cL g,g}_{L^2_v}=\cA [g]+\cK[g],
\end{equation}
where 
\begin{align}
	\label{def:A}
	\cA [g]&=\frac12 \iint_{\RR^{2d}\times \SS^{d-1}}B\left(\mu_*\left(g'-g\right)^2+g^2_*\left(\sqrt{\mu'}-\sqrt{\mu}\right)^2\right)\dd v_*\dd v \dd \sigma\\
		\label{def:K}
	\cK[ g]&=\jap{\cL_2 g,g}_{L^2_v}+\frac12 \iint_{\RR^{2d}\times \SS^{d-1}}B\sqrt{\mu_*}\left(\sqrt{\mu'_*}-\sqrt{\mu_*}\right)\left(g'-g\right)^2\dd v_*\dd v \dd \sigma.
\end{align}
From the results in \cite{alexandre2012boltzmann}, which we recall below, we know that the operator $\cA$ contains the information about the anisotropic dissipation and is comparable to some weighted Sobolev norms. On the other hand, $\cK$ can be thought as a compact perturbation. 
In the sequel, we also need to use the following variation of $\cA$ 
\begin{equation}
	\label{def:Arho}
	\cA^\kappa [f]=\frac12 \iint_{\RR^{2d}\times \SS^{d-1}}B\left(\mu_*^\kappa\left(f'-f\right)^2+f^2_*\left(\left(\mu'\right)^{\kappa/2}-\mu^{\kappa/2}\right)^2\right)\dd v_*\dd v \dd \sigma
	\end{equation}
which is defined for any $\kappa>0$.

In the next proposition we collect some bounds that are proved in \cite[Proposition 2.1-2.2, Lemma 2.12 and Lemma 2.15]{alexandre2012boltzmann}
(in the case of the Landau collision operator, i.e. $s=1$ and $\gamma=-3$, analogues can be found in \cite{guo2002landau}). 
\begin{proposition}
	\label{lemma:coercive}
	Let $0<s<1$, $\gamma>-3$ and $\kappa>0$. Then
	\begin{align}
	\label{bd:coerL} &	\cA[(I-\bP)g]\lesssim \jap{\cL g,g}_{L^2_v}\leq 2\jap{\cL_1g,g}_{L^2_v}\lesssim \cA[g],\\
	\label{bd:equivA} &\norm{g}^2_{H^s_{v,\gamma/2}}+\norm{g}_{L^2_{v,s+\gamma/2}}^2\lesssim \cA[g]\lesssim \norm{g}^2_{H^s_{v,s+\gamma/2}},\\
\label{bd:Arho}&	\cA^\kappa[g]\lesssim \cA[g].
	\end{align}
Moreover, there exists a constant $0<\delta<1$ such that 
\begin{equation}
\label{bd:L2gh}
\jap{\cL_2g,h}\lesssim \normL{\mu^{\delta} g}_{L^2_v}\normL{\mu^{\delta} h}_{L^2_v}.
\end{equation}
\end{proposition}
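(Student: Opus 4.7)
The plan is to establish the four bounds following Alexandre--Morimoto--Ukai--Xu--Yang \cite{alexandre2012boltzmann}, combining the symmetric representation \eqref{eq:L1ff} of $\cL_1$, the splitting $\cL = \cL_1 + \cL_2$, Mouhot's explicit spectral gap \cite{mouhot2006explicit}, and a Bobylev-type Fourier analysis for the Sobolev-norm characterization of $\cA$. For the chain \eqref{bd:coerL}, I would first prove the upper bound $\jap{\cL_1 g, g} \lesssim \cA[g]$ by starting from \eqref{eq:L1ff} and splitting $\sqrt{\mu_* \mu'_*} = \mu_* + \sqrt{\mu_*}(\sqrt{\mu'_*} - \sqrt{\mu_*})$; the first piece produces $\cA[g]$ exactly, while the remainder is a ``cancellation''-type term bounded by $\cA[g]$ using $|\sqrt{\mu'_*} - \sqrt{\mu_*}| \lesssim |v_* - v'_*|\,\e^{-c\min(|v_*|,|v'_*|)^2}$ together with standard post-collisional changes of variables. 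The coercivity $\cA[(I-\bP)g] \lesssim \jap{\cL g, g}$ would then follow from Mouhot's spectral gap combined with the same upper bound applied to the fully symmetric quadratic form for $\cL$, and $\jap{\cL g, g} \leq 2\jap{\cL_1 g, g}$ reflects the fact that in the symmetrized form (obtained via the double substitution $(v, v_*) \leftrightarrow (v', v'_*)$ and $(v, v_*) \leftrightarrow (v_*, v)$) the $\cL_2$ contribution is pointwise dominated by the $\cL_1$ contribution, as a consequence of the exchange symmetry between pre- and post-collisional arguments.

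For \eqref{bd:equivA}, after a dyadic partition of unity in $|v - v_*|$ that separates the $|v - v_*|^\gamma$ factor from the singular angular kernel, one reduces each dyadic piece to a Maxwellian-molecule kernel to which Bobylev's identity applies; the angular singularity $b(\cos\theta) \approx \theta^{-d-2s}\sin\theta$ then produces a Fourier symbol behaving like $|\xi|^{2s}$, from which the $H^s$ norm emerges, while the kinetic factor contributes the velocity weight $\brak{v}^{\gamma/2}$ on the lower bound and $\brak{v}^{s+\gamma/2}$ on the upper bound. Bound \eqref{bd:Arho} follows by a parallel analysis: the $g_*^2$ summand is controlled by the Taylor estimate $|(\mu')^{\kappa/2} - \mu^{\kappa/2}| \lesssim |v - v'|(\mu')^{\kappa/4}\mu^{\kappa/4}$, which reduces it to the $\kappa = 1$ case up to constants, while the $(g' - g)^2$ summand is handled by bounding $\cA^\kappa[g]$ above by the same Sobolev norm that bounds $\cA[g]$ from below via Bobylev. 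Finally, \eqref{bd:L2gh} follows by writing $\cL_2 g = -\Gamma(g, \sqrt{\mu})$: the Gaussian factors $\sqrt{\mu}(v')$ and $\sqrt{\mu}(v)$ inside the bilinear form \eqref{def:Gamma} render the angular singularity effectively integrable after extracting Gaussian weights from $g$ and $h$, yielding the pointwise control by $\norm{\mu^\delta g}_{L^2_v} \norm{\mu^\delta h}_{L^2_v}$ for some $\delta \in (0, 1/4)$, following \cite[Lemma~2.15]{alexandre2012boltzmann}.

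The main obstacle is the lower bound $\norm{g}^2_{H^s_{v,\gamma/2}} + \norm{g}^2_{L^2_{v, s+\gamma/2}} \lesssim \cA[g]$, where one must extract a fractional derivative of order $s$ in the anisotropic direction dictated by the collision geometry while simultaneously recovering the velocity weight $\brak{v}^{\gamma/2}$; the interplay of the angular singularity with the polynomial $|v - v_*|^\gamma$ kinetic factor is what makes this step technically delicate and is precisely what motivates the non-isotropic Sobolev framework of Gressman--Strain \cite{gressman2011sharp} as used in \cite{alexandre2012boltzmann}.
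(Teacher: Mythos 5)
The paper provides no proof of this proposition: it simply cites \cite[Propositions~2.1--2.2, Lemma~2.12, Lemma~2.15]{alexandre2012boltzmann} (and \cite{guo2002landau} for the Landau case), so there is no ``paper's proof'' to compare against literally. Your sketch tracks the general architecture of those references reasonably well: the splitting $\sqrt{\mu_*\mu_*'} = \mu_* + \sqrt{\mu_*}(\sqrt{\mu_*'}-\sqrt{\mu_*})$ is precisely the decomposition $\jap{\cL g,g} = \cA[g] + \cK[g]$ of \eqref{eq:splitcL}--\eqref{def:K}, the mean-value and post-collisional-substitution estimates for the remainder are the right tools, and invoking Bobylev-type Fourier analysis plus the Gressman--Strain anisotropic norms for \eqref{bd:equivA} is the correct route.

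There is, however, a genuine gap in your argument for the left-most inequality $\cA[(I-\bP)g] \lesssim \jap{\cL g, g}$. You say it follows from Mouhot's spectral gap ``combined with the same upper bound applied to the fully symmetric quadratic form for $\cL$,'' but an upper bound on $\jap{\cL_1 g,g}$ in terms of $\cA[g]$ cannot produce a lower bound on $\jap{\cL g,g}$ in terms of $\cA[(I-\bP)g]$. The missing ingredient is the \emph{weak coercivity} estimate $\jap{\cL_1 h, h} \geq \frac{1}{10}\cA[h] - C\norm{h}^2_{L^2_{v,\gamma/2}}$, which is exactly Proposition~\ref{prop:equivL1} in this paper (Proposition~2.16 of \cite{alexandre2012boltzmann}) and is singled out there as ``one of the main ingredients to prove \eqref{bd:coerL}--\eqref{bd:equivA}.'' The correct chain is: apply Proposition~\ref{prop:equivL1} to $h=(I-\bP)g$, use \eqref{bd:L2gh} to control the $\cL_2$ contribution by $\normL{\mu^\delta(I-\bP)g}_{L^2_v}^2$, observe $\cL g = \cL(I-\bP)g$, and then invoke Mouhot's $L^2_{v,\gamma/2}$ spectral gap precisely to absorb the resulting lower-order $\norm{(I-\bP)g}^2_{L^2_{v,\gamma/2}}$ error on the right. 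Without Proposition~\ref{prop:equivL1}, Mouhot's estimate only gives coercivity in the weighted $L^2$ norm, not in the stronger $\cA$-norm, and the regularization (the $H^s$ part) is lost.
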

In fact, an important bound we need, which is one of the main ingredients to prove \eqref{bd:coerL}-\eqref{bd:equivA}, was given in \cite[Proposition 2.16]{alexandre2012boltzmann}.
\begin{proposition}
	\label{prop:equivL1}
	Let $\gamma>-3$. There exists a constant $C>0$ such that
	\begin{equation}
		\frac{1}{10}\cA[g]-C\norm{g}^2_{L^2_{v,\gamma/2}}\leq \jap{\cL_1 g,g}_{L^2_v}\leq \cA[g].
	\end{equation}
\end{proposition}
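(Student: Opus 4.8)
\textbf{Proof proposal for Proposition \ref{prop:equivL1}.} The plan is to establish the two inequalities separately, starting from the identity \eqref{eq:L1ff} for $\jap{\cL_1 g,g}_{L^2_v}$, which splits the quadratic form into a purely dissipative ``gain-minus-loss'' piece plus a remainder generated by the mismatch between $\mu_*\mu_*'$ and $\mu_*$. The upper bound $\jap{\cL_1 g,g}_{L^2_v}\leq \cA[g]$ should be the easy direction: comparing \eqref{eq:L1ff} with \eqref{def:A}, the first term of \eqref{eq:L1ff} carries the weight $\sqrt{\mu_*\mu_*'}$ while $\cA[g]$ carries $\mu_*$, and one checks that $\sqrt{\mu_*\mu_*'}\leq \mu_*$ on the collisional geometry (using $\mu\mu_*=\mu'\mu_*'$ and $\theta\in[0,\pi/2]$, so that $v_*'$ is no closer to the origin than $v_*$ in the relevant sense); the second terms coincide. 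Thus the difference is a sign-definite quantity and the bound follows directly.

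For the lower bound I would write $\jap{\cL_1 g,g}_{L^2_v}=\cA[g]+\mathcal R[g]$ where, from \eqref{eq:L1ff} and \eqref{def:A},
\begin{align*}
\mathcal R[g]=\frac12\iint_{\RR^{2d}\times\SS^{d-1}}B\left(\sqrt{\mu_*\mu_*'}-\mu_*\right)(g'-g)^2\,\dd v_*\dd v\,\dd\sigma,
\end{align*}
and the goal is to show $\abs{\mathcal R[g]}\leq \tfrac{9}{10}\cA[g]+C\norm{g}_{L^2_{v,\gamma/2}}^2$, which combined with $\jap{\cL_1 g,g}_{L^2_v}=\cA[g]+\mathcal R[g]$ gives the claim. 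The natural strategy is a near/far decomposition in the deviation angle $\theta$: for $\theta$ small, $v_*'\to v_*$, so $\sqrt{\mu_*\mu_*'}-\mu_*=O(\theta)\mu_*$ (a Taylor expansion of $\e^{-|v_*'|^2/4}$ around $v_*$ using $|v_*'-v_*|\lesssim \theta|v-v_*|$), and this extra factor of $\theta$ combined with the singularity $b\sim\theta^{-1-2s}$ (still integrable after one gain of $\theta$ for $s<1$) produces a term absorbable into a small multiple of $\cA[g]$ after a symmetrized Cauchy--Schwarz in the $(g'-g)$ structure, exactly as in the proof of \eqref{eq:splitcL}--\eqref{def:A}. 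For $\theta$ bounded below, $B$ is no longer singular, the kernel is a bounded (polynomially weighted) convolution-type operator, and $\abs{\sqrt{\mu_*\mu_*'}-\mu_*}$ is dominated by a Gaussian in $v_*$; here one expands $(g'-g)^2\leq 2(g')^2+2g^2$, changes variables $v'\leftrightarrow v$ in the gain term (the Jacobian being harmless), and bounds everything by $C\norm{\brak{v}^{\gamma/2}g}_{L^2_v}^2$ using that $\Phi(|v-v_*|)=|v-v_*|^\gamma$ against a Gaussian in $v_*$ yields a bounded multiplier of order $\brak{v}^\gamma$ — this is precisely where the $\gamma>-3$ hypothesis enters, guaranteeing local integrability of $|z|^\gamma$ in $d=3$. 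I would cite the machinery already set up in \cite{alexandre2012boltzmann} for the delicate cancellation estimates rather than redo them.

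The main obstacle I anticipate is the small-$\theta$ estimate: one cannot naively bound $(g'-g)^2$ by $2(g')^2+2g^2$ there because the resulting integral diverges against $\theta^{-1-2s}$; the cancellation in $g'-g$ must be exploited, and the extra $\theta$ from $\sqrt{\mu_*\mu_*'}-\mu_*$ is exactly what is needed to make this a lower-order perturbation of $\cA[g]$ (as opposed to a term of the same size). This requires carefully tracking that the perturbation has a genuinely better power of $\theta$ uniformly, and controlling the $v_*$-integral of $B\,\theta\,\mu_*$ — including the kinetic factor $|v-v_*|^\gamma$ — which again is where $\gamma>-3$ is used to keep things finite near $v_*=v$. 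Once the perturbation $\mathcal R[g]$ is shown to be of the stated form, combining with the identity and choosing the absorbable fraction to be $9/10$ (leaving $1/10\,\cA[g]$ on the good side) completes the proof; the constant $1/10$ is not sharp and any fraction in $(0,1)$ would do.
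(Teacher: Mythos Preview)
The paper does not give its own proof of this proposition; it simply cites \cite{alexandre2012boltzmann}*{Proposition 2.16}. Your overall strategy---write $\jap{\cL_1 g,g}=\cA[g]+\mathcal R[g]$ with $\mathcal R[g]$ as in your display and then control $\mathcal R$---matches the structure in that reference, and your near/far-in-$\theta$ split for the lower bound is the right idea. There are, however, two concrete gaps.

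For the upper bound, the pointwise inequality $\sqrt{\mu_*\mu_*'}\leq\mu_*$ that you invoke is false: take for instance $v=0$, $v_*=2e_1$, and $\sigma$ close to $-e_1$ with $\theta>0$; one computes $|v_*'|^2=2+2\cos\theta<4=|v_*|^2$, hence $\mu_*'>\mu_*$. The correct argument is a symmetrization: the pre--post change of variables $(v,v_*)\leftrightarrow(v',v_*')$ leaves $B$ and $(g'-g)^2$ invariant, so the first integral in $\cA[g]$ equals $\tfrac12\iint B(\mu_*+\mu_*')(g'-g)^2$, and then AM--GM gives $\tfrac12(\mu_*+\mu_*')\geq\sqrt{\mu_*\mu_*'}$. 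This is an easy fix.

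For the lower bound, your small-$\theta$ step has a real problem. The Taylor estimate you write as ``$\sqrt{\mu_*\mu_*'}-\mu_*=O(\theta)\mu_*$'' hides a factor $|v_*|\,|v-v_*|$ coming from $|v_*'-v_*|\sim\theta|v-v_*|$ and from $\nabla\sqrt{\mu}$; this factor is unbounded and prevents you from absorbing pointwise into $\eps\,\mu_*$. Moreover, even granting the $O(\theta)$ gain, your parenthetical ``still integrable after one gain of $\theta$ for $s<1$'' is incorrect: the effective angular density is $b\,d\sigma\sim\theta^{-1-2s}\,d\theta$, and one extra power of $\theta$ yields $\theta^{-2s}$, which is integrable at $0$ only for $s<1/2$. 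In \cite{alexandre2012boltzmann} this is handled by a more careful expansion---exploiting either a second-order Taylor expansion of $\sqrt{\mu_*'}$ (gaining $\theta^2$, which \emph{is} integrable against $\theta^{-1-2s}$ for all $s<1$) together with a symmetry cancellation in the first-order term after angular averaging, or an equivalent dyadic decomposition---combined with the Gaussian decay in $v_*$ to absorb the polynomial factors. Your large-$\theta$ argument is essentially correct and is where the $\norm{g}_{L^2_{v,\gamma/2}}^2$ remainder (and the hypothesis $\gamma>-3$) appears. So the skeleton is right, but the small-angle estimate is precisely the delicate part, and your current version would not close for $s\in[1/2,1)$.
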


\subsubsection{Commutation properties}
Since we are going to use an energy method, commutator estimates of the linearized operator with derivatives and weights are crucial.
For standard derivatives, these were already understood in e.g. \cite{guo2006boltzmann,gressman2011global,alexandre2011global,guo2012decay}. 
However, we need to be sure of analogous properties involving the vector field $Z$. Introducing the trilinear operator
\begin{align}
	\label{def:T}
	\cT(g,h,q)=\iint_{\RR^d\times \SS^{d-1}}B(v-v_*,\sigma)q_*\left(g'_*h'-g_*h\right)\dd v_*\dd \sigma,
\end{align}
it is straightforward to check that 
\begin{equation}
	Z(\cT(g,h,q))=\cT(Zg,h,q)+\cT(g,Zh,q)+\cT(g,h,Zq).
\end{equation}
Since $\Gamma(g,h)=\mathcal{T}(g,h,\sqrt{\mu})$, recalling the multi-index notation \eqref{def:Zbetamulti}, we have the Leibniz formula 
\begin{equation}
	\label{eq:ZLeib}
	Z^{\beta}(\Gamma(g,h))=\sum_{|\beta_1|+|\beta_2|+|\beta_3|=|\beta|}C_{\beta_1,\beta_2,\beta_3}\cT\left(Z^{\beta_1}g,Z^{\beta_2}h,(\nabla_v)^{
\beta_3}\sqrt{\mu}\right),
\end{equation}
where we also used that $\mu$ does not depend on $x$. Then we have the following adaptation of estimates obtained in \cite{gressman2011global,alexandre2011global} for the standard derivatives.
\begin{lemma}
	\label{lemma:commutation}
There exist constants $C_{1},C_2>0$ such that for any $\widetilde{\delta}>0$ the following holds true:
	\begin{align}
	\label{bd:commZ}|\langle{[\cL_1,Z^\beta]g,h}\rangle_{L^2_v}|&\leq \widetilde{\delta} \cA[h]+\frac{C_1}{\widetilde{\delta}}\sum_{|\beta_1|\leq |\beta|-1}\cA[Z^{\beta_1}g] \\
	\label{bd:commZL2} |\langle{[\cL_2,Z^\beta]g,h}\rangle_{L^2_v}|&\leq \widetilde{\delta} \normL{\mu^{\frac{1}{C(\beta)}}h}_{L^2_v}^2+\frac{C_2}{\widetilde{\delta}}\sum_{|\beta_1|\leq |\beta|-1}\normL{\mu^{\frac{1}{C(\beta)}}Z^\beta g}^2_{L^2_{v}},
\end{align}
where $C(\beta)=10^4(|\beta|+1)$. Moreover, let $\ell \geq 0$. There exists $0<\delta_c<1, \, C_3, C_4>0$ such that 
\begin{align}
	\label{bd:commWl} 
		|\langle{[\cL_1,\brak{v}^\ell]g,\brak{v}^\ell g}\rangle_{L^2_v}|&\lesssim \normL{\mu^{\delta_c} g}_{L^2}^2\\
	\label{bd:lowerWZ}
	\jap{\brak{v}^\ell Z^\beta(\cL g),\brak{v}^\ell Z^\beta g}&\geq \frac{1}{100}\cA[\brak{v}^\ell Z^\beta g]-C_3\sum_{|\beta_1|\leq \beta-1}\cA[\brak{v}^\ell Z^{\beta_1}g]-C_4\normL{\mu^{\delta_c} Z^\beta g}^2_{L^2_v}.
\end{align}
The same bound holds with $Z$ replaced by $\nabla_v$.
\end{lemma}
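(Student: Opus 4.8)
\textbf{Proof strategy for Lemma \ref{lemma:commutation}.}

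The plan is to reduce all four estimates to commutator identities that expose the ``loss of one $Z$-derivative'' structure, and then apply the coercivity/compactness bounds of Proposition \ref{lemma:coercive} and Proposition \ref{prop:equivL1} (together with \eqref{bd:L2gh}). For \eqref{bd:commZ}, I would first note that $\cL_1 g = -\Gamma(\sqrt{\mu},g) = -\cT(\sqrt{\mu},g,\sqrt{\mu})$, so that by the Leibniz formula \eqref{eq:ZLeib} (applied with the first argument fixed at $\sqrt{\mu}$) we get
\begin{align*}
[\cL_1,Z^\beta]g = -\sum_{\substack{|\beta_1|+|\beta_3| \geq 1 \\ |\beta_2| \leq |\beta|-1}} C_{\beta_1,\beta_2,\beta_3} \cT\left( (\nabla_v)^{\beta_1}\sqrt{\mu}, Z^{\beta_2}g, (\nabla_v)^{\beta_3}\sqrt{\mu}\right),
\end{align*}
since $\sqrt{\mu}$ is $x$-independent and every term where $Z$ hits $\sqrt{\mu}$ (in either slot) produces a $(\nabla_v)$ acting on a Gaussian, which is still Gaussian. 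Each summand is of the form $\langle \Gamma_\mu(Z^{\beta_2}g), h\rangle_{L^2_v}$ where $\Gamma_\mu(\cdot) = \cT(G_1,\cdot,G_2)$ with $G_1,G_2$ Gaussian-type weights; these behave exactly like $\cL_1$ with extra rapidly-decaying weights, so the nonsymmetric bilinear estimate from \cite{gressman2011global,alexandre2011global} gives $|\langle \Gamma_\mu(Z^{\beta_2}g),h\rangle_{L^2_v}| \lesssim \cA^{1/2}[Z^{\beta_2}g]\,\cA^{1/2}[h]$ (or its weighted analogue \eqref{bd:Arho}). Splitting with Young's inequality $\widetilde\delta$/$\widetilde\delta^{-1}$ and summing over $|\beta_2| \leq |\beta|-1$ yields \eqref{bd:commZ}.

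For \eqref{bd:commZL2}, the point is that $\cL_2 g = -\Gamma(g,\sqrt{\mu}) = -\cT(g,\sqrt{\mu},\sqrt{\mu})$, and by \eqref{bd:L2gh} the operator $\cL_2$ (and each of its Gaussian-weighted variants arising when $Z$ differentiates the two $\sqrt{\mu}$ factors) maps into exponentially-weighted $L^2_v$: one has $|\langle \cL_2^{(G)} g, h\rangle| \lesssim \|\mu^\delta g\|_{L^2_v}\|\mu^\delta h\|_{L^2_v}$ for a suitable $\delta = \delta(\beta) > 0$. Here I need to be a little careful about how the Gaussian weight degrades as $|\beta|$ grows — each differentiation of $\sqrt{\mu}$ lowers the available exponential rate — which is why the exponent $1/C(\beta)$ with $C(\beta) = 10^4(|\beta|+1)$ appears; one should track that $(\nabla_v)^{\beta_3}\sqrt{\mu}$ is still $\lesssim \mu^{1/2 - c|\beta_3|}$-integrable and choose $\delta$ accordingly. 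Again applying Young's inequality with parameter $\widetilde\delta$ and summing over the lower-order terms gives \eqref{bd:commZL2}.

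For the weight commutators, \eqref{bd:commWl} follows from the standard observation (used in \cite{guo2006boltzmann,gressman2011global,alexandre2011global}) that $[\cL_1,\brak{v}^\ell]$ is a lower-order operator relative to the anisotropic norm and is in fact controlled by an exponentially-weighted $L^2_v$ norm: writing out the kernel identity \eqref{eq:L1ff} with the weight $\brak{v}^\ell$ inserted and using that $\mu_*$, $\sqrt{\mu}-\sqrt{\mu'}$ provide Gaussian decay, the difference $(\brak{v'}^\ell - \brak{v}^\ell)$ is absorbed. Finally \eqref{bd:lowerWZ} is obtained by combining the previous three: commute $\brak{v}^\ell Z^\beta$ past $\cL = \cL_1 + \cL_2$, use Proposition \ref{prop:equivL1} on the principal term $\langle \cL_1 (\brak{v}^\ell Z^\beta g), \brak{v}^\ell Z^\beta g\rangle \geq \frac{1}{10}\cA[\brak{v}^\ell Z^\beta g] - C\|\brak{v}^\ell Z^\beta g\|^2_{L^2_{v,\gamma/2}}$, absorb the $\|\cdot\|^2_{L^2_{v,\gamma/2}}$ error and the commutator errors from \eqref{bd:commZ}, \eqref{bd:commWl}, \eqref{bd:commZL2} into $\frac{1}{100}\cA[\brak{v}^\ell Z^\beta g]$ by taking $\widetilde\delta$ small, and collect the resulting lower-order $\cA[\brak{v}^\ell Z^{\beta_1}g]$ ($|\beta_1| \leq |\beta|-1$) and $\|\mu^{\delta_c}Z^\beta g\|^2_{L^2_v}$ terms. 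The identical argument works with $Z$ replaced by $\nabla_v$ since the Leibniz rule for $\cT$ holds verbatim for $\nabla_v$ as well.

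\textbf{Main obstacle.} The only genuinely delicate point is the bookkeeping of the exponential rate in \eqref{bd:commZL2} and the Gaussian weights in \eqref{bd:commWl}--\eqref{bd:lowerWZ}: one must verify that differentiating $\sqrt{\mu}$ up to order $|\beta|$ still leaves enough Gaussian decay to run the estimate \eqref{bd:L2gh} and that this degradation is captured by $C(\beta) = 10^4(|\beta|+1)$; everything else is a routine adaptation of the known standard-derivative commutator estimates combined with the bilinear bounds of Proposition \ref{lemma:coercive}.
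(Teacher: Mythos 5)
Your proposal follows the same route as the paper's own proof: Leibniz-expand the commutator via the trilinear structure of $\cT$ (so that every extra term has $|\beta_2|\le|\beta|-1$ on $g$ and derivatives falling on the Gaussian factors), bound each summand by anisotropic trilinear/compactness estimates and Young's inequality with $\widetilde{\delta}$, and combine with Proposition \ref{prop:equivL1} plus the weight commutator to get \eqref{bd:lowerWZ}. The paper additionally spells out two mechanisms you leave as black boxes — the symmetric/antisymmetric ($S$/$A$) split of the Maxwellian coefficients that produces the $\cA^\kappa$-type bound for the $\cL_1$ commutator, and the cancellation lemma of \cite{alexandre2000entropy} needed to control the $II^2$ term in the $\cL_2$ commutator — but your sketch correctly identifies the ingredients and the logical structure.
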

\begin{proof}
We first prove the bounds \eqref{bd:commZ} and \eqref{bd:commWl} involving $\cL_1$.		Recall that 
$$
\cL_1g=-\Gamma(\sqrt{\mu},g)=-\cT(\sqrt{\mu},g, \sqrt{\mu}).
$$
 Combining the Leibniz formula \eqref{eq:ZLeib} with the fact that $\mu$ does not depend on $x$, we get
		\begin{equation}
			 \label{bd:leibcomm}[\cL_1,Z^\beta g]=\sum_{\substack{|\beta_1|+|\beta_2|+|\beta_3|=\beta\\
			 |\beta_2|\leq |\beta|-1}}C_{\beta_1,\beta_2,\beta_3} \cT\left((\nabla_v)^{\beta_1}\sqrt{\mu},Z^{\beta_2}g,(\nabla_v)^{\beta_3}\sqrt{\mu}\right).
		\end{equation}
	We thus have to estimate terms like
    \begin{equation}
    I =\jap{\cT\left((\nabla_v)^{\beta_1}\sqrt{\mu},Z^{\beta_2}g,(\nabla_v)^{\beta_3}\sqrt{\mu}\right),h}.
    \end{equation}
From this point on, having $Z$-derivatives is the same as having $(x,v)$-derivatives in the proofs of the commutator estimates in \cite{gressman2011global,alexandre2011global}. For convenience of the reader, we present the main ideas to obtain such bounds. Notice that $\de_{v_i}^{\beta_j}\sqrt{\mu}=P_{\beta_j}(v_i)\sqrt{\mu}$ for a polynomial $P_{\beta_j}$. Hence, with a slight abuse in notation we rewrite the term above as 
\begin{equation}
	I =\iint_{\RR^{2d}\times \SS^{d-1}}B(P_{\beta_3}\sqrt{\mu})_*((P_{\beta_1}\sqrt{\mu})_*'Z^{\beta_2} g'-(P_{\beta_1}\sqrt{\mu})_*Z^{\beta_2} g)h\, \dd v\dd v_*\dd \sigma.
\end{equation}
We split this term as $I =	I^1+	I^2$, where
\begin{align}
	I ^1&=\iint_{\RR^{2d}\times \SS^{d-1}} B(P_{\beta_3}\sqrt{\mu})_*(P_{\beta_1}\sqrt{\mu})_*'(Z^{
\beta_2} g'-Z^{\beta_2}g)h\, \dd v\dd v_*\dd \sigma,\\
	\label{def:I2}I ^2&=\iint_{\RR^{2d}\times \SS^{d-1}}B(P_{\beta_3}\sqrt{\mu})_*((P_{\beta_1}\sqrt{\mu})_*'-(P_{\beta_1}\sqrt{\mu})_*)(Z^{\beta_2}g)h\, \dd v\dd v_*\dd \sigma.
\end{align}
For $I^2$, combining Lemma \ref{lemma:exp} with Lemma \ref{lemma:angular} and using Cauchy-Schwarz we get
\begin{equation}
	|I^2|\lesssim \normL{Z^{\beta_2}g}_{L^2_{v,s+\gamma/2}}\norm{h}_{L^2_{v,s+\gamma/2}}.
\end{equation}
 Concerning $I ^1$, we consider the symmetric and antysimmetric part of the coefficients involving the Maxwellian, namely $I^1=I^{1}_{\mathrm{sym}}+I^{1}_{\mathrm{anti}}$ with
\begin{align}
	I^{1}_{\mathrm{sym}}&=\frac12\iint_{\RR^{2d}\times \SS^{d-1}}BS(Z^{\beta_2}g'-Z^{\beta_2}g)h\, \dd v\dd v_*\dd \sigma,\\
	\label{def:S}S:&=(P_{\beta_3}\sqrt{\mu})_*(P_{\beta_1}\sqrt{\mu})_*'+(P_{\beta_3}\sqrt{\mu})_*'(P_{\beta_1}\sqrt{\mu})_*,\\
		I^{1}_{\mathrm{anti}}&=\frac12\iint_{\RR^{2d}\times \SS^{d-1}}BA(Z^{\beta_2}g'-Z^{\beta_2}g)h\, \dd v\dd v_*\dd \sigma,\\
		\label{def:Anti}A:&=(P_{\beta_3}\sqrt{\mu})_*(P_{\beta_1}\sqrt{\mu})_*'-(P_{\beta_3}\sqrt{\mu})_*'(P_{\beta_1}\sqrt{\mu})_*. 
\end{align}
For the symmetric part, since $S$ and $B$ are invariant under the change $(v,v_*)\to (v',v_*')$, we have 
\begin{equation}
I^{1}_{\mathrm{sym}}=\frac14\iint_{\RR^{2d}\times \SS^{d-1}}BS(Z^{\beta_2}g'-Z^{\beta_2}g)(h-h')\, \dd v\dd v_*\dd \sigma.
\end{equation}
To bound $S$, notice that for some $0<\kappa<1$ (depending on $\beta_3,\beta_1$) we have 
\begin{equation}
	|S|\lesssim \left(\mu_*\mu_*'\right)^{\kappa/2}=\left(\mu\mu'\right)^{\kappa/2}=\mu^\kappa+\mu^{\kappa/2}\left(\left(\mu'\right)^{\kappa/2}-\mu^{\kappa/2}\right).
\end{equation}
Hence, by the Cauchy-Schwarz inequality and the definition of $\cA^\kappa$ in \eqref{def:Arho}, we infer
\begin{equation}
	|I^{1}_{\mathrm{sym}}|\lesssim \cA^\kappa[Z^{\beta_2} g]\cA^\kappa[h]\lesssim \cA[Z^{\beta_2} g]\cA[h],
\end{equation}
where we used \eqref{bd:Arho} in the last inequality. To handle $I^1_{\mathrm{anti}}$, one exploits the fact that under the assumptions in the kernel, the change $v\to v'$ does not change the bounds. Thus, introducing further commutators, one is in a situation analogous to $I_2$. With the bounds above, appealing to Lemma \ref{lemma:coercive} and using Young's inequality, the commutator estimate \eqref{bd:commZ} is proved. 

Turning our attention to \eqref{bd:commWl}, notice that 
\begin{align}
	\jap{[\cL_1,\brak{v}^\ell]g,\brak{v}^\ell g}&=\iint_{\RR^{2d}\times \SS^{d-1}}B\sqrt{\mu_* \mu_*'}g'g(\brak{v'}^\ell-\brak{v}^\ell)\brak{v}^\ell \, \dd v \dd v_* \dd \sigma\\
	&=-\frac12 \iint_{\RR^{2d}\times \SS^{d-1}}B(\mu_* \mu_*')^\frac14 (\mu^\frac14g)'(\mu^\frac14g)(\brak{v'}^\ell-\brak{v}^\ell)^2 \, \dd v \dd v_* \dd \sigma,
\end{align}
where in the last identity we used $\mu_*\mu_*'=\mu\mu'$ and we symmetrized using $v\to v'$. Since $2(\mu^\frac14 g)'(\mu^\frac14 g)\leq ((\mu^\frac14 g)')^2+(\mu^\frac14 g)^2$, exchanging $v\to v'$ in the resulting integral of the first term, we get 
\begin{equation}
		\label{bd:commWL}
	\left|\jap{[\cL_1,\brak{v}^\ell]g,\brak{v}^\ell g}\right|\lesssim \iint_{\RR^{2d}\times \SS^{d-1}}B\left(\mu_* \mu_*'\right)^\frac14 \left(\mu^\frac14g\right)^2\left(\brak{v'}^\ell-\brak{v}^\ell\right)^2 \, \dd v \dd v_* \dd \sigma. 
\end{equation}
By the mean value theorem
\begin{equation}
	\label{bd:mvW}
	|\brak{v'}^\ell-\brak{v}^\ell|\lesssim |v-v'|(\brak{v'}^{\ell-1}+\brak{v}^{\ell-1}).
\end{equation}
Arguing as in \eqref{eq:v*-v*'}, one has
	\begin{equation}
		\label{bd:anvv'}
	|v-v'|\lesssim\sin(\theta/2)|v-v_*|.
\end{equation}
Therefore, combining \eqref{bd:mvW} and \eqref{bd:anvv'} with Lemma \ref{lemma:exp} and Lemma \ref{lemma:angular}, from \eqref{bd:commWL} we infer 
\begin{align}
		|\jap{[\cL_1,\brak{v}^\ell] g,\brak{v}^\ell g}|\lesssim \normL{\mu^{\delta_c} g}_{L^2}^2,
\end{align}
where $\delta_c>0$ is sufficiently small.

To prove \eqref{bd:commZL2}, recalling that $\cL_2g=-\Gamma(g,\sqrt{\mu})=-\cT(g,\sqrt{\mu},\sqrt{\mu})$, we get
 \begin{equation}
 	\label{bd:leibcommL2}[\cL_2,Z^\beta f]=\sum_{\substack{|\beta_1|+|\beta_2|+|\beta_3|=|\beta|\\
			 |\beta_2|\leq |\beta|-1}}\cT\left(Z^{\beta_1}f,(\nabla_v)^{\beta_2}\sqrt{\mu},(\nabla_v)^{\beta_3}\sqrt{\mu}\right).
 \end{equation}
Consequently, we can proceed as in the proof of \cite[Proposition 4.5]{alexandre2011global} (and \cite[Lemma 2.15]{alexandre2011global}) to prove \eqref{bd:commZL2}. More precisely, one has to bound terms like 
\begin{equation}
	II =\iint_{\RR^{2d}\times \SS^{d-1}}B(P_{\beta_3}\sqrt{\mu})_*\left(Z^{\beta_1}g'_*(P_{\beta_2}\sqrt{\mu})'-Z^{\beta_1}g_*(P_{\beta_2}\sqrt{\mu})\right)h\, \dd v\dd v_*\dd \sigma,
\end{equation}
where $P_\beta$ are again suitable polynomials arising from the derivatives of the Maxwellian.
Split $II=II^1+	II^2$  as 
\begin{align}
	II^1&=\iint_{\RR^{2d}\times \SS^{d-1}} B(Z^{\beta_1} g)'_*((P_{\beta_3}\sqrt{\mu})_*(P_{\beta_2}\sqrt{\mu})'-(P_{\beta_3}\sqrt{\mu})_*'(P_{\beta_2}\sqrt{\mu}))h\, \dd v\dd v_*\dd \sigma,\\
	\label{def:II2}II^2&=\iint_{\RR^{2d}\times \SS^{d-1}}B((P_{\beta_3}\sqrt{\mu}Z^{\beta_1}g)'_*-(P_{\beta_3}\sqrt{\mu}Z^{\beta_1}g)_*)(P_{\beta_2}\sqrt{\mu}h)\, \dd v\dd v_*\dd \sigma.
\end{align}
For the term $II^2$ one exploits the cancellation in \cite{alexandre2000entropy}*{Lemma 1} which says  that 
\begin{equation}
	\int_{\RR^{2d}\times \SS^{d-1}}B(g_*'-g_*)\dd v_*\dd \sigma= (K* g)(v), \qquad K\approx |v|^\gamma.
\end{equation}
For the term $II^1$ it is enough to split the term containing the Maxwellians to apply the mean value theorem together with Lemmas \ref{lemma:exp} and \ref{lemma:angular}.

Finally, to prove \eqref{bd:lowerWZ}, notice that 
\begin{align}
			\brak{v}^\ell Z^\beta \cL g=\cL_1(\brak{v}^\ell Z^\beta g)+\cL_2(\brak{v}^\ell Z^\beta g)+[\brak{v}^\ell,\cL_1+\cL_2]Z^\beta g+\brak{v}^\ell[Z^\beta,\cL_1+\cL_2]g.
\end{align}
Thanks to Proposition \ref{prop:equivL1} and \eqref{bd:commZ}-\eqref{bd:commWl}, in the formula above the only term we do not know how to estimate are the last ones containing $\cL_1$ on the right-side. However, multiplying \eqref{bd:leibcomm} by $\brak{v}^\ell$ corresponds in redefining $S$ and $A$ in \eqref{def:S} and \eqref{def:Anti} respectively as 
\begin{align}
	S_\ell&:=(P_{\beta_3}\sqrt{\mu})_*(P_{\beta_1}\sqrt{\mu})_*'\brak{v}^\ell+(P_{\beta_3}\sqrt{\mu})_*'(P_{\beta_1}\sqrt{\mu})_*\brak{v'}^\ell\\
	A_\ell&:=(P_{\beta_3}\sqrt{\mu})_*(P_{\beta_1}\sqrt{\mu})_*'\brak{v}^\ell-(P_{\beta_3}\sqrt{\mu})_*'(P_{\beta_1}\sqrt{\mu})_*\brak{v'}^\ell.
\end{align}
Moreover, also the term $I^2$ in \eqref{def:I2} has an extra $\brak{v}^\ell$ multiplying the Maxwellian $\sqrt{\mu_*}$. Upon introducing further commutators if necessary, for instance in $A_\ell$, one can repeat the arguments done to prove \eqref{bd:commZ} and obtain the same estimates (clearly with worst constants depending on $\ell$).

The fact that the lemma is true if we change $Z$ with $\nabla_v$ is straightforward. Indeed, as observed before, there is no difference between having $Z$ or $\nabla_v$ derivatives (the latter being used for the bounds in \cite{gressman2011global,alexandre2011global}).
\end{proof}
\subsection{Nonlinear operator}
For the nonlinear part, the main trilinear estimate we are going to exploit was derived by Alexandre et al. \cite{alexandre2011global} and Gressman and Strain \cite{gressman2011global,gressman2011sharp} (obtained independently and with different techniques, see the discussion in \cite{gressman2011sharp}).
\begin{theorem}[ \cite{gressman2011sharp}*{Theorem 2.1}, \cite{alexandre2011global}*{Theorem 1.2}]
	\label{th:trilinear}
	Let $0<s<1$ and $\gamma>\max\{-d,-d/2-2s\}$. Then 
	\begin{equation}
		\label{bd:trilinear}
		|\jap{\Gamma(f,g),h}|\lesssim \norm{f}_{L^2_v}\sqrt{\cA[g]\cA[h]}.
	\end{equation}
\end{theorem}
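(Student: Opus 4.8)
The plan is to establish \eqref{bd:trilinear} by reducing it to the sharp structural properties of the non-cutoff collision operator collected in Proposition \ref{lemma:coercive}, following the strategy of Gressman--Strain \cite{gressman2011sharp} and Alexandre et al. \cite{alexandre2011global}. First I would expand the pairing using \eqref{eq:Gamma}, writing
\[
\jap{\Gamma(f,g),h}_{L^2_v}=\tfrac12\iiint_{\RR^{2d}\times\SS^{d-1}}B(v-v_*,\sigma)\sqrt{\mu_*}\,\bigl(f'_*g'-f_*g\bigr)\,h\,\dd v_*\,\dd v\,\dd\sigma,
\]
and split the integrand through the algebraic identity $f'_*g'-f_*g=f'_*(g'-g)+(f'_*-f_*)g$. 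This separates the estimate into a \emph{diffusive} piece $T_1$ carrying the collisional difference $g'-g$, and a piece $T_2$ in which the difference falls only on the $f$-factor while $g$ and $h$ stay frozen at $v$. A symmetrization in the pre-/post-collisional variables $v\leftrightarrow v'$ (whose Jacobian is harmless in the $\sigma$-representation) should also be used so that in $T_1$ the singular weight can be shared between $g$ and $h$.

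For $T_1$ I would dyadically decompose the angular kernel, $b=\sum_{\ell\ge0}b_\ell$ with $b_\ell$ concentrated on $\theta\sim 2^{-\ell}$, and on each piece exploit the geometry $|v-v'|\lesssim\sin(\theta/2)|v-v_*|$ (as in \eqref{bd:anvv'}) together with a Cauchy--Schwarz that distributes $\sqrt{B\,\mu_*}$ symmetrically onto the factor $(g'-g)$ and the factor $h$, the extra $f'_*$ being absorbed into an $L^2_v$ norm of $f$ with the help of the Gaussian $\sqrt{\mu_*}$. Summing over $\ell$ (convergent since $s<1$, because the gain $2^{-2\ell}$ from the difference beats the loss $2^{2s\ell}$ from $\int b_\ell\,\dd\sigma$), one factor should rebuild $\sqrt{\cA[g]}$ and the other $\sqrt{\cA[h]}$: here one invokes the equivalences \eqref{bd:equivA}--\eqref{bd:Arho} of $\cA$ and $\cA^\kappa$ with the Gressman--Strain anisotropic seminorm, whose underlying non-isotropic metric is precisely the one generated by this angular decomposition. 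The kinetic factor $|v-v_*|^\gamma$ is absorbed by $\sqrt{\mu_*}$ in the whole range $\gamma>\max\{-d,-d/2-2s\}$ (local integrability near $v_*=v$, Gaussian decay at infinity), and the velocity weights hidden in $\cA$ are redistributed via $\jap{v}\lesssim\jap{v'}\jap{v-v'}$ and $\jap{v}\lesssim\jap{v_*}\jap{v-v_*}$.

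For $T_2=\iiint B\sqrt{\mu_*}(f'_*-f_*)\,g(v)h(v)$ I would apply the Alexandre--Desvillettes--Villani--Wennberg cancellation lemma \cite{alexandre2000entropy}*{Lemma 1}, $\int B(v-v_*,\sigma)(f'_*-f_*)\,\dd v_*\,\dd\sigma=(\widetilde K*f)(v)$ with $\widetilde K\approx|\cdot|^\gamma$, which removes the angular singularity entirely and leaves $\int g(v)h(v)(\widetilde K*f)(v)\,\dd v$; this is then closed by Hardy--Littlewood--Sobolev together with the Sobolev embedding behind \eqref{bd:equivA}, giving control by $\normL{f}_{L^2_v}\normL{g}_{H^s_{v,\gamma/2}}\normL{h}_{H^s_{v,\gamma/2}}\lesssim\normL{f}_{L^2_v}\sqrt{\cA[g]\cA[h]}$, with $\sqrt{\mu_*}$ again handling the weights for $\gamma\ne0$.

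The main obstacle is $T_1$: the Cauchy--Schwarz must be arranged so that \emph{each} output factor is controlled by $\cA$ of the corresponding function and not merely by a crude fractional Sobolev norm, which forces one to track the sharp anisotropic (Carnot--Carath\'eodory-type) geometry encoded in $\cA$ simultaneously with the velocity weights, uniformly across all potential regimes. This matching of geometry and weights is the technical heart of \cite{gressman2011sharp,alexandre2011global}; since the statement is identical to \cite{gressman2011sharp}*{Theorem 2.1} and \cite{alexandre2011global}*{Theorem 1.2}, one may also simply cite those results directly.
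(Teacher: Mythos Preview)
Your proposal is correct, and in fact you give considerably more detail than the paper does: the paper offers no proof at all for Theorem~\ref{th:trilinear} but simply records it as a citation of \cite{gressman2011sharp}*{Theorem 2.1} and \cite{alexandre2011global}*{Theorem 1.2}. Your sketch of the $T_1/T_2$ splitting, the angular dyadic decomposition matched to the anisotropic geometry of $\cA$, and the cancellation-lemma treatment of $T_2$ accurately reflects the content of those references, and your closing remark that one may cite directly is exactly what the paper does.
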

For the commutation properties with the weight we have the following.
\begin{proposition}[Proposition 3.13 \cite{alexandre2011global}]
	\label{prop:trilincomm}
		Let $0<s<1$ and $\gamma>\max\{-d,-d/2-2s\}$. Then, for any $\ell\geq0$ one has 
		\begin{align}
			\label{bd:trilincomm}
			\left|\jap{\jap{v}^\ell\Gamma(f,g)-\Gamma(f,\jap{v}^\ell g),h}\right|\lesssim&\sqrt{\cA[h]} \big(\norm{f}_{L^2_{v,s+\gamma/2}}\normL{\jap{v}^{\ell-s}g}_{L^2_{v,s+\gamma/2}}\\
			\notag &+\min\{\norm{f}_{L^2_v}\normL{\jap{v}^{\ell-s}g}_{L^2_{v,s+\gamma/2}},\, \norm{f}_{L^2_{v,s+\gamma/2}}\normL{\jap{v}^{\ell-s}g}_{L^2_v}\}\big)
		\end{align}
\end{proposition}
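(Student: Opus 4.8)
The plan is to adapt the proof of \cite[Proposition 3.13]{alexandre2011global}, running the same scheme as in our proof of the $\cL_1$-weight commutator \eqref{bd:commWl} above but now keeping track of the polynomial weight on the second slot. Set $W_\ell(v):=\brak{v}^\ell$. Starting from the symmetric representation \eqref{eq:Gamma} of $\Gamma$, and using $(W_\ell g)'=\brak{v'}^\ell g'$, $(W_\ell g)=\brak{v}^\ell g$, the two ``loss'' contributions proportional to $f_*g$ carry the same factor $W_\ell(v)$ in $W_\ell\Gamma(f,g)$ and in $\Gamma(f,W_\ell g)$, hence cancel, and we are left with
\[
W_\ell\Gamma(f,g)-\Gamma(f,W_\ell g)=\frac12\iint_{\RR^d\times\SS^{d-1}}B(v-v_*,\sigma)\sqrt{\mu_*}\,f'_*\,g'\,\bigl(W_\ell(v)-W_\ell(v')\bigr)\,\dd v_*\,\dd\sigma.
\]
Pairing with $h$, the first move is to symmetrize via the pre-post collisional change of variables $(v,v_*,\sigma)\mapsto(v',v'_*,\sigma')$ — an involution of unit Jacobian leaving $B$ and $\dd v\,\dd v_*\,\dd\sigma$ invariant — and to average the two resulting expressions; this rewrites $\langle W_\ell\Gamma(f,g)-\Gamma(f,W_\ell g),h\rangle$ as a combination of an integral containing the difference $h-h'$ and one in which a further difference of the Gaussian factors (controlled through $\mu_*\mu=\mu'_*\mu'$) can be pulled out, exactly the manoeuvre performed for the pieces $S$ and $A$ in the proof of Lemma \ref{lemma:commutation}. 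Producing the factor $h-h'$ is essential: for soft potentials $\sqrt{\cA[h]}$ does not dominate $\|h\|_{L^2_v}$, so the $H^s_v$-structure built into $\cA$ must genuinely be used on the $h$-slot, via \eqref{bd:equivA} and \eqref{bd:Arho}.

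The core of the argument is the estimate of the weight difference $W_\ell(v)-W_\ell(v')$ inside the remaining integrals. The mean value theorem as in \eqref{bd:mvW}, together with the collisional geometry \eqref{bd:anvv'}, gives $|W_\ell(v)-W_\ell(v')|\lesssim\sin(\theta/2)|v-v_*|\bigl(\brak{v}^{\ell-1}+\brak{v'}^{\ell-1}\bigr)$, alongside the trivial bound $|W_\ell(v)-W_\ell(v')|\lesssim\brak{v}^\ell+\brak{v'}^\ell$. Splitting the $\sigma$-integral according to whether $\sin(\theta/2)|v-v_*|\lesssim\brak{v}$ or not: on the first region $\brak{v'}\approx\brak{v}$, the mean value bound becomes $\lesssim\sin(\theta/2)|v-v_*|\brak{v}^{\ell-1}$, and since $b(\cos\theta)$ is integrable against $\sin^2(\theta/2)$ for $s\in(0,1)$ this part is harmless and in fact loses less than the claimed $\brak{v}^s$. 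On the complementary region, where one has the effective angular lower cutoff $\sin(\theta/2)\gtrsim\brak{v}/|v-v_*|$ and, since $|v-v_*|\lesssim\brak{v}\brak{v_*}$, also $\sin(\theta/2)\gtrsim\brak{v_*}^{-1}$, one uses the crude bound together with a dyadic decomposition in the angle and the relative velocity as in the non-cutoff trilinear estimates of \cite{gressman2011global,alexandre2011global}; this leaves exactly a loss of $\brak{v}^s$ on $g$, the excess growth in the relative speed being charged to the angular singularity (through $\int_{\theta\gtrsim\brak{v_*}^{-1}}b(\cos\theta)\,\dd\sigma\lesssim\brak{v_*}^{2s}$) and the growth in $v_*$ — and, after any further change of variables, in $v'_*$ via $\mu_*\mu=\mu'_*\mu'$ — to the Gaussian $\sqrt{\mu_*}$. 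In both regions the kinetic factor $\Phi(|v-v_*|)=|v-v_*|^\gamma$ and the leftover $v_*$- and angular integrations are dispatched by the elementary bounds of Lemmas \ref{lemma:exp} and \ref{lemma:angular}, and a final Cauchy-Schwarz against $\sqrt{\cA[h]}$ (using the $h-h'$ factor and \eqref{bd:equivA}, \eqref{bd:Arho}) closes the estimate; the freedom of placing the residual $\brak{v}^{s+\gamma/2}$ weight on $f$ or on $\brak{v}^{\ell-s}g$ accounts for the minimum in the right-hand side, the first summand being the fully symmetric allocation.

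The step I expect to be the main obstacle is precisely this last book-keeping in the region where $|v-v_*|$ is not comparable to $\brak{v}$: there the naive mean value bound $|W_\ell(v)-W_\ell(v')|\lesssim\brak{v}^{\ell-1}|v-v'|$ fails, and one must simultaneously (i) extract enough powers of $\sin(\theta/2)$ to beat the grazing singularity of $b$, which rules out a crude first-order estimate and forces the use of the cancellation already present in $h-h'$ (or of a second-order Taylor expansion of $W_\ell$ and the $\sigma\mapsto-\sigma$ symmetry of $b$), (ii) lose only $\brak{v}^s$ rather than a full power of the weight, and (iii) discharge the spurious growth in $v_*$ and, after symmetrizing, in $v'_*$ onto the Gaussian. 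Once this three-way trade-off is organized, what remains are the weighted angular / relative-velocity estimates already available in \cite{gressman2011global,alexandre2011global}.
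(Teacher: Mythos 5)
The paper does not prove this statement --- it is quoted verbatim (with attribution in the theorem header) as Proposition 3.13 of \cite{alexandre2011global}, and no proof is offered in this manuscript; the only related remark is in the proof of Lemma \ref{lem:trilBd}, where the authors note that the same estimate extends to $\cT(f,g,\partial_{v_i}^{\beta_j}\mu)$ after a suitable symmetrization. So there is no in-paper argument to compare your proposal against.

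On its own merits, your sketch is consistent with the standard non-cutoff weight-commutator argument: the cancellation of the loss term, the reduction to the single kernel $B\sqrt{\mu_*}\,f'_*\,g'\,(W_\ell(v)-W_\ell(v'))$, the symmetrization producing $h-h'$, the near/far angular split via the mean value bound together with $|v-v'|\lesssim\sin(\theta/2)|v-v_*|$, and the weighted angular bounds (Lemmas \ref{lemma:exp}--\ref{lemma:angular}) are all the right ingredients. One caveat about ordering: as written, you declare the near-grazing region ``harmless'' because ``$b(\cos\theta)$ is integrable against $\sin^2(\theta/2)$,'' but at that point the mean value theorem has supplied only a single power of $\sin(\theta/2)$, and $\int b(\cos\theta)\sin(\theta/2)\,\dd\sigma$ diverges once $s\ge1/2$. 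The second power only materializes after one performs Cauchy--Schwarz against the $\cA[h]$-controlled factor $\iint B\mu_*^{\kappa}(h-h')^2$ (or, alternatively, a second-order Taylor expansion of $W_\ell$ together with the $\sigma$-symmetry of $b$), so the Cauchy--Schwarz you invoke at the very end must actually be done \emph{before} the angular integration can be called convergent. With that reordering --- and granting that the dyadic book-keeping you allude to genuinely yields the $\brak{v}^s$ weight loss on $g$ and the $\brak{v}^{s+\gamma/2}$ allocation behind the minimum, neither of which is spelled out --- your outline tracks the proof in \cite{alexandre2011global}.
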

Combining Theorem \ref{th:trilinear}, Proposition \ref{prop:trilincomm} and Lemma \ref{lemma:commutation}, we get the following.
\begin{lemma}
	\label{lem:trilBd}
			Let $0<s<1$ and $\gamma>\max\{-d,-d/2-2s\}$. Then, for any $B\geq2d$, $M> B+|\gamma|+2s$,  $|\beta|\leq B$ one has
			\begin{align}
\label{bd:NLZ}\left|\jap{\jap{v}^MZ^\beta\Gamma(f,g),h}\right|&\lesssim\sqrt{\cA[h]}\sum_{|\beta_1|+|\beta_2|\leq|\beta|} \bigg(\normL{\jap{v}^MZ^{\beta_1}f}_{L^2_v}\sqrt{\cA[\jap{v}^MZ^{\beta_2}g]}\\
\notag &\hspace{4cm}+\sqrt{\cA[\jap{v}^MZ^{\beta_1}f]}\normL{\jap{v}^MZ^{\beta_2}g}_{L^2_v}\bigg)	
			\end{align} 
\end{lemma}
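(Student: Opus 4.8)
The plan is to reduce the weighted, $Z$-differentiated trilinear bound \eqref{bd:NLZ} to the already-established ingredients: the trilinear estimate \eqref{bd:trilinear} of Theorem \ref{th:trilinear}, the weight-commutator estimate \eqref{bd:trilincomm} of Proposition \ref{prop:trilincomm}, and the $Z$-Leibniz rule \eqref{eq:ZLeib} together with the commutator machinery of Lemma \ref{lemma:commutation}. First I would apply the Leibniz formula \eqref{eq:ZLeib} to expand $Z^\beta \Gamma(f,g)$ as a finite sum over $|\beta_1|+|\beta_2|+|\beta_3|=|\beta|$ of terms $\cT(Z^{\beta_1}f, Z^{\beta_2}g, (\nabla_v)^{\beta_3}\sqrt{\mu})$. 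Since $(\nabla_v)^{\beta_3}\sqrt{\mu} = P_{\beta_3}(v)\sqrt{\mu}$ for a polynomial $P_{\beta_3}$, each such term is of the form $\Gamma_{\phi}(Z^{\beta_1}f, Z^{\beta_2}g)$ where $\Gamma_\phi$ is a Gamma-type operator with $\sqrt{\mu}$ replaced by $\phi = P_{\beta_3}\sqrt{\mu}$, which is still a Schwartz function with Gaussian decay; the polynomial growth of $P_{\beta_3}$ is harmless and absorbed into the Gaussian, so the estimates of Theorem \ref{th:trilinear} and Proposition \ref{prop:trilincomm} apply to $\Gamma_\phi$ with constants depending only on $|\beta_3| \leq B$.

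Next I would handle the weight $\brak{v}^M$. Writing $\brak{v}^M Z^\beta \Gamma(f,g)$, for each term in the Leibniz expansion I commute $\brak{v}^M$ inside using \eqref{bd:trilincomm} (applied to $\Gamma_\phi$): this produces a main term $\Gamma_\phi(Z^{\beta_1}f, \brak{v}^M Z^{\beta_2}g)$ plus commutator errors controlled by $\sqrt{\cA[h]}$ times products of the form $\|Z^{\beta_1}f\|_{L^2_{v,s+\gamma/2}} \|\brak{v}^{M-s} Z^{\beta_2}g\|_{L^2_{v,s+\gamma/2}}$ (and the symmetric/minimum variants). Since $M > B + |\gamma| + 2s \geq |\beta_2| + |\gamma|+2s$, the weight $\brak{v}^{s+\gamma/2}$ and the loss $\brak{v}^{-s}$ are both dominated by $\brak{v}^M$, so these error terms are bounded by $\sqrt{\cA[h]}\, \|\brak{v}^M Z^{\beta_1}f\|_{L^2_v} \|\brak{v}^M Z^{\beta_2}g\|_{L^2_v}$, which is of the form on the right-hand side of \eqref{bd:NLZ} (using \eqref{bd:equivA} to bound an $L^2_v$-with-weight norm by $\sqrt{\cA[\cdot]}$, or simply keeping it as the $\|\cdot\|_{L^2_v}\sqrt{\cA[\cdot]}$ product). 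For the main term $\brak{v}^M Z^{\beta_1} f$, $\brak{v}^M Z^{\beta_2}g$ I would also need to bring the weight onto $Z^{\beta_1}f$; since $\Gamma_\phi$ is bilinear and the first argument only enters through its $L^2_v$ (or $L^2_{v,s+\gamma/2}$) norm in \eqref{bd:trilinear}, I can simply insert $\brak{v}^M \brak{v}^{-M}$ and note $\|Z^{\beta_1}f\|_{L^2_v} \leq \|\brak{v}^M Z^{\beta_1}f\|_{L^2_v}$, so no genuine commutator on the first slot is needed.

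Applying \eqref{bd:trilinear} to the main term gives $|\jap{\Gamma_\phi(Z^{\beta_1}f, \brak{v}^M Z^{\beta_2}g), h}| \lesssim \|Z^{\beta_1}f\|_{L^2_v}\sqrt{\cA[\brak{v}^M Z^{\beta_2}g]\,\cA[h]}$, again of the desired form after bounding $\|Z^{\beta_1}f\|_{L^2_v} \leq \|\brak{v}^M Z^{\beta_1}f\|_{L^2_v}$. Summing the finitely many Leibniz terms (there are $O(1)$ of them for $|\beta|\leq B$) and relabeling $\beta_1,\beta_2$ to satisfy $|\beta_1|+|\beta_2|\leq|\beta|$ yields \eqref{bd:NLZ}. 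The symmetric term $\sqrt{\cA[\brak{v}^M Z^{\beta_1}f]}\,\|\brak{v}^M Z^{\beta_2}g\|_{L^2_v}$ appears because the minimum in \eqref{bd:trilincomm} and the freedom to play the $L^2_{v,s+\gamma/2}$ weight off of either argument let us symmetrize; alternatively it comes from also treating the case where one wants the $\sqrt{\cA}$ to land on $f$, which is allowed since in the end we will pair this against a dissipation term that controls either $\sqrt{\cA}$ of $f$ or of $g$.

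The main obstacle — or rather the only point requiring care — is verifying uniformly that the polynomial factors $P_{\beta_1}, P_{\beta_3}$ coming from $(\nabla_v)^{\beta_i}\sqrt\mu$ do not spoil the structural estimates: one must check that $\Gamma_\phi$ with $\phi = P\sqrt\mu$ still satisfies \eqref{bd:trilinear} and \eqref{bd:trilincomm}. This follows because the proofs of those estimates in \cite{alexandre2011global, gressman2011sharp} only use that $\sqrt\mu$ (and hence $\phi$) has rapid decay and that the pre-post collisional Maxwellian identity/near-identity bounds hold; replacing $\sqrt\mu$ by $P\sqrt\mu$ changes nothing structurally and only worsens constants by a factor depending on $|\beta_3|\leq B$. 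The remaining bookkeeping — checking that $M > B+|\gamma|+2s$ suffices to absorb the weight losses $\brak v^{s+\gamma/2}$ and $\brak v^{-s}$ in the commutator terms, and that \eqref{bd:equivA}/\eqref{bd:Arho} let us freely trade weighted $L^2_v$ norms for $\sqrt{\cA[\cdot]}$ when convenient — is routine.
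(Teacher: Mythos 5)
Your proposal matches the paper's argument almost step for step: both expand $Z^\beta\Gamma(f,g)$ via the Leibniz rule \eqref{eq:ZLeib} into terms $\cT(Z^{\beta_1}f, Z^{\beta_2}g, P_{\beta_3}\sqrt{\mu})$, split each against the weight $\brak{v}^M$ into a commutator piece (controlled by the analogue of Proposition \ref{prop:trilincomm}) plus a main piece (controlled by the analogue of Theorem \ref{th:trilinear}), and then observe that these two estimates extend from $\sqrt{\mu}$ to $P\sqrt{\mu}$ because the proofs in \cite{alexandre2011global,gressman2011sharp} only use rapid decay of the third slot. This is exactly the paper's proof.
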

\begin{proof}
From \eqref{eq:ZLeib}, we see that we have to control terms like 
	\begin{equation}
		\label{eq:typ}
		\jap{\jap{v}^M\cT\left(Z^{\beta_1}f,Z^{\beta_2}g,(\nabla_v)^{\beta_3}\mu\right),h}.
	\end{equation}	
	We split this term as 
 \begin{align}
 	&\jap{\jap{v}^M\cT\left(Z^{\beta_1}f,Z^{\beta_2}g,(\nabla_v)^{\beta_3}\mu\right),h}
	\\
\label{eq:splittyp}&=\jap{\jap{v}^M\cT\left(Z^{\beta_1}f,Z^{\beta_2}g,(\nabla_v)^{\beta_3}\mu\right)-\cT\left(Z^{\beta_1}f,\jap{v}^MZ^{\beta_2}g,(\nabla_v)^{\beta_3}\mu\right),h}\\
 	&\quad +		\jap{\cT\left(Z^{\beta_1}f,\jap{v}^MZ^{\beta_2}g,(\nabla_v)^{\beta_3}\mu\right),h}.
 \end{align}
  Then, we notice that Theorem \ref{th:trilinear} and Proposition \ref{prop:trilincomm} are stated for $\Gamma(f,g)=\cT(f,g,\sqrt{\mu})$. However, it is not difficult show that the bounds \eqref{th:trilinear}-\eqref{bd:trilincomm} holds true also for $\cT(f,g,\de_{v_i}^{\beta_j}\mu)$. Indeed, the proofs  in \cite{alexandre2011global} rely on a decomposition of the operator $\Gamma$ based on a nice identity to isolate the singularities, see in \cite[Lemma 3.6]{alexandre2011global}. Upon properly symmetrizing to take care of the polynomial (as done for instance in \eqref{def:S}), using \eqref{bd:equivA} and Proposition \ref{lemma:coercive}, one can verify that the estimates \eqref{bd:trilinear}-\eqref{bd:trilincomm} holds also for  $\cT(f,g,\de_{v_i}^{\beta_j}\mu)$. Thus, we can apply the analogoue of Proposition \ref{prop:trilincomm} to the terms in \eqref{eq:splittyp}  and Theorem \ref{th:trilinear} to the remaining ones. 
\end{proof}

\section{Linear Estimates} \label{sec:Lin}
This section is devoted to proving the results announced in Section \ref{sec:outLin}. A first crucial step is to obtain the monotonicity estimates \eqref{bd:energyED} and \eqref{bd:energyTD} which we collect in the next proposition.
\begin{proposition}
\label{prop:mono}
Let $E^{e.d.}_{M,B}$, $\mathcal{D}^{e.d.}_{M,B}$, $E^{T.d.}_{M,B}$ and $\mathcal{D}^{T.d.}_{M,B}$ be the functionals defined in \eqref{def:Eed}, \eqref{def:Ded},  \eqref{def:ETd} and \eqref{def:DTD} respectively. Then, there exists constants $0<\delta_{e}, \delta_{d}<1$
\begin{align}
		&\frac{\dd }{\dd t} E^{e.d.}_{M,B}+\delta_{e} \mathcal{D}^{e.d.}_{M,B}\leq 0 \label{bd:energyED4}\\
		&\label{bd:energyTD4}
		\frac{\dd }{\dd t} E^{T.d.}_{M,B}+\delta_d\mathcal{D}^{T.d}_{M,B}\leq 0.
\end{align}
\end{proposition}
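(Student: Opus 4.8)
## Proof proposal for Proposition \ref{prop:mono}

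The plan is to differentiate each energy functional in time, substitute the linearized equation \eqref{def:Link} (rewriting $\partial_t \hat f = -ik\cdot v\,\hat f - \nu\cL\hat f$), and then carefully track how each term combines with the coercivity and commutator estimates from Section \ref{sec:preliminaries}. The bookkeeping is delicate because of the $\brak{\nu t}^{-2\beta}$ weights (which produce an extra favorable term $-2\beta\nu\brak{\nu t}^{-1}\brak{\nu t}^{-2\beta}\|\cdots\|^2$ upon differentiating) and because of the many free small parameters $a_0,b_0,c_i,b_i,\mathtt{C},\mathtt{C}_j$; the whole point of the argument is that these can be fixed in the right order so that all bad terms are dominated.

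For the enhanced dissipation regime \eqref{bd:energyED4}, I would proceed term by term in \eqref{def:Eed}. (1) The transport term $ik\cdot v$ is skew-adjoint on $L^2_v$, so it contributes nothing to $\ddt\normL{\jap{v}^M Z^\beta\hat f}^2$ except a commutator with $\jap{v}^M$ and with $Z^\beta$; the latter is harmless since $[Z, v\cdot\grad_x]=0$ but $Z$ and $ik\cdot v$ produce lower-order-in-$\beta$ terms of the form $\jap{ik\, Z^{\beta'}\hat f, \cdots}$, which is exactly why the cross term with coefficient $b_{\nu,k}$ is present. (2) The collisional term $-\nu\cL$ contributes $-\nu\jap{\jap{v}^M Z^\beta\cL\hat f,\jap{v}^M Z^\beta\hat f}$, which by \eqref{bd:lowerWZ} is bounded above by $-\frac{\nu}{100}\cA[\jap{v}^M Z^\beta\hat f] + \nu C_3\sum_{|\beta_1|<|\beta|}\cA[\jap{v}^M Z^{\beta_1}\hat f] + \nu C_4\normL{\mu^{\delta_c}Z^\beta\hat f}^2$. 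The sum over lower $\beta_1$ is absorbed using the geometric weights $2^{-\mathtt{C}\beta}$ by taking $\mathtt{C}$ large (each lower-order term carries weight $2^{\mathtt{C}}$ larger but only finitely many of them), and the $\mu^{\delta_c}$ error is absorbed into $\nu\cA[\cdot]$ since $\cA$ controls a weighted $L^2$ norm by \eqref{bd:equivA}. (3) The terms carrying $a_{\nu,k}$ and $b_{\nu,k}$: here one computes $\ddt$ of $\normL{\jap{v}^{M+q}Z^\beta\grad_v\hat f}^2$ and of the cross term, using the commutator $[\grad_v, ik\cdot v] = ik$ which is precisely what generates the positive $|k|^2$ dissipation in \eqref{def:Ded} after completing the square with the $b_{\nu,k}$ cross term. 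The choice \eqref{def:abnuk} of $a_0\ll b_0\ll 1$ and the scalings $(\nu/|k|)^{2/(2s+1)}$, $(\nu/|k|)^{1/(2s+1)}|k|^{-1}$ are exactly tuned so that the worst cross terms (e.g. $\nu b_{\nu,k}\jap{ik\,\cL(\cdots),\grad_v(\cdots)}$, bounded via \eqref{bd:trilinear}-type/\eqref{bd:L2gh} estimates and Cauchy–Schwarz) are beaten by $\nu a_{\nu,k}\cA[\grad_v(\cdots)] + b_{\nu,k}|k|^2\normL{\cdots}^2$. This is a finite linear-algebra computation in the parameters, done after all other terms are in place.

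For the Taylor dispersion regime \eqref{bd:energyTD4}, the same scheme applies to the microscopic pieces (the $c_2,c_3$ terms mimic the enhanced dissipation energy applied to $(I-\bP)\hat f$, using that $\cL(I-\bP) = \cL$ and the projection commutes appropriately modulo $\mu^\delta$-errors), but the genuinely new ingredient is controlling $\cM$ in \eqref{def:Mkbeta}. Here I would use the hydrodynamic system \eqref{eq:HydroEqns}: differentiating $\cM$ and substituting \eqref{eq:detrho0}–\eqref{eq:dte0}, the leading terms produce, after the Kawashima-type algebra, a positive contribution $\sim |k|^2(|\hat\rho|^2 + |\hat{\sfm}|^2 + |\hat{\sfe}|^2)$ — i.e. dissipation on $\bP\hat f$ — at the cost of terms involving $\grad_x$ of the moment functions $\Theta[(I-\bP)\hat f]$, $\Lambda[(I-\bP)\hat f]$, which are $\lesssim |k|\,\normL{(I-\bP)\hat f}_{L^2_v}$ and hence absorbed by the $|k|\cA[(I-\bP)\hat f]$ in \eqref{def:DTD}. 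Inserting the $\frac{c_0}{\nu}$ and $\frac{c_3}{\nu}$ prefactors, and using $|k|/\nu\lesssim \delta_0^{-1}$ to convert $\nu^{-1}|k|^2$ bounds as needed, one arranges $0<c_{i+1}\ll c_i$ and $\mathtt{C}_0\ll\mathtt{C}_1$ so the full time-derivative is $\leq -\delta_d\mathcal{D}^{T.d.}_{M,B}$.

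The main obstacle I anticipate is the Taylor dispersion case: specifically, extracting dissipation on $Z^\beta\bP\hat f$ for $|\beta|\geq 1$ in the regime $|k|\ll\nu$. The hydrodynamic equations \eqref{eq:HydroEqns} do not commute cleanly with $Z = \grad_v + t\grad_x$ (the macroscopic variables are $v$-independent, so $Z$ acting on them is just $t\grad_x$, but $Z^\beta$ acting on $(I-\bP)\hat f$ interacts with the projection and with the $t$-dependence), and the $\brak{\nu t}^{-2\beta}$ weight must be shown to exactly compensate the loss; this is presumably where Lemma \ref{lemma:mixed} referenced in the outline does the work, and I expect the careful choice of the hierarchy $\mathtt{C}_0\ll\mathtt{C}_1$ together with the factor $|k|/\nu$ in front of $\normL{Z^\beta\hat f}^2_{L^2_v}$ to be essential. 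Everything else — the transport commutators, the $\jap{v}^M$ weight commutators \eqref{bd:commWl}, and the absorption of $\mu^{\delta_c}$-localized errors — is routine given Section \ref{sec:preliminaries}.
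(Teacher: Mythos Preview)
Your proposal is on the right track and follows the paper's approach closely, but there is one genuine gap in the enhanced dissipation part that you dismiss too quickly. The term that arises when you differentiate the $a_{\nu,k}$ piece of \eqref{def:Eed} and hit the commutator $[\nabla_v, ik\cdot v]=ik$ is
\[
a_{\nu,k}\big|\langle \jap{v}^{M+q_{\gamma,s}} Z^\beta (ik\hat f),\, \jap{v}^{M+q_{\gamma,s}} \nabla_v Z^\beta \hat f\rangle\big|,
\]
and for $s<1$ with soft potentials this is \emph{not} a ``finite linear-algebra computation in the parameters.'' The dissipation $\cA$ only controls $H^s_{\gamma/2}$-type norms, which degenerate as $|v|\to\infty$, so a naive Cauchy--Schwarz followed by parameter-tuning fails: you would need $a_{\nu,k}^2/b_{\nu,k}\ll \nu a_{\nu,k}$ after absorbing the weight mismatch, and the weight mismatch is not uniform in $v$. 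The paper instead performs a dyadic decomposition in $v$, applies Gagliardo--Nirenberg on each shell to interpolate between $\nu a_{\nu,k}\cA[\jap{v}^{M+q_{\gamma,s}}\nabla_v Z^\beta\hat f]$ and $\nu\cA[\jap{v}^M Z^\beta\hat f]$, and uses the precise choice $q_{\gamma,s}<\gamma/(2s)$ in \eqref{def:qgammas} to make the resulting series $\sum_j 2^{-j\gamma+2sjq_{\gamma,s}}$ converge. This yields the constraint $a_0^{1+s}\ll b_0$ rather than the naive $a_0^2\ll b_0$. (For $s=1$, Landau, your description is essentially correct and the interpolation is unnecessary.)

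On the Taylor dispersion side, you correctly flag the extraction of dissipation on $Z^\beta\bP\hat f$ for $|\beta|\ge 1$ as the main obstacle, but you should know the resolution is more concrete than ``the $\brak{\nu t}^{-2\beta}$ weight compensates the loss.'' The paper does \emph{not} build separate Kawashima compensators for each $\beta$; instead it proves the single $\beta=0$ estimate $\ddt\cM + \tilde c_\star |k|^2|(\hat\rho,\hat\sfm,\hat\sfe)|^2 \lesssim \nu^2\|\mu^\delta(I-\bP)\hat f\|^2$ and then uses the elementary bound $|k|^2\ge \delta_0^{2\beta}|tk|^{2\beta}|k|^2/\brak{\nu t}^{2\beta}$ (valid because $|k|\le\delta_0^{-1}\nu$ in this regime) together with Lemma~\ref{lem:equivZP} to pass the gain to all $Z^\beta\bP\hat f$. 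This is why the $\brak{\nu t}^{-2\beta}$ weight is exactly right. Finally, a minor correction: your item (1) overcomplicates things --- since $[Z,\partial_t+ik\cdot v]=0$ and $\jap{v}^M$ commutes with multiplication by $ik\cdot v$, the transport term contributes literally nothing to $\ddt\|\jap{v}^M Z^\beta\hat f\|^2$; there are no lower-order-in-$\beta$ transport commutators.
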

 We prove \eqref{bd:energyED4}  and  \eqref{bd:energyTD4} in Section \ref{subsec:ED} and Section \ref{subsec:TD} respectively.  Having these estimates at hand, in Section \ref{subsec:decay} we present the proof of the decay estimates for the linearized problem given in Theorem \ref{thm:LinDecEst}.

 To simplify the notation, since all the norms are $L^2_v$ based, in the rest of this section we will always omit the subscript $L^2_v$ and we write $H^s_{*}$ instead of $H^s_{v,*}$. Moreover, thanks to \eqref{def:Link} we know that the problem decouples in $k$.
 Hence, the factors $\brak{k}^\alpha$ in the definition of the energies play no role in the estimates  (but are crucial in the nonlinear problem).

\subsection{Monotonicity estimate in the enhanced dissipation regime}
\label{subsec:ED}
The aim of this section is to prove \eqref{bd:energyED4} in Proposition \ref{prop:mono}. As a consequence of the properties given in Section \ref{sec:preliminaries}, we first give some basic energy inequalities which are necessary to control the time-derivative of $E^{e.d.}_{M,B}$ \eqref{def:Eed}.
\begin{lemma} For any $M>2|q_{\gamma,s}|$, $\beta\geq 0$, there hold the energy inequalities
\begin{align}
	\label{bd:en0}&\frac12 \frac{\dd }{\dd t}\norm{\jap{v}^{M}Z^\beta \hat{f}}^2+\frac{\nu}{100}  \cA[\jap{v}^{M}Z^\beta \hat{f}] \leq\nu \mathcal{R}^1_{\beta}, \\	
	\notag&\frac12 \frac{\dd }{\dd t}\norm{\jap{v}^{M+q_{\gamma,s}}\nabla_v Z^\beta \hat{f}}^2+\frac{\nu}{100}\cA[\jap{v}^{M+q_{\gamma,s}}\nabla_v Z^\beta \hat{f}]\\
	\label{bd:env}		&\qquad \qquad \leq \left|\jap{\jap{v}^{M+q_{\gamma,s}}Z^\beta(ik \hat{f}),\jap{v}^{M+q_{\gamma,s}}\nabla_vZ^\beta \hat{f}}\right|+ \nu \mathcal{R}^2_{\beta},\\
			\label{bd:enmixed}&\frac{\dd}{\dd t}\Re \jap{\jap{v}^{M+q_{\gamma,s}}Z^\beta(ik \hat{f}),\jap{v}^{M+q_{\gamma,s}}\nabla_vZ^\beta \hat{f}}+|k|^2\norm{\jap{v}^{M+q_{\gamma,s}} Z^\beta  \hat{f}}^2\leq  \nu \mathcal{R}^3_{\beta}.\end{align}
where the remainder terms are given by
\begin{align}
\label{def:R1}	&\mathcal{R}^1_{\beta}=C_{\beta,1}\bigg(\normL{\mu^\delta Z^\beta \hat{f}}^2+\sum_{|\beta_1|\leq |\beta|-1}\cA[\jap{v}^{M} Z^{\beta_1} \hat{f}]\bigg),\\
\label{def:R2}	&	\mathcal{R}^2_{\beta}= C_{\beta,2}\bigg(\normL{\mu^\delta \nabla_vZ^\beta \hat{f}}^2+\cA[\jap{v}^{M+q_{\gamma,s}}Z^\beta \hat{f}]+\sum_{|\beta_1|\leq |\beta|-1}\cA[\jap{v}^{M+q_{\gamma,s}}\nabla_v Z^{\beta_1} \hat{f}]\bigg),\\
\notag&\mathcal{R}^3_{\beta}=2\left|\jap{\jap{v}^{M+q_{\gamma,s}}\nabla_vZ^\beta \hat{f},\jap{v}^{M+q_{\gamma,s}}Z^\beta \cL (ik \hat{f})}\right|\\
 \label{def:R3}	 &\qquad \quad +2\left|\jap{[\nabla_v,\jap{v}^{M+q_{\gamma,s}}]Z^\beta \hat{f},\jap{v}^{M+q_{\gamma,s}}Z^\beta \cL (ik \hat{f})}\right|,
\end{align}
with $0<C_{\beta,1},C_{\beta,2}$ being fixed constants depending only on $\beta$.
\end{lemma}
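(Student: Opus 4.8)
The plan is to derive each of the three inequalities \eqref{bd:en0}, \eqref{bd:env}, \eqref{bd:enmixed} by differentiating the corresponding quadratic (or bilinear) quantity in time, using the equation \eqref{def:Link} $\partial_t \hat f = -ik\cdot v\, \hat f - \nu\cL\hat f$, and then commuting the weight $\brak{v}^M$ (resp.\ $\brak{v}^{M+q_{\gamma,s}}$) and the vector field $Z^\beta$ (resp.\ $\nabla_v Z^\beta$) past the operators $v\cdot\nabla_x$ and $\cL$. The transport term $ik\cdot v$ is antisymmetric on $L^2_v$ and, crucially, $Z = \nabla_v + t\nabla_x$ commutes with $\partial_t + v\cdot\nabla_x$ exactly, so acting with $Z^\beta$ on the equation produces no transport commutator; in Fourier this amounts to the fact that $Z^\beta$ commutes with multiplication by $ik\cdot v$ up to lower-order $Z$-derivatives with $ik$ coefficients, which is precisely the structure that the mixed term on the right of \eqref{bd:env} and \eqref{bd:enmixed} is designed to absorb. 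The weight $\brak{v}^M$ does \emph{not} commute with $v\cdot\nabla_x$, but since that term is purely imaginary in the $L^2_v$ pairing (it is $ik\cdot v$, a real multiplier times $i$), its contribution to $\frac{\dd}{\dd t}\normL{\brak{v}^M Z^\beta\hat f}^2$ vanishes identically; this is why \eqref{bd:en0} has no mixed term at all.

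The key step for \eqref{bd:en0} is then to handle the collision term: $-\nu\Re\jap{\brak{v}^M Z^\beta(\cL\hat f),\brak{v}^M Z^\beta\hat f}$. I would write $\brak{v}^M Z^\beta\cL\hat f = \cL(\brak{v}^M Z^\beta\hat f) + [\brak{v}^M,\cL]Z^\beta\hat f + \brak{v}^M[Z^\beta,\cL]\hat f$, then apply the coercivity lower bound \eqref{bd:lowerWZ} of Lemma~\ref{lemma:commutation} (which already packages the commutators of both $Z^\beta$ and $\brak{v}^\ell$ with $\cL_1$), together with \eqref{bd:commZL2} and \eqref{bd:commWl} for the $\cL_2$ pieces and \eqref{bd:L2gh} of Proposition~\ref{lemma:coercive}. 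This produces $-\frac{\nu}{100}\cA[\brak{v}^M Z^\beta\hat f]$ plus error terms controlled by $\normL{\mu^\delta Z^\beta\hat f}^2$ and $\sum_{|\beta_1|\le|\beta|-1}\cA[\brak{v}^M Z^{\beta_1}\hat f]$, which is exactly $\nu\mathcal{R}^1_\beta$. The condition $M>2|q_{\gamma,s}|$ enters to ensure the weighted norms are comparable and that polynomial growth from commutators with $\brak{v}^M$ is dominated by the Gaussian factors hidden in $\cL_1$. For \eqref{bd:env}, the same scheme is applied to $\nabla_v Z^\beta$ with weight $\brak{v}^{M+q_{\gamma,s}}$; now commuting $\nabla_v$ past $v\cdot\nabla_x$ produces exactly the term $ik\cdot Z^\beta\hat f$ paired against $\nabla_v Z^\beta\hat f$ (after one integration by parts / using $[\nabla_v, ik\cdot v]=ik$), which is the first term on the right-hand side of \eqref{bd:env}, and the collision contribution again gives $-\frac{\nu}{100}\cA[\brak{v}^{M+q_{\gamma,s}}\nabla_v Z^\beta\hat f]$ plus the errors collected in $\mathcal{R}^2_\beta$; note that $\mathcal{R}^2_\beta$ now also carries a term $\cA[\brak{v}^{M+q_{\gamma,s}}Z^\beta\hat f]$ coming from $[\nabla_v,\brak{v}^{M+q_{\gamma,s}}]$, which lowers a $v$-derivative at the cost of a weight.

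For \eqref{bd:enmixed}, I would differentiate the real part of the cross term $\jap{\brak{v}^{M+q_{\gamma,s}}Z^\beta(ik\hat f),\brak{v}^{M+q_{\gamma,s}}\nabla_v Z^\beta\hat f}$ using the product rule and \eqref{def:Link} on each factor. The transport term acting on the second slot contributes, after integrating by parts in $v$, the positive definite quantity $\abs{k}^2\normL{\brak{v}^{M+q_{\gamma,s}}Z^\beta\hat f}^2$ (this is the mechanism by which the hypocoercive cross term transfers $x$-oscillation into a positive $v$-free term — the standard Villani/Hérau structure), while all remaining pieces — the collision terms on both slots and the commutators of $\nabla_v$ and $\brak{v}^{M+q_{\gamma,s}}$ with the transport operator — are grouped into $\nu\mathcal{R}^3_\beta$, whose two explicitly displayed terms in \eqref{def:R3} are precisely the collision contributions with the weight/derivative commutator separated out. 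The main obstacle throughout is bookkeeping the commutators so that every error is genuinely either (a) a $\normL{\mu^\delta\,\cdot\,}$-type term (harmless, exponentially localized), (b) of strictly lower $Z$-order (absorbed later by induction on $|\beta|$ with the $2^{-\mathtt{C}\beta}$ weights in $E^{e.d.}_{M,B}$), or (c) of the same order but with a $\cA$-norm that will be dominated by the leading $\cA$ terms once the small parameters $a_0,b_0$ are fixed; the delicate point is that commutators of $\cL_1$ with $\brak{v}^{M+q_{\gamma,s}}$ and with $Z^\beta$ must \emph{not} produce a top-order $\cA[\brak{v}^{M+q_{\gamma,s}}\nabla_v Z^\beta\hat f]$ term with an uncontrolled constant, which is guaranteed by the $\widetilde\delta$-smallness in \eqref{bd:commZ} and \eqref{bd:lowerWZ} of Lemma~\ref{lemma:commutation}.
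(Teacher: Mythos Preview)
Your proposal is correct and follows essentially the same approach as the paper: the first two inequalities are direct consequences of the weighted coercivity bound \eqref{bd:lowerWZ} (together with the antisymmetry of $ik\cdot v$, which kills the transport contribution in the weighted $L^2_v$ energies), and the third comes from differentiating the cross term, using $[\nabla_v, ik\cdot v]=ik$ to produce the $|k|^2$ term, cancelling the two transport contributions by antisymmetry, and then integrating by parts in $v$ on one of the two collision terms to arrive at the form of $\mathcal{R}^3_\beta$. One small correction: the weight $\brak{v}^M$ \emph{does} commute with $v\cdot\nabla_x$ (both are multiplication operators in Fourier), so there is no weight--transport commutator anywhere; the vanishing of the transport term in \eqref{bd:en0} and the cancellation in \eqref{bd:enmixed} are purely due to antisymmetry, and the $[\nabla_v,\brak{v}^{M+q_{\gamma,s}}]$ piece in $\mathcal{R}^3_\beta$ arises from the integration-by-parts on the collision term, not from transport.
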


\begin{proof}
Bounds \eqref{bd:en0} and \eqref{bd:env} are a consequence of \eqref{bd:lowerWZ}. For \eqref{bd:enmixed}, since 
\begin{align}
	&\de_t\nabla_xZ^\beta f=-v\cdot \nabla_x \nabla_xZ^\beta f-\nu\nabla_x Z^\beta \cL f,\\
	&\de_t\nabla_v Z^\beta f+\nabla_xZ^\beta f=-v\cdot \nabla_x \nabla_vZ^\beta f-\nu\nabla_v Z^\beta \cL f ,
\end{align}
using the antisymmetry of $v\cdot \nabla_x$ we get
\begin{align}
	\frac{\dd}{\dd t}&\Re\jap{\jap{v}^{M+q_{\gamma,s}}Z^\beta(ik \hat{f}),\jap{v}^{M+q_{\gamma,s}}\nabla_vZ^\beta \hat{f}}+|k|^2\normL{\jap{v}^{M+q_{\gamma,s}}Z^\beta \hat{f}}^2\\
	&= 
	-\nu \Re \jap{\jap{v}^{M+q_{\gamma,s}}Z^\beta \cL (ik \hat{f}),\jap{v}^{M+q_{\gamma,s}}\nabla_vZ^\beta \hat{f}}\\
	&\quad-\nu\Re \jap{\jap{v}^{M+q_{\gamma,s}}Z^\beta(ik \hat{f}),\jap{v}^{M+q_{\gamma,s}}\nabla_v(Z^\beta \cL \hat{f})}.
\end{align}
Integrating by parts in $v$ the last term above, and moving the $ik$ on the term containing $\cL$, we prove \eqref{bd:enmixed}.
\end{proof}

We are now ready to prove the first bound in Proposition \ref{prop:mono}.
\begin{proof}[Proof of Proposition \ref{bd:energyED4}]
	From \eqref{def:Eed} and \eqref{bd:en0}-\eqref{bd:enmixed}, we get that 
\begin{align}
\notag \frac{\dd}{\dd t}E^{e.d.}_{M,B}&+\frac{1}{200}\sum_{\alpha+|\beta|\leq B}\frac{2^{-\mathtt{C}\beta}\brak{k}^\alpha}{\jap{\nu t}^{2\beta}}\bigg( \nu\cA[\jap{v}^{M}Z^\beta \hat{f}]+\nu a_{\nu,k} \cA[\jap{v}^{M+q_{\gamma,s}}\nabla_vZ^\beta \hat{f}]\\
		&\quad+ b_{\nu,k}|k|^2\norm{\jap{v}^{M+q_{\gamma,s}} Z^\beta  \hat{f}}^2_{}\bigg)\\
		\label{bd:mixEd}&\leq  \sum_{\alpha+|\beta|\leq B}\frac{2^{-\mathtt{C}\beta}\brak{k}^\alpha}{\jap{\nu t}^{2\beta}} a_{\nu,k}\left|\jap{\jap{v}^{M+q_{\gamma,s}}Z^\beta(ik \hat{f}),\jap{v}^{M+q_{\gamma,s}}\nabla_vZ^\beta \hat{f}}\right|\\
		\label{bd:Ris}&\quad +  \sum_{\alpha+|\beta|\leq B}\frac{2^{-\mathtt{C}\beta}\brak{k}^\alpha}{\jap{\nu t}^{2\beta}}( \nu\mathcal{R}^1_{\beta}+\nu a_{\nu,k}\mathcal{R}^2_{\beta}+\nu b_{\nu,k}\mathcal{R}^3_{\beta}),
	\end{align}
where we have neglected the term with a negative sign on the right-hand side coming from the time derivative of $\jap{\nu t}^{-2\beta}$. In the rest of the proof, we highlight  all the necessary restrictions on the coefficient and in the end we show that it is possible to choose the coefficients in order to satisfy said constraints.
We first bound the error terms in \eqref{bd:Ris}, which can be directly controlled with the available {anisotropic} dissipation appearing on the left-hand side of the inequalty.
Then we control the remaining mixed inner product error term in \eqref{bd:mixEd}.

\medskip \noindent $\diamond$ \textbf{Bound on} $\cR^1_{\beta}$ \eqref{def:R1}. The sum containing lower order derivatives is controlled with the available dissipation coming from the $\beta-1$ terms. Namely, it is enough to impose that 
\begin{equation}
	\label{res0}
	 2^{-\mathtt{C}\beta}C_{\beta,1}\ll \frac{2^{-\mathtt{C}(\beta-1)}}{200} \quad \Longrightarrow \quad  2^{-\mathtt{C}}\ll \frac{1}{C_{\beta,1}}.
\end{equation}
 For the other term in $\cR^1_{\beta}$, since $M+q_{\gamma,s}\geq 0$  we have
\begin{align}
	\label{bd:kernel}
	\normL{\mu^{\delta}Z^\beta \hat{f}}^2
	\lesssim \frac{1}{|k|^2} 	\norm{\jap{v}^{M+q_{\gamma,s}}Z^\beta (ik \hat{f})}^2.
\end{align}
Thus, to control $\cR^1_{\beta}$, we need to impose the following restrictions on the coefficients
\begin{equation}
	\label{res1}
	\frac{\nu C_{\beta,1}}{|k|^2}\ll b_{\nu,k}.
\end{equation}

\medskip \noindent $\diamond$ \textbf{Bound on} $\cR^2_{\beta}$ \eqref{def:R2}. To control the first term in the definition of $\cR^2_{\beta}$, as in \cite[(6.14)]{alexandre2011global}, we exploit the following interpolation inequality: for any $\ell\geq -\gamma$, $\delta>0$ there exists $C_\delta$ such that
\begin{align}
	\normL{\jap{v}^\ell\nabla_v g}_{L^2_{\gamma/2}}^2&\leq \delta \normL{\jap{v}^\ell\nabla_v g}_{H^s_{\gamma/2}}^2+ C_\delta\normL{\jap{v}^\ell \nabla_v g}_{H^{-1}_{\gamma/2}}^2,
	\label{bd:interpolation}
	\lesssim \delta\cA[\jap{v}^\ell \nabla_v g]+C_\delta\normL{\jap{v}^\ell  g}_{L^2_{\gamma/2}}^2.
\end{align}
The bounds above follow by the Gagliardo-Nirenberg inequality and straightforward commutator estimates to handle the weights. 
With the inequality \eqref{bd:interpolation} at hand, we get
\begin{align}
				C_{\beta,2}\normL{\mu^\delta\nabla_v Z^\beta \hat{f}}^2&\leq \frac{1}{400}\cA[\jap{v}^{M+q_{\gamma,s}}\nabla_v Z^\beta \hat{f}]+\frac{\widetilde{C}_{\beta,2}}{|k|^2} \norm{\jap{v}^{M+q_{\gamma,s}} Z^\beta (ik \hat{f})}^2.
\end{align}
Moreover, since $q_{\gamma,s}\leq 0$ we have
\begin{equation}
	\cA[\jap{v}^{M+q_{\gamma,s}}Z^\beta f]\lesssim \cA[\jap{v}^{M}Z^\beta f].
\end{equation}
Hence, we require that
\begin{equation}
	\label{res2}
	\frac{\nu a_{\nu,k}\widetilde{C}_{\beta,2}}{|k|^2}\ll  b_{\nu,k}, \qquad a_{\nu,k}C_{\beta,2}\ll1,  \qquad  2^{-\mathtt{C}}C_{\beta,2}\ll1, 
\end{equation}
in order to be able to absorb the errors terms with the dissipation.

\medskip \noindent $\diamond$ \textbf{Bound on} $\cR^3_{\beta}$ \eqref{def:R3}. From the upper bound in \eqref{bd:coerL}, we get 
$$
|\jap{\cL g,h}|\lesssim \sqrt{\cA[g]}\sqrt{\cA[f]}.
$$ Hence, using $M>|q_{\gamma,s}|$ and $-d<\gamma<0$, from the Cauchy-Schwarz inequality we deduce
\begin{equation}
	\nu b_{\nu,k} \cR^3_{\beta}\leq \frac{\nu}{1600} \cA[\jap{v}^MZ^\beta \hat{f}]+\nu C |k|^2(b_{\nu,k})^2\big( \cA[\jap{v}^{M+q_{\gamma,s}}\nabla_vZ^\beta \hat{f}]+\cA[\jap{v}^{M}Z^\beta \hat{f}]\big)
\end{equation} 
Consequently, the  following restriction on the coefficients is needed
\begin{equation}
	\label{res3}
	C |k|^2(b_{\nu,k})^2\ll \min\{a_{\nu,k},1\}.
\end{equation}
Collecting the estimates on the remainders made above, under the restrictions \eqref{res1},\eqref{res2} and \eqref{res3}, we can absorb all the $\mathcal{R}^i_\beta$ error terms to get
	\begin{align}
	\notag \frac{\dd}{\dd t}E^{e.d.}_{M,B}&+\frac{1}{400}\sum_{\alpha+|\beta|\leq B}\frac{2^{-\mathtt{C}\beta}\brak{k}^\alpha}{\jap{\nu t}^{2\beta}}\bigg( \nu\cA[\jap{v}^{M}Z^\beta \hat{f}]+\nu a_{\nu,k} \cA[\jap{v}^{M+q_{\gamma,s}}\nabla_vZ^\beta \hat{f}]\\ &\hspace{4cm}+b_{\nu,k}|k|^2\norm{\jap{v}^{M+q_{\gamma,s}} Z^\beta \hat{f}}^2\bigg) \\
		\label{bd:func1}&\leq \sum_{\alpha+|\beta|\leq B}\frac{2^{-\mathtt{C}\beta}\brak{k}^\alpha}{\jap{\nu t}^{2\beta}} a_{\nu,k}\left|\jap{\jap{v}^{M+q_{\gamma,s}}Z^\beta(ik \hat{f}),\jap{v}^{M+q_{\gamma,s}}\nabla_vZ^\beta \hat{f}}\right|.	
\end{align}

\medskip \noindent $\diamond$ \textbf{Bound on the mixed inner product.} The last error term appearing in \eqref{bd:func1} is the most delicate to control. Indeed, we do not have $\nu$ as a smallness parameter and it is the first term where we have to explicitly deal with the softness of the potential. To overcome the latter issue, it is crucial to use the fact that $v$-derivatives are controlled with weaker velocity weights. For instance, in the Landau case, one has $s=1$ and $q_{-3,1}=-3/2$. Thus, we can directly control this term combining the Cauchy-Schwarz inequality with $a^2_{\nu,k}/b_{\nu,k}\ll \nu$; the proof for $s=1$ is omitted for brevity, as it is more straightforward (see also \cite{CLN21}, where $q_{-3,1}=-4$ though). 
We begin by noting the following lower bounds, obtained from Proposition \ref{lemma:coercive} 
\begin{align}
	\notag  &\nu \cA[\jap{v}^{M}Z^\beta \hat{f}]+\nu a_{\nu,k} \cA[\jap{v}^{M+q_{\gamma,s}}\nabla_vZ^\beta \hat{f}]\\
	\label{bd:lwdissEd} &\qquad\qquad\gtrsim \nu \norm{\jap{v}^{M}Z^\beta \hat{f}}^2_{H^s_{\gamma/2}}+\nu a_{\nu,k} \norm{\jap{v}^{M+q_{\gamma,s}}\nabla_vZ^\beta \hat{f}}^2_{H^s_{\gamma/2}},
\end{align}
For a general $0<s<1$, we have to handle carefully the anisotropy of the dissipation. We consider a dyadic decomposition of $\mathbb{R}^d$ and apply the Cauchy-Schwarz inequality to get 
\begin{align}
	a_{\nu,k}&\left|\jap{\jap{v}^{M+q_{\gamma,s}}Z^\beta(ik \hat{f}),\jap{v}^{M+q_{\gamma,s}}\nabla_vZ^\beta \hat{f}}\right|\leq \\
&\sum_{j=0}^{\infty}a_{\nu,k}|k| \norm{\mathbbm{1}_{\{2^{j}\leq\brak{v}\leq 2^{j+1}\}}\brak{v}^{M+q_{\gamma,s}}Z^\beta \hat{f}}\norm{\mathbbm{1}_{\{2^{j}\leq\brak{v}\leq 2^{j+1}\}}\brak{v}^{M+q_{\gamma,s}}\nabla_v Z^\beta \hat{f}}\\
:=&\, \sum_{j=0}^{\infty}\mathcal{R}_{j}.
\end{align}
Using the Young's inequality, we get
\begin{align}
\label{bd:EDRj}
\mathcal{R}_j\leq\, &  \frac{b_{\nu,k}}{1600}|k|^2\norm{\mathbbm{1}_{\{2^{j}\leq\brak{v}\leq 2^{j+1}\}}\brak{v}^{M+q_{\gamma,s}}Z^\beta \hat{f}}^2\\
&+1600\frac{a^2_{\nu,k}}{b_{\nu,k}}\norm{\mathbbm{1}_{\{2^{j}\leq\brak{v}\leq 2^{j+1}\}}\brak{v}^{M+q_{\gamma,s}}\nabla_v Z^\beta \hat{f}}^2.
\end{align}
By the choice of the coefficients in \eqref{def:abnuk}, notice that
\begin{equation}
\label{eq:coa2b}
\frac{a^2_{\nu,k}}{b_{\nu,k}}=\frac{a_0^{1+s}}{b_0}\nu a^{1-s}_{\nu,k}.
\end{equation}
Combining \eqref{eq:coa2b} with the Gagliardo-Nirenberg inequality, since $q_{\gamma,s}\leq0$, we obtain 
\begin{align}
1600&\frac{a^2_{\nu,k}}{b_{\nu,k}}\norm{\mathbbm{1}_{\{2^{j}\leq\brak{v}\leq 2^{j+1}\}}\brak{v}^{M+q_{\gamma,s}}\nabla_v Z^\beta \hat{f}}^2\leq \\
&C\frac{a_0^{1+s}}{b_0}\bigg(2^{-(j+1)\gamma}\nu a_{\nu,k}\norm{\mathbbm{1}_{\{2^{j}\leq\brak{v}\leq 2^{j+1}\}}\brak{v}^{M+q_{\gamma,s}}\nabla_v Z^\beta \hat{f}}^2_{H^s_{\gamma/2}}\bigg)^{1-s}\\
&\times \bigg(2^{-(j+1)\gamma+2jq_{\gamma,s}}\nu\norm{\mathbbm{1}_{\{2^{j}\leq\brak{v}\leq 2^{j+1}\}}\brak{v}^{M} Z^\beta \hat{f}}^2_{H^s_{\gamma/2}}\bigg)^{s}.
\end{align}
From the inequality above,  we deduce that 
\begin{align}
&\sum_{j=0}^{+\infty}\mathcal{R}_j\leq \,  \frac{b_{\nu,k}}{1600}|k|^2\norm{\brak{v}^{M+q_{\gamma,s}}Z^\beta \hat{f}}^2\\
\notag&+C\frac{a_0^{1+s}}{b_0}\bigg(\nu a_{\nu,k}\norm{\brak{v}^{M+q_{\gamma,s}}\nabla_v Z^\beta \hat{f}}^2_{H^s_{\gamma/2}}\bigg)^{1-s}\bigg(\nu\norm{\brak{v}^{M} Z^\beta \hat{f}}^2_{H^s_{\gamma/2}}\bigg)^{s}  \sum_{j=0}^{+\infty}2^{-(j+1)\gamma}2^{2sjq_{\gamma,s}}.
\end{align}
The series above is convergent for any $q_{\gamma,s}<\gamma/(2s)$, which is guaranteed by the choice we made in \eqref{def:qgammas}. Thus, using \eqref{bd:lwdissEd},  
we conclude  
\begin{align}
\sum_{j=0}^{+\infty}\mathcal{R}_j\leq\, &\frac{b_{\nu,k}}{1600}|k|^2\norm{\brak{v}^{M+q_{\gamma,s}}Z^\beta \hat{f}}^2 \\
\label{bd:mixedRj}&+\widetilde{C}\frac{a_0^{1+s}}{b_0}\left( \nu a_{\nu,k}\cA[\brak{v}^{M+q_{\gamma,s}}\nabla_v Z^\beta \hat{f}]+\nu \cA[\brak{v}^{M} Z^\beta \hat{f}]\right).
\end{align}
The term above can be absorbed on the left-hand side of \eqref{bd:func1} upon choosing
\begin{equation}
\label{bd:mixedconstr}
a_0^{1+s}\ll b_0.
\end{equation}

By the definition of the coefficients in \eqref{def:abnuk}, satisfying the constraints \eqref{res0}, \eqref{res1}, \eqref{res2}, \eqref{res3} and \eqref{bd:mixedconstr} is equivalent to imposing
\begin{equation}
\label{bd:constEd}
\begin{split}&\mathtt{C}\gg 1, \qquad \left(\frac{\nu}{|k|}\right)^{\frac{2s}{1+2s}}\leq \delta_0^{\frac{2s}{1+2s}}\ll b_0, \qquad a_0 \delta_0^{\frac{2(1+s)}{(1+2s)}}\ll b_0,\\
&b_0^2\ll a_0, \qquad a_0^{1+s}\ll b_0.
\end{split}
\end{equation}
Hence, $\mathtt{C}$ is simply some sufficiently large (universal) constant to absorb the error terms coming from the commutators of $\cL$. To satisfy the restrictions on $a_0,b_0,\delta_0$, let $0<\kappa_0\ll1$. One can then choose
\begin{equation}
a_0=\frac{1}{1000}\kappa_0^{\frac{1}{1+2s}}, \qquad b_0=\sqrt{\kappa_0 a_0}, \qquad \delta_0=\frac{1}{1000}\min\big\{\kappa_0^{\frac{1+s}{2s}},\kappa_0^{\frac{s}{2(1+s)}}\big\}.
\end{equation}
It is not hard to verify that this choice satisfies all the constraints in \eqref{bd:constEd}. Finally, using  \eqref{bd:mixedRj} in \eqref{bd:func1}, we arrive at
	\begin{align}
	\label{bd:enhanced} \frac{\dd}{\dd t}E^{e.d.}_{M,B}+\frac{1}{1600}\sum_{\alpha+|\beta|\leq B}\frac{2^{-\mathtt{C}\beta}\brak{k}^\alpha}{\jap{\nu t}^{2\beta}}\bigg(& \nu \cA[\jap{v}^{M}Z^\beta \hat{f}]+\nu a_{\nu,k} \cA[\jap{v}^{M+q_{\gamma,s}}\nabla_vZ^\beta \hat{f}]\\
	&+  \frac{b_{\nu,k}}{1600}|k|^2\norm{\jap{v}^{M+q_{\gamma,s}} Z^\beta \hat{f}}^2\bigg)\leq 0.
\end{align}
In view of the definition of $\mathcal{D}^{e.d.}_{M,B}$ \eqref{def:Ded}, the monotonicity estimate \eqref{bd:energyED4} is proved.
\end{proof}

\subsection{Monotonicity estimate in the Taylor dispersion regime} 
\label{subsec:TD}
We now turn our attention to the the proof of \eqref{bd:energyTD4}. As explained in Section \ref{sec:outLin}, we have to combine the micro-macro energy approach and the hypocoercivity scheme. We first deal with the macroscopic quantities in Section \ref{subsec:macro} and then we present the bounds for microscopic ones in Section \ref{subsec:micro}. 

\subsubsection{Bounds on the macroscopic quantities}
\label{subsec:macro}
Recall that 
\begin{align}
	&\bP f= \left(\rho(t,x)+\sfm(t,x)\cdot v+ \sfe(t,x)(|v|^2-d)\right) \sqrt{\mu},
\end{align}
where $(\rho,\sfm,\sfe)$ satisfy the hydrodynamic system \eqref{eq:detrho0}-\eqref{eq:dte0}. To gain dissipation for the macroscopic variables, it is necessary to introduce the equations satisfied by $\Theta[(I-\bP)f], \Lambda[(I-\bP)f]$ (that are terms appearing on the right-hand side of \eqref{eq:detm0} and \eqref{eq:dte0} respectively) which are given 
\begin{align}
		\label{eq:Theta0}	&\de_t(\Theta[(I-\bP)f]+2\sfe I)+\nabla_x\sfm+(\nabla_x\sfm)^T=-\Theta[ik \cdot v(I-\bP)\hat{f}+\nu\cL(I-\bP)f],\\
	\label{eq:Lambda0}&\de_t\Lambda[(I-\bP)f]+\nabla_x \sfe=-\Lambda[ik \cdot v(I-\bP)\hat{f}+\nu\cL(I-\bP)f].
\end{align}
We recall that $\Theta, \Lambda$ are the higher-order moments projections defined in \eqref{def:Theta} and \eqref{def:Lambda} respectively.
Looking at the structure of the hydrodynamic system \eqref{eq:detrho0}-\eqref{eq:dte0} and \eqref{eq:Theta0}-\eqref{eq:Lambda0}, it is natural to try to exploit mixed inner products to recover dissipation for the macroscopic variables. Indeed, loosely speaking, one has the following 
\begin{align}
	&\frac{\dd}{\dd t}(\sfm \cdot \nabla_x\rho)+|\nabla_x \rho|^2=\text{error terms}\\
	&\frac{\dd}{\dd t}((\Theta[(I-\bP)f]+2\sfe I): (\nabla_x\sfm+(\nabla_x\sfm)^T))+|\nabla_x\sfm+(\nabla_x\sfm)^T|^2=\text{error terms}\\
	\label{eq:heurT3}&\frac{\dd}{\dd t}(\Lambda[(I-\bP)f]\cdot \nabla_x \sfe)+|\nabla_x\sfe|^2=\text{error terms}
\end{align}
The idea of using mixed inner products to recover dissipation dates back at least to the PhD thesis of Kawashima \cite{kawashima1984systems} and has been successfully exploited in many different problems \cite{ADM21,guo2006boltzmann,guo2012decay,strain2012optimal,duan2009stability}. 
However, we also need to recover dissipation for terms involving $Z$-derivatives. Thus, we first need the following equivalence.
\begin{lemma}
\label{lem:equivZP}
For any $\beta\geq 0$, the following inequalities holds true
\begin{align}
	\label{bd:equivZP}
	&\normL{Z^\beta \widehat{\bP f}}_{L^2_v}^2\leq 100|(t k)^{2\beta}||(\hat{\rho},\hat{\sfm},\hat{\sfe})|^2+C_1\sum_{0\leq |\tilde{\beta}|\leq |\beta|-1}|(t k)^{2\tilde{\beta}}||(\hat{\rho},\hat{\sfm},\hat{\sfe})|^2,\\
	&\normL{Z^\beta \widehat{\bP f}}_{L^2_v}^2\geq \frac12|(t k)^{2\beta}||(\hat{\rho},\hat{\sfm},\hat{\sfe})|^2-C_2\sum_{0\leq |\tilde{\beta}|\leq |\beta|-1}|(t k)^{2\tilde{\beta}}||(\hat{\rho},\hat{\sfm},\hat{\sfe})|^2.
\end{align}
\end{lemma}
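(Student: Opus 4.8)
The plan is to prove this by a purely algebraic computation — the collision operator plays no role — by expanding $Z^\beta$ against the explicit profile of $\bP f$ and isolating the term carrying the top power of $tk$. Since the linearized problem decouples in $k$, I would work in the $x$-Fourier variables, where $Z=\nabla_v+itk$ and $\widehat{\bP f}=q\sqrt\mu$ with the polynomial $q(v):=\hat\rho+\hat\sfm\cdot v+\hat\sfe(|v|^2-d)$. First I would record the exact orthogonality of the Hermite-type functions spanning $\operatorname{Ker}\cL$: from $\int\mu\,\dd v=1$, $\int v_i\mu\,\dd v=0$, $\int v_iv_j\mu\,\dd v=\delta_{ij}$, $\int(|v|^2-d)\mu\,\dd v=0$ and $\int(|v|^2-d)^2\mu\,\dd v=2d$, together with parity, the functions $\sqrt\mu$, $\{v_i\sqrt\mu\}_i$ and $(|v|^2-d)\sqrt\mu$ are pairwise $L^2_v$-orthogonal, so that $\normL{\widehat{\bP f}}_{L^2_v}^2=|\hat\rho|^2+|\hat\sfm|^2+2d|\hat\sfe|^2$. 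In particular this is comparable to $|(\hat\rho,\hat\sfm,\hat\sfe)|^2$, with constant $1$ from below and $\max\{1,2d\}$ from above, which settles the case $\beta=0$.

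For a general multi-index $\beta$, since $\nabla_v$ commutes with the ($v$-independent) vector $itk$, the binomial formula gives
\begin{align*}
Z^\beta\widehat{\bP f}=\sum_{\tilde\beta\le\beta}\binom{\beta}{\tilde\beta}(itk)^{\beta-\tilde\beta}(\nabla_v)^{\tilde\beta}\big(q\sqrt\mu\big).
\end{align*}
I would single out the $\tilde\beta=0$ term, the \emph{main term} $(itk)^\beta\widehat{\bP f}$, whose squared $L^2_v$ norm is exactly $|(tk)^{2\beta}|\normL{\widehat{\bP f}}_{L^2_v}^2=|(tk)^{2\beta}|\big(|\hat\rho|^2+|\hat\sfm|^2+2d|\hat\sfe|^2\big)$, using $|(tk)^\beta|^2=|(tk)^{2\beta}|$ for real $k$. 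Every other term has $|\tilde\beta|\ge1$, hence carries a factor $(itk)^{\beta-\tilde\beta}$ with $|\beta-\tilde\beta|\le|\beta|-1$, while $(\nabla_v)^{\tilde\beta}(q\sqrt\mu)=p_{\tilde\beta}(v)\sqrt\mu$ for a polynomial $p_{\tilde\beta}$ of bounded degree whose coefficients are linear combinations (with numerical coefficients depending only on $\tilde\beta$ and $d$) of $\hat\rho,\hat\sfm,\hat\sfe$; since Gaussian-weighted polynomial norms in $L^2_v$ are finite, $\normL{(\nabla_v)^{\tilde\beta}(q\sqrt\mu)}_{L^2_v}\le C_{\tilde\beta,d}\,|(\hat\rho,\hat\sfm,\hat\sfe)|$. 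Thus the sum of all non-main terms is bounded in $L^2_v$ by $C_\beta\sum_{0\le|\tilde\gamma|\le|\beta|-1}|(tk)^{\tilde\gamma}|\,|(\hat\rho,\hat\sfm,\hat\sfe)|$, the relevant $\tilde\gamma$ being the sub-multi-indices $\beta-\tilde\beta$ of $\beta$.

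To conclude I would combine the two pieces by the triangle inequality in $L^2_v$ followed by Young's inequality $2ab\le\eps a^2+\eps^{-1}b^2$ on the cross term, using $|(tk)^{\tilde\gamma}|^2=|(tk)^{2\tilde\gamma}|$: choosing $\eps$ small so that $(1+\eps)\max\{1,2d\}\le100$ (the precise value of the leading constant is unimportant, and in general $100$ should be read as a constant depending only on $d$) gives the stated upper bound with $C_1=C_1(\beta,d)$; for the lower bound, $\normL{a+b}_{L^2_v}^2\ge(1-\eps)\normL{a}_{L^2_v}^2-\eps^{-1}\normL{b}_{L^2_v}^2$ with $\eps=1/2$, together with $|\hat\rho|^2+|\hat\sfm|^2+2d|\hat\sfe|^2\ge|(\hat\rho,\hat\sfm,\hat\sfe)|^2$ (valid since $d\ge1$), yields the factor $\tfrac12|(tk)^{2\beta}||(\hat\rho,\hat\sfm,\hat\sfe)|^2$ minus a lower-order sum with constant $C_2=C_2(\beta,d)$. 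The only place demanding any care is the multi-index bookkeeping in the binomial expansion — checking that \emph{every} term other than $(itk)^\beta\widehat{\bP f}$ carries strictly fewer than $|\beta|$ factors of $tk$, so that it genuinely lands in the lower-order sum — but this is immediate, since each $\partial_{v_j}$ appearing in $(\nabla_v)^{\tilde\beta}$ trades one power of $itk_j$ for a $v$-derivative.
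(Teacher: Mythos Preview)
Your proposal is correct and follows essentially the same approach as the paper: expand $Z^\beta$ on $\widehat{\bP f}$ via the binomial/Leibniz formula, isolate the top-order term $(itk)^\beta\widehat{\bP f}$ using the $L^2_v$-orthogonality of the basis of $\operatorname{Ker}\cL$, and control the remainder by Cauchy--Schwarz/Young together with Gaussian integrability. Your write-up is in fact more explicit about the constants and the mechanism of the lower bound than the paper's sketch.
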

From this lemma we deduce that we can consider $\normL{Z^\beta\widehat{\bP f}}_{L^2_v}$ as being equivalent to $|t k|^\beta|(\hat{\rho},\hat{\sfm},\hat{\sfe})|$ up to lower order terms.

\begin{proof}
When $\beta=0$ the equivalence is a direct consequence of the orthogonality in $L^2_v$ of   $$(\sqrt{\mu},v_i\sqrt{\mu},(|v|^2-d)\sqrt{\mu}),$$
which is a basis for the kernel of $\cL$. 
When $|\beta|>0$, notice that
 \begin{align}
	\notag Z^{\beta}(\bP f)&=\sum_{|\beta_1|+|\beta_2|=|\beta|}C_{\beta_1,\beta_2} \bigg(((t\nabla_x)^{\beta_1}\rho)(\nabla_v^{ \beta_2}\sqrt{\mu})+((t\nabla_x)^{\beta_1}\sfe)(\nabla_v^{ \beta_2}((|v|^2-d)\sqrt{\mu}))\\
	&\qquad+\sum_{j=1}^d ((t\nabla_x)^{\beta_1}\sfm_{j})(\nabla_v^{ \beta_2}(v_j\sqrt{\mu}))\bigg)\\
	&=((t\nabla_x)^{\beta}\rho )\sqrt{\mu}+((t\nabla_x)^{\beta}\sfe)((|v|^2-s)\sqrt{\mu})+\sum_{j=1}^d ((t\nabla_x)^{\beta}\sfm_{j})(v_j\sqrt{\mu})+\mathcal{I}_{\widetilde{\beta}},
\end{align}
where the term $\mathcal{I}_{\widetilde{\beta}}$ is what it remains from the sum when $|\beta_2|\geq 1$.
Using again the orthogonality condition, the proof of \eqref{bd:equivZP} follows by the identity above and Cauchy-Schwarz inequality.
\end{proof}

The dissipation for the macroscopic variables is recovered from the mixed inner product defined in \eqref{def:Mkbeta}. In particular, we have the following adaptation of the estimate originally obtained in \cite{guo2006boltzmann,duan2009stability}. 
\begin{lemma}
	\label{lemma:mixed}
Let $\cM$ be defined as in \eqref{def:Mkbeta} and assume that $b_2\ll b_1\ll 1$. Then, there exists a universal constant $0<\tilde{c}<1$ such that
\begin{align}
	\label{bd:mixedTD}\frac{\dd }{\dd t}\cM&+\tilde{c}\sum_{0\leq|\beta|\leq B}  \frac{|k|^2}{\brak{\nu t}^{2\beta}}\normL{Z^\beta \bP \hat{f}}^2_{L^2_v}	\lesssim \nu^2 \normL{\mu^\delta(I-\bP)\hat{f}}^2_{L^2_v}.
\end{align}
\end{lemma}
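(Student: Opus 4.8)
\medskip
The plan is to differentiate each of the three pieces of $\cM$ in \eqref{def:Mkbeta} in time, using the hydrodynamic equations \eqref{eq:detrho0}--\eqref{eq:dte0} together with the auxiliary evolution equations \eqref{eq:Theta0}--\eqref{eq:Lambda0}, and then to reconstruct the full gradient energy $|k|^2(|\hat\rho|^2+|\hat\sfm|^2+|\hat\sfe|^2)$ from the three ``$|\nabla_x\rho|^2$, $|\nabla_x\sfm|^2$, $|\nabla_x\sfe|^2$'' terms that appear, as sketched heuristically in \eqref{eq:heurT3}. I would first treat the undifferentiated case $\beta=0$ and then explain how the $Z^\beta$-weighted version follows. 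For $\beta=0$: compute $\ddt\Re(\hat\sfm\cdot ik\hat\rho)$ using \eqref{eq:detrho0} for $\de_t\hat\rho$ and \eqref{eq:detm0} for $\de_t\hat\sfm$; the principal term is $|k|^2|\hat\rho|^2$ (from $\hat\rho\,\nabla_x\cdot\sfm$ paired with $\nabla_x\cdot\sfm$ via the first equation), while the terms $\nabla_x\hat\sfe$ and $\nabla_x\cdot\Theta[(I-\bP)\hat f]$ from \eqref{eq:detm0} produce $|k|^2|\hat\sfe||\hat\rho|$ and $|k|\,|\hat\rho|\,\|\mu^\delta(I-\bP)\hat f\|$ which are absorbed by Young's inequality into the $b_1$- and $\nu$-terms respectively (here crucially using $\jap{(v_iv_j-1)\sqrt\mu,g}_{L^2_v}\lesssim\normL{\mu^\delta g}_{L^2_v}$, which follows from Cauchy--Schwarz in $v$). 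Then compute $\ddt$ of the $b_1$-term using \eqref{eq:Theta0} for $\de_t(\Theta+2\sfe I)$ and \eqref{eq:detm0} for $\de_t\hat\sfm$: the good term is $b_1|\nabla_x\sfm+(\nabla_x\sfm)^T|^2\gtrsim b_1|k|^2|\hat\sfm|^2$ (by the elementary inequality $|ik\otimes w+(ik\otimes w)^T|^2\gtrsim|k|^2|w|^2$ after also using that the trace part controls $|k\cdot\hat\sfm|$), and the remaining terms, including $\Theta[ik\cdot v(I-\bP)\hat f]$ which is morally $|k|\,\|\mu^\delta(I-\bP)\hat f\|$ up to the polynomial weight, are again put into the $b_2$-term, the $\nu$-term, or back into the leading $|k|^2|\hat\rho|^2$. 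Finally $\ddt$ of the $b_0$-term (the $\Lambda\cdot ik\hat\sfe$ piece) via \eqref{eq:Lambda0} and \eqref{eq:dte0} yields $|\nabla_x\sfe|^2\gtrsim|k|^2|\hat\sfe|^2$ plus errors controlled by the $b_1$-term and the $\nu$-term. Choosing $0<b_2\ll b_1\ll1$ in this order lets each error be absorbed by the coercive term produced at the previous stage, and all the ``microscopic'' remainders collect into $\nu^2\normL{\mu^\delta(I-\bP)\hat f}^2_{L^2_v}$ (one power of $\nu$ per factor of $\cL(I-\bP)\hat f$; two such factors never both appear since $\cM$ is linear in the moments, so in fact the bound is as stated—note the collision contributions $\nu\cL(I-\bP)\hat f$ in \eqref{eq:Theta0}--\eqref{eq:Lambda0} are what generate $\nu^2$ after Young against the $\nu$-free leading terms).

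\medskip
For the $Z^\beta$-weighted sum, I would apply $Z^\beta=(t\nabla_x+\nabla_v)^\beta$ to the hydrodynamic and moment equations. Since $[Z,\de_t+v\cdot\nabla_x]=0$ and $\rho,\sfm,\sfe$ depend only on $(t,x)$, acting with $Z^\beta$ on them is the same as acting with $(t\nabla_x)^\beta$, so the commuted system is structurally identical with $(\hat\rho,\hat\sfm,\hat\sfe)$ replaced by $(t k)^\beta$-weighted versions — this is exactly the content of Lemma \ref{lem:equivZP}, which I would invoke to pass between $\normL{Z^\beta\widehat{\bP f}}_{L^2_v}$ and $|tk|^\beta|(\hat\rho,\hat\sfm,\hat\sfe)|$ modulo strictly lower-order multi-indices $\widetilde\beta$. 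Repeating the $\beta=0$ argument at each level $\beta$ and dividing by $\brak{\nu t}^{2\beta}$, the lower-order remainders from Lemma \ref{lem:equivZP} and from the $Z^\beta$ Leibniz expansion of $v\cdot\nabla_x$ (each $Z$ hitting $v$ produces one copy of $\nabla_x$ on a lower $Z$-power, hence a term like $|k|\brak{\nu t}^{-2\beta}|tk|^{\beta-1}\cdots$) are dominated by the $\beta-1$ coercive term provided one also uses $|k|\,t\,\brak{\nu t}^{-2}\lesssim \nu^{-1}$ in this regime $\nu/|k|>\delta_0$; this is precisely why the $\brak{\nu t}^{-2\beta}$ weight is indispensable here (cf.\ the Remark following \eqref{def:DTD}). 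Summing the telescoping hierarchy over $0\le|\beta|\le B$ gives \eqref{bd:mixedTD} with $\tilde c$ a fixed fraction of the smallest constant $b_2$.

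\medskip
The main obstacle I expect is bookkeeping the error terms so that the hierarchy of constants $b_2\ll b_1\ll1$ (together with the interplay of $|k|$, $\nu$, and the $\brak{\nu t}^{-2\beta}$ weights) genuinely closes: in particular, making sure that every term involving $(I-\bP)\hat f$ carries the Gaussian-weighted norm $\normL{\mu^\delta(\cdot)}_{L^2_v}$ rather than the full $\cA$-dissipation — this is what forces the use of the crisp moment bounds \eqref{def:Theta}--\eqref{def:Lambda} estimated by $\normL{\mu^\delta g}_{L^2_v}$ and \eqref{bd:L2gh} — and that the $Z^\beta$ lower-order remainders are always one order lower in $|\beta|$ so the induction on $|\beta|$ actually terminates. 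The algebraic positivity $|ik\otimes w+(ik\otimes w)^\top|^2+|k\cdot w|^2\gtrsim|k|^2|w|^2$ and the analogous reconstruction of $|k|^2|\hat\rho|^2$ and $|k|^2|\hat\sfe|^2$ from the three mixed-product time derivatives is the other place requiring care, but it is elementary linear algebra. Everything else is Cauchy--Schwarz, Young, and the already-established Lemma \ref{lem:equivZP}.
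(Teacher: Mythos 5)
Your $\beta=0$ computation matches the paper's in spirit: the paper cites and then re-derives the inequality
\begin{equation}
\frac{1}{\nu}\frac{\dd}{\dd t}\cM + \tilde c_\star\frac{|k|^2}{\nu}|(\hat\rho,\hat\sfe,\hat\sfm)|^2\lesssim\nu\normL{\mu^\delta(I-\bP)\hat f}^2_{L^2_v}
\end{equation}
by exactly the sequence of moment-equation time-derivatives, elementary positivity, and Young's inequality that you outline. The divergence is in how the sum over $0\le|\beta|\le B$ is produced. Notice that $\cM$ in \eqref{def:Mkbeta} is defined \emph{only} at the $\beta=0$ level; the lemma asserts that this single mixed product already yields dissipation for the entire tower. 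The paper's route is purely algebraic: in the Taylor regime $|k|\le\delta_0^{-1}\nu$ one has $|tk|^{2\beta}/\brak{\nu t}^{2\beta}\le\delta_0^{-2\beta}$, so $|k|^2|(\hat\rho,\hat\sfm,\hat\sfe)|^2$ dominates $\sum_{|\beta|\le B}\delta_0^{2\beta}\brak{\nu t}^{-2\beta}|tk|^{2\beta}|k|^2|(\hat\rho,\hat\sfm,\hat\sfe)|^2$, and Lemma \ref{lem:equivZP} converts $|tk|^{2\beta}|(\hat\rho,\hat\sfm,\hat\sfe)|^2$ into $\normL{Z^\beta\bP\hat f}^2_{L^2_v}$ up to absorbable lower-order multi-indices. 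No further time-differentiation, no hierarchy of mixed inner products, no induction on $|\beta|$.

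Your plan of ``repeating the $\beta=0$ argument at each level $\beta$'' is precisely the alternative that Remark \ref{rem:nutbeta} considers and flags as problematic. Two concrete issues: (i) it would require enlarging the energy by additional mixed products for $(t\nabla_x)^{\otimes\beta}(\rho,\sfm,\sfe)$, which are not present in $\cM$ nor in $E^{T.d.}_{M,B}$, so you would be proving a different statement; and (ii) at level $|\beta|\ge1$ the collision contribution to $\dd\cM_\beta/\dd t$ is $\Lambda[Z^\beta\cL(I-\bP)\hat f]\cdot ikZ^\beta\hat\sfe$ and its $\Theta$-analogue, whose Young estimate produces $\nu^2\normL{\mu^\delta Z^\beta(I-\bP)\hat f}^2_{L^2_v}$ rather than $\nu^2\normL{\mu^\delta(I-\bP)\hat f}^2_{L^2_v}$, which is the right-hand side of \eqref{bd:mixedTD}. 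Controlling these requires the iterative reduction $\normL{\mu^\delta Z^\beta(I-\bP)\hat f}^2\lesssim\brak{\nu t}^2\sum_{|\tilde\beta|\le|\beta|-1}\normL{\mu^\delta Z^{\tilde\beta}(I-\bP)\hat f}^2$ discussed in that Remark, which your writeup does not supply. The transport commutators you worry about are in fact a non-issue, since $[Z,\partial_t+v\cdot\nabla_x]=0$ and $[Z,\nabla_x]=0$, so the collision term is the genuine obstruction. In short: you have correctly identified the $\beta=0$ mechanism but missed the key point that the $\beta\ge1$ dissipation comes ``for free'' from the algebraic bound on $|tk|^{2\beta}/\brak{\nu t}^{2\beta}$ in the Taylor regime, not from differentiating further mixed products.
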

This estimate is the only one where we need to crucially use the factor $\brak{\nu t}^{2\beta}$ in the definition of the energy functional, as we explain in more details after the proof (see Remark \ref{rem:nutbeta}). Before proving this lemma, we recall some basic properties of the higher order moment projections. 
\begin{lemma}
\label{lem:ThetaLambda}
Let $\Theta[\cdot],\Lambda[\cdot]$ be defined as in \eqref{def:Theta}-\eqref{def:Lambda}. Then, there exists a universal constant $0<\delta<1$ such that  
\begin{align}
	\label{bd:projhom}
	|\Theta[g]|+|\Lambda[g]|&\lesssim \normL{\mu^{\delta}g}_{L^2_v}\\
	\label{bd:projhomL}
	|\Theta[\cL(g)]|+|\Lambda[\cL(g)]|&
	\lesssim \normL{\mu^\delta g}_{L^2_v}.
\end{align}
\end{lemma}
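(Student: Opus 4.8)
The first estimate \eqref{bd:projhom} is handled by plain Cauchy--Schwarz: fixing a small $\delta\in(0,\tfrac12)$ and writing $\Theta_{ij}[g]=\jap{(v_iv_j-1)\mu^{\frac12-\delta},\mu^\delta g}_{L^2_v}$ and $\Lambda_i[g]=\jap{(|v|^2-(d+2))v_i\,\mu^{\frac12-\delta},\mu^\delta g}_{L^2_v}$, the weighted Gaussians lie in $L^2_v$, so $|\Theta[g]|+|\Lambda[g]|\lesssim\norm{\mu^\delta g}_{L^2_v}$.

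For \eqref{bd:projhomL} the plan is to move $\cL$ onto the fixed, Gaussian-weighted test functions using the self-adjointness of $\cL$ on $L^2_v$ (a standard consequence of the explicit formula \eqref{eq:explL}). Setting $\psi_{ij}:=(v_iv_j-1)\sqrt\mu$ and $\varphi_i:=(|v|^2-(d+2))v_i\sqrt\mu$, this gives $\Theta_{ij}[\cL g]=\jap{\cL\psi_{ij},g}_{L^2_v}$ and $\Lambda_i[\cL g]=\jap{\cL\varphi_i,g}_{L^2_v}$. So it will be enough to prove that $\cL$ maps any function $p(v)\sqrt\mu$, $p$ a polynomial, into a function with Gaussian decay --- concretely, $\norm{\mu^{-\delta}\cL(p\sqrt\mu)}_{L^2_v}<\infty$ for $\delta$ small enough --- since then one more Cauchy--Schwarz yields $|\jap{\cL\psi,g}_{L^2_v}|\le\norm{\mu^{-\delta}\cL\psi}_{L^2_v}\norm{\mu^\delta g}_{L^2_v}\lesssim\norm{\mu^\delta g}_{L^2_v}$, i.e.\ \eqref{bd:projhomL}.

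To prove this Gaussian-decay property I would split $\cL\psi=\cL_1\psi+\cL_2\psi$ with $\psi=p\sqrt\mu$. The $\cL_2$-part is immediate from \eqref{bd:L2gh}: testing against an arbitrary $\chi\in L^2_v$ gives $|\jap{\cL_2\psi,\mu^{-\delta}\chi}_{L^2_v}|\lesssim\norm{\mu^\delta\psi}_{L^2_v}\norm{\chi}_{L^2_v}$, hence $\norm{\mu^{-\delta}\cL_2\psi}_{L^2_v}\lesssim\norm{\mu^\delta\psi}_{L^2_v}=\norm{p\,\mu^{\frac12+\delta}}_{L^2_v}<\infty$. For the $\cL_1$-part, recalling $\cL_1\psi=-\Gamma(\sqrt\mu,\psi)=-\tfrac{1}{\sqrt\mu}\cQ(\mu,p\mu)$ and using the energy-conservation identity $\mu\mu_*=\mu'\mu'_*$ from \eqref{eq:consmomen}, the definition \eqref{def:Q} collapses to the explicit expression
\begin{equation*}
\cL_1\psi(v)=-\frac{\sqrt{\mu(v)}}{2}\int_{\RR^d}\int_{\SS^{d-1}}B(v-v_*,\sigma)\,\mu(v_*)\bigl(p(v'_*)-p(v_*)\bigr)\,\dd v_*\,\dd\sigma ,
\end{equation*}
so it remains to bound this double integral by $\brak{v}^N$ for some $N=N(\deg p)$, which then gives $|\mu^{-\delta}\cL_1\psi(v)|\lesssim\mu(v)^{\frac12-\delta}\brak{v}^N\in L^2_v$ for $\delta<\tfrac12$.

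The pointwise bound on this last double integral is the one genuinely technical point. Since $v'_*\to v_*$ as $\theta\to0$, the difference $p(v'_*)-p(v_*)$ supplies the cancellation that tames the angular singularity of $b$; I would make this quantitative with $|v'_*-v_*|\lesssim\sin(\theta/2)|v-v_*|$ (as in \eqref{bd:anvv'}), a first/second-order Taylor expansion of $p$, the cancellation lemma of Alexandre et al.\ \cite{alexandre2000entropy}*{Lemma 1} (already invoked in the proof of Lemma \ref{lemma:commutation}), and the exponential and angular moment estimates of Lemmas \ref{lemma:exp}--\ref{lemma:angular} to absorb, through the Gaussian factor $\mu(v_*)$, the powers of $v_*$ generated by $p$; the outcome is exactly a polynomial-in-$v$ bound. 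No new ideas are needed beyond those already used for the commutator estimates in Section \ref{sec:preliminaries} --- this is a classical regularity feature of the non-cutoff linearized operator --- so I expect the bookkeeping here, rather than any conceptual difficulty, to be the main chore. Combining the $\cL_1$- and $\cL_2$-bounds gives the Gaussian decay of $\cL\psi$ and hence \eqref{bd:projhomL}.
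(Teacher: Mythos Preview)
Your approach is correct and essentially identical to the paper's: Cauchy--Schwarz for \eqref{bd:projhom}, then self-adjointness of $\cL$ to transfer the operator onto the fixed test function $p\sqrt\mu$, followed by the Gaussian decay of $\cL(p\sqrt\mu)$. The paper's proof is in fact terser than yours --- it simply writes $|\Lambda_i[\cL g]|=|\jap{\cL(|v|^2v_i\sqrt\mu),g}_{L^2_v}|\lesssim\norm{\mu^\delta g}_{L^2_v}$ and leaves the Gaussian decay of $\cL(p\sqrt\mu)$ implicit --- so your expansion via the $\cL_1/\cL_2$ split, the explicit formula $\cL_1(p\sqrt\mu)=-\tfrac{\sqrt\mu}{2}\iint B\mu_*(p(v'_*)-p(v_*))\,\dd v_*\dd\sigma$, and the cancellation/angular lemmas is exactly the kind of detail the paper suppresses.
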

\begin{proof}
The proof of \eqref{bd:projhom} is a direct consequence of Cauchy-Schwarz inequality and the fact that $\brak{v}^p\mu^q\in L^2_v$ for any $p\geq 0$ and $q>0$. Similarly, to prove \eqref{bd:projhomL} notice that 
\begin{equation}
|\Lambda_i[\cL(g)]|	=|\jap{(|v|^2-(d+2))v_i\sqrt{\mu},\cL g}_{L^2_v}|=|\jap{\cL(|v|^2v_i\sqrt{\mu}),g}_{L^2_v}|\lesssim \normL{\mu^\delta g}_{L^2_v}
\end{equation}
and analogous bounds holds for $\Theta$, whence proving \eqref{bd:projhomL}.
\end{proof}
We are now ready to prove Lemma \ref{lemma:mixed}. In the proof we again omit the $L^2_v$ subscript in the norms.
 \begin{proof}[Proof of Lemma \ref{lemma:mixed}]
 In \cite[Lemma 4.1]{duan2009stability} the following inequality is obtained (for $\nu=1$)
 \begin{align}
 \label{bd:macroDuan}
 \frac{1}{\nu}\frac{\dd }{\dd t} \mathcal{M}+\tilde{c}_\star\frac{|k|^2}{\nu}|(\hat{\rho},\hat{\sfe},\hat{\sfm})|^2\lesssim \nu\norm{\mu^\delta(I-\bP)\hat{f}}^2,
 \end{align}
 for some $\tilde{c}_\star$ independent of $\nu, k$. For convenience of the reader, we present below the proof of this inequality. However first, having at hand \eqref{bd:macroDuan}, we prove \eqref{bd:mixedTD}. Indeed,  since $|k|\leq \delta_0^{-1}\nu$, observe that
 \begin{equation}
 |k|^2\geq \delta_0^{2\beta}\frac{|tk|^{2\beta}}{\brak{\nu t}^{2\beta}}|k|^2,
 \end{equation}
 for any $|\beta|\geq0$. Appealing to Lemma \ref{lem:equivZP}, we get
 \begin{align}
 |k|^2|(\hat{\rho},\hat{\sfe},\hat{\sfm})|^2&\geq \frac{1}{B+1}\sum_{0\leq |\beta|\leq B}\delta_0^{2\beta}|k|^2\frac{|tk|^{2\beta}}{\brak{\nu t}^{2\beta}}|(\hat{\rho},\hat{\sfe},\hat{\sfm})|^2\\
 \notag&\geq\frac{1}{200(B+1)}\sum_{0\leq |\beta|\leq B}\delta_0^{2\beta}\bigg(\frac{|k|^2}{\brak{\nu t}^{2\beta}}\normL{Z^\beta\widehat{\bP f}}^2-C_1\sum_{0\leq |\widetilde{\beta}|\leq |\beta|-1}\frac{|tk|^{2\widetilde{\beta}}}{\brak{\nu t}^{2\widetilde{\beta}}}|(\hat{\rho},\hat{\sfe},\hat{\sfm})|^2\bigg) \\
 &\quad +\frac{1}{2(B+1)}\sum_{0\leq |\beta|\leq B}\delta_0^{2\beta}|k|^2\frac{|tk|^{2\beta}}{\brak{\nu t}^{2\beta}}|(\hat{\rho},\hat{\sfe},\hat{\sfm})|^2.
 \end{align}
 Since 
 \begin{equation}
 \sum_{0\leq |\beta|\leq B}\delta_0^{2\beta}C_1\sum_{0\leq |\widetilde{\beta}|\leq |\beta|-1}\frac{|tk|^{2\widetilde{\beta}}}{\brak{\nu t}^{2\widetilde{\beta}}}|(\hat{\rho},\hat{\sfe},\hat{\sfm})|^2\leq \delta_0^{2}BC_1\sum_{0\leq |\widetilde{\beta}|\leq B-1}\delta_0^{2\widetilde{\beta}}\frac{|tk|^{2\widetilde{\beta}}}{\brak{\nu t}^{2\widetilde{\beta}}}|(\hat{\rho},\hat{\sfe},\hat{\sfm})|^2,
 \end{equation} 
 for $\delta_0$ sufficiently small we deduce that 
 \begin{equation}
 |k|^2|(\hat{\rho},\hat{\sfe},\hat{\sfm})|^2\geq \frac{1}{200(B+1)}\sum_{0\leq |\beta|\leq B}\delta_0^{2\beta}\frac{|k|^2}{\brak{\nu t}^{2\beta}}\normL{Z^\beta\widehat{\bP f}}^2.
 \end{equation}
 Combining the inequality above with \eqref{bd:macroDuan}, we prove \eqref{bd:mixedTD} with $\tilde{c}:=\tilde{c}_\star\delta_0^{2\beta}/(200(B+1))$.

 To prove \eqref{bd:macroDuan}, applying $\nabla_x$ to \eqref{eq:dte0} and taking the Fourier transform of the resulting equation and of \eqref{eq:Lambda0}, we get 
	\begin{align}
		\label{eq:mixe1}\frac{\dd }{\dd t}\Re(\Lambda&[(I-\bP)\hat{f}]\cdot  \widehat{ik\sfe})+|k|^2|\hat{\sfe}|^2\\
	\label{eq:mixe2}		\leq\,&C_{\sfe}\big(|k|^2|\hat{\sfm}||\Lambda[(I-\bP)\hat{f}]| +|k|^2|\Lambda[(I-\bP)\hat{f}]|^2\\
	\label{eq:mixe3}	&+|k||\Lambda[ik \cdot v(I-\bP)\hat{f}]||\hat{\sfe}|+\nu|k||\Lambda[\cL((I-\bP)\hat{f}]|  |\hat{\sfe}|\big)\\
	:=\,&C_{\sfe}\big(\cI^\sfe_1+\dots +\cI^\sfe_4\big)
	\end{align}
From Young's inequality and Lemma \ref{lem:ThetaLambda} we get for some $\widetilde{C}_j$'s, 
\begin{align}
    \cI^\sfe_1&\leq\frac{b_1}{16C_{\sfe}}|k|^2|\hat{\sfm}|^2+\widetilde{C}_1|k|^2\normL{\mu^\delta(I-\bP)\hat{f}}^2,\\
    \cI^\sfe_2&\leq \widetilde{C}_2|k|^2\normL{\mu^\delta(I-\bP)\hat{f}}^2,\\
    \cI^\sfe_3&\leq\frac{|k|^2}{16C_{\sfe}}|\hat{\sfe}|^2+\widetilde{C}_3|k|^2\normL{\mu^\delta (I-\bP)\hat{f}}^2,\\
\cI^\sfe_4&\leq  \frac{|k|^2}{16C_{\sfe}} |\hat{\sfe}|^2+\widetilde{C}_4\nu^2\normL{\mu^\delta(I-\bP)\hat{f}}^2.
\end{align}
Since $|k|\lesssim \nu$, using the bounds above in \eqref{eq:mixe1} we obtain 
\begin{align}
			\label{eq:mixe}\frac{\dd }{\dd t}\Re(\Lambda[(I-\bP)\hat{f}]\cdot  ik \hat{\sfe})+\frac12|k|^2|\hat{\sfe}|^2\leq\frac{b_1}{16}|k|^2 |\hat{\sfm}|^2+\widetilde{C}\nu^2\normL{\mu^\delta(I-\bP)\hat{f}}^2.
\end{align}
Consider now the second term in $\cM$.
From \eqref{eq:Theta0} and \eqref{eq:detm0}, one obtains
\begin{align}
\label{eq:dtmixm0}	\frac{\dd }{\dd t}&\Re\bigg(\big(\Theta[(I-\bP)\hat{f}]+2(\hat{\sfe} I)\big):\big((ik\hat{\sfm}+(ik\hat{\sfm})^T)\big)\bigg)+|k\otimes\hat{\sfm}+(k\otimes\hat{\sfm})^T|^2\\
\lesssim\,&|k|^2(|\hat{\sfe}|+| \hat{\rho}|)(|\Theta[(I-\bP)\hat{f}]|+|\hat{\sfe}|)+|k|^2|\Theta[(I-\bP)\hat{f}]|(|\Theta[(I-\bP)\hat{f}]|+|\hat{\sfe}|)\\
&+|k||\hat{\sfm}|\big(|\Theta[\cF(v\cdot \nabla_x((I-\bP)f))]|+\nu\Theta[\cL((I-\bP)\hat{f})]|\big).
\end{align}
 Notice that 
\begin{align}
| k\otimes\hat{\sfm}+(k\otimes\hat{\sfm})^T|^2=\sum_{i,j=1}^d(k_j \hat{\sfm}_{i}+k_i\hat{\sfm}_{j})^2=(|k|^2|\hat{\sfm}|^2+|k\cdot \hat{\sfm}|^2).
\end{align}
Hence, using Young's inequality and Lemma \ref{lem:ThetaLambda}, similarly to \eqref{eq:mixe}, there exists $C_{\sfm}$ (independent of $\sfm$) such that
\begin{align}
	\label{eq:dtmixm}	\frac{\dd }{\dd t}&\Re\bigg(\big(\Theta[(I-\bP)\hat{f}]+2(\hat{\sfe} I)\big):\big((ik\hat{\sfm}+(ik\hat{\sfm})^T)\big)\bigg)+\frac12|k|^2|\hat{\sfm}|^2\\
	\label{eq:dtmixmC}&\qquad\leq\frac{b_2}{16}|\hat{\rho}|^2+C_{\sfm}|k|^2|\hat{\sfe}|^2+\widetilde{C}_{\sfm}\nu^2\normL{\mu^\delta(I-\bP)\hat{f}}^2.
\end{align}
Arguing analogously for the remaing mixed inner product, we infer there exists $C_{\rho}$ (independent of $\rho$) such that
\begin{align}
		\label{eq:dtmixrhoF}\frac{\dd }{\dd t}&\operatorname{Re}( \hat{\sfm}\cdot ik \hat{\rho})+\frac{|k|^2}{2}|\hat{\rho}|^2\leq C_{\rho}|k|^2 (|\hat{\sfm}|^2+|\hat{\sfe}|^2).
\end{align}
In light of \eqref{bd:equivZP}, choosing $b_2\ll b_1\ll 1$ and combining \eqref{eq:mixe}, \eqref{eq:dtmixm}, \eqref{eq:dtmixrhoF} we prove the bound \eqref{bd:macroDuan} for a suitable constant $\tilde{c}_\star$.
 \end{proof}
 
 \begin{remark}
 \label{rem:nutbeta}
In the proof of Lemma \ref{lemma:mixed}, it is crucial to exploit the factor $\brak{\nu t}^{2\beta}$ to recover dissipation for all $Z^\beta \bP f$ by using only the one available for $(\rho,\sfm,\sfe)$ (corresponding to $\beta =0$). In fact, one can also define the mixed inner product for $(t\nabla_x)^{\otimes \beta}(\rho,\sfm,\sfe)$ and try to obtain an estimate as \eqref{bd:mixedTD}. However, for $|\beta|\geq 1$ we have some dangerous error terms coming from $\nu\cL(I-\bP)f$ in \eqref{eq:Theta0}-\eqref{eq:Lambda0}, where the main errors are proportional to
\begin{equation}
\nu\normL{\mu^\delta Z^\beta (I-\bP)f}_{L^2_v}^2.
\end{equation}
For $\beta=0$ this is fine since, by the standard $L^2_v$ energy estimate, in the dissipation functional we  have a term that scales as $\nu\normL{\brak{v}^{\gamma/2+s}(I-\bP)f}_{L^2_v}^2$. On the other hand, the available dissipation on $Z^\beta(I-\bP)f$ has at most a factor $\nu^{-1}|k|^2$ in front, which is much smaller than $\nu$ in the regime $|k|\ll \nu$. To overcome this difficulty, one possibility is to exploit the estimate 
\begin{equation}
\normL{\mu^\delta Z^\beta (I-\bP)f}_{L^2_v}^2\lesssim \brak{\nu t}^2\sum_{0\leq |\widetilde{\beta}|\leq |\beta|-1} \normL{\mu^\delta Z^{\widetilde{\beta}} (I-\bP)f}_{L^2_v}^2.
\end{equation}
The loss $\brak{\nu t}^2$ can indeed be easily controlled if we divide by $\brak{\nu t}^{2\beta}$ each mixed inner product related to  $(t\nabla_x)^{\otimes \beta}(\rho,\sfm,\sfe)$.  Hence, dividing by $\brak{\nu t}^{2\beta}$ seems to be necessary to handle frequencies $|k|\ll \nu$, where the effect of the phase mixing generated by the transport can be too weak with respect to the collisional effects. 
 \end{remark}
 
 \subsubsection{Bounds on the microscopic part}
\label{subsec:micro}
In the remains to control the microscopic quantities in $E^{T.d.}_{M,B}$. Recall that $(I-\bP)f$ satisfies 
\begin{equation}
\label{eq:BoltzTayI-P}
\de_t(I-\bP) f+v\cdot \nabla_x (I-\bP)f+\nu\cL(I-\bP)f=\bP(v\cdot \nabla_x f)-v\cdot \nabla_x \bP f.
\end{equation}
In the following lemma, we collect the bounds that are needed to recover the dissipation on the microscopic part of $f$, which is a key ingredient to prove \eqref{bd:energyTD}. \begin{lemma}
	\label{lemma:microTD}
	Let $M>2\max\{|q_{\gamma,s}|,|\gamma|/2+s\}$, $\beta\geq 0$ and $\tilde{c}$ be the constant in \eqref{bd:mixedTD}. There exists $0<\tilde{\delta}<1$ such that the following inequalities holds true: for the unweighted vector fields of the full solution we have
	\begin{align}
		\label{bd:enknuZf}\frac12\frac{|k|}{\nu}\frac{\dd }{\dd t}&\normL{Z^{\beta}\hat{f}}^2_{L^2_v}+\tilde{c}| k| \cA[Z^{\beta}(I-\bP)\hat{f}]\leq\, \tilde{\delta}\nu\sum_{|\tilde{\beta}|\leq |\beta|}\cA[Z^{\tilde{\beta}}(I-\bP)\hat{f}]\\
		&+C_{\tilde{\delta}}\frac{|k|^2}{\nu}\sum_{|\beta_1|\leq|\beta|-1}\normL{Z^{\beta_1}\bP \hat{f}}^2_{L^2_v}+C|k|\sum_{|\beta_1|\leq |\beta|-1}\cA[Z^{\beta_1}(I-\bP)\hat{f}].
	\end{align}	
For the unweighted vector fields of the microscopic part we get
	\begin{align}
	\label{bd:enZI-Pf}\frac{1}{2}\frac{\dd }{\dd t}\normL{Z^{\beta}(I-\bP)\hat{f}}^2_{L^2_v}+\tilde{c}\nu \cA[Z^{\beta}(I-\bP)\hat{f}]
	\leq\,	&C_{\tilde{\delta}}\frac{|k|^2}{\nu}\sum_{|\tilde{\beta}|\leq|\beta|}\normL{Z^{\tilde{\beta}}\bP \hat{f}}^2_{L^2_v}\\
	&+C(\nu+|k|)\sum_{|\beta_1|\leq |\beta|-1}\cA[Z^{\beta_1}(I-\bP)\hat{f}],
\end{align}	
and
	\begin{align}
	\notag\frac{1}{2}\frac{\dd }{\dd t}&\normL{Z^{\beta}\nabla_v(I-\bP)\hat{f}}^2_{L^2_v}+\tilde{c}\nu \cA[Z^{\beta}\nabla_v(I-\bP)\hat{f}]
	\leq\,	{C}\nu (\cA[Z^{\beta}(I-\bP)\hat{f}]+\cA[\jap{v}^MZ^{\beta}(I-\bP)\hat{f}])\\
	\label{bd:enZI-Pfnabla}	&+{C}\frac{|k|^2}{\nu}\sum_{|\tilde{\beta}|\leq|\beta|}\normL{Z^{\tilde{\beta}}\bP \hat{f}}^2_{L^2_v}+C(\nu+|k|)\sum_{|\beta_1|\leq |\beta|-1}\cA[Z^{\beta_1}\nabla_v(I-\bP)\hat{f}]. 
\end{align}	
For the weighted vector fields of the microscopic part we obtain
	\begin{align}
	\label{bd:enZvMI-Pf}\frac{1}{2}\frac{\dd }{\dd t}&\normL{\jap{v}^MZ^{\beta}(I-\bP)\hat{f}}^2_{L^2_v}+\tilde{c}\nu \cA[\jap{v}^MZ^{\beta}(I-\bP)\hat{f}]
	\leq C_{\tilde{\delta}}\nu \cA[Z^\beta(I-\bP)\hat{f}] \\
	&+C_{\tilde{\delta}}\frac{|k|^2}{\nu}\sum_{|\tilde{\beta}|\leq|\beta|}\normL{Z^{\tilde{\beta}}\bP \hat{f}}^2_{L^2_v}+C(\nu+|k|)\sum_{|\beta_1|\leq |\beta|-1}\cA[Z^{\beta_1}(I-\bP)\hat{f}],
\end{align}	
and
\begin{align}
	\label{bd:enZvMI-Pfnabla}\frac{1}{2}\frac{\dd }{\dd t}&\normL{\jap{v}^{M+q_{\gamma,s}}Z^{\beta}\nabla_v(I-\bP)\hat{f}}^2_{L^2_v}+\tilde{c}\nu \cA[\jap{v}^{M+q_{\gamma,s}}\nabla_vZ^{\beta}(I-\bP)\hat{f}]
	\\
	&\leq\tilde{\delta}\frac{|k|^2}{\nu}\normL{\jap{v}^{M+q_{\gamma,s}}(I-\bP)\hat{f}}^2_{L^2_v}\\
	&\quad +C_{\tilde{\delta}}\nu (\cA[Z^\beta\nabla_v(I-\bP)\hat{f}] +\cA[Z^{\beta}(I-\bP)\hat{f}]+\cA[\jap{v}^MZ^{\beta}(I-\bP)\hat{f}])\\
	&\quad +C_{\tilde{\delta}}\frac{|k|^2}{\nu}\sum_{|\tilde{\beta}|\leq|\beta|}\normL{Z^{\tilde{\beta}}\bP \hat{f}}^2_{L^2_v}+C(\nu+|k|)\sum_{|\beta_1|\leq |\beta|-1}\cA[Z^{\beta_1}(I-\bP)\hat{f}].
\end{align}	
Finally, for the microscopic mixed inner product we have
\begin{align}
	\label{bd:enTDmixedI-P}
	\frac{1}{\nu}\frac{\dd}{\dd t}&\Re \jap{\jap{v}^{M+q_{\gamma,s}}Z^\beta (\nabla_x(I-\bP)f)_k,\jap{v}^{M+q_{\gamma,s}}Z^\beta\nabla_v (I-\bP)_k}_{L^2_v}\\
	&+\frac{|k|^2}{\nu}\normL{\jap{v}^{M+q_{\gamma,s}}(I-\bP)\hat{f}}^2_{L^2_v}\\
	&\leq C\nu (\cA[\brak{v}^{M+q_{\gamma,s}}Z^\beta\nabla_v(I-\bP)\hat{f}] +\cA[\brak{v}^MZ^{\beta}(I-\bP)\hat{f}])\\
	&\quad +C\frac{|k|^2}{\nu}\sum_{|\tilde{\beta}|\leq|\beta|}\normL{Z^{\tilde{\beta}}\bP \hat{f}}^2_{L^2_v}+C(\nu+|k|)\sum_{|\beta_1|\leq |\beta|-1}\cA[Z^{\beta_1}(I-\bP)\hat{f}].
\end{align}
\end{lemma}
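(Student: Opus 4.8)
The plan is to prove all six inequalities by one and the same weighted $L^2_v$ energy argument, varying only the evolution equation and the differential operator one starts from. For \eqref{bd:enknuZf} one applies $Z^\beta$ to the full linearised equation \eqref{def:Link}, pairs with $Z^\beta\hat f$ in $L^2_v$ and takes the real part; for the other five one applies the appropriate operator — $Z^\beta$, $Z^\beta\nabla_v$, or one of these times the weight $\jap{v}^M$ (resp. $\jap{v}^{M+q_{\gamma,s}}$) — to the microscopic equation \eqref{eq:BoltzTayI-P} and pairs with the same (weighted) quantity, the exception being \eqref{bd:enTDmixedI-P}, where one instead differentiates the mixed inner product using the coupled equations for $Z^\beta\nabla_x(I-\bP)f$ and $Z^\beta\nabla_v(I-\bP)f$ exactly as in the derivation of \eqref{bd:enmixed}. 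In every case $ik\cdot v$ is purely imaginary, so it cancels in all self-pairings (real weights $\jap{v}^M$ commute with multiplication by $ik\cdot v$ and do not spoil this), while in the mixed pairing of \eqref{bd:enTDmixedI-P} it is exactly what produces the coercive term $\tfrac{|k|^2}{\nu}\normL{\jap{v}^{M+q_{\gamma,s}}(I-\bP)\hat f}^2_{L^2_v}$. The collision term yields the leading dissipation via $\jap{\cL g,g}_{L^2_v}\gtrsim\cA[(I-\bP)g]$ from Proposition \ref{lemma:coercive}: $\nu\jap{\cL Z^\beta g,Z^\beta g}_{L^2_v}$ gives $\nu\cA[(I-\bP)Z^\beta g]$, which after multiplying \eqref{bd:enknuZf} by $|k|/\nu$ becomes the $|k|\cA$ dissipation; relating $\cA[(I-\bP)Z^\beta g]$ to $\cA[Z^\beta(I-\bP)g]$ costs the commutator $[\bP,Z^\beta]g$, which by (the argument behind) Lemma \ref{lem:equivZP} is a sum of lower-order $\jap{v}$-localised pieces, hence bounded by $\sum_{|\beta_1|\le|\beta|-1}\normL{\mu^\delta Z^{\beta_1}(I-\bP)g}^2_{L^2_v}$ and by $\sum\normL{Z^{\tilde\beta}\bP\hat f}^2_{L^2_v}$.

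The second ingredient is the macroscopic source $\bP(v\cdot\grad_x f)-v\cdot\grad_x\bP f$ in \eqref{eq:BoltzTayI-P}. In Fourier this equals $ik$ acting on moments of $\hat f$, so it is $\lesssim|k|$ times a Gaussian profile whose pointwise size is $\lesssim|(\hat\rho,\hat\sfm,\hat\sfe)|+\normL{\mu^\delta(I-\bP)\hat f}_{L^2_v}$; after applying $Z^\beta$ (and, where relevant, $\nabla_v$ and the weight) and using Lemma \ref{lem:equivZP} it is controlled by $|k|\big(\sum_{|\tilde\beta|\le|\beta|}\normL{Z^{\tilde\beta}\bP\hat f}_{L^2_v}+\sum_{|\tilde\beta|\le|\beta|}\normL{\mu^\delta Z^{\tilde\beta}(I-\bP)\hat f}_{L^2_v}\big)$. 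Pairing with the $(I-\bP)$-quantity and applying Young's inequality in the form $|k|AB\le\tfrac{|k|^2}{\nu}A^2+\nu B^2$ deposits the macroscopic factor into the $C_{\tilde\delta}\tfrac{|k|^2}{\nu}\sum\normL{Z^{\tilde\beta}\bP\hat f}^2_{L^2_v}$ term and the microscopic factor — which satisfies $\normL{\mu^\delta g}_{L^2_v}\lesssim\normL{g}_{L^2_{v,\gamma/2}}\lesssim\sqrt{\cA[g]}$ because a Gaussian beats any polynomial — into a small multiple of the $\nu\cA$-dissipation. This is where the standing assumption $|k|\le\delta_0^{-1}\nu$ of the Taylor regime is essential: it makes $\tfrac{|k|^2}{\nu}$ the small weight and lets leftover $|k|\,\cA[Z^{\beta_1}(I-\bP)\hat f]$ terms with $|\beta_1|\le|\beta|-1$ be carried into the lower-order sums.

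The third and trickiest ingredient concerns the $\nabla_v$-estimates \eqref{bd:enZI-Pfnabla}, \eqref{bd:enZvMI-Pfnabla}, \eqref{bd:enTDmixedI-P} and the soft-potential degeneracy of $\cA$. Since $[\nabla_v,v\cdot\grad_x]=\grad_x$, i.e. $ik$ in Fourier, applying $\nabla_v$ to \eqref{eq:BoltzTayI-P} introduces the extra term $-ikZ^\beta(I-\bP)\hat f$ in the equation for $Z^\beta\nabla_v(I-\bP)\hat f$; pairing with $Z^\beta\nabla_v(I-\bP)\hat f$ yields the cross term $-\Re\jap{ikZ^\beta(I-\bP)\hat f,Z^\beta\nabla_v(I-\bP)\hat f}_{L^2_v}$, which is absorbed by Young's inequality using $|k|\ll\nu$ in the unweighted case, and kept in the weighted case \eqref{bd:enZvMI-Pfnabla} to be absorbed a posteriori by the microscopic mixed inner product \eqref{bd:enTDmixedI-P} (whose time-derivative, computed as for \eqref{bd:enmixed}, supplies the $\tilde\delta\tfrac{|k|^2}{\nu}\normL{\jap{v}^{M+q_{\gamma,s}}(I-\bP)\hat f}^2_{L^2_v}$ on its right-hand side) — in exact analogy with how $\jap{ik\,\cdot,\nabla_v\,\cdot}$ is treated in the enhanced-dissipation energy. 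Because $\cA$ is weaker than $L^2_v$ for $\gamma<0$, these cross terms and the commutators $[\nabla_v,\jap{v}^{M+q_{\gamma,s}}]$, $[\jap{v}^M,\cL]$, $[Z^\beta,\cL]$ cannot be absorbed naively: one trades non-coercive $\normL{\jap{v}^\ell\nabla_v\cdot}^2$-type terms against a small fraction of $\cA[\jap{v}^{M+q_{\gamma,s}}\nabla_v\cdot]$ plus lower weights via the Gagliardo--Nirenberg interpolation \eqref{bd:interpolation}, exactly as in the proof of the enhanced-dissipation monotonicity estimate, and uses the commutator bounds \eqref{bd:commZ}, \eqref{bd:commZL2}, \eqref{bd:commWl}, \eqref{bd:lowerWZ} of Lemma \ref{lemma:commutation}.

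Finally, one notes that the $[Z^\beta,\cL]$ commutators always involve at most $|\beta|-1$ vector fields (by the Leibniz formula \eqref{eq:ZLeib} and \eqref{bd:leibcomm}), so every error on the right-hand sides lands either in the lower-order sums $\sum_{|\beta_1|\le|\beta|-1}\cA[Z^{\beta_1}\cdots]$ or in $\sum_{|\tilde\beta|\le|\beta|}\normL{Z^{\tilde\beta}\bP\hat f}^2_{L^2_v}$, which is exactly the structure needed for the inductive closure of \eqref{bd:energyTD4} when one assembles $E^{T.d.}_{M,B}$ and sums over $\beta$. The main obstacle is not any single estimate but the simultaneous bookkeeping: one must arrange all six $\nu$-versus-$\tfrac{|k|^2}{\nu}$ (and $\nu$-versus-$|k|$) balances consistently so that every $\bP\hat f$-error carries the small prefactor $\tfrac{|k|^2}{\nu}$ — the only form in which it can later be absorbed by the macroscopic dissipation \eqref{bd:mixedTD} weighted by $c_0/\nu$ — while scrupulously respecting that $\cA$ controls only sub-$L^2_v$ norms for deep soft potentials, which is what forces the systematic use of velocity weights, interpolation, and the dyadic splitting throughout.
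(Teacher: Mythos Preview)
Your outline captures the paper's approach faithfully in almost every respect: the six estimates are indeed all weighted $L^2_v$ energy identities combined with coercivity of $\cL$, the commutator bounds of Lemma~\ref{lemma:commutation}, the macroscopic source estimate via Young's inequality $|k|AB\le\tfrac{|k|^2}{\nu}A^2+\nu B^2$, and --- for the $\nabla_v$ estimates --- the interpolation/Gagliardo--Nirenberg device you describe. The treatment of the mixed inner product \eqref{bd:enTDmixedI-P} is also exactly as in the paper.

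There is, however, one ingredient you do not mention and which the paper singles out as ``crucial'': a top-order cancellation in the cross term between $\cL$ and the macroscopic projection. For \eqref{bd:enknuZf} the energy identity, after writing $Z^\beta\hat f=Z^\beta(I-\bP)\hat f+Z^\beta\bP\hat f$, produces the cross term
\[
\nu\,\Re\big\langle Z^\beta\cL(I-\bP)\hat f,\ Z^\beta\bP\hat f\big\rangle_{L^2_v},
\]
and the paper uses the identity $\big\langle(t\nabla_x)^\beta\cL(I-\bP)\hat f,(t\nabla_x)^\beta\bP\hat f\big\rangle=0$ (valid because $(t\nabla_x)^\beta$ commutes with both $\cL$ and $\bP$, while $\cL\bP=0$) to force the \emph{macroscopic} factor down to order $\le|\beta|-1$; see \eqref{eq:nicecanc} and the decomposition \eqref{eq:japmicmac1}--\eqref{eq:japmicmac2} leading to \eqref{bd:ZI-PZP}. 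After Young's inequality this is what yields the lower-order sum $\tfrac{|k|^2}{\nu}\sum_{|\beta_1|\le|\beta|-1}\normL{Z^{\beta_1}\bP\hat f}^2$ in the statement. If instead one bounds $\langle[Z^\beta,\cL](I-\bP)\hat f,Z^\beta\bP\hat f\rangle$ directly via Lemma~\ref{lemma:commutation} --- which is what your last paragraph suggests --- then the commutator lowers the \emph{microscopic} factor to order $\le|\beta|-1$ but leaves $Z^\beta\bP\hat f$ at top order, and Young's inequality produces $C_{\tilde\delta}\tfrac{|k|^2}{\nu}\normL{Z^\beta\bP\hat f}^2$ on the right-hand side, which is \emph{not} in the stated \eqref{bd:enknuZf}. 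An analogous cancellation \eqref{eq:canc} is invoked for the source term $\bP(ik\cdot v\hat f)$ in the proof of \eqref{bd:enZI-Pf}, though there it only sharpens the bound rather than being structurally indispensable, since \eqref{bd:enZI-Pf} does carry a top-order macroscopic sum anyway.

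The distinction matters for the overall closure: the $\tfrac{|k|}{\nu}\normL{Z^\beta\hat f}^2$ piece of $E^{T.d.}_{M,B}$ carries no small prefactor $c_j$, so a top-order macroscopic error with large constant $C_{\tilde\delta}$ cannot be directly absorbed by the macroscopic dissipation from $\cM$ (which comes with the small factor $c_0\tilde c$). The paper's cancellation sidesteps this cleanly and delivers \eqref{bd:enknuZf} exactly as stated; your route would instead require leaning on the $2^{-\mathtt{C}_0\beta}$ weights for $|\beta|\ge1$ to recover absorbability, and in any case would prove a variant of the inequality rather than the one written.
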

\begin{proof} 
We omit the subscript $L^2_v$ for convenience of notation. From \eqref{def:Link} 
we compute that 
	\begin{align}
		\label{eq:enIdZf}
		\frac12\frac{\dd }{\dd t}\normL{Z^\beta \hat{f}}^2&+\nu \jap{\cL(Z^{\beta}(I-\bP)\hat{f}),Z^{\beta}(I-\bP)\hat{f}}=-\nu\Re\jap{Z^\beta\cL(I-\bP)\hat{f},Z^{\beta}\bP \hat{f}}\\
	\notag	&\quad\quad+\nu\Re\jap{[\cL,Z^\beta](I-\bP)\hat{f}, Z^{\beta}(I-\bP)\hat{f}}.
	\end{align}
Appealing to \eqref{bd:coerL}, since $(a-b)^2\geq a^2/2-2b^2$, we get 
\begin{align}
	\jap{\cL(Z^{\beta}(I-\bP)\hat{f}),Z^{\beta}(I-\bP)\hat{f}}&\geq C_1\cA[(I-\bP)Z^{\beta}(I-\bP)\hat{f}]\\
	\label{bd:lwZI-Pf}	&\geq \frac{C_1}{2}\cA[Z^\beta(I-\bP)\hat{f}]-2C_1\cA[\bP(Z^{\beta}(I-\bP)\hat{f})].
\end{align}
By the definition of $Z^\beta$ and the fact that $\bP((t\nabla_x)^{\beta}(I-\bP)\hat{f})=0$ for any multi-index $\beta$, notice that
\begin{align}
	\bP(Z^\beta(I-\bP)\hat{f})&=\bP\bigg(\sum_{\substack{|\beta_1|+|\beta_2|=|\beta|\\
|\beta_1|\leq |\beta|-1}}C_{\beta_1,\beta_2}(itk)^{\beta_1}(\nabla_v)^{\beta_2}(I-\bP)\hat{f}\bigg)\\
&=\bP\bigg(\sum_{\substack{|\beta_1|+|\beta_2|=|\beta|\\
		|\beta_1|\leq |\beta|-1}}\sum_{|\beta_3|+|\beta_4|=|\beta_1|}C_{\beta_1,\beta_2,\beta_3,\beta_4}(\nabla_v)^{\beta_2+\beta_4}Z^{\beta_3}(I-\bP)\hat{f}\bigg).
\end{align}
Since we are doing the projection onto the kernel of $\cL$, we can safely move all the $v$-derivatives and weights on $\sqrt{\mu}$. Thus
\begin{align}
	\label{bd:PZI-P}
	\cA[\bP(Z^{\beta}(I-\bP)\hat{f})]\lesssim \sum_{|\beta_1|\leq {|\beta|-1}}\cA[Z^{\beta_{1}}(I-\bP)\hat{f}].
\end{align}
From the commutation properties \eqref{bd:commZ}-\eqref{bd:commZL2} and \eqref{bd:equivA}, we obtain
\begin{equation}
	\label{bd:commZI-PI-P}
	\big|\jap{[\cL,Z^\beta](I-\bP)\hat{f}, Z^{\beta}(I-\bP)\hat{f}}\big|\leq \frac{C_1}{200}\cA[{Z^{\beta}(I-\bP)\hat{f}}]+ C\sum_{|\beta_1|\leq |\beta|-1}\cA[Z^{\beta_{1}}(I-\bP)\hat{f}].
\end{equation}
It remains to control the term involving the macroscopic part, where we crucially exploit the cancellation
\begin{equation}
	\label{eq:nicecanc}
	\jap{(t\nabla_x)^{\beta}\cL(I-\bP)\hat{f},(t\nabla_x)^\beta\bP \hat{f}}=0
\end{equation}
to write 
\begin{align}
\label{eq:japmicmac1}	&\jap{Z^\beta\cL(I-\bP)\hat{f},Z^{\beta}\bP \hat{f}}=\sum_{\substack{|\beta_1|+|\beta_2|=|\beta|\\
		|\beta_1|\leq |\beta|-1}}C_{\beta_1,\beta_2}\jap{\cL[(itk)^\beta(I-\bP)\hat{f}],(itk)^{\beta_1}(\nabla_v)^{\beta_2}\bP \hat{f}}\\
\label{eq:japmicmac2}	&\quad +\sum_{\substack{|\beta_1|+|\beta_2|=|\beta|\\
			|\beta_1|\leq |\beta|-1}}C_{\beta_1,\beta_2}\jap{(itk)^{\beta_1}(\nabla_v)^{\beta_2}\cL[(I-\bP)\hat{f}],(itk)^{\beta}\bP \hat{f}}\\
	\notag	&\quad +\sum_{\substack{|\beta_1|+|\beta_2|=|\beta|\\
				|\beta_3|+|\beta_4|=|\beta|\\
				|\beta_1|\leq |\beta|-1, \, |\beta_3|\leq |\beta|-1}}C^{\beta_1,\beta_2}_{\beta_3,\beta_4}\jap{(itk)^{\beta_1}(\nabla_v)^{\beta_2}\cL[(I-\bP)\hat{f}],(itk)^{\beta_3}(\nabla_v)^{\beta_4}\bP \hat{f}}.
\end{align} 
Since $(t\nabla_x)^{\beta}=(Z-\nabla_v)^\beta$, moving all the $v$-derivatives on the Maxwellian,  it is not hard to show that 
\begin{equation}
	\notag
\big|\jap{\cL[(itk)^\beta(I-\bP)\hat{f}],(itk)^{\beta_1}(\nabla_v)^{\beta_2}\bP \hat{f}}\big|\lesssim |tk|^{\beta_1}|(\hat{\rho},\hat{\sfm},\hat{\sfe})|\sum_{|\tilde{\beta}|\leq |\beta|}\sqrt{\cA[Z^{\tilde{\beta}}(I-\bP)\hat{f}]}
\end{equation}
For the term in \eqref{eq:japmicmac2}, we first move $(itk)^{\beta}$ on $I-\bP$ and $(itk)^{\beta_1}(\nabla_v)^{\beta_2}$ on $\bP$ and then argue as above. The remaining terms are lower order and can be controlled analogously to finally obtain 
\begin{align}
\label{bd:ZI-PZP}	\big|\jap{Z^\beta\cL(I-\bP)\hat{f},Z^{\beta}\bP \hat{f}}\big|\lesssim\,& \sqrt{\cA[Z^\beta(I-\bP)\hat{f}]}\sum_{|\beta_1|\leq |\beta|-1}|tk|^{\beta_1}|(\hat{\rho},\hat{\sfm},\hat{\sfe})|\\
	&+\sum_{|\beta_1|\leq |\beta|-1}\sqrt{\cA[Z^{\beta_1}(I-\bP)\hat{f}]}|tk|^{\beta_1}|(\hat{\rho},\hat{\sfm},\hat{\sfe})|.
\end{align}
Combining \eqref{eq:enIdZf}, \eqref{bd:lwZI-Pf}, \eqref{bd:PZI-P}, \eqref{bd:commZI-PI-P} and \eqref{bd:ZI-PZP} we have 
	\begin{align}
	\label{bd:nukZf}
	\frac12\frac{|k|}{\nu}\frac{\dd }{\dd t}&\normL{Z^\beta \hat{f}}^2+|k|\frac{C_1}{100} \notag\cA[Z^\beta(I-\bP)\hat{f}]\lesssim |k|\sqrt{\cA[Z^{\beta_1}(I-\bP)\hat{f}]}\sum_{|\beta_1|\leq |\beta|-1}|tk|^{\beta_1}|(\hat{\rho},\hat{\sfm},\hat{\sfe})|\\
	&+|k|\sum_{|\beta_1|\leq |\beta|-1}\sqrt{\cA[Z^{\beta_1}(I-\bP)\hat{f}]}|tk|^{\beta_1}|(\hat{\rho},\hat{\sfm},\hat{\sfe})|+|k|\sum_{|\beta_1|\leq |\beta|-1}\cA[Z^{\beta_1}(I-\bP)\hat{f}].
\end{align}
From Young's inequality, for any $\tilde{\delta}>0$ we have 
\begin{align}
	\label{bd:err1}|k|&\sqrt{\cA[Z^{\beta_1}(I-\bP)\hat{f}]}\sum_{|\beta_1|\leq |\beta|-1}|(tk)^{\beta_1}(\hat{\rho},\hat{\sfm},\hat{\sfe})|\leq\, \tilde{\delta} \nu \cA[Z^{\beta}(I-\bP)\hat{f}]\\
	\notag&\quad +\tilde{\delta}^{-1}\frac{|k|^2}{\nu}\sum_{|\beta_1|\leq |\beta|-1}|tk|^{2\beta_1}|(\hat{\rho},\hat{\sfm},\hat{\sfe})|^2.
\end{align}
Arguing analogously for the remaining terms and taking into account Lemma \ref{lem:equivZP}, we prove \eqref{bd:enknuZf}.

To prove \eqref{bd:enZI-Pf}, from \eqref{eq:BoltzTayI-P} we compute  
	\begin{align}
	\label{eq:enIdZI-Pf}\frac12\frac{\dd }{\dd t}&\normL{Z^\beta (I-\bP)\hat{f}}^2+\nu \jap{\cL(Z^{\beta}(I-\bP)\hat{f}),Z^{\beta}(I-\bP)\hat{f}}\\
	\label{eq:enIdZI-Pf0}	&=\Re\jap{Z^\beta\bP(ik\cdot v\hat{f}),Z^\beta(I-\bP)\hat{f}} -	\Re\jap{Z^\beta(ik\cdot v\bP \hat{f}),Z^\beta(I-\bP)\hat{f}}\\
	&\qquad +\nu\jap{[\cL,Z^\beta](I-\bP)\hat{f}, Z^{\beta}(I-\bP)\hat{f}}.
\end{align}
For the last term in \eqref{eq:enIdZI-Pf0}, using the Cauchy-Schwarz inequality and standard properties of the Maxwellian we get 
\begin{align}
	\label{bd:ZvPZI-P}\big|\jap{Z^\beta(ik\cdot v  \bP \hat{f}),Z^\beta(I-\bP)\hat{f}}\big|\leq \,&\tilde{\delta} \nu\cA[Z^\beta(I-\bP)\hat{f}]\\
	&+\tilde{\delta}^{-1}\frac{|k|^2}{\nu}\sum_{|\tilde{\beta}|\leq |\beta|}|tk|^{2\tilde{\beta}}|(\hat{\rho},\hat{\sfm},\hat{\sfe})|^2.
\end{align} 
Notice that here we pay a large constant in front of the macroscopic dissipation up to order $|\beta|$. This is why we have to choose $c_1\ll c_0$ in the definition of the energy functional \eqref{def:ETd}.

To control the first term on the right-hand side of \eqref{eq:enIdZI-Pf0}, we can again exploit a cancellation as in \eqref{eq:nicecanc} to remove the highest order  $t\nabla_x$-derivatives. In particular, we have
\begin{equation}
	\label{eq:canc}
	\jap{(tik)^\beta\bP(ik\cdot v\hat{f}),(tik)^\beta(I-\bP)\hat{f}}=0.
\end{equation}
Hence, writing $(ik\cdot v\hat{f})=(ik\cdot v\bP\hat{f})+(ik\cdot v(I-\bP)\hat{f}) $, and arguing as done in \eqref{eq:japmicmac1} to get \eqref{bd:ZI-PZP}, we infer 
\begin{align}
	\label{bd:ZPvZI-P}\big|&\jap{Z^\beta\bP(ik\cdot v\hat{f}),Z^\beta(I-\bP)\hat{f}}\big|\leq \,\tilde{\delta} (\nu+|k|)\cA[Z^\beta(I-\bP)\hat{f}]\\
	\label{bd:ZPvZI-P1}	&+C_{\tilde{\delta}}\frac{|k|^2}{\nu}\sum_{|\beta_1|\leq |\beta|-1}|tk|^{2\beta_1}|(\hat{\rho},\hat{\sfm},\hat{\sfe})|^2+C|k|\sum_{|\beta_1|\leq |\beta|-1}\cA[Z^{\beta_1}(I-\bP)\hat{f}].
\end{align} 
Therefore, from the energy identity \eqref{eq:enIdZI-Pf} we first combine \eqref{bd:lwZI-Pf}, \eqref{bd:PZI-P}, \eqref{bd:commZI-PI-P},  \eqref{bd:ZvPZI-P} and \eqref{bd:ZPvZI-P}.  Then, appealing to Lemma \ref{lem:equivZP} and using that $|k|\leq \delta_0^{-1}\nu$ we prove \eqref{bd:enZI-Pf} upon choosing $\tilde{\delta}$ in \eqref{bd:ZvPZI-P} and \eqref{bd:ZPvZI-P} sufficiently small.

To get \eqref{bd:enZvMI-Pf}, we can proceed as done to obtain \eqref{bd:enZI-Pf} but we need to handle the commutator between the weight and $\cL$. In particular, the energy identity has the same structure of \eqref{eq:enIdZI-Pf} with the extra term 
\begin{equation}
	\nu \jap{[\cL,\jap{v}^M]Z^\beta(I-\bP)\hat{f},Z^\beta(I-\bP)\hat{f}}.
\end{equation}
Appealing to \eqref{bd:commWl}, we get 
\begin{equation}
	\notag
		\nu\big| \jap{[\cL,\jap{v}^M]Z^\beta(I-\bP)\hat{f},Z^\beta(I-\bP)\hat{f}}\big|\lesssim\nu \norm{\mu^{\delta}Z^\beta(I-\bP)\hat{f}}^2\lesssim \nu \cA[Z^\beta(I-\bP)\hat{f}],
\end{equation}
whence proving \eqref{bd:enZvMI-Pf}.

To handle the terms with the $v$-derivatives, observe that 
\begin{align}
	\notag\de_t&\nabla_v(I-\bP)f+v\cdot \nabla_x \nabla_v(I-\bP)f+\nu\cL\nabla_v(I-\bP)f\\
	\label{eq:BoltzTayI-Pv}&=-\nabla_x f+ \nu[\cL,\nabla_v](I-\bP)f+\nabla_v\bP (v\cdot \nabla_x f)-v\cdot \nabla_x \nabla_v\bP f.
\end{align}
On the left-hand side we have exactly the same structure we had without $\nabla_v$. On the right-hand side of \eqref{eq:BoltzTayI-Pv}, the first three terms are different with respect to the case without $v$-derivatives. The term $\nabla_x f$ is the most dangerous one. Notice that here we can always pay a large constant for error terms involving the dissipation without $v$-derivatives since we choose $2^{-\mathtt{C}_1}\ll 2^{-\mathtt{C}_0}$ in the definition of the energy functional \eqref{def:ETd}. 

To prove \eqref{bd:enZI-Pfnabla} and \eqref{bd:enZvMI-Pfnabla}, observe that by the commutation properties in Lemma \ref{lemma:commutation}, for $j=0,1$ we have 
\begin{align}
\label{bd:Tayeasy}
\nu	\big|&\brak{\brak{v}^{j(M+q_{\gamma,s})}Z^\beta[\cL,\nabla_v](I-\bP)\hat{f},\brak{v}^{j(M+q_{\gamma,s})}Z^\beta(I-\bP)\hat{f}}\big|\\
&\leq \tilde{\delta}\nu \cA[\brak{v}^{j(M+q_{\gamma,s})}Z^\beta(I-\bP)\hat{f}]+C_{\tilde{\delta}}\nu \sum_{|\tilde{\beta}|\leq |\beta|}\cA[\jap{v}^{jM}Z^{\tilde{\beta}}(I-\bP)\hat{f}],
\end{align}
where we also used that $q_{\gamma,s}\leq0$ to obtain the last terms inside the sum above.

For the error terms generated by $\nabla_v \bP(v\cdot \nabla_x f)=\nabla_v \bP(v\cdot \nabla_x \bP f)+\nabla_v \bP(v\cdot \nabla_x (I-\bP)f)$, moving all the $v$-derivatives and weights on the term containing $\bP$,  we get that
\begin{align}
	\big|&\brak{\jap{v}^{j(M+q_{\gamma,s})}\nabla_v \bP(ik\cdot v \hat{f}),\jap{v}^{j(M+q_{\gamma,s})}\nabla_vZ^\beta(I-\bP)\hat{f}}\big|\\
	&\lesssim (|k|+\nu)\cA[Z^\beta(I-\bP)\hat{f}]+\frac{|k|^2}{\nu}\sum_{|\tilde{\beta}|\leq|\beta|}|tk|^{2\tilde{\beta}}|(\hat{\rho},\hat{\sfm},\hat{\sfe})|^2. 
\end{align}

For the error terms arising form $-\nabla_x f$, we argue as follows
\begin{align}
\notag	\big|&\brak{\brak{v}^{j(M+q_{\gamma,s})}Z^\beta(ik \hat{f}),\brak{v}^{j(M+q_{\gamma,s})}Z^\beta \nabla_v(I-\bP)\hat{f}}\big|\leq\, \\
&\big|\brak{\brak{v}^{j(M+q_{\gamma,s})}Z^\beta(ik \bP \hat{f}),\brak{v}^{j(M+q_{\gamma,s})}Z^\beta \nabla_v(I-\bP)\hat{f}}\big|\\
	&+\big|\brak{\brak{v}^{j(M+q_{\gamma,s})}Z^\beta(ik(I- \bP) \hat{f}),\brak{v}^{j(M+q_{\gamma,s})}Z^\beta \nabla_v(I-\bP)\hat{f}}\big|:=\mathcal{I}^j_1+\mathcal{I}^j_2
\end{align}
For $\mathcal{I}^j_1$, integrating by parts in $v$ and using the properties of the Maxwellian we have 
\begin{equation}
	\mathcal{I}_1^j\leq \tilde{\delta}\nu\cA[Z^\beta(I-\bP)\hat{f}]+C_{\tilde{\delta}}\frac{|k|^2}{\nu}\sum_{|\tilde{\beta}|\leq |\beta|}|tk|^{2\tilde{\beta}}|(\hat{\rho},\hat{\sfm},\hat{\sfe})|^2.
\end{equation}
To handle $\mathcal{I}^0_2$, since  $M>2|\gamma|/2+s$ and $|k|\leq \delta_0^{-1}\nu$, notice that 
\begin{align*}
	\mathcal{I}_2^0\leq\,&\tilde{\tilde{\delta}} \nu\normL{Z^\beta\nabla_v(I-\bP)\hat{f}}_{L^2_{s+\gamma/2}}^2+C_{\tilde{\tilde{\delta}}}\nu \normL{Z^\beta\jap{v}^{|s+\gamma/2|}(I-\bP)\hat{f}}_{L^2}^2\\
	\leq \,&\tilde{\delta} \nu\cA[Z^\beta\nabla_v(I-\bP)\hat{f}]+C_{\tilde{\delta}}\nu \cA[Z^\beta\jap{v}^{M}(I-\bP)\hat{f}].
\end{align*}
For $\mathcal{I}^1_2$, we proceed as done to control the mixed inner product in the enhanced dissipation regime, see \eqref{bd:mixedRj}. Namely, appealing to the Cauchy-Schwarz inequality we get 
\begin{align}
	\notag\mathcal{I}^1_2&\leq\sum_{j=0}^{+\infty} |k|\norm{\mathbbm{1}_{\{2^j\leq \jap{v}\leq 2^{j+1}\}}\brak{v}^{M+q_{\gamma,s}}Z^\beta(I-\bP)\hat{f}}\norm{\mathbbm{1}_{\{2^j\leq \jap{v}\leq 2^{j+1}\}}\brak{v}^{M+q_{\gamma,s}}Z^\beta\nabla_v(I-\bP)\hat{f}}\\
	&:=\sum_{j=0}^{+\infty}\widetilde{\cR}_j
\end{align}
Combining the Young's inequality, the Gagliardo-Nirenberg inequality and using $q_{\gamma,s}<\gamma/(2s)$, as in \eqref{bd:EDRj}-\eqref{bd:mixedRj}, we obtain that
\begin{align}
\sum_{j=0}^{+\infty}\widetilde{\mathcal{R}}_j\leq\, &\tilde{\delta}\frac{|k|^2}{\nu}\norm{\brak{v}^{M+q_{\gamma,s}}Z^\beta (I-\bP)\hat{f}}^2 \\
\label{bd:mixedRTj}&+\nu\left(\tilde{\delta}   \cA[\brak{v}^{M+q_{\gamma,s}}\nabla_v Z^\beta (I-\bP)\hat{f}]+ C_{\tilde{\delta}}\cA[\brak{v}^{M} Z^\beta (I-\bP)\hat{f}]\right).
\end{align}

Hence, the proofs of \eqref{bd:enZI-Pfnabla} and \eqref{bd:enZvMI-Pfnabla} follows by computing the time derivatives and using the estimates \eqref{bd:Tayeasy}-\eqref{bd:mixedRTj} (upon choosing $\tilde{\delta}$ sufficiently small).

Finally, to prove \eqref{bd:enTDmixedI-P} notice that the good term on the left-hand side of \eqref{bd:enTDmixedI-P} is given from the $-\nabla_x f$ in \eqref{eq:BoltzTayI-Pv} since $[\jap{v}^{M+q_{\gamma,s}}\nabla_x,\bP]=0$. Similarly to \eqref{bd:enmixed}, the terms arising from the transport $v\cdot\nabla_x$ cancel out. All the remaining error terms can be treated in a similar way to what we did to get \eqref{bd:enZI-Pfnabla}-\eqref{bd:enZvMI-Pfnabla}. For instance, we control the following term as 
\begin{align*}
	\big|&\brak{\brak{v}^{M+q_{\gamma,s}}\nabla_vZ^\beta (I-\bP)\hat{f},\brak{v}^{M+q_{\gamma,s}}Z^\beta\cL (ik(I-\bP)\hat{f})}\big|\leq\\
	& C\nu(\cA[\jap{v}^{M+q_{\gamma,s}}\nabla_v(I-\bP)\hat{f}]+\cA[\jap{v}^{M}(I-\bP)\hat{f}]),
\end{align*}
where we used $|k|\leq \delta_0^{-1}\nu$ and $q_{\gamma,s}<0$. The remaining error terms can be handled analogously.
\end{proof}

We are now ready to present the proof of the monotonicity estimate \eqref{bd:energyTD4}, whence concluding the proof of Proposition \ref{prop:mono}.
\begin{proof}[Proof of \eqref{bd:energyTD4}]
	Recall the definition of $E^{T.d.}_{M,N}$ in \eqref{def:ETd} and $\mathcal{D}^{T.d.}_{M,N}$ in \eqref{def:DTD}. When computing the time-derivative of $E^{T.d.}_{M,N}$ we neglect the negative terms on the left-hand side appearing from the time derivative of $\jap{\nu t}^{-2\beta}$. Up to the constant independent of $\mathtt{C}_j$ in front of each term, the first term in \eqref{def:DTD} directly follows by the standard $L^2_v$-energy estimate for the Boltzmann equation  while the others are a suitable linear combination of the good terms in Lemma \ref{lemma:mixed} and Lemma \ref{lemma:microTD}. 
The error terms are  absorbed thanks to the choice of the constants $c_i\ll c_{i+1}$ and $1\ll \mathtt{C}_0\ll \mathtt{C}_1$. Finally, $\delta_d$ can be chosen to scale as the smallest constant in $E^{T.d.}_{M,N}$, namely $\delta_d=c_3 \tilde{c}/1000$ upon choosing $\mathtt{C}_0$ sufficiently large.
\end{proof}
\subsection{Decay estimates}
\label{subsec:decay}
In this section, we aim at proving the decay estimates in Theorem \ref{thm:LinDecEst}. Thanks to the monotonicity estimates in Proposition \eqref{prop:mono}, it is enough to reconstruct the energy functional from the available (anisotropic) dissipation. In the hard potential case $2s+\gamma\geq0$, this is a relatively simple consequence of some interpolation inequalities. On the other hand, for soft potentials the dissipation degenerates for large velocities. To overcome this problem, the standard procedure (e.g. \cite{caflisch1980boltzmann,DV05,MR2366140}) is to use the control on higher-order moments. Indeed, the monotonicity estimates \eqref{bd:energyED4} and \eqref{bd:energyTD4} are true also for the functionals  $E^{*}_{M+M',B}$ for any $M'>0$. Recalling the definition of $\mathcal{E}_{M,B}$ given in \eqref{def:linEnergy}, i.e.
\begin{align*}
\mathcal{E}_{M,B}(t)= \mathbbm{1}_{\nu/|k|\leq\delta_0}E^{e.d.}_{M,B}(t)+\mathbbm{1}_{\nu/|k|>\delta_0}E^{T.d.}_{M,B}(t),
\end{align*}
we want to prove the following. 
\begin{lemma}
\label{lem:splitdecay}
Let $\mathcal{E}_{M,B}$ be the functional defined in \eqref{def:linEnergy} and define $\varpi=(|\gamma|(2-s)+2s|q_{\gamma,s}|)/(1+s)$. Then, for any $M'>0$, there exists constants $c,C>0$ such that for all $R\geq1$,  
\begin{equation}
\label{bd:decaweight}	\frac{\dd }{\dd t} \mathcal{E}_{M,B}(t)\leq-c\frac{\lambda_{\nu,k}}{R^{\varpi}}\mathcal{E}_{M,B}(t)+C\frac{\lambda_{\nu,k}}{R^{|\gamma+2s|+2M'}}\mathcal{E}_{M+M',B}(0),
\end{equation}
where $\lambda_{\nu,k}$ is defined in \eqref{def:lambdanuk_intro}.
\end{lemma}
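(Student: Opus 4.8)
The strategy is the classical "weak Poincaré / splitting by velocity" argument, adapted to the anisotropic dissipation $\mathcal{D}_{M,B}$ and to both the enhanced dissipation and Taylor dispersion regimes simultaneously. The starting point is the monotonicity estimate of Proposition \ref{prop:mono}, namely $\frac{\dd}{\dd t}\mathcal{E}_{M,B}+\delta_\star \mathcal{D}_{M,B}\leq 0$ with $\delta_\star=\min\{\delta_e,\delta_d\}$, so that everything reduces to a lower bound of the form $\mathcal{D}_{M,B}\gtrsim R^{-\varpi}\lambda_{\nu,k}\mathcal{E}_{M,B}-C R^{-(|\gamma+2s|+2M')}\mathcal{E}_{M+M',B}$. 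Since $\mathcal{E}_{M+M',B}$ is itself monotone nonincreasing (the monotonicity estimates hold for any weight $M$), we may replace $\mathcal{E}_{M+M',B}(t)$ by $\mathcal{E}_{M+M',B}(0)$ in the error term, which gives exactly \eqref{bd:decaweight}.

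\textbf{Step 1: reconstruct the energy from the dissipation at bounded velocities.} Split each $L^2_v$-norm appearing in $\mathcal{E}_{M,B}$ into $\{\brak{v}\leq R\}$ and $\{\brak{v}>R\}$. On $\{\brak{v}>R\}$ one trades one power of the extra weight for a factor $R^{-2M'}$: e.g. $\normL{\mathbbm{1}_{\brak{v}>R}\jap{v}^MZ^\beta\hat f}^2\leq R^{-2M'}\normL{\jap{v}^{M+M'}Z^\beta\hat f}^2$, which is controlled by $\mathcal{E}_{M+M',B}$. On $\{\brak{v}\leq R\}$, I use the coercivity bound \eqref{bd:equivA}: for soft potentials $\cA[g]\gtrsim \normL{\jap{v}^{\gamma/2}g}_{H^s_v}^2+\normL{g}_{L^2_{v,s+\gamma/2}}^2$, and since on $\{\brak{v}\leq R\}$ the weight $\jap{v}^{\gamma/2}$ is bounded below by $R^{\gamma/2}$ (recall $\gamma<0$), one gets $\cA[g]\gtrsim R^{\gamma}\normL{\mathbbm{1}_{\brak{v}\leq R}g}_{L^2_v}^2$ and similarly $\cA[g]\gtrsim R^{\gamma+2s}\normL{\mathbbm{1}_{\brak{v}\leq R} g}_{L^2_v}^2$ — more precisely, after interpolating the $H^s$ part down to $L^2$ one loses $R^{|\gamma|(2-s)/(1+s)}\cdot R^{2s|q_{\gamma,s}|/(1+s)}$-type factors coming from the mismatch between the weight $M$ on $\hat f$ and the weaker weight $M+q_{\gamma,s}$ on $\nabla_v\hat f$; this is precisely the source of the exponent $\varpi=(|\gamma|(2-s)+2s|q_{\gamma,s}|)/(1+s)$. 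The upshot is that each norm squared in $\mathcal{E}_{M,B}$ is bounded by $R^{\varpi}$ times the corresponding $\cA$-term in $\mathcal{D}_{M,B}$ plus $R^{-2M'}\mathcal{E}_{M+M',B}$.

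\textbf{Step 2: match the $\nu,k$-prefactors to produce $\lambda_{\nu,k}$.} This is where the two regimes diverge and is the main bookkeeping obstacle. In the enhanced dissipation regime $\nu/|k|\leq\delta_0$, the relevant dissipative term in $\mathcal{D}^{e.d.}_{M,B}$ carrying the "$b_{\nu,k}|k|^2$" prefactor combined with $\nu\cA$-terms must be shown to dominate $\lambda_{\nu,k}E^{e.d.}_{M,B}$: using $a_{\nu,k}=a_0(\nu/|k|)^{2/(2s+1)}$, $b_{\nu,k}=b_0(\nu/|k|)^{1/(2s+1)}|k|^{-1}$ one checks $b_{\nu,k}|k|^2=b_0\nu^{1/(2s+1)}|k|^{(2s)/(2s+1)}=b_0\lambda_{\nu,k}/\delta_1$ and $\nu/a_{\nu,k}=a_0^{-1}\nu^{(2s-1)/(2s+1)}|k|^{2/(2s+1)}\sim\lambda_{\nu,k}$, so after Step 1's loss of $R^\varpi$ one gets $\mathcal{D}^{e.d.}_{M,B}\gtrsim R^{-\varpi}\lambda_{\nu,k}E^{e.d.}_{M,B}-\text{(moment error)}$. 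In the Taylor dispersion regime $\nu/|k|>\delta_0$, the term $\nu^{-1}|k|^2\normL{Z^\beta\bP\hat f}^2$ (from Lemma \ref{lemma:mixed}, which is why the $\brak{\nu t}^{-2\beta}$ factor is essential) together with $\nu^{-1}|k|^2\normL{\jap{v}^{M+q_{\gamma,s}}Z^\beta(I-\bP)\hat f}^2$ and the $\nu\cA[(I-\bP)\hat f]$ terms reconstruct all of $E^{T.d.}_{M,B}$; here the target rate is $\lambda_{\nu,k}=\delta_1\nu^{-1}|k|^2$, which matches these prefactors directly, and the $\nu\cA$-terms contribute $\nu\gtrsim\nu^{-1}|k|^2=\lambda_{\nu,k}/\delta_1$ in this range, so again $\mathcal{D}^{T.d.}_{M,B}\gtrsim R^{-\varpi}\lambda_{\nu,k}E^{T.d.}_{M,B}-R^{-(|\gamma+2s|+2M')}\mathcal{E}_{M+M',B}$.

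\textbf{Step 3: conclude.} Combining Steps 1–2 with the monotonicity estimate gives \eqref{bd:decaweight} for all $R\geq 1$ (the factor $R^{-(|\gamma+2s|+2M')}$ rather than $R^{-2M'}$ in the error term comes from carefully tracking that the moment norm is multiplied by the same $\lambda_{\nu,k}$ and that one spends an extra $R^{|\gamma+2s|}$ converting the $\cA$-norm at high velocities back to a plain $L^2$ moment). The hard parts are: (a) getting the exponent $\varpi$ sharp, which requires the interpolation in Step 1 to be done with the $q_{\gamma,s}$-weighted $\nabla_v$ norm rather than the full-weight one — this is exactly why the energy functionals were designed with the weaker weight $M+q_{\gamma,s}$ on $v$-derivatives; and (b) in the Taylor regime, ensuring that the macroscopic dissipation $\nu^{-1}|k|^2\normL{Z^\beta\bP\hat f}^2$ really does see the full weight structure — but since $\bP\hat f$ is a Gaussian times a polynomial, all velocity weights are harmless there, so no $R^\varpi$ loss occurs on the $\bP$-part and the bottleneck $\varpi$ comes only from the $(I-\bP)$ and enhanced-dissipation terms. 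Optimizing $R$ in $t$ (choosing $R\sim(\lambda_{\nu,k}t)^{(1+s)/(|\gamma|(2-s)+2s|q_{\gamma,s}|+\cdots)}$ in the subsequent Grönwall argument) then yields the polynomial decay rate $\widetilde M$ of Theorem \ref{thm:LinDecEst}, but that is carried out after this lemma.
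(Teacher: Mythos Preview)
Your overall architecture is right and matches the paper: start from the monotonicity $\frac{\dd}{\dd t}\mathcal{E}_{M,B}+\delta_\star\mathcal{D}_{M,B}\leq 0$, prove a lower bound $\mathcal{D}_{M,B}\gtrsim R^{-\varpi}\lambda_{\nu,k}\mathcal{E}_{M,B}-R^{-(|\gamma+2s|+2M')}\lambda_{\nu,k}\mathcal{E}_{M+M',B}$, and use monotonicity of $\mathcal{E}_{M+M',B}$ to replace $t$ by $0$ in the error. Your treatment of the Taylor dispersion regime is also essentially correct (and in fact the paper only needs the weaker loss $R^{|\gamma+2s|}\leq R^{\varpi}$ there).

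There is, however, a real gap in your enhanced dissipation argument. The claim $\nu/a_{\nu,k}\sim\lambda_{\nu,k}$ is false for $s<1$: you computed $\nu/a_{\nu,k}=a_0^{-1}\nu^{(2s-1)/(2s+1)}|k|^{2/(2s+1)}$, whereas $\lambda_{\nu,k}=\delta_1\nu^{1/(2s+1)}|k|^{2s/(2s+1)}$, and the exponents match only at $s=1$. More to the point, in the regime $|k|\gg\nu$ one has $\nu\ll\lambda_{\nu,k}$, so the term $\nu a_{\nu,k}\cA[\jap{v}^{M+q_{\gamma,s}}\nabla_vZ^\beta\hat f]$ by itself can never dominate $\lambda_{\nu,k}a_{\nu,k}\normL{\jap{v}^{M+q_{\gamma,s}}\nabla_vZ^\beta\hat f}^2$, no matter what $R$-loss you accept. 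Separating ``velocity interpolation'' (your Step~1) from ``prefactor matching'' (your Step~2) does not work here.

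The paper's fix is to do both at once via Gagliardo--Nirenberg. The key algebraic identity is
\[
\lambda_{\nu,k}\,a_{\nu,k}\;\approx\;(\nu a_{\nu,k})^{\frac{1}{1+s}}\,(\lambda_{\nu,k})^{\frac{s}{1+s}},
\]
which, combined with $\normL{\nabla_v g}_{L^2(B_R)}^2\lesssim\normL{\nabla_v g}_{H^s(B_R)}^{2/(1+s)}\normL{g}_{L^2(B_R)}^{2s/(1+s)}$, gives
\[
\lambda_{\nu,k}a_{\nu,k}\normL{\nabla_v g}_{L^2(B_R)}^2\lesssim\big(\nu a_{\nu,k}\normL{\nabla_v g}_{H^s(B_R)}^2\big)^{\frac{1}{1+s}}\big(\lambda_{\nu,k}\normL{g}_{L^2(B_R)}^2\big)^{\frac{s}{1+s}}.
\]
The first factor on the right is bounded by $R^{|\gamma|(2-s)}\,\nu a_{\nu,k}\cA[\jap{v}^{M+q_{\gamma,s}}\nabla_v\cdot]$ via the weighted-$H^s$ lower bound of Lemma~\ref{lemma:weightedHs}; the second by $R^{2|q_{\gamma,s}|}\,b_{\nu,k}|k|^2\normL{\jap{v}^{M+q_{\gamma,s}}\cdot}^2$ (trading the weight mismatch $\jap{v}^M\to\jap{v}^{M+q_{\gamma,s}}$ on $B_R$). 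Taking the $(1+s)$-weighted geometric mean of these two $R$-losses produces exactly $\varpi=(|\gamma|(2-s)+2s|q_{\gamma,s}|)/(1+s)$. So the origin of $\varpi$ and the matching of the $(\nu,k)$-prefactors are the \emph{same} interpolation step, not two separate ones.
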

Having at hand the inequality \eqref{bd:decaweight}, whose proof we postpone at the end of this section, we are ready to prove Theorem \ref{thm:LinDecEst}.

\begin{proof}[Proof of Theorem \ref{thm:LinDecEst}]
For $0\leq t\leq1$ we have nothing to prove, therefore we assume $t>1$ in the sequel. For simplicity of notation, we write $\lambda$ instead of $\lambda_{\nu,k}$. 
Let $0<p<1$ to and choose $R$ in \eqref{bd:decaweight} as 
\begin{equation}
	R=\brak{\lambda t}^{\frac{p}{\varpi}}.
\end{equation} 
Then, calling $\delta_p=c(1-p)/2$, 
 combining the inequality \eqref{bd:decaweight} with the choice of $R$ we get
\begin{align}
\label{bd:decMp}
	\frac{\dd}{\dd t}\left(\e^{\delta_p(\lambda t)^{1-p}}\mathcal{E}_{M,B}(t)\right)\lesssim  \frac{\lambda\e^{\delta_p(\lambda t)^{1-p}}}{\brak{\lambda t}^{\frac{p(|\gamma+2s|+2M')}{\varpi}}}\mathcal{E}_{M+M',B}(0).
\end{align}
Define 
\begin{equation}
	M''=\frac{p(|\gamma+2s|+2M')}{\varpi},
\end{equation}
Choosing $M'$ sufficiently large, we have $M''>2$. Integrating in time \eqref{bd:decMp}, we get 
\begin{align}
	\label{bd:pfendeca}
	\mathcal{E}_{M,B}(t)\lesssim \e^{-\delta_p(\lambda t)^{1-p}}\mathcal{E}_{M,B}(0)+  \mathcal{E}_{M+M',B}(0)\e^{-\delta_p(\lambda t)^{1-p}}\int_0^t\frac{\lambda\e^{\delta_p(\lambda \tau)^{1-p}}}{\brak{\lambda \tau}^{M''}}\dd \tau.
\end{align}
To control the last integral in the inequality above, first observe that 
\begin{align}
	\int_0^t\frac{\lambda\e^{\delta_p(\lambda \tau)^{1-p}}}{\brak{\lambda \tau}^{M''}}\dd \tau&=\left(\int_0^{\lambda t/2}+\int_{\lambda t/2}^{\lambda t}\right)\frac{\e^{\delta_ps^{1-p}}}{\brak{s}^{M''}}\dd s\lesssim \e^{\delta_p(\lambda t/2)^{1-p}}+\frac{1}{\brak{\lambda t/2}^{M''}}\int_{\lambda t/2}^{\lambda t}\e^{\delta_ps^{1-p}}\dd s.
\end{align}
Having that
\begin{align}
	\int_{\lambda t/2}^{\lambda t}\e^{\delta_ps^{1-p}}\dd s=\int_{\lambda t/2}^{\lambda t}s^p(s^{-p}\e^{\delta_ps^{1-p}})\dd s\lesssim (\lambda t)^p\e^{\delta_p(\lambda t)^{1-p}},
\end{align}
 we  obtain
 \begin{equation}
 	\e^{-\delta_p(\lambda t)^{1-p}}\int_0^t\frac{\lambda e^{c(\lambda \tau)^{1-p}}}{\brak{\lambda \tau}^{M''}}\dd \tau\lesssim \e^{-\delta_p((\lambda t)^{1-p}-(\lambda t/2)^{1-p})}+\frac{(\lambda t)^p}{\brak{\lambda t/2}^{M''}}\lesssim \frac{1}{\brak{\lambda t}^{\widetilde{M}}},
 \end{equation}
where $\widetilde{M}=M''-p>1$ and in the last inequality we used that $0<p<1$ to bound the exponential term with the polynomially decaying one. Therefore, combining the bound above with \eqref{bd:pfendeca}, we finally get
 \begin{align}
 	\mathcal{E}_{M,B}(t)\lesssim \e^{-\delta_p(\lambda t)^{1-p}}\mathcal{E}_{M,B}(0)+  \frac{1}{\brak{\lambda t}^{\widetilde{M}}}\mathcal{E}_{M+M',B}(0)
 \end{align}
whence proving \eqref{bd:lindecay}.
\end{proof}
\smallskip

It thus remain to prove Lemma \ref{lem:splitdecay}. 

\begin{proof}[Proof of Lemma \ref{lem:splitdecay}]
From the definition of $\mathcal{E}_{M,B}(t)$ \eqref{def:linEnergy} and $\mathcal{D}_{M,B}(t)$ \eqref{def:linDiss}, we study the enhanced dissipation regime ($\nu/|k|\leq\delta_0$) and the Taylor dispersion one ($\nu/|k|>\delta_0$) separately.

 \medskip

\noindent $\diamond$ \textbf{Taylor dispersion regime.} When $\nu/|k|>\delta_0$, recall the definitions of the energy $E^{T.d.}_{M,N}$ and dissipation $\cD^{T.d.}_{M,N}$ given in \eqref{def:ETd} and \eqref{def:DTD} respectively. We need to reconstruct the energy functional from the available dissipation. Thanks to the equivalence \eqref{bd:equivETd}, it is enough to reconstruct only the positive terms in $E^{T.d.}_{M,B}$.

Exploiting the integrability properties of the Maxwellian and  $M+q_{\gamma,s}>1$, we get
\begin{align}
	\label{bd:lowTD1}
	\frac{|k|^2}{\nu}\big(&\normL{Z^{\beta}\bP \hat{f}}^2+\normL{\brak{v}^{M+q_{\gamma,s}}Z^\beta(I-\bP)\hat{f}}^2\big)\gtrsim |k|(\nu^{-1}|k|\normL{Z^{\beta}\hat{f}}^2)\\
	&+\frac{|k|^2}{\nu}\big(\mathbbm{1}_{\beta=0}\normL{\hat{f}}^2+\normL{Z^\beta(I-\bP)\hat{f}}^2+\normL{\brak{v}^{M+q_{\gamma,s}}Z^\beta(I-\bP)\hat{f}}^2).
\end{align}
Next, first note the following
	\begin{equation}
		\label{bd:recon}
		\norm{\chi_{|v|\leq R} g}_{L^2}^2\geq \norm{g}_{L^2}^2-\norm{\chi_{|v|>R}g}_{L^2}^2\geq \norm{g}_{L^2}^2-R^{-2M'}\normL{\jap{v}^{M'}g}_{L^2}^2,
	\end{equation}
	for any $M'_\star>0$.
To recover the weighted term without $v$-derivatives in $E^{T.d.}_{M,B}$, we use the last term in \eqref{def:DTD}. Namely, in view of Proposition \ref{lemma:coercive}, since $\nu\gtrsim |k|^2/\nu$, we have 
 \begin{align}
 	\label{bd:lowTD2}
 \notag	&\nu \cA[\jap{v}^MZ^\beta(I-\bP)\hat{f}]\gtrsim \frac{|k|^2}{\nu}\normL{\brak{v}^{M}\mathbbm{1}_{|v|\leq R}Z^\beta(I-\bP)\hat{f}}^2_{L^2_{\gamma/2+s}}\\
 &\hspace{2cm}\gtrsim \frac{\nu^{-1}|k|^2}{R^{|\gamma
 +2s|}}\big(\normL{\brak{v}^{M}Z^\beta (I-\bP)\hat{f}}^2-\normL{\mathbbm{1}_{|v|>R}\brak{v}^MZ^\beta(I-\bP)\hat{f}}^2\big)\\
 &\hspace{2cm} \gtrsim \frac{\nu^{-1}|k|^2}{R^{|\gamma+2s|}}\bigg(\normL{\brak{v}^{M}Z^\beta (I-\bP)\hat{f}}^2-\frac{1}{R^{2M'}}E^{T.d.}_{M+M',B}(0)\bigg).
 \end{align}
In the last inequality we used that $E^{T.d}_{M,N}$, thanks to Proposition \ref{prop:mono}, is non-increasing for any $M$ and \eqref{bd:recon}.  
Analogously, we reconstruct the piece of $E^{T.d.}_{M,N}$ involving $v$-derivatives as follows
\begin{align}
	&\cA[\jap{v}^{j(M+q_{\gamma,s})}\nabla_vZ^\beta (I-\bP)\hat{f}]\gtrsim \norm{\mathbbm{1}_{|v|\leq R}\jap{v}^{j(M+q_{\gamma,s})}\nabla_vZ^\beta (I-\bP)\hat{f}}^2_{L^2_{\gamma/2+s}}\\
	\label{bd:AdissTD}
	&\gtrsim \frac{1}{R^{|\gamma+2s|}} \left(\norm{\jap{v}^{j(M+q_{\gamma,s})}\nabla_vZ^\beta(I-\bP) \hat{f}}^2_{L^2}-\frac{1}{R^{2M'}}E^{T.d.}_{M+M',B}(0)\right).
\end{align}
Combining \eqref{bd:equivETd}, \eqref{bd:lowTD1}, \eqref{bd:lowTD2} and \eqref{bd:AdissTD}, since $|k|\gtrsim \nu^{-1}|k|^2$, we get 
\begin{equation}
\label{bd:lwDTd}
	\mathcal{D}^{T.d.}_{M,B}(t)\gtrsim \frac{\nu^{-1}|k|^2}{R^{|\gamma+2s|}} E^{T.d.}_{M,B}(t)-\frac{\nu^{-1}|k|^2}{R^{|\gamma+2s|+2M'}} E^{T.d.}_{M+M',B}(0).
\end{equation}
Therefore 
\begin{equation}
	\frac12\frac{\dd }{\dd t}E^{T.d}_{M,B}(t)\lesssim -\frac{\nu^{-1}|k|^2}{R^{|\gamma+2s|}} E^{T.d.}_{M,B}(t)+\frac{\nu^{-1}|k|^2}{R^{|\gamma+2s|+2M'}} E^{T.d.}_{M+M',B}(0).
\end{equation}
In light of the definition of $\lambda_{\nu,k}$  \eqref{def:lambdanuk_intro} and $\mathcal{E}_{M,B}$ \eqref{def:linEnergy}, since $|\gamma+2s|\leq \varpi$ the bound \eqref{bd:decaweight} is proved in the Taylor dispersion regime.

 \medskip

\noindent$\diamond$ \textbf{Enhanced dissipation regime.} 
The idea of proof is very similar to the previous one, we only need to be more careful with the right scaling of the $(\nu,k)$-dependent coefficients when reconstructing the energy functional $E^{e.d.}_{M,B}$ \eqref{def:Eed} from the dissipation $\mathcal{D}^{e.d.}_{M,N}$ \eqref{def:Ded}.
In this case, recall that 
\begin{equation}
	\label{bd:Eedequiv}
	E^{e.d.}_{M,B}(t)\approx \sum_{\alpha+|\beta|\leq B}\frac{2^{-\mathtt{C}\beta}\brak{k}^{\alpha}}{\brak{\nu t}^{2\beta}}\left(\normL{\brak{v}^MZ^{\beta}\hat{f}}^2+a_{\nu,k}\normL{\brak{v}^{M+q_{\gamma,s}}Z^\beta\nabla_v \hat{f}}^2\right).
\end{equation}
To reconstruct the term without $v$-derivatives, we cannot proceed as in \eqref{bd:lowTD2} since for large $|k|$'s we have $\nu\ll \lambda_{\nu,k}$. Thus, we have to exploit the good term generated by the mixed inner product. Namely, in view of Proposition \ref{prop:mono} we know that $E^{e.d.}_{M,B}$ is non-increasing for any $M$. Thus
\begin{align}
	\label{bd:reconL2}
	b_{\nu,k}|k|^2\norm{\jap{v}^{M+q_{\gamma,s}}Z^\beta \hat{f}}^2\gtrsim \frac{\lambda_{\nu,k}}{R^{2|q_{\gamma,s}|}}\left(\norm{\jap{v}^{M}Z^\beta \hat{f}}^2_{L^2_v}-\frac{1}{R^{2M'}}E^{e.d.}_{M+M',N}(0)\right)
\end{align}
where we recall that in this case
\begin{equation}
	\lambda_{\nu,k}=\frac{\delta_1}{b_0}|k|^2b_{\nu,k}=\delta_1 \nu^{\frac{1}{1+2s}}|k|^{\frac{2s}{1+2s}}.
\end{equation}
To reconstruct the piece with $v$-derivatives, we need to exploit an interpolation inequality. Combining Proposition \ref{lemma:coercive} with Lemma \ref{lemma:weightedHs}, we have 
\begin{align}
	\label{bd:Adiss1}
	\cA[\jap{v}^{M+q_{\gamma,s}}\nabla_vZ^\beta \hat{f}]\gtrsim \frac{1}{R^{|\gamma|(2-s)}} \norm{\chi_{|v|\leq R}\jap{v}^{M+q_{\gamma,s}}\nabla_vZ^\beta \hat{f}}^2_{H^s}.
\end{align}
From the definitions of $\lambda_{\nu,k}$ \eqref{def:lambdanuk_intro} and $a_{\nu,k}$ \eqref{def:abnuk}, notice that 
\begin{equation}
\lambda_{\nu,k}a_{\nu,k}=\delta_1^{\frac{1}{1+s}}a_0^{\frac{s}{1+s}} (\lambda_{\nu,k})^{\frac{s}{1+s}}(\nu a_{\nu,k})^{\frac{1}{1+s}}
\end{equation}
Therefore, using the Gagliardo-Nirenberg inequality we have 
\begin{align}
\label{bd:gap}
	\lambda_{\nu,k} a_{\nu,k}\norm{\nabla_v g}_{L^2(B_R)}^2&\lesssim\delta_1^{\frac{1}{1+s}}a_0^{\frac{s}{1+s}}(\nu a_{\nu,k}\norm{\nabla_v g}_{H^s(B_R)}^2)^{\frac{1}{1+s}}(\lambda_{\nu,k}\norm{g}_{L^2(B_R)}^2)^\frac{s}{1+s}\\
	&\lesssim\delta_1^{\frac{1}{1+s}}a_0^{\frac{s}{1+s}}\left( \nu a_{\nu,k} \norm{\nabla_v g}_{H^s(B_R)}^2+\lambda_{\nu,k}\norm{g}_{L^2(B_R)}^2\right).
\end{align}
Since $a_0, \delta_1\ll 1$, combining \eqref{bd:reconL2}, \eqref{bd:Adiss1} with \eqref{bd:gap}, we infer
\begin{align}
	\label{bd:recova}
	\nu a_{\nu,k}&\cA[\jap{v}^{M+q_{\gamma,s}}\nabla_v Z^\beta \hat{f}]+b_{\nu,k}|k|^2\norm{\jap{v}^{M+q_{\gamma,s}}Z^\beta \hat{f}}^2_{L^2_v} \\
	&\gtrsim \frac{\nu a_{\nu,k}}{R^{|\gamma|(2-s)}} \norm{\chi_{|v|\leq R}\jap{v}^{M+q_{\gamma,s}}\nabla_vZ^\beta \hat{f}}^2_{H^s}+\frac{\lambda_{\nu,k}}{R^{2|q_{\gamma,s}|}}\norm{\chi_{|v|\leq R}\jap{v}^{M}Z^\beta \hat{f}}^2_{L^2_v}\\
	&\gtrsim \lambda_{\nu,k}\frac{a_{\nu,k}}{R^{\varpi}}\norm{\chi_{|v|\leq R}\jap{v}^{M+q_{\gamma,s}}\nabla_v Z^\beta \hat{f}}^2_{L^2_v}\\
	&\gtrsim \frac{\lambda_{\nu,k}}{R^{\varpi}}\left(a_{\nu,k}\norm{\jap{v}^{M+q_{\gamma,s}}\nabla_v Z^\beta \hat{f}}^2_{L^2_v}-\frac{1}{R^{2M'}}E^{e.d.}_{M+M',B}(0)\right)
\end{align}
where we also used $\varpi=(|\gamma|(2-s)+2s|q_{\gamma,s}|)/(1+s)$, the inequality \eqref{bd:recon} and the fact that $E^{e.d.}_{M,B}$ is non-increasing for any $M$.
Hence, recalling the definition of $\mathcal{D}^{e.d.}_{M,N}$, combining \eqref{bd:reconL2}, \eqref{bd:recova} and \eqref{bd:Eedequiv}, we obtain
\begin{equation}
\label{bd:lwDed}
	\mathcal{D}^{e.d.}_{M,B}\gtrsim \frac{\lambda_{\nu,k}}{R^{\varpi}} E^{e.d.}_{M,B}-\frac{\lambda_{\nu,k}}{R^{\varpi+2M'}} E^{e.d.}_{M+M',B}(0).
\end{equation}
Consequently, appealing to \eqref{bd:enhanced} we get
\begin{align}
	\frac{\dd}{\dd t}E^{e.d.}_{M,B}\leq -c\frac{\lambda_{\nu,k}}{R^{\varpi}}E^{e.d.}_{M,B}+ C\frac{\lambda_{\nu,k}}{R^{\varpi+2M'}}E^{e.d.}_{M+M',N}(0),
\end{align}
whence proving \eqref{bd:decaweight} in the enhanced dissipation regime since $|\gamma+2s|\leq \varpi$.

\end{proof}

\section{Nonlinear estimates}

In this section we aim at proving Proposition \ref{prop:bootstrap}, which requires the control of several error terms arising from the time derivative of the energy functionals. We first define such error terms and we collect their bounds Propositions \ref{lem:linerr}-\ref{lem:momerrLinfty}. Having at hand the aforementioned bounds, we finally prove Proposition \ref{prop:bootstrap}. 
Recall that 
\begin{align}
\label{def:EDpf}\mathcal{E}(t) & = \int_{\abs{k} \leq \delta_0^{-1} \nu} \brak{\lambda_{\nu,k} t}^{2J}  E^{T.d.}_{M,B}(t,k) \dd k +  \int_{\abs{k} > \delta_0^{-1} \nu} \brak{\lambda_{\nu,k} t}^{2J}  E^{e.d.}_{M,B}(t,k) \dd k, \\
\label{def:ELFpf}\mathcal{E}_{LF}(t) & = \sup_{k : \abs{k} \leq \delta_0^{-1} \nu} \brak{\lambda_{\nu,k} t}^{2J'}  E^{T.d.}_{M',B'}(t,k) + \sup_{k : \abs{k} > \delta_0^{-1} \nu}  \brak{\lambda_{\nu,k} t}^{2J'} E^{e.d.}_{M',B'}(t,k),\\
\label{def:EmomD}\mathcal{E}_{mom}(t) & = \int_{\abs{k} \leq \delta_0^{-1} \nu} E^{T.d.}_{M+M_J,B}(t,k) \dd k +  \int_{\abs{k} > \delta_0^{-1} \nu}  E^{e.d.}_{M+M_J,B}(t,k) \dd k, \\ 
\label{def:EmomLF}\mathcal{E}_{mom,LF}(t) &= \sup_{k : \abs{k} \leq \delta_0^{-1} \nu} E^{T.d.}_{M'+M_{J'},B'}(t,k)  + \sup_{k : \abs{k} > \delta_0^{-1} \nu}  E^{e.d.}_{M'+M_{J'},B'}(t,k),
\end{align}
where we assume the conditions \eqref{eq:constMJB}. Associated to these energies, we have the dissipations defined as above with 
\begin{equation}
\label{def:Diss}
(\mathcal{E}_{*},E^{T.d.}_{*},E^{e.d.}_{*})\to(\mathcal{D}_{*},\mathcal{D}^{T.d.}_{*},\mathcal{D}^{e.d.}_{*}) 
\end{equation}
 Taking the time derivative of $\mathcal{E}$ and exploiting the pointwise in frequency monotonicity estimates in Proposition \ref{prop:mono}, we know that 
 \begin{align}
\notag\frac{\dd }{\dd t}\mathcal{E}\leq\,& -\delta_\star\mathcal{D}+\int_{\mathbb{R}^d}L_{J,M,B}\dd k\\
\label{eq:dtcED}&+\nu \int_{\abs{k}\leq\delta_0^{-1}\nu}\brak{ \lambda_{\nu,k} t}^{2J}NL^{T.d.}_{M,B}\dd k+\nu\int_{\abs{k}>\delta_0^{-1}\nu}\brak{ \lambda_{\nu,k} t}^{2J} NL^{e.d.}_{M,B} \dd k,
\end{align}
where $\delta_\star=\min\{\delta_e,\delta_d\}>0$ with $\delta_e,\delta_d$ being the one appearing in \eqref{bd:energyED4}-\eqref{bd:energyTD4}. The linear error term is
\begin{equation}
\label{def:Le}
L_{J,M,B}:=2J\lambda_{\nu,k}\brak{\lambda_{\nu,k} t}^{2J-1}(\mathbbm{1}_{\abs{k}\leq \delta_0^{-1}\nu}E^{T.d}_{M,B}+\mathbbm{1}_{\abs{k}> \delta_0^{-1}\nu}E^{e.d.}_{M,B}).
\end{equation}
We define the nonlinear error terms as follows: the one arising from the enhanced dissipation energy functional $E^{e.d.}_{M,B}$ is
 \begin{align}
 	\label{def:NLed}NL^{e.d.}_{M,B}  = \sum_{\alpha+|\beta|\leq B} &\frac{2^{-\mathtt{C}\beta}}{\brak{\nu t}^{2\beta}} \brak{k}^{2\alpha}\bigg(\bigg|\brak{ \brak{v}^M Z^\beta \widehat{\Gamma(f,f)}, \brak{v}^M Z^\beta \hat{f}}_{L^2_v}\bigg| \\
 	\label{def:NLed1}&  + a_{\nu,k} \bigg|\brak{ \brak{v}^{M+q_{\gamma,s}} \grad_v Z^\beta \widehat{\Gamma(f,f)}, \brak{v}^{M+q_{\gamma,s}} \grad_v Z^\beta \hat{f}}_{L^2_v} \bigg|\\
 	\label{def:NLed2}&  + b_{\nu,k} \bigg|\brak{ \brak{v}^{M+q_{\gamma,s}}  Z^\beta \widehat{ik\Gamma(f,f)}, \brak{v}^{M+q_{\gamma,s}} \grad_v Z^\beta \hat{f}}_{L^2_v} \bigg|\\
 	\label{def:NLed3}&  + b_{\nu,k} \bigg|\brak{ \brak{v}^{M+q_{\gamma,s}} \grad_v Z^\beta \widehat{\Gamma(f,f)}, \brak{v}^{M+q_{\gamma,s}}  Z^\beta ik\hat{f}}_{L^2_v}\bigg|\bigg). 
 \end{align}
 From $E^{T.d.}_{M,B}$, using that $\langle{\Gamma(f,f),f}\rangle=\langle{\Gamma(f,f),(I-\bP) f}\rangle$, we get
   \begin{align}
 \label{def:NLTd}	&NL^{T.d.}_{M,B}  = \bigg|\brak{\widehat{\Gamma(f,f)},(I-\bP) \hat{f}}_{L^2_v}\bigg|+\sum_{\alpha+|\beta|\leq B}\sum_{j=0}^1 \frac{2^{-\mathtt{C}_j\beta}}{\brak{\nu t}^{2\beta}} \bigg(\bigg|\frac{|k|}{\nu}\brak{ Z^\beta\widehat{\Gamma(f,f)}, Z^\beta \hat{f}}_{L^2_v}\bigg| \\
 \label{def:NLTd1} 	& \qquad  + c_1 \bigg|\brak{   Z^\beta(\nabla_v)^j \widehat{\Gamma(f,f)}, Z^\beta(\nabla_v)^j  (I-\bP)\hat{f})}_{L^2_v}\bigg| \\
 \label{def:NLTd2} 	& \qquad + c_2 \bigg|\brak{  \jap{v}^{M+jq_{\gamma,s}} Z^\beta(\nabla_v)^j \widehat{\Gamma(f,f)}, \jap{v}^{M+jq_{\gamma,s}} Z^\beta(\nabla_v)^j  (I-\bP)\hat{f}}_{L^2_v}\bigg|\bigg) \\
  \label{def:NLTd3}	& +\frac{c_0}{\nu}\mathcal{M}_{\Gamma}+\sum_{\alpha+|\beta|\leq B} \frac{2^{-\mathtt{C}\beta}}{\brak{\nu t}^{2\beta}}c_3\bigg(\bigg|\brak{ \brak{v}^{M+q_{\gamma,s}}  Z^\beta ik\widehat{\Gamma(f,f)}, \brak{v}^{M+q_{\gamma,s}} \grad_v Z^\beta (I-\bP)\hat{f}}_{L^2_v}\bigg| \\
  \label{def:NLTd4}	& \qquad  +  \bigg|\brak{ \brak{v}^{M+q_{\gamma,s}} \grad_v Z^\beta\widehat{\Gamma(f,f)}, \brak{v}^{M+q_{\gamma,s}}  Z^\beta  (I-\bP)ik\hat{f}}_{L^2_v}\bigg|\bigg),
 \end{align}
 where 
 \begin{align}
\label{bd:mixedNL}
	\mathcal{M}_{\Gamma}:=\,&\big|\Lambda[\widehat{\Gamma(f,f)}]\cdot  (ik \hat{\sfe})\big|+\big|\Theta[\widehat{\Gamma(f,f)}]: (ik\hat{\sfm}+(ik \hat{\sfm})^T)\big|.
\end{align}
Observe that the term above, which is given by the mixed inner product with macroscopic variables defined in \eqref{def:Mkbeta}, contains only the nonlinear terms for the equations involving the higher-order moments $\Theta$ and $\Lambda$. This is a consequence of  $\brak{\Gamma(f,f),\bP f}=0$.

The time derivative of $\mathcal{E}_{LF}$ is as \eqref{eq:dtcED} with 
\begin{align}
\label{changeDLF}
(\mathcal{E},\mathcal{D})\to(\mathcal{E}_{LF},\mathcal{D}_{LF}), \quad (J,M,B)\to (J',M',B') \quad \text{and}\quad \int \to \sup.
\end{align}
For $\mathcal{E}_{mom,*}$ instead we change \eqref{eq:dtcED} as follows
\begin{align}
\label{changeDmomD}
&(\mathcal{E},\mathcal{D})\to(\mathcal{E}_{mom},\mathcal{D}_{mom}), \quad (J,M,B)\to (0,M+M_J,B) ,\\
\label{changeDmomLF}&(\mathcal{E},\mathcal{D})\to(\mathcal{E}_{mom,LF},\mathcal{D}_{mom,LF}), \quad (J,M,B)\to (0,M'+M_{J'},B)\quad \text{and}\quad \int \to \sup
\end{align}
\begin{remark}
\label{rem:trivialEmom}
Notice that, since we are not imposing any time-decay for $\mathcal{E}_{mom,*}$, we do not have the linear error term. Namely $L_{0,M,B}=0$. 
\end{remark}
The goal is then to bound the error terms in \eqref{eq:dtcED} (with the changes \eqref{changeDLF}-\eqref{changeDmomLF}) for $\mathcal{E}$ (each) energy functional. The bounds for the linear errors are a direct consequence of the lower bounds on the dissipation functionals given in Section \ref{subsec:decay}. In particular, we show  the following in Section \ref{sec:linerr}.
\begin{proposition}
\label{lem:linerr}
Let $L_{J,M,B}$ be defined as in \eqref{def:Le}. Then, there exists constants $C_1,C_2>0$ (explicitly computable) such that
\begin{align}
&\int_0^t\int_{\mathbb{R}^d}L_{J,M,B}\dd t \dd k\leq \frac{\delta_\star}{8}\int_0^t\mathcal{D}\dd t+C_1\sup_{0\leq s\leq t}(\mathcal{E}_{mom}(s)),  \label{bd:LerrD} \\
&\int_0^t\sup_{k\in \RR^d}L_{J',M',B'}\dd t\leq \frac{\delta_\star}{8}\int_0^t\mathcal{D}_{LF}\dd t+C_2\sup_{0\leq s \leq t}(\mathcal{E}_{mom,LF}(s)), \label{bd:LerrLF}
\end{align}
where $\delta_\star>0$ is the constant in \eqref{eq:dtcED}.
\end{proposition}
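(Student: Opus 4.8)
The plan is to obtain \eqref{bd:LerrD}--\eqref{bd:LerrLF} directly from the weak--Poincar\'e type lower bounds on the dissipation densities established in Section~\ref{subsec:decay}, namely \eqref{bd:lwDed} in the enhanced dissipation regime and \eqref{bd:lwDTd} in the Taylor dispersion regime. To lighten notation, write $E_{\star;M,B}$, $\mathcal D_{\star;M,B}$ for the energy and dissipation density of the regime relevant at frequency $k$ (so $E_{\star;M,B}\in\{E^{e.d.}_{M,B},E^{T.d.}_{M,B}\}$, and likewise for $\mathcal D$), and recall $\varpi=(|\gamma|(2-s)+2s|q_{\gamma,s}|)/(1+s)$. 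These estimates assert that for every $R\geq1$, pointwise in $k$,
\[ \mathcal D_{\star;M,B}(\tau,k)\ \gtrsim\ \frac{\lambda_{\nu,k}}{R^{\varpi}}\,E_{\star;M,B}(\tau,k)\ -\ \frac{\lambda_{\nu,k}}{R^{\varpi+2M_J}}\,E_{\star;M+M_J,B}(\tau,k), \]
where, in the nonlinear setting, we keep the moment remainder at time $\tau$ instead of at $t=0$ as in Lemma~\ref{lem:splitdecay} (there monotonicity of the linear flow was used). By \eqref{def:Le} we have $L_{J,M,B}\leq 2J\,\lambda_{\nu,k}\brak{\lambda_{\nu,k}\tau}^{2J-1}E_{\star;M,B}$, so $L_{J,M,B}$ carries exactly one power of $\brak{\lambda_{\nu,k}\tau}$ less than the weighted dissipation density $\brak{\lambda_{\nu,k}\tau}^{2J}\mathcal D_{\star;M,B}$ that is integrated to form $\mathcal D$. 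I would therefore split the $(\tau,k)$--region according to whether $\brak{\lambda_{\nu,k}\tau}\geq T_\star$ or $\brak{\lambda_{\nu,k}\tau}<T_\star$, with $T_\star=T_\star(J,\delta_\star)$ a large constant fixed below.

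In the regime $\brak{\lambda_{\nu,k}\tau}\geq T_\star$ I would choose $R=R(\tau,k)=\bigl(c_\star\,\delta_\star J^{-1}\brak{\lambda_{\nu,k}\tau}\bigr)^{1/\varpi}$, which is $\geq1$ precisely because of the lower bound on $\brak{\lambda_{\nu,k}\tau}$ once the small constant $c_\star$ (depending only on the implicit constants in \eqref{bd:lwDed}--\eqref{bd:lwDTd}) is fixed; then $\lambda_{\nu,k}/R^{\varpi}\simeq \delta_\star J^{-1}\lambda_{\nu,k}\brak{\lambda_{\nu,k}\tau}^{-1}$, and the displayed lower bound yields the pointwise inequality
\[ L_{J,M,B}(\tau,k)\ \leq\ \tfrac{\delta_\star}{8}\,\brak{\lambda_{\nu,k}\tau}^{2J}\,\mathcal D_{\star;M,B}(\tau,k)\ +\ C_J\,\lambda_{\nu,k}\,\brak{\lambda_{\nu,k}\tau}^{\,2J-1-2M_J/\varpi}\,E_{\star;M+M_J,B}(\tau,k). \]
By \eqref{eq:constMJB} we have $M_J>2\varpi(J+1)$, so the exponent $2J-1-2M_J/\varpi$ is $\leq-2$, and the substitution $u=\lambda_{\nu,k}\tau$ gives $\int_0^{\infty}\lambda_{\nu,k}\brak{\lambda_{\nu,k}\tau}^{\,2J-1-2M_J/\varpi}\,\dd\tau\lesssim1$ uniformly in $k$. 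In the complementary regime $\brak{\lambda_{\nu,k}\tau}<T_\star$ I would crudely bound $L_{J,M,B}\leq 2JT_\star^{2J-1}\lambda_{\nu,k}E_{\star;M,B}\leq 2JT_\star^{2J-1}\lambda_{\nu,k}E_{\star;M+M_J,B}$ and use $\int_0^{t}\lambda_{\nu,k}\mathbbm{1}_{\{\lambda_{\nu,k}\tau<T_\star\}}\,\dd\tau\leq T_\star$ after the same substitution. Adding the two contributions and integrating in $\tau\in[0,t]$ and in $k$, the weighted dissipation term reproduces exactly $\tfrac{\delta_\star}{8}\int_0^{t}\mathcal D\,\dd\tau$.

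It then remains to control the moment remainder, which after Fubini is bounded by $C\int_{\mathbb R^d}\sup_{0\leq\tau\leq t}E_{\star;M+M_J,B}(\tau,k)\,\dd k$. Here I would invoke that the moment energies $\mathcal E_{mom}$ carry \emph{no} linear forcing (Remark~\ref{rem:trivialEmom}) and, in the bootstrap regime, absorb their nonlinear errors, so they are non-increasing up to absorbable contributions; concretely, starting from the pointwise-in-$k$ energy inequality $E_{\star;M+M_J,B}(\tau,k)\leq E_{\star;M+M_J,B}(0,k)+\nu\int_0^{t}|NL_{M+M_J,B}(s,k)|\,\dd s$, integrating in $k$, and using the nonlinear estimates of this section together with the bootstrap hypotheses, one obtains $\int_{\mathbb R^d}\sup_{0\leq\tau\leq t}E_{\star;M+M_J,B}(\tau,k)\,\dd k\lesssim\mathcal E_{mom}(0)\lesssim\sup_{0\leq s\leq t}\mathcal E_{mom}(s)$, which gives \eqref{bd:LerrD}. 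The bound \eqref{bd:LerrLF} follows from the identical scheme with $(J,M,B)$ replaced by $(J',M',B')$ and the $k$--integral replaced by $\sup_k$ throughout, factoring $\sup_k[A\,B]\leq(\sup_kA)(\sup_kB)$ to split off the dissipation supremum ($\to\mathcal D_{LF}$) from the moment supremum ($\to\mathcal E_{mom,LF}$), and bounding the contribution of a bounded time-interval by the solution's data as done elsewhere in the section. The main obstacle — indeed the only point demanding genuine care — is the exchange between $\int_k\sup_\tau$ and $\sup_\tau\int_k$ (and the analogous small-time contribution in the $\sup_k$ estimate), which is precisely the reason the moment energies $\mathcal E_{mom}$ and $\mathcal E_{mom,LF}$ are defined without a time weight in \eqref{def:E}.
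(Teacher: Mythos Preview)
Your approach is essentially the paper's: invoke the weak--Poincar\'e lower bounds \eqref{bd:lwDTd}--\eqref{bd:lwDed}, choose $R^\varpi\sim\brak{\lambda_{\nu,k}\tau}$ so that the main term reproduces the weighted dissipation density, and control the moment remainder by time integration using the change of variables $s=\lambda_{\nu,k}\tau$. The paper does not split into $\brak{\lambda_{\nu,k}\tau}\gtrless T_\star$ --- it simply sets $R^\varpi=\tilde\delta\brak{\lambda_{\nu,k}\tau}$ with $\tilde\delta$ small and absorbs the resulting large constant in front of the moment term --- but your splitting is a harmless way to enforce $R\geq1$.

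One point worth flagging: you correctly identify that after Fubini one faces $\int_{\mathbb R^d}\sup_{0\leq\tau\leq t}E_{\star;M+M_J,B}(\tau,k)\,\dd k$, and that $\int_k\sup_\tau\not\leq\sup_\tau\int_k$ in general. The paper writes this interchange without comment; your fix via the pointwise-in-$k$ energy inequality $E(\tau,k)\leq E(0,k)+\nu\int_0^\tau|NL(s,k)|\,\dd s$ together with Proposition~\ref{lem:momerrL2} and the bootstrap hypotheses is the right way to close this, and is in the spirit of how the paper uses these propositions inside Proposition~\ref{prop:bootstrap}. Note, though, that this makes your argument conditional on the bootstrap (so the constant $C_1$ picks up an $O(\eps)$ correction), whereas the proposition is stated unconditionally; since it is only ever applied within the bootstrap, this is harmless, but it is worth being explicit that the clean statement as written requires this context.
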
 

To state the bounds for the nonlinear error terms, in the following propositions we always consider $\mathcal{E}_*,\mathcal{D}_*$ to be the functionals defined in \eqref{def:EDpf}-\eqref{def:Diss} and
$NL^{e.d.}_{*}$, $NL^{T.d.}_{*}$ the nonlinear errors defined in \eqref{def:NLed}, \eqref{def:NLTd} respectively. We control separately the nonlinear errors arising from the time derivative of each of the energy functionals in \eqref{def:EDpf}-\eqref{def:EmomLF}. For the ones associated with $\mathcal{E}$, in Section \ref{sec:NLerrED} we prove the following.
\begin{proposition}
\label{prop:NLerrorL2}
The following inequality holds true
\begin{align}
	&\nu \int_{\abs{k}\leq\delta_0^{-1}\nu}\brak{ \lambda_{\nu,k} t}^{2J}NL^{T.d.}_{M,B}\dd k+\nu\int_{\abs{k}>\delta_0^{-1}\nu}\brak{ \lambda_{\nu,k} t}^{2J} NL^{e.d.}_{M,B} \dd k \notag\\
	\label{bd:NLL2}&\lesssim\sqrt{\mathcal{E}}\sqrt{\mathcal{D}}\left(\sqrt{\mathcal{D}}+\big(\nu^d\mathbbm{1}_{t\leq \nu^{-1}}+\brak{\nu/t}^{\frac{d}{2}}\mathbbm{1}_{t> \nu^{-1}}\big)\sqrt{\mathcal{E}_{LF}}\right)(t).	
\end{align}
\end{proposition}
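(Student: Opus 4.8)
The plan is to estimate each block of nonlinear terms in $NL^{e.d.}_{M,B}$ and $NL^{T.d.}_{M,B}$ by using the trilinear bound in Lemma \ref{lem:trilBd} to convert the $\Gamma$-terms into products of the form $\normL{\jap{v}^M Z^{\beta_1}f}_{L^2_v}\sqrt{\cA[\jap{v}^M Z^{\beta_2}f]}$, and then to split the frequency integral $\int \dd k$ using convolution structure: one factor is measured in $L^2_k$ (giving $\sqrt{\mathcal{D}}$ after picking up the anisotropic norm $\cA$), the other in $L^1_k$ via Young's inequality. The $L^1_k$ factor is where the low-frequency dichotomy enters: for the modes of one of the two copies of $f$ we use $\int_{\abs{k}\le\delta_0^{-1}\nu}\dd k \lesssim \nu^d$ (explaining the $\nu^d\mathbbm{1}_{t\le\nu^{-1}}$ factor) and the decay of $\brak{\lambda_{\nu,k}t}$ to produce $\brak{\nu/t}^{d/2}\mathbbm{1}_{t>\nu^{-1}}$, exactly as in the proof of Lemma \ref{lem:LDLF}; this is the term that gets bounded by $\sqrt{\mathcal{E}_{LF}}$ (pointwise-in-$k$ surrogate for $L^1_xL^2_v$).

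\textbf{Key steps.} First, for a fixed $k$ and multi-index $\beta$, apply the Leibniz rule \eqref{eq:ZLeib} and Lemma \ref{lem:trilBd} to bound each inner product in \eqref{def:NLed}–\eqref{def:NLed3} and \eqref{def:NLTd}–\eqref{def:NLTd4} by sums over $|\beta_1|+|\beta_2|\le|\beta|$ of terms like $\sqrt{\cA[\jap{v}^M Z^\beta \hat h]}\,\normL{\jap{v}^M \widehat{Z^{\beta_1}f}}_{L^2_v}\ast_k \sqrt{\cA[\jap{v}^M \widehat{Z^{\beta_2}f}]}$, where the convolution is in $k$ because $\Gamma$ couples all $x$-frequencies. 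Since $Z^\beta$ commutes with $v\cdot\grad_x$, the vector fields distribute cleanly onto the two copies. For the mixed-inner-product terms \eqref{def:NLed2}–\eqref{def:NLed3}, \eqref{def:NLTd3}–\eqref{def:NLTd4} and $\mathcal{M}_\Gamma$, one also uses that $\brak{\Gamma(f,f),\bP f}=0$ together with Lemma \ref{lem:ThetaLambda} to gain $\normL{\mu^\delta\cdot}_{L^2_v}$ smallness, exactly mirroring the linear treatment in Section \ref{sec:Lin}. Second, for each product, designate the factor carrying the ``most dangerous'' frequency localization (i.e. the lowest regularity budget, hence forced into $\mathcal{E}_{LF}$) and estimate its $x$-norm pointwise in $k$, while keeping the other two factors in $L^2_k$. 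Using Young's convolution inequality $\norm{g\ast h}_{L^2_k}\le\norm{g}_{L^1_k}\norm{h}_{L^2_k}$, the $L^1_k$-norm of the low-regularity factor is bounded by $\int_{\RR^d}\brak{\lambda_{\nu,k}t}^{-J'}\dd k\cdot\sup_k\brak{\lambda_{\nu,k}t}^{J'}(\cdots)$, and on $\abs{k}\le\delta_0^{-1}\nu$ this integral is $\lesssim\nu^d$ for $t\le\nu^{-1}$ and $\lesssim\brak{\nu/t}^{d/2}$ for $t>\nu^{-1}$ (using $\lambda_{\nu,k}=\delta_1\nu^{-1}|k|^2$ there and $J'>d$ so that $\int\brak{\nu^{-1}|k|^2t}^{-J'}\dd k\lesssim(\nu/t)^{d/2}$), while on $\abs{k}>\delta_0^{-1}\nu$ a similar bound with $\lambda_{\nu,k}\gtrsim\nu$ is even more favorable. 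Third, the weight $\brak{\lambda_{\nu,k}t}^{2J}$ in front is distributed across the three factors via $\brak{\lambda_{\nu,k_1+k_2}t}^{2J}\lesssim\brak{\lambda_{\nu,k_1}t}^{2J}\brak{\lambda_{\nu,k_2}t}^{2J}$ up to the weak-Poincar\'e loss already absorbed by $\mathcal{E}_{mom}$; the two high-regularity factors then contribute one $\sqrt{\mathcal{E}}$ (the $L^2_k$ factor with the $\cA$-norm integrated) and one $\sqrt{\mathcal{D}}$ (the $\cA$-norm of $\hat h$), while the designated low factor contributes $\sqrt{\mathcal{E}_{LF}}$ unless all three copies are high-frequency, in which case one gets the pure $\sqrt{\mathcal{E}}\mathcal{D}$ term. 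Finally, reassemble the $\nu$-prefactor: the explicit $\nu$ in front of $NL$ combined with $|k|/\nu$ or $a_{\nu,k},b_{\nu,k}$ scalings produces the dissipation $\sqrt{\mathcal{D}}$ powers correctly, exactly as in the monotonicity proofs of Section \ref{subsec:ED}–\ref{subsec:TD}.

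\textbf{The hard part} will be the bookkeeping in the Taylor dispersion regime $\abs{k}\le\delta_0^{-1}\nu$, specifically handling the terms \eqref{def:NLTd}–\eqref{def:NLTd2} that involve the $\nu^{-1}|k|$ or $\nu^{-1}$ prefactors together with the $\brak{\nu t}^{-2\beta}$ factors, since there one cannot use $\nu$ as a smallness parameter and must instead rely on the boundedness of $|k|/\nu$ and the cancellation $\brak{\Gamma(f,f),\bP f}=0$ to keep everything expressible through $(I-\bP)f$ and $\mathcal{D}^{T.d.}$. One must check that distributing $\brak{\lambda_{\nu,k}t}^{2J}$ and the $\brak{\nu t}^{-2\beta}$ factors onto the convolution factors does not spoil the gain: using $\brak{\nu t}^{-2\beta}\le\brak{\nu t}^{-2\beta_1}\brak{\nu t}^{-2\beta_2}$ (trivially, since all are $\le1$) and that $Z^{\beta}=(Z)^{\beta_1}(Z)^{\beta_2}$ respects the product, this reduces to the same energy/dissipation pairing as on $\TT^d$, plus the low-frequency gain. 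The anisotropy of $\cA$ requires, as in the mixed-inner-product estimate \eqref{bd:mixedRj}, a dyadic decomposition in $v$ and Gagliardo–Nirenberg interpolation when $v$-derivatives meet the weaker weight $\jap{v}^{M+q_{\gamma,s}}$; this is routine given Proposition \ref{lemma:coercive} and the trilinear commutator bound \eqref{bd:trilincomm}. With these ingredients assembled term by term, summing over the finitely many $\alpha,\beta$ with $\alpha+|\beta|\le B$ yields \eqref{bd:NLL2}.
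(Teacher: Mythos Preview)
Your overall plan is on the right track—trilinear estimate, convolution structure, Young's inequality, and the low-frequency $L^1_k$ integration yielding the $\nu^d$/$\brak{\nu/t}^{d/2}$ dichotomy are indeed the main ingredients. However, there are two concrete gaps that would cause trouble if you tried to execute the proof as written.

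\textbf{Distribution of the time weight.} Your claim that $\brak{\lambda_{\nu,k_1+k_2}t}^{2J}\lesssim\brak{\lambda_{\nu,k_1}t}^{2J}\brak{\lambda_{\nu,k_2}t}^{2J}$ is true, but using it multiplicatively forces you to absorb a full $\brak{\lambda_{\nu,\cdot}t}^{J}$ on the \emph{low-frequency} convolution factor as well. That factor is supposed to land in $\sqrt{\mathcal{E}_{LF}}$, which only carries $\brak{\lambda_{\nu,\cdot}t}^{J'}$ with $J'<J-1$, so you would be short by $\brak{\lambda_{\nu,k-\xi}t}^{J-J'}$. The paper avoids this entirely by organizing the convolution through a Littlewood--Paley paraproduct $\Gamma(f_N,f_{<N/8})+\Gamma(f_{<N/8},f_N)+\sum_{N'\sim N}\Gamma(f_N,f_{N'})$; in the high--low piece one has $|k|\approx|\xi|\approx N$, so $\brak{\lambda_{\nu,k}t}^{J}\approx\brak{\lambda_{\nu,\xi}t}^{J}$ and the entire weight passes to the high-frequency factor with no residual on the low one. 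Your remark that the shortfall is a ``weak-Poincar\'e loss already absorbed by $\mathcal{E}_{mom}$'' is incorrect: $\mathcal{E}_{mom}$ enters only in the \emph{linear} error estimate (Proposition~\ref{lem:linerr}) and plays no role in the nonlinear bound \eqref{bd:NLL2}.

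\textbf{Micro--macro splitting at very low frequencies.} The second gap is in identifying precisely \emph{which} term forces the appearance of $\sqrt{\mathcal{E}_{LF}}$. It is not simply ``the low-regularity factor'': in the paper the dangerous contribution arises when the $\cA$-norm falls on a factor $P_{\le\delta_0^{-1}\nu}\hat f_{<N/8}$ in the Taylor-dispersion range. One must then split $\hat f=\bP\hat f+(I-\bP)\hat f$; the microscopic piece is harmless (it matches the $E^{e.d.}$ structure), but for the macroscopic piece one has $\sqrt{\nu\cA[\bP\hat f]}\lesssim\sqrt{\nu}\,|(\hat\rho,\hat\sfm,\hat\sfe)|$ with no factor of $|k|$ to feed into $\mathcal{D}^{T.d.}$. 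It is precisely this residual $\sqrt{\nu}\,\normL{P_{\le\delta_0^{-1}\nu}\hat f}_{L^1_kL^2_v}$ that is estimated via the $L^\infty_k$ surrogate $\sqrt{\mathcal{E}_{LF}}$ and the explicit $\int_{|k|\lesssim\nu}\brak{\nu^{-1}|k|^2t}^{-J'}\dd k$ computation. Your sketch does not isolate this step, and without the micro--macro split you cannot separate the part that closes with $\sqrt{\mathcal{E}}\mathcal{D}$ from the part that genuinely requires $\mathcal{E}_{LF}$. Additionally, in the Taylor-dispersion nonlinear errors with $N\sim N'\gtrsim\nu$ (very-low $k$ interacting with two high-frequency factors), the paper recovers a factor $\nu^{d/2}$ from the volume $\int_{|k|\lesssim\nu}\dd k$ to compensate the $\nu^{-1/(1+2s)}$ loss from the $a_{\nu,\xi}$ coefficients; this mechanism is also missing from your outline.
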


To control the $L^\infty_k$ energy functional $\mathcal{E}_{LF}$, which is needed in \eqref{bd:NLL2} and necessary to obtain the optimal time-decay rates, in Section \ref{sec:NLerrLinfty} we show the following.
\begin{proposition}
\label{prop:NLerrorLinfty}
The following inequality holds true
\begin{align}
	&\nu \sup_{k:\abs{k}\leq\delta_0^{-1}\nu}\brak{ \lambda_{\nu,k} t}^{2J'}NL^{T.d.}_{M',B'}+\nu\sup_{k:\abs{k}>\delta_0^{-1}\nu}\brak{ \lambda_{\nu,k} t}^{2J'} NL^{e.d.}_{M',B'} \notag \\
	\label{bd:NLsup}&\lesssim\sqrt{\mathcal{D}_{LF}}\left(\sqrt{\mathcal{E}}\sqrt{\mathcal{D}}+\big(\nu^d\mathbbm{1}_{t\leq \nu^{-1}}+\brak{\nu/t}^{\frac{d}{2}}\mathbbm{1}_{t> \nu^{-1}}\big)\mathcal{E}_{LF}\right)(t).
\end{align}
\end{proposition}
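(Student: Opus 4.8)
The goal is to bound the nonlinear error terms $NL^{e.d.}_{M',B'}$ and $NL^{T.d.}_{M',B'}$ arising from the time derivative of the low-frequency supremum energy $\mathcal{E}_{LF}$, obtaining a bound structurally identical to Proposition \ref{prop:NLerrorL2} but with an extra factor of $\sqrt{\mathcal{E}_{LF}}$ replaced by $\sqrt{\mathcal{D}_{LF}}$ on the outside (reflecting that we are now estimating a quantity measured pointwise in $k$ rather than integrated). The plan is to reduce every term to the master trilinear estimate of Lemma \ref{lem:trilBd} (and its consequences Theorem \ref{th:trilinear} and Proposition \ref{prop:trilincomm}), and then to distribute the $x$-frequency integrals coming from the nonlinearity $\widehat{\Gamma(f,f)}(k) = \frac{1}{(2\pi)^{d/2}}\int \Gamma(\hat f(k-\ell),\hat f(\ell))\dd\ell$ in a way that keeps a factor of $\sqrt{\mathcal{D}_{LF}}$ outside. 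The main difference from Section \ref{sec:NLerrED} is bookkeeping: since $\mathcal{E}_{LF}$ and $\mathcal{D}_{LF}$ are $L^\infty_k$-type quantities, one of the two copies of $\hat f$ in the convolution must be placed in a space that converts back to an $L^1_k$ (hence $L^2_k$ with the help of the better $\mathcal{E}$ or $\mathcal{E}_{mom}$ moment norms) integrable factor, while the other copy sits in the $\mathcal{E}_{LF}/\mathcal{D}_{LF}$ slot.

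First I would fix $k$ with, say, $|k| > \delta_0^{-1}\nu$ and expand $Z^\beta\widehat{\Gamma(f,f)}(k)$ via the Leibniz rule \eqref{eq:ZLeib}, writing the convolution integral $\int_{\mathbb R^d}\cdots\dd\ell$ over frequencies $\ell$ and $k-\ell$. Using Lemma \ref{lem:trilBd} on each pairing $\langle \langle v\rangle^{M'}Z^\beta\widehat{\Gamma},\langle v\rangle^{M'}Z^\beta\hat f(k)\rangle$ produces, for each $\beta_1+\beta_2\le\beta$, a bound of the form $\sqrt{\mathcal A[\langle v\rangle^{M'}Z^\beta\hat f(k)]}$ times $\int (\|\langle v\rangle^{M'}Z^{\beta_1}\hat f(k-\ell)\|_{L^2_v}\sqrt{\mathcal A[\langle v\rangle^{M'}Z^{\beta_2}\hat f(\ell)]} + (\text{symmetric}))\dd\ell$. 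The first $\mathcal A$-factor in $k$ is exactly a piece of $\sqrt{\mathcal D^{e.d.}_{M',B'}(k)}$, so after multiplying by $\langle\lambda_{\nu,k}t\rangle^{2J'}$, taking $\sup_k$ and using Cauchy--Schwarz this yields $\sqrt{\mathcal D_{LF}}$ times the $\ell$-integral. For the $\ell$-integral I would split by frequency: for $|\ell|\le\delta_0^{-1}\nu$ (low) put that copy in $\mathcal{E}_{LF}$ and the companion at frequency $k-\ell$ in $\mathcal{E}$ or $\mathcal D$; for $|\ell|$ high, put both copies in the $L^2_k$ energies $\mathcal{E},\mathcal D$. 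The $L^1_\ell$ norm of the low-frequency ball contributes the volume factor $\nu^d$ (or $\langle\nu/t\rangle^{d/2}$ after exploiting the time decay $\langle\lambda_{\nu,\ell}t\rangle^{-J'}\lesssim \langle\nu^{-1}|\ell|^2 t\rangle^{-J'}$ and integrating, exactly as in Lemma \ref{lem:LDLF}); this is where the dichotomy $\nu^d\mathbbm 1_{t\le\nu^{-1}}+\langle\nu/t\rangle^{d/2}\mathbbm 1_{t>\nu^{-1}}$ comes from. The terms with $\grad_v$, with weights $\langle v\rangle^{M'+q_{\gamma,s}}$, and the mixed inner-product terms \eqref{def:NLTd3}--\eqref{def:NLTd4} and $\mathcal M_\Gamma$ \eqref{bd:mixedNL} are handled the same way, using Proposition \ref{prop:trilincomm} to commute the weight past $\Gamma$ and Lemma \ref{lem:ThetaLambda} together with $\langle\Gamma(f,f),\bP f\rangle=0$ for the macroscopic pieces; the coefficient hierarchy $c_{i+1}\ll c_i$ and $1\ll\mathtt C_0\ll\mathtt C_1$ lets the lower-order $\beta$ contributions and the $\grad_v$ terms be absorbed.

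The Taylor-dispersion regime $|k|\le\delta_0^{-1}\nu$ is analogous but one must be careful with the $\tfrac{|k|}{\nu}$ and $\tfrac{c_0}{\nu}$ prefactors in \eqref{def:NLTd}--\eqref{def:NLTd3}. Here the relevant dissipation is $\mathcal D^{T.d.}_{M',B'}$, which contains $\nu^{-1}|k|^2\|Z^\beta\bP\hat f\|^2$ and $|k|\mathcal A[Z^\beta(I-\bP)\hat f]$; using that $|k|/\nu$ is bounded and matching powers of $|k|$ carefully, the trilinear estimate again produces one $\sqrt{\mathcal D^{T.d.}_{M',B'}(k)}$ factor and an $\ell$-integral handled by the same low/high splitting. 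For the term $\tfrac{|k|}{\nu}|\langle Z^\beta\widehat\Gamma,Z^\beta\hat f\rangle|$ one uses $\langle\Gamma,\hat f\rangle = \langle\Gamma,(I-\bP)\hat f\rangle$ to land on $\mathcal A[Z^\beta(I-\bP)\hat f(k)]$ and hence on a piece of $\mathcal D^{T.d.}_{M',B'}$ carrying the $|k|$ weight, while the $\tfrac{|k|}{\nu}$ prefactor is absorbed since it is $\lesssim 1$ up to the remaining $|k|$.

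The main obstacle I anticipate is the careful tracking of the $(\nu,k)$-dependent coefficients $a_{\nu,k},b_{\nu,k},|k|/\nu,1/\nu$ together with the time weights $\langle\lambda_{\nu,k}t\rangle^{2J'}$, so that (i) one always extracts exactly one clean $\sqrt{\mathcal D_{LF}}$ and one $\sqrt{\mathcal D}$ (or $\sqrt{\mathcal E}$) factor, and (ii) the low-frequency volume integral of $\langle\lambda_{\nu,\ell}t\rangle^{-J'}$ over $|\ell|\le\delta_0^{-1}\nu$ genuinely produces the stated dichotomy, which forces the constraint $\tfrac{2s}{1+2s}J' > d$ (equivalently $J'>d$ here, since the Taylor regime gives the weaker rate) used elsewhere. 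A secondary subtlety is that $\mathcal{E}_{LF}$ is built from $E_{M',B'}$ with the \emph{smaller} moment index $M'$, so when a weight-commutator forces a loss of $\langle v\rangle^s$ one must verify the budget $M' > B'+|\gamma|+2s$ of Lemma \ref{lem:trilBd} is respected; this is guaranteed by \eqref{eq:constMJB}. Once these bookkeeping points are in place, the estimate \eqref{bd:NLsup} follows by summing the finitely many terms and applying Young's inequality to redistribute the small constants.
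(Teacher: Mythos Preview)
Your overall architecture is right and matches the paper's Section~\ref{sec:NLerrLinfty}: paraproduct decomposition, trilinear estimate from Lemma~\ref{lem:trilBd}, extract one $\sqrt{\mathcal{D}_{LF}}$ from the external $k$-factor, then Young in $\ell$ with both inputs in $L^2_k$ (so $L^2*L^2\hookrightarrow L^\infty$). The enhanced-dissipation case and the very-low/very-low Taylor interaction indeed go through essentially as you outline.

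There is, however, a genuine gap in your Taylor-regime paragraph. When $|k|\le\delta_0^{-1}\nu$ but the paraproduct pairing is $\Gamma(f_N,f_{N'})$ with $N\sim N'\gtrsim\nu$ (both inputs at high frequency), the term \eqref{def:NLTd2} carries a $\nabla_v$ on $\Gamma$. After Leibniz, this $\nabla_v$ can land on $\hat f_N(\xi)$ with $|\xi|\gtrsim\nu$; to place that factor in $E^{e.d.}$ or $\mathcal{D}^{e.d.}$ you must manufacture a coefficient $\sqrt{a_{\nu,\xi}}$, which costs $(|\xi|/\nu)^{1/(1+2s)}$. In the $L^2_k$ proof this loss was harmless because integrating $|k|\lesssim\nu$ produced a compensating $\nu^{d/2}$ via \eqref{bd:easyTD}. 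Taking $\sup_k$ offers no such gain, so your claim that the Taylor regime is ``analogous'' and handled by ``the same low/high splitting'' does not close.

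The paper's fix is to avoid the $\nabla_v$ on the high-frequency inputs altogether: write $\nabla_vZ^\beta=Z^{\beta+1}-t\nabla_xZ^\beta$ on the $\Gamma$ side (see \eqref{def:IaTdsup1}). The $Z^{\beta+1}$ piece spends one extra $Z$-derivative, which is allowed precisely because $B'\le B-1$ and both inputs are measured in the higher-order $L^2_k$ functionals $\mathcal{E},\mathcal{D}$. The $t\nabla_x$ piece is bounded by $\langle\nu t\rangle$ since $|k|\lesssim\nu$, and this $\langle\nu t\rangle$ loss (which also appears in the $Z^{\beta+1}$ piece when matching the $\langle\nu t\rangle^{-(\beta+1)}$ weight) is absorbed via \eqref{bd:exlambdaJ}, using $J'\le J-1$ to trade one power of $\langle\lambda_{\nu,\xi}t\rangle$ for $\langle\nu t\rangle^{-1}$. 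This is where the hypotheses $B'<B$ and $J'<J$ in \eqref{eq:constMJB} are actually consumed; your proposal does not invoke them.
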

\begin{remark}
As it will be clear from the proof of Proposition \ref{prop:NLerrorLinfty}, we are allowed to use $\mathcal{E}$ in \eqref{bd:NLsup} because $\mathcal{E}_{LF}$ is a lower order energy with respect to $\mathcal{E}$. Indeed, it requires less derivatives, decay and weights, a fact that is crucial to close the estimates. 
\end{remark}
We finally have to estimate the higher order moments functionals $\mathcal{E}_{mom,*}$, appearing in the bounds for the linear error terms \eqref{bd:LerrD}-\eqref{bd:LerrLF}. Recall that for the time derivative of $\mathcal{E}_{mom,*}$ we only have nonlinear error terms, since  $L_{0,M,B}=0$ as observed in Remark \ref{rem:trivialEmom}. Hence, to control the time derivative of $\mathcal{E}_{mom}$ the following proposition is sufficient.
\begin{proposition}
\label{lem:momerrL2}
The following inequality holds true
\begin{align}
	&\nu \int_{\abs{k}\leq\delta_0^{-1}\nu}NL^{T.d.}_{M+M_J,B}\dd k+\nu\int_{\abs{k}>\delta_0^{-1}\nu} NL^{e.d.}_{M+M_J,B} \dd k \notag \\
	\label{bd:NLL2mom}&\lesssim\sqrt{\mathcal{E}_{mom}}\sqrt{\mathcal{D}_{mom}}\left(\sqrt{\mathcal{D}_{mom}}+\big(\nu^d\mathbbm{1}_{t\leq \nu^{-1}}+\brak{\nu/t}^{\frac{d}{2}}\mathbbm{1}_{t> \nu^{-1}}\big)\sqrt{\mathcal{E}_{LF}}\right)(t).	
\end{align}
\end{proposition}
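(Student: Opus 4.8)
The plan is to treat Proposition \ref{lem:momerrL2} exactly as Proposition \ref{prop:NLerrorL2}, with the only changes being that the weight $M$ is replaced by $M + M_J$ and no decay factor $\brak{\lambda_{\nu,k}t}^{2J}$ is present. The key observation is that, since the structure of the nonlinear error functionals $NL^{e.d.}_{M+M_J,B}$ and $NL^{T.d.}_{M+M_J,B}$ is literally \eqref{def:NLed}--\eqref{def:NLTd4} with $M$ replaced by $M + M_J$, and the energy/dissipation functionals $\mathcal{E}_{mom}, \mathcal{D}_{mom}$ are defined by \eqref{def:EmomD} (and the corresponding dissipation), all the frequency-by-frequency estimates proved in Section \ref{sec:NLerrED} apply verbatim, provided $M + M_J$ still satisfies the trilinear hypotheses $M + M_J > B + |\gamma| + 2s$ (which holds by \eqref{eq:constMJB}, since $M_J > 0$). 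So I would simply invoke those estimates.

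First I would recall the basic building block: for each fixed $k$, Lemma \ref{lem:trilBd} gives, for $|\beta| \leq B$ and weight $M+M_J$,
\begin{align*}
\left|\brak{\brak{v}^{M+M_J}Z^\beta \widehat{\Gamma(f,f)}, \brak{v}^{M+M_J}Z^\beta \hat{f}}_{L^2_v}\right| \lesssim \sqrt{\cA[\brak{v}^{M+M_J}Z^\beta \hat f]}\!\! \sum_{|\beta_1|+|\beta_2|\leq|\beta|}\!\! \widehat{\mathcal{G}}_{\beta_1}(k)\,\mathcal{H}_{\beta_2}(k),
\end{align*}
where $\widehat{\mathcal{G}}_{\beta_1}$ collects the $L^2_v$-type factors on $Z^{\beta_1}f$ and $\mathcal{H}_{\beta_2}$ the $\cA$-type factors on $Z^{\beta_2}f$ (or vice versa). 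Because $\Gamma$ is a convolution in the $x$-frequency variable $k$, I would split each $x$-frequency into low ($|k|\leq \delta_0^{-1}\nu$, or more precisely the lower of the two input frequencies) and high, use Young's convolution inequality in $k$, and estimate the low-frequency input in the supremum norm $L^\infty_k$ — this is exactly where $\mathcal{E}_{LF}$ enters — and the high-frequency input in $L^2_k$ against a copy of $\mathcal{E}_{mom}$ or $\mathcal{D}_{mom}$. The Bessel-type factor $\nu^{d}\mathbf 1_{t\leq \nu^{-1}} + \brak{\nu/t}^{d/2}\mathbf 1_{t>\nu^{-1}}$ arises from integrating $\brak{\lambda_{\nu,k}t}^{-2J'}$ (or more precisely the decay of $\mathcal{E}_{LF}$) over the low-frequency ball, exactly as in the proof of Lemma \ref{lem:LDLF} and Proposition \ref{prop:NLerrorL2}; note that here there is no outer $\brak{\lambda_{\nu,k}t}^{2J}$ weight to transfer, which only makes the combinatorial splitting of the decay exponent among the three factors \emph{easier} than in Proposition \ref{prop:NLerrorL2}, since we need no extra decay on the $\mathcal{E}_{mom}$-factor at all.

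Concretely I would proceed term by term through \eqref{def:NLTd}--\eqref{def:NLTd4} and \eqref{def:NLed}--\eqref{def:NLed3}: for the top-order term $\brak{\brak{v}^{M+M_J}Z^\beta\widehat{\Gamma},\brak{v}^{M+M_J}Z^\beta(I-\bP)\hat f}$ one $\sqrt{\cA[\cdot]}$ factor is absorbed into $\sqrt{\mathcal{D}_{mom}}$; the remaining product of inputs is bounded, after the low/high frequency split and Young's inequality, by $\sqrt{\mathcal{E}_{mom}}\sqrt{\mathcal{D}_{mom}}$ when both inputs are ``high'', or by $(\nu^d\mathbf 1_{t\leq\nu^{-1}} + \brak{\nu/t}^{d/2}\mathbf 1_{t>\nu^{-1}})\sqrt{\mathcal{E}_{LF}}\,\sqrt{\mathcal{E}_{mom}}$ when one input is ``low.'' The mixed-inner-product pieces \eqref{def:NLed2}, \eqref{def:NLed3}, \eqref{def:NLTd3}, \eqref{def:NLTd4} are handled by first using $|k|/\nu \lesssim 1$ (resp.\ the definitions \eqref{def:abnuk}) to trade the prefactors against powers of $\lambda_{\nu,k}$, exactly as in the linear monotonicity proofs, and then applying the same convolution splitting; the macroscopic term $\mathcal{M}_\Gamma$ uses $\brak{\Gamma(f,f),\bP f}=0$ together with Lemma \ref{lem:ThetaLambda} to reduce to $\mu^\delta$-localized $L^2_v$ norms controlled by $\mathcal{D}_{mom}$. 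The main obstacle — really the only nontrivial point — is bookkeeping the velocity weights: one must check that in the Leibniz expansion \eqref{eq:ZLeib} every term $\cT(Z^{\beta_1}f, Z^{\beta_2}f,(\nabla_v)^{\beta_3}\sqrt\mu)$ can be estimated so that each of the two factors $f$ carries at most weight $M+M_J$ and at most $B$ derivatives, which holds because $M + M_J$ exceeds the threshold in Lemma \ref{lem:trilBd} and because $\mathcal{E}_{mom}$ and $\mathcal{E}_{LF}$ together control precisely those norms — here one uses $M' + M_{J'} \leq M \leq M + M_J$ from \eqref{eq:constMJB} so that the ``low-frequency'' factor, measured by $\mathcal{E}_{LF}$, carries enough weight and regularity to be put in $L^\infty_k$. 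No genuinely new estimate is needed beyond what is already proved for Proposition \ref{prop:NLerrorL2}; the proof is a direct adaptation, and I would state it as such, pointing to the line-by-line argument in Section \ref{sec:NLerrED} and indicating only the (trivial) modifications.
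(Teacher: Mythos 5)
Your proposal is correct and matches the paper's own proof, which likewise proceeds by substituting $(M,J)\to(M+M_J,0)$ throughout the estimates from the proof of Proposition \ref{prop:NLerrorL2}, observing that $L_{0,M+M_J,B}=0$ removes the linear error and that $M+M_J$ still satisfies the trilinear threshold. The only small misattribution is your appeal to the constraint $M'+M_{J'}\leq M$: in the paper that condition is used only in the proof of Proposition \ref{lem:momerrLinfty}; for the present proposition the $\mathcal{E}_{LF}$-factor arises solely through the macroscopic pieces $(\hat\rho,\hat\sfm,\hat\sfe)$ at very low frequency, where velocity weights are irrelevant thanks to the decay of the Maxwellian, so no weight comparison between $M'$ and $M+M_J$ is required.
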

\noindent Similary, for the $L^\infty_k$ functional $\mathcal{E}_{mom,LF}$ we obtain the following.
\begin{proposition}
The following inequality holds true
\label{lem:momerrLinfty} 
\begin{align}
	\label{bd:NLsupmom}&\nu \sup_{k:\abs{k}\leq\delta_0^{-1}\nu}NL^{T.d.}_{M'+M_{J'},B'}+\nu\sup_{k:\abs{k}>\delta_0^{-1}\nu} NL^{e.d.}_{M'+M_{J'},B'}  \\
	\notag &\lesssim\sqrt{\mathcal{D}_{mom,LF}}\bigg((\sqrt{\mathcal{E}_{mom}}+\sqrt{\mathcal{E}})\sqrt{\mathcal{D}_{mom}}+\big(\nu^d\mathbbm{1}_{t\leq \nu^{-1}}
	+\brak{\nu/t}^{\frac{d}{2}}\mathbbm{1}_{t> \nu^{-1}}\big)\sqrt{\mathcal{E}_{LF}\mathcal{E}_{mom,LF}}\bigg)(t).
\end{align}
\end{proposition}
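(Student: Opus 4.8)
The plan is to run the same argument as in the proofs of Propositions \ref{prop:NLerrorL2}--\ref{lem:momerrL2}, estimating each term in $NL^{e.d.}_{M'+M_{J'},B'}$ and $NL^{T.d.}_{M'+M_{J'},B'}$ \emph{pointwise in $k$}, extracting a factor $\sqrt{\mathcal{D}_{mom,LF}}$ from the slot occupied by the ``test'' function $Z^\beta\hat f(k)$, and then bounding the remaining bilinear $\ell$-integral. The only structural novelties compared with the cited statements are that (i) the velocity weight is now $M'+M_{J'}$, which by \eqref{eq:constMJB} still satisfies $M'+M_{J'}\le M\le M+M_J$, so every weighted $L^2_v$ norm that appears is controlled by one of $\mathcal{E}$, $\mathcal{E}_{mom}$, $\mathcal{E}_{LF}$, $\mathcal{E}_{mom,LF}$; and (ii) the outer norm in $k$ is a supremum rather than an integral, which interchanges the roles of $L^1_k$ and $L^\infty_k$ relative to Proposition \ref{lem:momerrL2}. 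Since $\langle\Gamma(f,f),\bP f\rangle=0$, the only surviving macroscopic contribution in $NL^{T.d.}_{M'+M_{J'},B'}$ is $\mathcal{M}_\Gamma$ of \eqref{bd:mixedNL}; applying Lemma \ref{lem:ThetaLambda} to $\widehat{\Gamma(f,f)}$ and Young's inequality, this is controlled by $\tilde\delta\,\nu^{-1}|k|^2|(\hat\rho,\hat\sfm,\hat\sfe)|^2+C_{\tilde\delta}\,\nu\normL{\mu^\delta\widehat{\Gamma(f,f)}}_{L^2_v}^2$, the first term being absorbed into $\mathcal{D}_{mom,LF}$ and the second treated with the genuinely collisional terms below.

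The engine is the trilinear bound of Lemma \ref{lem:trilBd}. Writing $\widehat{\Gamma(f,f)}(k)=c_d\int_{\RR^d}\Gamma(\hat f(k-\ell),\hat f(\ell))\dd\ell$, expanding $Z^\beta$ via \eqref{eq:ZLeib}, and applying \eqref{bd:NLZ} with weight $M'+M_{J'}$ and $h$ the relevant $Z^\beta$-derivative of $(I-\bP)\hat f(k)$ (or $\hat f(k)$), one obtains after $\sup_k$ and summation over $\alpha+|\beta|\le B'$ a factor $\sqrt{\mathcal{D}_{mom,LF}(t)}$ from $\sqrt{\cA[h]}$, times an integral of the schematic form
\begin{align*}
\int_{\RR^d}\sum_{|\beta_1|+|\beta_2|\le|\beta|}\Big(\normL{\brak{v}^{M'+M_{J'}}Z^{\beta_1}\hat f(k-\ell)}_{L^2_v}\sqrt{\cA[\brak{v}^{M'+M_{J'}}Z^{\beta_2}\hat f(\ell)]}+(\mathrm{sym.})\Big)\dd\ell .
\end{align*}
First I would split the $\ell$-domain dyadically so that the lower of the two output frequencies $|k-\ell|$, $|\ell|$ sits in a distinguished slot. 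When the higher frequency carries the dissipative $\cA$-norm and the bulk of the $\le B'\le B-1$ derivatives, I would bound its $\ell$-integral by Cauchy--Schwarz in $\ell$ and pass to $L^2_\ell$: the $\cA$-factor is then controlled by $\sqrt{\mathcal{D}_{mom}}$, and its partner weighted $L^2_v$ norm by $\sqrt{\mathcal{E}_{mom}}$ when the full moment weight $M+M_J$ is needed for the weak-Poincaré argument, or by $\sqrt{\mathcal{E}}$ when the weight $M$ suffices but the extra time decay $\brak{\lambda_{\nu,k}t}^{2J}$ carried by $\mathcal{E}$ is required for convergence; this yields the term $(\sqrt{\mathcal{E}_{mom}}+\sqrt{\mathcal{E}})\sqrt{\mathcal{D}_{mom}}$.

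In the complementary case the small factor $\hat f(\ell)$ is supported in $|\ell|\le\delta_0^{-1}\nu$, and I would estimate its contribution in $L^1_\ell$. Using that $\hat f$ satisfies the decay encoded in $\mathcal{E}_{LF}$ (resp. $\mathcal{E}_{mom,LF}$), one has pointwise $\normL{\brak{v}^{M'}\hat f(\ell)}_{L^2_v}\lesssim\brak{\lambda_{\nu,\ell}t}^{-J'}\sqrt{\mathcal{E}_{LF}(t)}$, and after the change of variables $\ell=\sqrt{\nu/t}\,\xi$ (as in Lemma \ref{lem:LDLF}), since $\lambda_{\nu,\ell}\gtrsim\nu^{-1}|\ell|^2$ on this range and $J'>d$,
\begin{align*}
\int_{|\ell|\le\delta_0^{-1}\nu}\frac{\dd\ell}{\brak{\lambda_{\nu,\ell}t}^{J'}}\lesssim\nu^{d}\,\mathbbm{1}_{t\le\nu^{-1}}+\brak{\nu/t}^{d/2}\,\mathbbm{1}_{t>\nu^{-1}},
\end{align*}
which is exactly the prefactor in \eqref{bd:NLsupmom}; the high-frequency partner $\hat f(k-\ell)$ then contributes $\sqrt{\mathcal{E}_{mom,LF}}$, producing $\big(\nu^d\mathbbm{1}_{t\le\nu^{-1}}+\brak{\nu/t}^{d/2}\mathbbm{1}_{t>\nu^{-1}}\big)\sqrt{\mathcal{E}_{LF}\mathcal{E}_{mom,LF}}$ (interpolating between the weights $M'$ and $M'+M_{J'}$ and between no-decay and $2J'$-decay is harmless, and not circular, since $\mathcal{E}_{mom,LF}$ requires strictly fewer derivatives, less velocity weight and no time decay than $\mathcal{E}_{mom}$ and $\mathcal{E}$). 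The commutator and weight-commutator errors from Lemma \ref{lemma:commutation}, Proposition \ref{prop:trilincomm} and the $(I-\bP)$ splitting carry Gaussian-localised velocity weights and are absorbed as in the previous propositions.

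The main obstacle I expect is purely the combinatorial bookkeeping: one must verify, case by case over the dyadic splitting and the Leibniz expansion of $Z^\beta$, that every resulting product has the admissible form (a dissipation functional)\,$\times$\,(a strictly lower-order energy) with matching velocity weights, derivative counts and powers of $\brak{\lambda_{\nu,k}t}$, that the ``small-frequency slot'' is always the one generating the $\nu^{d}/\brak{\nu/t}^{d/2}$ gain, and that $\mathcal{E}_{mom,LF}$ never re-enters to the first power without such a gain. The hypothesis $d\ge2$ is what makes $J'>d$ compatible with \eqref{eq:constMJB} and hence guarantees convergence of all the $\ell$-integrals above; no further restriction on the parameters is needed.
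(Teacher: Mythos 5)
Your high-level outline matches the paper's strategy (follow Proposition~\ref{prop:NLerrorLinfty} with $(M',J')\to(M'+M_{J'},0)$, use $\sqrt{\mathcal{D}_{mom,LF}}$ in the $\sup_k$ slot, and split the $\ell$-integral), but the place where the argument actually requires $\sqrt{\mathcal{E}}$ — the right-hand side term $(\sqrt{\mathcal{E}_{mom}}+\sqrt{\mathcal{E}})\sqrt{\mathcal{D}_{mom}}$ — is explained incorrectly, and this points to a real missing step. You attribute $\sqrt{\mathcal{E}}$ to needing its time-decay "for convergence" of the $\ell$-integral; that is not the mechanism. The critical case is the Taylor-dispersion output frequency $\abs{k}\le\delta_0^{-1}\nu$ with \emph{both} input factors at moderately high frequency $N\sim N'\gtrsim\nu$. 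In the $L^2_k$ setting (Propositions~\ref{prop:NLerrorL2} and~\ref{lem:momerrL2}), the offending $\nu^{-1/(1+2s)}$ loss that arises when the trilinear estimate lands a $\nabla_v$ on a moderately-high-frequency factor is compensated by the $\nu^{d/2}$ gain from integrating $\dd k$ over $\abs{k}\lesssim\nu$ (inequality~\eqref{bd:easyTD}). In the $L^\infty_k$ setting this gain is simply unavailable. The paper resolves this by replacing $\nabla_vZ^\beta=Z^{\beta+1}-t\nabla_xZ^\beta$ on the tested function, spending one extra $Z$-derivative (allowed since $B\ge B'+1$) and one factor $t\abs{k}\lesssim\brak{\nu t}$; the resulting $\brak{\nu t}$ loss is then absorbed via \eqref{bd:exlambdaJ}, which requires the $\brak{\lambda_{\nu,k}t}^{J}$ weight present in $\mathcal{E}$ but absent from $\mathcal{E}_{mom}$. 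Your proposal never introduces this substitution, so as written it would hit an unrecoverable $\nu^{-1/(1+2s)}$ in exactly this case.

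A secondary inaccuracy: you assert that the only surviving macroscopic contribution in $NL^{T.d.}$ is $\mathcal{M}_\Gamma$. The cancellation $\brak{\Gamma(f,f),\bP f}=0$ kills only the $\beta=0$ macroscopic term; for $\abs{\beta}\ge1$ the term $\nu^{-1}\abs{k}\brak{Z^\beta\widehat{\Gamma(f,f)},Z^\beta\bP\hat f}$ survives because $Z^\beta$ does not commute with $\bP$, and the paper isolates it as $\mathcal{J}^{\alpha,\beta}_{macro,N\sim N'}$ in \eqref{def:errmacro}. This omission is harmless here (no $\nabla_v$ appears, so no $\nu^{-1/(1+2s)}$ loss, and the bound follows as for $\mathcal{M}_\Gamma$), but it should be stated rather than dismissed.
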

The proofs of Propositions \ref{lem:momerrL2} and \ref{lem:momerrLinfty} are given in Section \ref{sec:NLmom} and are obtained via minor modifications in the arguments done to arrive at \eqref{bd:NLL2} and \eqref{bd:NLsup}.

We first show how Propositions \ref{lem:linerr}-\ref{lem:momerrLinfty} imply the main bootstrap Proposition \ref{prop:bootstrap}. 

\begin{proof}[Proof of Proposition \ref{prop:bootstrap}]
We begin by proving that under the bootstrap assumptions \eqref{bd:bootEmomD}-\eqref{bd:bootELF}, the inequality \eqref{bd:bootEd} holds true with half the constant on the right-hand side of \eqref{bd:bootEd}. Indeed, integrating in time \eqref{eq:dtcED} and using the bound \eqref{bd:LerrD} and \eqref{bd:NLL2}, we have
\begin{align}
\mathcal{E}(t)&+\frac78\delta_\star\int_0^t\mathcal{D}(\tau)\dd \tau\leq \mathcal{E}(0)+C_1\sup_{0\leq s\leq t}(\mathcal{E}_{mom}(s))\\
&+C\int_0^t\bigg(\sqrt{\mathcal{E}}\mathcal{D}+\left(\nu^d\mathbbm{1}_{t\leq \nu^{-1}}+\brak{\nu/\tau}^{\frac{d}{2}}\mathbbm{1}_{t> \nu^{-1}}\right)\sqrt{\mathcal{E}_{LF}}\sqrt{\mathcal{E}}\sqrt{\mathcal{D}}\bigg)(\tau)\dd \tau.
\end{align}
From the bootstrap assumptions \eqref{bd:bootEmomD}-\eqref{bd:bootELF}, for any $t\in[0,T]$ we get
\begin{align}
&C_1\sup_{0\leq s\leq t}(\mathcal{E}_{mom}(s))\leq 4C_1\eps^2,\\
&C\int_0^t(\sqrt{\mathcal{E}}\mathcal{D})(\tau)\dd \tau\leq \eps\widetilde{C} \int_0^t\mathcal{D}(\tau)\dd \tau,\\
\notag&C\int_0^t\left(\nu^d\mathbbm{1}_{\tau\leq \nu^{-1}}+\brak{\nu/\tau}^{\frac{d}{2}}\mathbbm{1}_{\tau> \nu^{-1}}\right)(\sqrt{\mathcal{E}_{LF}}\sqrt{\mathcal{E}}\sqrt{\mathcal{D}})(\tau)\dd \tau\\
\label{bd:pfmain1}&\qquad \leq \eps^3\widetilde{C}_1\int_0^t\left(\nu^d\mathbbm{1}_{\tau\leq \nu^{-1}}+\brak{\nu/\tau}^{\frac{d}{2}}\mathbbm{1}_{\tau> \nu^{-1}}\right)^2\dd \tau+\eps \widetilde{C}_2\int_0^t\mathcal{D}(\tau)\dd \tau.
\end{align}
Since $d\geq 2$, we know that the first term on the right-hand side of \eqref{bd:pfmain1} is integrable in time. Thus, we obtain that
\begin{align}
\mathcal{E}(t)+\big(\frac78-\eps (\widetilde{C}+\widetilde{C}_2)\big)\delta_\star\int_0^t\mathcal{D}(\tau)\dd \tau\leq (1+4C_1+\eps \widetilde{C}_1)\eps^2.
\end{align}
Setting 
\begin{equation}
\mathsf{B}_1:=2(1+4C_1),
\end{equation}
and choosing $\eps$ small enough, we prove that \eqref{bd:bootEd} holds with half the constant on the right-hand side. The arguments for the other functionals are analogous and we omit their proof. 
\end{proof}

The rest of this section is dedicated to the proof of Propositions \ref{lem:linerr}-\ref{lem:momerrLinfty}. In particular, for the nonlinear error terms we need to compare the solution in different frequency regimes, therefore we state some technical estimates used throughout the proof.
\begin{lemma}
\label{lem:freqex}
Let $k,\xi \in \mathbb{R}^d$. For $a_{\nu,k}$ defined in \eqref{def:abnuk}, we have
\begin{equation}
\label{bd:trivcoeff}
\mathbbm{1}_{|k|\gtrsim \nu}a_{\nu,k}\lesssim 1, \qquad \mathbbm{1}_{|k|\approx |\xi|\gtrsim \nu}\frac{a_{\nu,k}}{a_{\nu,\xi}}\lesssim 1, \qquad \mathbbm{1}_{|k|\approx |\xi|\gtrsim \nu}\frac{a_{\nu,k}}{a_{\nu,k-\xi}}\lesssim 1.
\end{equation}
Let $\lambda_{\nu,k}$ be defined as in \eqref{def:lambdanuk_intro}. Then
\begin{equation}
\label{bd:exlambda}
\mathbbm{1}_{|k|\leq \delta_0^{-1}\nu}\brak{\lambda_{\nu,k} t}\lesssim\mathbbm{1}_{|\xi|\gtrsim \nu}\brak{\lambda_{\nu,\xi} t}.
\end{equation}
For any $0<J'\leq J-1$, the following inequality holds
\begin{equation}
\label{bd:exlambdaJ}
\mathbbm{1}_{|k|\leq \delta_0^{-1}\nu}\brak{\lambda_{\nu,k} t}^{J'}\lesssim \mathbbm{1}_{|\xi|\gtrsim \nu}\frac{1}{\brak{\nu t}}\brak{\lambda_{\nu,\xi} t}^{J}.
\end{equation}
Let  $E^{e.d.}_{M,B}$ and $E^{T.d.}_{M,B}$ be defined as in \eqref{def:Eed} and \eqref{def:ETd}  respectively. Then 
\begin{align}
\label{bd:equivalencenu}
&\mathbbm{1}_{|k|\sim \nu}E^{e.d.}_{M,B}(t,k)\approx\mathbbm{1}_{|k|\sim \nu} E^{T.d.}_{M,B}(t,k), \\
\label{bd:equivalencenu1}
 &\mathbbm{1}_{|k|\sim \nu}\cD^{e.d.}_{M,B}(t,k)\approx\mathbbm{1}_{|k|\sim \nu} \cD^{T.d.}_{M,B}(t,k).
\end{align}
\end{lemma}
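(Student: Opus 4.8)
The plan is to deduce every bound in Lemma \ref{lem:freqex} directly from the explicit formulas for $a_{\nu,k},b_{\nu,k}$ in \eqref{def:abnuk} and $\lambda_{\nu,k}$ in \eqref{def:lambdanuk_intro}, together with the equivalent forms of the energies \eqref{bd:equivETd} and \eqref{bd:Eedequiv}. For the coefficient bounds \eqref{bd:trivcoeff}: since the exponent $\tfrac{2}{2s+1}$ is positive, the map $|k|\mapsto a_{\nu,k}=a_0(\nu/|k|)^{2/(2s+1)}$ is decreasing, so $|k|\gtrsim\nu$ gives $a_{\nu,k}\lesssim a_0\lesssim1$. The two ratios are computed exactly, $a_{\nu,k}/a_{\nu,\xi}=(|\xi|/|k|)^{2/(2s+1)}$ and $a_{\nu,k}/a_{\nu,k-\xi}=(|k-\xi|/|k|)^{2/(2s+1)}$; on $\{|k|\approx|\xi|\}$ the first is $\approx1$, while the triangle inequality $|k-\xi|\le|k|+|\xi|\lesssim|k|$ makes the second $\lesssim1$.

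\textbf{Frequency exchange for $\lambda_{\nu,k}$.} The key elementary observation is that $\lambda_{\nu,k}\approx\nu$ at the transition $|k|\approx\nu$: if $|k|\le\delta_0^{-1}\nu$ then $\lambda_{\nu,k}=\delta_1\nu^{-1}|k|^2\le\delta_1\delta_0^{-1}|k|\lesssim\nu$, whereas if $|\xi|\gtrsim\nu$ then inspecting the two branches of \eqref{def:lambdanuk_intro} gives $\lambda_{\nu,\xi}\gtrsim\nu$ (for $|\xi|\le\delta_0^{-1}\nu$ because $\nu^{-1}|\xi|^2\gtrsim\nu$, and for $|\xi|>\delta_0^{-1}\nu$ because $\nu^{1/(1+2s)}|\xi|^{2s/(1+2s)}\gtrsim\nu$). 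Hence on the region singled out by the indicators in \eqref{bd:exlambda} one has $\lambda_{\nu,k}\lesssim\nu\lesssim\lambda_{\nu,\xi}$, and \eqref{bd:exlambda} follows from $\brak{Ca}\le C\brak{a}$ for $C\ge1$. For \eqref{bd:exlambdaJ} I would raise \eqref{bd:exlambda} to the power $J'$, use $J'\le J-1$ and $\brak{\cdot}\ge1$ to write $\brak{\lambda_{\nu,\xi}t}^{J'}\le\brak{\lambda_{\nu,\xi}t}^{J-1}=\brak{\lambda_{\nu,\xi}t}^{J}/\brak{\lambda_{\nu,\xi}t}$, and finally invoke $\lambda_{\nu,\xi}\gtrsim\nu$ to replace $\brak{\lambda_{\nu,\xi}t}^{-1}$ by $\brak{\nu t}^{-1}$ up to a constant.

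\textbf{Equivalence of the two schemes on $\{|k|\sim\nu\}$.} Here all the $(\nu,k)$-dependent weights appearing in the two functionals are comparable to universal constants: $a_{\nu,k}\approx a_0$, $b_{\nu,k}|k|^2=(b_0/\delta_1)\lambda_{\nu,k}\approx\nu$, $|k|/\nu\approx1$, and $\brak{k}\approx1$ since $\nu<1$; moreover the $\nu^{-1}$ prefactors of $\mathcal{M}$ and of the last sum in \eqref{def:ETd} are compensated by the factor $ik$ sitting inside those inner products. Using \eqref{bd:equivETd} and \eqref{bd:Eedequiv}, it remains to compare $\normL{\jap{v}^{M}Z^\beta\hat f}^2+a_{\nu,k}\normL{\jap{v}^{M+q_{\gamma,s}}Z^\beta\nabla_v\hat f}^2$ with its micro--macro counterpart. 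This is done by splitting $\hat f=\bP\hat f+(I-\bP)\hat f$ in both directions: the macroscopic part $\widehat{\bP f}$ carries a Gaussian, so all weighted norms of $Z^\beta\widehat{\bP f}$ are comparable to $\normL{Z^\beta\widehat{\bP f}}$, which by Lemma \ref{lem:equivZP} and the $L^2_v$-orthogonality of $\bP$ is in turn controlled by $\sum_{|\tilde\beta|\le|\beta|}\normL{Z^{\tilde\beta}\hat f}$; reabsorbing these macroscopic contributions and using $\jap{v}\ge1$ yields the two-sided bound. The dissipation equivalence \eqref{bd:equivalencenu1} follows identically from \eqref{def:Ded}, \eqref{def:DTD} and \eqref{bd:coerL}, which lets one pass between $\cA[\cdot]$ of $\hat f$ and of $(I-\bP)\hat f$.

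\textbf{Main obstacle.} The only point demanding a little care is this last weight bookkeeping in \eqref{bd:equivalencenu}--\eqref{bd:equivalencenu1}: one must check that the weight $\jap{v}^{M}$, which is attached to only some of the $(I-\bP)\hat f$ terms in $E^{T.d.}_{M,B}$ and absent from the macroscopic part of $E^{e.d.}_{M,B}$, does not break the comparison. This is precisely where the Gaussian decay of $\widehat{\bP f}$ (so that polynomial weights are harmless on the kernel component) and the nesting of weighted $L^2_v$ norms are used; everything else is elementary scaling.
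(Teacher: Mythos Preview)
Your proposal is correct and follows essentially the same approach as the paper: the coefficient bounds \eqref{bd:trivcoeff} and frequency comparisons \eqref{bd:exlambda}--\eqref{bd:exlambdaJ} are derived from the explicit formulas via the pivot $\lambda_{\nu,k}\lesssim\nu\lesssim\lambda_{\nu,\xi}$ exactly as in the paper, and the energy equivalence \eqref{bd:equivalencenu}--\eqref{bd:equivalencenu1} is obtained by observing that all $(\nu,k)$-dependent weights collapse to constants on $\{|k|\sim\nu\}$ and then using the micro--macro splitting together with the Gaussian decay of $\widehat{\bP f}$. If anything, your treatment of \eqref{bd:exlambda} is slightly more careful than the paper's, since you explicitly check both branches of $\lambda_{\nu,\xi}$.
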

\begin{proof}
The first and the second inequalities in \eqref{bd:trivcoeff} are immediate from  the definition of $a_{\nu,k}$. For the last one, when $|k|\approx |\xi|\gtrsim \nu$, notice that 
\begin{equation}
\frac{a_{\nu,k}}{a_{\nu,k-\xi}}=\left(\frac{|k-\xi|}{|k|}\right)^{\frac{2}{1+2s}}\lesssim 1.
\end{equation}
To prove \eqref{bd:exlambda}, from  the expression of $\lambda_{\nu,k}$, it is enough to observe that 
\begin{equation}
\mathbbm{1}_{|k|\leq \delta_0^{-1}\nu}\frac{|k|^2}{\nu}\lesssim \mathbbm{1}_{|\xi|\gtrsim\nu}\nu^{\frac{1}{1+2s}}|\xi|^{\frac{2s}{1+2s}}. 
\end{equation}
To prove \eqref{bd:exlambdaJ} notice that, whenever $|\xi|\gtrsim \nu$ we get
\begin{equation}
\langle\lambda_{\nu,\xi} t\rangle\approx \langle\nu^{\frac{1}{1+2s}}|\xi|^{\frac{2s}{1+2s}}t\rangle\gtrsim \brak{\nu t}.
\end{equation}
Hence 
\begin{equation}
\mathbbm{1}_{|k|\leq \delta_0^{-1}\nu}\brak{\lambda_{\nu,k} t}^{J'}\lesssim\mathbbm{1}_{|\xi|\gtrsim \nu}\frac{1}{\brak{\lambda_{\nu,\xi} t}^{J-J'}}\brak{\lambda_{\nu,\xi} t}^{J}\lesssim \mathbbm{1}_{|\xi|\gtrsim \nu}\frac{1}{\brak{\nu t}}\brak{\lambda_{\nu,\xi} t}^{J},
\end{equation}
which proves \eqref{bd:exlambdaJ}.

For the equivalence \eqref{bd:equivalencenu}, by  Cauchy-Schwartz inequality and the fact that 
\begin{equation}
(b_{\nu,k}|k|)^2= \frac{b_0^2}{a_0}a_{\nu,k}\ll a_{\nu,k},
\end{equation}
see \eqref{bd:constEd}, we get
\begin{align}
E^{e.d.}_{M,B}&\leq \frac12\sum_{\alpha+|\beta|\leq B} \frac{\jap{k}^\alpha}{\jap{\nu t}^{2\beta}}\bigg(2\norm{\jap{v}^MZ^\beta \hat{f}}^2_{L^2_v}+a_{\nu,k}(1+\frac{b_0^2}{a_0})\norm{\jap{v}^{M+q_{\gamma,s}}Z^\beta \nabla_v\hat{f}}^2_{L^2_v}\bigg)\\
E^{e.d.}_{M,B}&\geq 2^{-\mathtt{C}B-2}\sum_{\alpha+|\beta|\leq B} \frac{\jap{k}^\alpha}{\jap{\nu t}^{2\beta}}\bigg(\norm{\jap{v}^MZ^\beta \hat{f}}^2_{L^2_v}+2a_{\nu,k}(1-2\frac{b_0^2}{a_0})\norm{\jap{v}^{M+q_{\gamma,s}}Z^\beta \nabla_v\hat{f}}^2_{L^2_v}\bigg).
\end{align}
Therefore, when $|k|\approx \nu$ we deduce that 
\begin{equation}
\label{bd:eqEed5}
E^{e.d.}_{M,B}\approx \sum_{\alpha+|\beta|\leq B} \frac{1}{\jap{\nu t}^{2\beta}}\bigg(\norm{\jap{v}^MZ^\beta \hat{f}}^2_{L^2_v}+\tilde{c}\norm{\jap{v}^{M+q_{\gamma,s}}Z^\beta \nabla_v\hat{f}}^2_{L^2_v}\bigg),
\end{equation}
for some $\tilde{c}$ explicitly computable.

For $E^{T.d}_{M,B}$, recalling the definition of $\mathcal{M}$ \eqref{def:Mkbeta}, we first notice that
\begin{align}
\frac{1}{\nu}|\mathcal{M}|\leq C_m\frac{|k|}{\nu}\normL{\hat{f}}^2_{L^2_v}\leq C_m\delta_0^{-1}\normL{\hat{f}}^2_{L^2_v},
\end{align}
for some constant $C_m>0$. Thus, there exists also another $\tilde{C}_m>0$ such that
\begin{align}
&E^{T.d.}_{M,B}\leq(\frac12+c_0\delta_0^{-1}C_m)\normL{\hat{f}}^2_{L^2_v}\notag \\
\notag&+\sum_{|\beta|\leq B}\sum_{j=0}^1\frac{ 1}{\brak{\nu t}^{2\beta}} \bigg(\frac{|k|}{\nu}\normL{Z^\beta \hat{f}}^2_{L^2_v}+(c_1+c_3\delta_0^{-1}\tilde{C}_m)\normL{Z^\beta\nabla_v^j(I-\bP)\hat{f}}^2_{L^2_v} \notag \\ & \hspace{5cm}
+(c_2+c_3\delta_0^{-1}\tilde{C}_m)\normL{\jap{v}^{M+jq_{\gamma,s}}Z^\beta \nabla_v^j(I-\bP)\hat{f}}^2_{L^2_v}\bigg) 
\end{align}
and 
\begin{align}
&E^{T.d.}_{M,B}\geq(\frac12-c_0\delta_0^{-1}C_m)\normL{\hat{f}}^2_{L^2_v}\notag \\
&+2^{-\mathtt{C}_1B-2}\sum_{|\beta|\leq B}\sum_{j=0}^1\frac{ 1}{\brak{\nu t}^{2\beta}} \bigg(\frac{|k|}{\nu}\normL{Z^\beta \hat{f}}^2_{L^2_v}+(c_1-c_3\delta_0^{-1}\tilde{C}_m)\normL{Z^\beta\nabla_v^j(I-\bP)\hat{f}}^2_{L^2_v} \notag \\ & \hspace{5cm}
+(c_2-c_3\delta_0^{-1}\tilde{C}_m)\normL{\jap{v}^{M+jq_{\gamma,s}}Z^\beta \nabla_v^j(I-\bP)\hat{f}}^2_{L^2_v}\bigg).  
\end{align}
Having that $M+q_{\gamma,s}>1$, it follows immediately that $E^{T.d}_{M,B}\lesssim E^{e.d.}_{M,B}$ (for any $|k|\lesssim \nu$). On the other hand, notice that
\begin{equation}
\normL{Z^\beta \hat{f}}^2_{L^2_v}\geq \frac{c_2}{4+c_2}\normL{Z^\beta \bP \hat{f}}^2_{L^2_v}-\frac{c_2}{4}\normL{Z^\beta (I-\bP)\hat{f}}^2_{L^2_v},
\end{equation}
where in the last inequality we used $(a+b)^2\geq (1-(1+c_2/4)^{-1})a^2-c_2b^2/4$. From the integrability properties of the Maxwellian, we also know that 
\begin{equation}
\normL{Z^\beta \bP \hat{f}}^2_{L^2_v}\gtrsim \normL{\jap{v}^MZ^\beta \bP \hat{f}}^2_{L^2_v}+\normL{\jap{v}^M\nabla_vZ^\beta \bP \hat{f}}^2_{L^2_v}.
\end{equation}
Hence, combining the two inequalities above, when $|k|\approx \nu$ we get
\begin{align}
&E^{T.d.}_{M,B}\gtrsim\sum_{|\beta|\leq B}\sum_{j=0}^1\frac{ 1}{\brak{\nu t}^{2\beta}} \bigg(\normL{Z^\beta \bP \hat{f}}^2
+\tilde{c}\normL{\jap{v}^{M+jq_{\gamma,s}}Z^\beta \nabla_v^j(I-\bP)\hat{f}}^2\bigg) \gtrsim E^{e.d.}_{M,B}, 
\end{align}
where in the last inequality we used \eqref{bd:eqEed5}. For the dissipation energy functionals, the proof is analogous and we omit it. 
\end{proof}

We are now ready to prove Propositions \ref{lem:linerr}-\ref{lem:momerrLinfty}.
\subsection{Linear errors}
\label{sec:linerr}
To prove Proposition \ref{lem:linerr}, we rely on the pointwise-in-$k$ estimates \eqref{bd:lwDTd} and \eqref{bd:lwDed} that are needed to prove decay for the linearized problem. In fact, the factor $\brak{\lambda_{\nu,k} t}$ in the energy functionals replaces the $k$-by-$k$ decay in time (which should not be possible in the nonlinear problem) and generates the linear error term \eqref{def:Le}. It is therefore natural to expect that the same estimates giving decay in the linearized problem allow us to control this linear error term.  
\begin{proof}[Proof of Proposition \ref{lem:linerr}]
Appealing to \eqref{bd:lwDTd} and \eqref{bd:lwDed}, recalling that $$\varpi=(|\gamma|(2-s)+2s|q_{\gamma,s}|)/(1+s),$$ for any $R>0$ we have 
\begin{align}
\mathcal{D}(t)\gtrsim \, &\int_{|k|\leq \delta_0^{-1}\nu}\bigg(\frac{\lambda_{\nu,k}}{R^{\varpi}}\brak{\lambda_{\nu,k} t}^{2J}E^{T.d.}_{M,B}-\frac{\lambda_{\nu,k}}{R^{|\gamma+2s|+2M_J}}\brak{\lambda_{\nu,k} t}^{2J}E^{T.d.}_{M+M_J,B}\bigg) \dd k \notag \\
&+\int_{|k|> \delta_0^{-1}\nu}\bigg(\frac{\lambda_{\nu,k}}{R^{\varpi}}\brak{\lambda_{\nu,k} t}^{2J}E^{e.d.}_{M,B}-\frac{\lambda_{\nu,k}}{R^{|\gamma+2s|+2M_J}}\brak{\lambda_{\nu,k} t}^{2J}E^{e.d.}_{M+M_J,B} \bigg)\dd k. \label{bd:lwDNL} 
\end{align}
The idea here is to choose $R$ such that we control $L_{J,M,B}$ \eqref{def:Le} with the dissipation and the higher order moments. In particular, we need
\begin{equation}
\label{eq:RNL}
\frac{\lambda_{\nu,k}}{R^{\varpi}}\brak{\lambda_{\nu,k} t}^{2J}\sim\lambda_{\nu,k}\brak{\lambda_{\nu,k} t}^{2J-1} \qquad \Longrightarrow \qquad R^\varpi=\tilde{\delta}\brak{\lambda_{\nu,k} t}
\end{equation}
for some $\tilde{\delta}$ sufficiently small to be specified later. Plugging \eqref{eq:RNL} in \eqref{bd:lwDNL}, we know that there is a constant $C>0$ such that
\begin{align}
&\int_{\mathbb{R}^d}L_{J,M,B}\dd k\leq \tilde{\delta}C\mathcal{D} \notag \\
&\quad+C\tilde{\delta}^{1-\widetilde{M}_J}\int_{|k|\leq \delta_0^{-1}\nu}\lambda_{\nu,k}\brak{\lambda_{\nu,k} t}^{2J-\widetilde{M}_J}\left(\mathbbm{1}_{|k|\leq \delta_0^{-1}\nu}E^{T.d.}_{M+M_J,B}+\mathbbm{1}_{|k|>\delta_0^{-1}\nu}E^{e.d.}_{M+M_J,B}\right)\dd k, \label{bd:LJMB}   
\end{align}
where $\widetilde{M}_J=(|\gamma+2s|+2M_J)/\varpi>2J+2$ thanks to our choice in \eqref{eq:constMJB}.
Then observe that
\begin{align*}
&\int_0^t\int_{\mathbb{R}^d}\lambda_{\nu,k}\brak{\lambda_{\nu,k} \tau}^{2J-\widetilde{M}_J}\left(\mathbbm{1}_{|k|\leq \delta_0^{-1}\nu}E^{T.d.}_{M+M_J,B}+\mathbbm{1}_{|k|>\delta_0^{-1}\nu}E^{e.d.}_{M+M_J,B}\right)(\tau)\dd \tau\dd k\\
&=\int_{\mathbb{R}^d}\int_0^{\lambda_{\nu,k} t}\frac{1}{\brak{s}^{\widetilde{M}_J-2J}}\left(\mathbbm{1}_{|k|\leq \delta_0^{-1}\nu}E^{T.d.}_{M+M_J,B}+\mathbbm{1}_{|k|>\delta_0^{-1}\nu}E^{e.d.}_{M+M_J,B}\right)(\lambda_{\nu,k}^{-1} s)\dd s\dd k\\
&\leq \int_{\mathbb{R}^d}\sup_{0\leq s'\leq t}\left(\mathbbm{1}_{|k|\leq \delta_0^{-1}\nu}E^{T.d.}_{M+M_J,B}+\mathbbm{1}_{|k|>\delta_0^{-1}\nu}E^{e.d.}_{M+M_J,B}\right)(s') \dd k \int_0^{\infty}\frac{1}{\brak{s}^{2}} \dd s\\
&\lesssim \sup_{0\leq s'\leq t}\mathcal{E}_{mom}(s').
\end{align*}
Hence, integrating in time \eqref{bd:LJMB}, using the bound above and choosing $\tilde{\delta}$ sufficiently small we prove  \eqref{bd:LerrD}. The proof of \eqref{bd:LerrLF} is analogous.
\end{proof}
\subsection{Nonlinear errors for the main $L^2_k$ energy}
\label{sec:NLerrED}
We now turn our attention the proof of Proposition \ref{prop:NLerrorL2}. A first crucial ingredient is the trilinear estimate in Lemma \ref{lem:trilBd}. This allow us to control everything with $L^2_v$ and $\cA$ norms, which are used in the definition of the energy and dissipation functionals. Having the trilinear estimate \eqref{bd:NLZ} at hand, the main strategy of proof is to split the nonlinearity to study separately the interactions between high and low $k$-frequencies in each nonlinear term. In particular, we must take care also of \textit{very low} frequencies $|k|\ll \nu$ since the behavior in the enhanced dissipation regime is different with respect to the Taylor dispersion one. The main technical difficulty is to choose this splitting carefully to reconstruct the energy and dissipation functionals and to take advantage of the time-decay when needed.
 
 \begin{proof}[Proof of Proposition \ref{prop:NLerrorL2}] 
 First of all, appealing to the paraproduct decomposition \eqref{def:para0}-\eqref{def:para1}, we split the nonlinear term as 
\begin{equation}
\label{eq:paraNL}
	\Gamma(f,f)=\sum_{N,N'\in 2^{\ZZ}}\bigg(\Gamma(f_N,f_{< N/8})+\Gamma(f_{< N/8},f_N)+\sum_{N/8\leq N'\leq 8N}\Gamma(f_{N},f_{N'})\bigg)
\end{equation}
Due to the structure of the nonlinearity, the treatment of the first two terms inside the parenthesis of \eqref{eq:paraNL} is identical and therefore we will not discuss the term $\Gamma(f_{< N/8}, f_N)$ in the sequel. From \eqref{def:Gamma}, one has
 \begin{equation}
 \label{def:GammaF}
 \widehat{\Gamma(g,h)}(k)=
 \frac{1}{\sqrt{\mu}}
 \int_{\mathbb{R}^d} \mathcal{Q}(\sqrt{\mu}\hat{g}(\xi),\sqrt{\mu}\hat{h}(k-\xi))\dd \xi.
 \end{equation}
The nonlinear error terms $NL^{e.d.}_{M,B}$ \eqref{def:NLed}  and $NL^{T.d.}_{M,B}$ \eqref{def:NLTd} have fundamentally different frequency interactions. We thus divide the proof by controlling first the error terms in $NL^{e.d.}_{M,B}$ and then the ones in $NL^{T.d.}_{M,B}$.

\subsubsection{\textbf{Nonlinear enhanced dissipation errors}}
\label{subsec:NLEDL2}
In this case $|k|\geq \delta_0^{-1}\nu$. Consider the term $\Gamma(f_N,f_{< N/8})$. Using the notation  in \eqref{def:GammaF}, we have 
\begin{equation}
\label{bd:simN}
|k|\leq |k-\xi|+|\xi|\lesssim N, \qquad |k|\geq |\xi|-|k-\xi|\geq N/4 \quad \Longrightarrow \quad N\approx |k|\gtrsim\nu.
\end{equation}
For $\Gamma(f_N,f_{N'})$, we cannot have $|\xi|\leq (\delta_0^{-1}\nu)/64$. Indeed, if the latter inequality were true we would get $|k-\xi|\geq (63\delta_0^{-1}\nu)/64$. But this is not possible since otherwise 
\begin{equation}
N'\geq |k-\xi|/2,\qquad N\geq N'/8, \qquad |\xi|\geq N/2 \quad \Longrightarrow \quad |\xi|\geq (\delta_0^{-1}\nu)(63/2^{11})
\end{equation}
 which is a contradiction because $ 63/2^{11}>1/64$.
Namely, for $\Gamma[f_N,f_{N'}]$, we only have to study the case $$N,N'\gtrsim \nu.$$
Thanks to \eqref{bd:equivalencenu}-\eqref{bd:equivalencenu1},  we can always consider $f_N,f_{N'}$  to be in the enhanced dissipation regime. On the other hand, the factor $f_{< N/8}$ can be in either regimes. For this reason, in the following we only detail how to control the terms arising from $\Gamma(f_N,f_{< N/8})$. The bounds for the terms involving $\Gamma(f_N,f_{N'})$ are easily recovered from the ones for $\Gamma(f_N,f_{< N/8})$ when $f_{< N/8}$ is at frequencies $\gtrsim \nu$.  
 
 	To prove \eqref{bd:NLL2}, we first bound the following term in $NL^{e.d.}_{M,B}$
	\begin{equation}
	\label{def:Iaed}
		 \mathcal{I}^{\alpha,\beta}_{M}:=\mathbbm{1}_{|k|\geq \delta_0^{-1}\nu} \frac{2^{-\mathtt{C}\beta}}{\brak{\nu t}^{2\beta}}\jap{k}^{2\alpha}a_{\nu,k}\sum_{N\sim |k|}\left| \brak{ \brak{v}^{M+q_{\gamma,s}} \grad_v Z^\beta \cF(  \Gamma(f_N,f_{< N/8})), \brak{v}^{M+q_{\gamma,s}} \grad_v Z^\beta \hat{f}}_{L^2_v}\right|. 
	\end{equation}
	 Then, we split the low-frequency part as 
\begin{equation}
	f_{< N/8}=P_{\leq \delta_0^{-1}\nu}f_{< N/8}+P_{> \delta_0^{-1}\nu}f_{< N/8},
\end{equation}
and  define 
\begin{equation}
\label{def:splitIM}
\mathcal{I}^{\alpha,\beta}_{M}=\mathcal{I}^{\alpha,\beta}_{M,\gtrsim \nu}+\mathcal{I}^{\alpha,\beta}_{M,\lesssim \nu},
\end{equation}
where $\mathcal{I}^{\alpha,\beta}_{M,\lesssim \nu}$ and $\mathcal{I}^{\alpha,\beta}_{M,\gtrsim \nu}$ are the right-hand side of \eqref{def:Iaed} with $f_{< N/8}$ replaced by $P_{\leq \delta_0^{-1}\nu}f_{< N/8}$ and $P_{> \delta_0^{-1}\nu}f_{< N/8}$ respectively.

\smallskip \noindent \textbf{High-moderately low frequency interactions.} 
We proceed with the bound for $\mathcal{I}^{\alpha,\beta}_{M,\gtrsim \nu}$.
 From the trilinear estimate in Lemma \ref{lem:trilBd} and the Leibniz rule for $\Gamma$, we infer
\begin{align}
\label{bd:Ia1}	&\mathcal{I}^{\alpha,\beta}_{M,\gtrsim \nu}\lesssim\,\sum_{\substack{N\sim |k|\\ |\beta_1|+|\beta_2|\leq|\beta|\\
	j_1+j_2\leq1}} \mathbbm{1}_{|k|\geq \delta_0^{-1}\nu} a_{\nu,k}\frac{2^{-\mathtt{C}\beta}}{\brak{\nu t}^{2\beta}}\jap{k}^{2\alpha}\sqrt{\cA[\brak{v}^{M+q_{\gamma,s}} \grad_v Z^\beta \hat{f}(k)]}\\
\label{bd:Ia2}	\times &\int_{\mathbb{R}^d} \bigg(\sqrt{\cA[\brak{v}^{M+q_{\gamma,s}} (\grad_v)^{j_1} Z^{\beta_1} \hat{f}_N(\xi)]}\norm{\brak{v}^{M+q_{\gamma,s}} (\grad_v)^{j_2} Z^{\beta_2} P_{> \delta_0^{-1}\nu}\hat{f}_{< N/8}(k-\xi)}_{L^2_v}\\
\label{bd:Ia3}& \quad+\norm{\brak{v}^{M+q_{\gamma,s}} (\grad_v)^{j_1} Z^{\beta_1} \hat{f}_N(\xi)}_{L^2_v}\sqrt{\cA[\brak{v}^{M+q_{\gamma,s}} (\grad_v)^{j_2} Z^{\beta_2}P_{> \delta_0^{-1}\nu} \hat{f}_{< N/8}(k-\xi)]}\bigg)\dd \xi. 
\end{align}
Thanks to \eqref{bd:equivalencenu}, we can always control the terms containing $P_{> \delta_0^{-1}\nu} \hat{f}_{< N/8}$ with the functionals in the enhanced dissipation regime.  In particular, for the terms involving $\cA$ we can use the dissipation functional $\mathcal{D}^{e.d.}_{M,B}$ \eqref{def:Ded} and for the $L^2_v$ terms the energy $E^{e.d.}_{M,B}$ \eqref{def:Eed}. However, when we take $v$-derivatives we need to compare the coefficients $a_{\nu,k}$ at different frequencies as in \eqref{bd:trivcoeff}.
To proceed with the bounds, in the following we denote
\begin{equation}
\label{def:notg}
g^{(\beta,M,j)}=\frac{2^{-\mathtt{C}\beta/2}}{\brak{\nu t}^{\beta}}\brak{v}^{M+q_{\gamma,s}} (\grad_v)^{j} Z^{\beta} \hat{f}.
\end{equation}
Since $|\beta_1|+|\beta_2|\leq|\beta|$ and $|\xi|\approx N\approx |k|$, see \eqref{bd:simN}, we have
\begin{equation}
\frac{2^{-\mathtt{C}\beta}}{\brak{\nu t}^{2\beta}}\leq \frac{2^{-\mathtt{C}\beta/2}}{\brak{\nu t}^{\beta}}\frac{2^{-\mathtt{C}(\beta_1+\beta_2)/2}}{\brak{\nu t}^{(\beta_1+\beta_2)}}, \qquad \brak{k}\approx \brak{\xi}.
\end{equation}
Hence, with the notation \eqref{def:notg}, using \eqref{bd:trivcoeff} and the above inequalities in \eqref{bd:Ia1}, we get
\begin{align}
\label{bd:Igtrnu} 
&\mathcal{I}^{\alpha,\beta}_{M,\gtrsim\nu}\lesssim\frac{1}{\nu} \sum_{\substack{N\sim |k|\\ |\beta_1|+|\beta_2|\leq|\beta|\\ j_1+j_2\leq 1}}\mathbbm{1}_{|k|\geq \delta_0^{-1}\nu} \jap{k}^{\alpha}\sqrt{\nu a_{\nu,k}\cA[g^{(\beta,M,1)}(k)]}\\
\label{bd:Igtrnu1} 
\times&\int_{\mathbb{R}^d} \bigg(\jap{\xi}^{\alpha}\sqrt{\nu( a_{\nu,\xi})^{j_1}\cA[g^{(\beta_1,M,j_1)}_N(\xi)]}\sqrt{(a_{\nu,k-\xi})^{j_2}}\normL{P_{>\delta_0^{-1}\nu}g^{(\beta_2,M,j_2)}_{< N/8}(k-\xi)}_{L^2_v}\\
\label{bd:Igtrnu3} 
&  +\jap{\xi}^{\alpha}(\sqrt{a_{\nu,\xi}})^{j_1}\normL{g^{(\beta_1,M,j_1)}_N(\xi)}_{L^2_v}\sqrt{\nu (a_{\nu,k-\xi})^{j_2}\cA[P_{>\delta_0^{-1}\nu}g^{(\beta_2,M,j_2)}_{< N/8}(k-\xi)]}\bigg)\dd \xi.
\end{align}
We now claim that 
\begin{equation}
\label{bd:goal1}
\nu \int_{|k|>\delta_0^{-1}\nu} \sum_{\alpha+|\beta|\leq B}\jap{\lambda_{\nu,k} t}^{2J} \mathcal{I}^{\alpha,\beta}_{M,\gtrsim \nu} \dd k\lesssim \sqrt{\mathcal{E}}\mathcal{D}
\end{equation}
To prove this bound, we are going to use Cauchy-Schwartz and Young's convolution inequalities. However, we need to be careful with the number of $x$ and $Z$ derivatives we have in each term, since the sum of the two must always be less than $B$. Therefore we distinguish two cases. 

\smallskip \noindent $\diamond$ \textbf{Case} $|\beta_2|+d/2< B.$ Recall the definitions of $E^{e.d.}_{M.B}$ and $\mathcal{D}^{e.d.}_{M.B}$ given in \eqref{def:Eed} and \eqref{def:ETd} respectively. Notice that, since $\alpha+|\beta_1|\leq B$, $j_1\leq 1$ and $N\gtrsim \nu$, using also \eqref{bd:equivalencenu}-\eqref{bd:equivalencenu1}, we have 
 \begin{align}
\label{bd:L2DN}&\norm{\brak{k}^\alpha \sqrt{\nu (a_{\nu,k})^{j_1}\cA[g^{(\beta_1,M,j_1)}_{ N}]}}_{L^2_k}\lesssim \norm{\sqrt{\mathcal{D}^{e.d.}_{M,B}(k)_{N}}}_{L^2_k}\\
\label{bd:L2EN}&\norm{\brak{k}^\alpha(\sqrt{a_{\nu,k}})^{j_1}g^{(\beta_1,M,j_1)}_{N}}_{L^2_k}\lesssim \norm{\sqrt{E^{e.d.}_{M,B}(k)_{ N}}}_{L^2_k}
\end{align}
 Hence, from the quasi-orthogonality of the Littlewood-Paley projection, the Cauchy-Schwartz and the Young's convolution inequalities, combining \eqref{bd:Igtrnu} with \eqref{bd:L2DN}-
 \eqref{bd:L2EN} we get
 \begin{align}
 \nu &\int_{|k|>\delta_0^{-1}\nu} \sum_{\substack{\alpha+|\beta|\leq B\\ |\beta_2|+(d+1)/2<B}}\jap{\lambda_{\nu,k} t}^{2J} \mathcal{I}^{\alpha,\beta}_{M,\gtrsim \nu} \dd k\lesssim \sum_{\substack{N\gtrsim \nu\\ |\beta_2|+(d+1)/2<B, \, j_2\leq 1}}\norm{\brak{\lambda_{\nu,k} t}^{J}\sqrt{\mathcal{D}^{e.d.}_{M,B}(k)_{\sim N}}}_{L^2_k}\\
 \label{bd:Igtrnucase1}
&\times \bigg( \norm{\brak{\lambda_{\nu,k} t}^{J}\sqrt{\mathcal{D}^{e.d.}_{M,B}(k)_{N}}}_{L^2_k}\norm{\sqrt{(a_{\nu,k})^{j_2}}P_{>\delta_0^{-1}\nu}g^{(\beta_2,M,j_2)}_{< N/8}}_{L^1_k L^2_v}\\
\label{bd:Igtrnucase12} & \quad +\norm{\brak{\lambda_{\nu,k} t}^{J}\sqrt{E^{e.d.}_{M,B}(k)_{ N}}}_{L^2_k}\norm{\sqrt{\nu (a_{\nu,k})^{j_2}\cA[P_{>\delta_0^{-1}\nu}g^{(\beta_2,M,j_2)}_{< N/8}(k)]}}_{L^1_k}\bigg).
  \end{align}
  From H\"older's inequality we deduce
\begin{equation}
\label{bd:0case1}
\norm{\sqrt{(a_{\nu,k})^{j_2}}P_{>\delta_0^{-1}\nu}g^{(\beta_2,M,j_2)}_{< N/8}}_{L^1_k L^2_v}\lesssim_{\delta_*} \norm{\brak{k}^{d/2+\delta_*}\sqrt{a_{\nu,k}}^{j_2}P_{>\delta_0^{-1}\nu}g^{(\beta_2,M,j_2)}_{< N/8}}_{L^2_k L^2_v}
\end{equation}
where $\delta_*>0$ is an arbitrary small number. In the case under consideration, it is enough to choose $\delta_*$ such that $|\beta_2|+d/2+\delta_*\leq B$ to obtain
\begin{equation}
\label{bd:1case1}
\norm{\brak{k}^{d/2+\delta_*}\sqrt{(a_{\nu,k})^{j_2}}P_{>\delta_0^{-1}\nu}g^{(\beta_2,M,j_2)}_{< N/8}}_{L^2_k L^2_v}\lesssim \norm{\sqrt{P_{>\delta_0^{-1}\nu}E^{e.d.}_{M,B}(k)_{< N/8}}}_{L^2_k}.
\end{equation}
Analogously, we get
 \begin{equation}
\label{bd:0case10}
\norm{\sqrt{\nu(a_{\nu,k})^{j_2}\cA[P_{>\delta_0^{-1}\nu}g^{(\beta_2,M,j_2)}_{< N/8}]}}_{L^1_k}\lesssim \norm{\sqrt{P_{>\delta_0^{-1}\nu}\mathcal{D}^{e.d.}_{M,B}(k)_{< N/8}}}_{L^2_k}
\end{equation}
Therefore, combining \eqref{bd:Igtrnucase1} with \eqref{bd:1case1} and \eqref{bd:0case10}, using the quasi orthogonality of the Littlewood-Paley projections, we infer
 \begin{align}
\label{bd:sqrtED1}
 \nu\int_{|k|\geq \delta_0^{-1}\nu}\sum_{\substack{\alpha+|\beta|\leq B\\ |\beta_2|+(d+1)/2< B}}\brak{\lambda_{\nu,k} t}^{2J}\mathcal{I}^{\alpha,\beta}_{M,\gtrsim \nu}\dd k
\lesssim \sqrt{\mathcal{E}}\mathcal{D}.
\end{align}

 \smallskip \noindent $\diamond$  \textbf{Case} $|\beta_2|+d/2\geq B:$ In this case we cannot pay $x$-derivatives on the low frequency factors. 
 Then, since $\alpha+|\beta|\leq B$ and $|\beta_1|+|\beta_2|\leq|\beta|$, observe that 
 \begin{equation}
 \label{bd:coeffcase2}
\alpha+d/2+|\beta_1|\leq B+d/2-|\beta_2|\leq d< B.
\end{equation}
Hence, when applying the Young's convolution inequality to \eqref{bd:Igtrnu} we can use the $L^1_k$ norm in the factors at frequencies $N$ in \eqref{bd:Igtrnu1}-\eqref{bd:Igtrnu3}. Namely, using \eqref{bd:L2DN}-\eqref{bd:L2EN} with $$(\alpha,j_1,\beta_1,N)\to(0,j_2,\beta_2,<N/8),$$
similarly to \eqref{bd:Igtrnucase1}, one has
 \begin{align}
 \label{bd:Igtrnucase2}
 \nu &\int_{|k|>\delta_0^{-1}\nu} \sum_{\substack{\alpha+|\beta|\leq B\\ |\beta_2|+(d+1)/2\geq B}}\jap{\lambda_{\nu,k} t}^{2J} \mathcal{I}^{\alpha,\beta}_{M,\gtrsim \nu} \dd k\lesssim \sum_{\substack{N\gtrsim \nu, \, |\beta_1|+|\beta_2|\leq B\\ |\beta_2|+(d+1)/2\geq B, \, j_1\leq 1}}\norm{\brak{\lambda_{\nu,k} t}^{J}\sqrt{\mathcal{D}^{e.d.}_{M,B}(k)_{\sim N}}}_{L^2_k}\\
  & \times \bigg(\norm{\jap{\lambda_{\nu,k} t}^J\jap{k}^{\alpha}\sqrt{\nu (a_{\nu,k})^{j_1}\cA[g^{(\beta_1,M,j_1)}_{N}(k)]}}_{L^1_k}\norm{\sqrt{E^{e.d.}_{M,B}(k)_{< N/8}}}_{L^2_k}\\
& \quad +\norm{\jap{\lambda_{\nu,k} t}^J\jap{k}^{\alpha}\sqrt{(a_{\nu,k})^{j_1}}g^{(\beta_1,M,j_1)}_{N}}_{L^1_k L^2_v}\norm{\sqrt{\mathcal{D}^{e.d.}_{M,B}(k)_{< N/8}}}_{L^2_k}\bigg).
  \end{align}
Thus, thanks to \eqref{bd:coeffcase2}, we can argue as done in \eqref{bd:0case1}-\eqref{bd:0case10}  for the factors at frequencies $N$. So we conclude that
 \begin{align}
\label{bd:sqrtED2}
 \nu\int_{|k|\geq \delta_0^{-1}\nu}\sum_{\substack{\alpha+|\beta|\leq B\\ |\beta_2|+(d+1)/2\geq B}}\brak{\lambda_{\nu,k} t}^{2J}\mathcal{I}^{\alpha,\beta}_{M,\gtrsim \nu}\dd k\lesssim \sqrt{\mathcal{E}}\mathcal{D},
\end{align}
which proves the claim \eqref{bd:goal1} by combining \eqref{bd:sqrtED1} and \eqref{bd:sqrtED2}.
\begin{remark}
\label{rem:TvsR}
Observe that to get  \eqref{bd:goal1} we never used the time-decay to control the nonlinearity. The main reason for this is that all factors are in the enhanced dissipation regime. Therefore, if instead of $\RR^d$ we consider $\TT^d$, the nonlinear error term under consideration is bounded by  \eqref{bd:goal1}. Since the other error terms enjoy analogous estimates, the proof for the bound \eqref{bd:NLL2} would be almost over (one has to control the interactions between the $k=0$ mode and $k\neq 0$ separately). On the other hand, to control our energy functionals in $\RR^d$, we cannot use the available dissipation for $k$ too small and the time-decay will play a crucial role.\footnote{A similar issue was already present for the energy functional used by Strain \cite{strain2012optimal}. However, due to the structure of the energy functional in \cite{strain2012optimal}, it is not necessary to exploit the decay in time and estimates of the form $\sqrt{\mathcal{E}}\mathcal{D}$ are enough to close the argument. On the other hand, exploiting the time-decay in the nonlinear problem seems crucial to prove the Taylor dispersion, the enhanced dissipation and the optimal time-decay rates obtained by controlling the $L^\infty_k$ norm of the solution.}
\end{remark}
 
\smallskip \noindent \textbf{High-very low frequency interactions.}
We now turn our attention to $\mathcal{I}^{\alpha,\beta}_{a,\lesssim \nu}$, which we recall is defined as the right-hand side of \eqref{def:Iaed} with $f_{< N/8}$ replaced by $P_{\leq \delta_0^{-1}\nu}f_{< N/8}$.  In this case the low-frequencies $<N/8$ are at very low frequencies, namely in the Taylor dispersion regime, in which we do not  have $k$-dependent coefficients in front of $v$-derivatives in $E^{T.d.}_{M,B}$ \eqref{def:ETd}. Therefore, exploiting that $a_{\nu,k}\lesssim 1$ for $|k|\gtrsim \nu$, we bound $\mathcal{I}^{\alpha,\beta}_{a,\lesssim \nu}$ as the right-hand side of \eqref{bd:Igtrnu} with $P_{>\delta_0^{-1}\nu}\to P_{\leq\delta_0^{-1}\nu}$ and $\brak{k-\xi}\sqrt{a_{\nu,k-\xi}}\to 1 $. More precisely, recalling the notation for $g^*$ introduced in \eqref{def:notg}, we have
\begin{align}
\label{bd:Ilesnu} 
\mathcal{I}^{\alpha,\beta}_{M,\lesssim\nu}\lesssim\,& \frac{1}{\nu}\sum_{\substack{N\sim |k|\\ |\beta_1|+|\beta_2|\leq|\beta|\\ j_1+j_2\leq 1}}\mathbbm{1}_{|k|\geq \delta_0^{-1}\nu} \jap{k}^{\alpha}\sqrt{\nu a_{\nu,k}\cA[g^{(\beta,M,1)}(k)]}\\
\label{bd:Ilesnu1} 
&\times\int_{\mathbb{R}^d} \bigg(\jap{\xi}^{\alpha}\sqrt{\nu( a_{\nu,\xi})^{j_1}\cA[g^{(\beta_1,M,j_1)}_N(\xi)]}\normL{P_{\leq\delta_0^{-1}\nu}g^{(\beta_2,M,j_2)}_{< N/8}(k-\xi)}_{L^2_v}\\
\label{bd:Ilesnu3} 
&  \qquad +\jap{\xi}^{\alpha}\sqrt{(a_{\nu,\xi})^{j_1}}\normL{g^{(\beta_1,M,j_1)}_N(\xi)}_{L^2_v}\sqrt{\nu \cA[P_{\leq\delta_0^{-1}\nu}g^{(\beta_2,M,j_2)}_{< N/8}(k-\xi)]}\bigg)\dd \xi.
\end{align}
We claim that 
\begin{equation}
\label{bd:goal2}
\nu \int_{|k|>\delta_0^{-1}\nu} \sum_{\alpha+|\beta|\leq B}\jap{\lambda_{\nu,k} t}^{2J} \mathcal{I}^{\alpha,\beta}_{M,\lesssim \nu} \dd k\lesssim \sqrt{\mathcal{E}}\sqrt{\mathcal{D}}\left(\sqrt{\mathcal{D}}+\big(\nu^d\mathbbm{1}_{t\leq \nu^{-1}}+\brak{\nu/t}^{\frac{d}{2}}\mathbbm{1}_{t> \nu^{-1}}\big)\sqrt{\mathcal{E}_{LF}}\right). 
\end{equation}
Observe that the first piece on the right-hand side of \eqref{bd:goal2} is exactly as in \eqref{bd:goal1}. On the other hand, we will see that to handle the interactions between two very different frequency regimes we need to exploit all the available information about the solution.

To prove \eqref{bd:goal2}, we need to use the micro-macro decomposition, which is a key ingredient at very low frequencies. In light of \eqref{def:notg}, it is convenient to denote\footnote{This extra notation is introduced because $[Z,\bP]\neq 0$.}
\begin{equation}
\label{def:micmacg}
g^{(\beta,M,j)}_{micro}=\frac{2^{-\mathtt{C}\beta/2}}{\brak{\nu t}^{\beta}}\brak{v}^{M+q_{\gamma,s}} (\grad_v)^{j} Z^{\beta}(I-\bP) \hat{f}.
\end{equation}
To control the term involving only the $L^2_v$-norm at very low-frequencies in \eqref{bd:Ilesnu1}, we exploit the properties of the Maxwellian to get
\begin{equation}\label{bd:micmacNLed} 
\begin{aligned}
\normL{ P_{\leq\delta_0^{-1}\nu} g^{(\beta_2,M,j_2)}_{< N/8}(k-\xi)}_{L^2_v}\lesssim\, &  \sum_{0\leq|\tilde{\beta}|\leq|\beta_2|}\frac{2^{-\mathtt{C}\tilde{\beta}/2}}{\brak{\nu t}^{\tilde{\beta}}}P_{\leq \delta_0^{-1}\nu}(| t(k-\xi)|^{\tilde{\beta}}|(\hat{\rho},\hat{\sfm},\hat{\sfe})(k-\xi)_{< N/8}|)\\
&+\norm{ P_{\leq\delta_0^{-1}\nu} g^{(\beta_2,M,j_2)}_{micro}(k-\xi)_{< N/8}}_{L^2_v}.
\end{aligned}
\end{equation}
Since $E^{T.d}_{M,B}$ \eqref{def:ETd}, $\mathcal{D}^{T.d}_{M,B}$ \eqref{def:DTD} and $E^{e.d.}_{M,B}$ \eqref{def:Eed}, $\mathcal{D}^{e.d.}_{M,B}$ \eqref{def:Ded} control the same quantities for the microscopic part (as observed after the definition of $E^{T.d}_{M,B}$), we can proceed as done to obtain \eqref{bd:goal1} for all the terms with $g^*_{micro}$. We therefore do not detail such bounds. On the other hand, when macroscopic quantities are involved we must proceed in a different way. 

First of all, when $|\tilde{\beta}|=0$ we can readily reconstruct the energy functional in view of the term $\norm{ f}_{L^2_v}$ in $E^{T.d}_{M,B}$. 
 For $|\tilde{\beta}|\geq 1$, since $|k-\xi|\lesssim \nu$,  one has 
\begin{equation}
\label{bd:stupidtrick}
|t (k-\xi)|^{\tilde{\beta}}\lesssim \nu t \sqrt{\frac{|k-\xi|}{\nu}}|t (k-\xi)|^{\tilde{\beta}-1}.
\end{equation}
Thus, in view of Lemma \ref{lem:equivZP}, we deduce that
\begin{align}
\label{bd:NLmacro1}
&\frac{2^{-\mathtt{C}\tilde{\beta}/2}}{\brak{\nu t}^{\tilde{\beta}}}P_{\leq \delta_0^{-1}\nu}(| t(k-\xi)|^{\tilde{\beta}}|(\hat{\rho},\hat{\sfm},\hat{\sfe})(k-\xi)_{< N/8}|)\lesssim\normL{P_{\leq \delta_0^{-1}\nu}\hat{f}_{< N/8}(k-\xi)}_{L^2_v}\\
\label{bd:NLmacro2}
&\qquad+\nu t \frac{2^{-\mathtt{C}\tilde{\beta}/2}}{\brak{\nu t}^{\tilde{\beta}}}\sum_{1\leq|\tilde{\beta}'|\leq |\tilde{\beta}|-1} P_{\leq \delta_0^{-1}\nu}\bigg(\sqrt{\frac{|k-\xi|}{\nu}}\normL{Z^{\tilde{\beta}'}\hat{f}_{< N/8}(k-\xi)}_{L^2_v}\bigg).
\end{align}
For the first term on the right-hand side of \eqref{bd:NLmacro1} we can again exploit the term $\norm{f}_{L^2_v}$ in $E^{T.d}_{M,B}$. For the remaining ones, 
observe that, since $|\tilde{\beta}'|\leq |\tilde{\beta}|-1$ we have
\begin{equation}
\nu t\frac{2^{-\mathtt{C}\tilde{\beta}/2}}{\jap{\nu t}^{\tilde{\beta}}} \leq\frac{2^{-\mathtt{C}\tilde{\beta}'/2}}{\jap{\nu t}^{\tilde{\beta}'}},
\end{equation}
so we are able to absorb the factor $\nu t$
in \eqref{bd:NLmacro2}. Recalling that in the definition of $E^{T.d.}_{M,b}$ \eqref{def:ETd} we have the term $\sqrt{\frac{|k|}{\nu}}\norm{Z^\beta f}_{L^2_v}$, we finally obtain
\begin{equation}
\sum_{0\leq|\tilde{\beta}|\leq|\beta_2|}\frac{2^{-\mathtt{C}\tilde{\beta}/2}}{\brak{\nu t}^{\tilde{\beta}}}P_{\leq \delta_0^{-1}\nu}(| t(k-\xi)|^{\tilde{\beta}}|(\hat{\rho},\hat{\sfm},\hat{\sfe})(k-\xi)_{< N/8}|) \lesssim \sqrt{P_{\leq \delta_0^{-1}\nu}E^{T.d.}_{M,B}(k-\xi)_{< N/8}}.
\end{equation}
Notice that here we do not need to pay attention to the number of $x$-derivatives since these terms are compactly supported in the Fourier space. Therefore, the terms arising from \eqref{bd:Ilesnu1} are controlled by $\sqrt{\mathcal{E}}\mathcal{D}$ when proving \eqref{bd:goal2}.

Consider now the terms in \eqref{bd:Ilesnu3}, where very low frequencies are computed in the $\cA$-norm. By the micro-macro decomposition and standard properties of the Maxwellian, we get
 \begin{align}
\label{bd:AmicmacNLed} \sqrt{\nu\cA[ P_{\leq\delta_0^{-1}\nu} g^{(\beta_2,M,j_2)}_{< N/8}(k-\xi)]}&\lesssim \sqrt{\nu}\sum_{0\leq|\tilde{\beta}|\leq|\beta_2|}\frac{2^{-\mathtt{C}\tilde{\beta}/2}}{\brak{\nu t}^{\tilde{\beta}}}P_{\leq \delta_0^{-1}\nu}(| t(k-\xi)|^{\tilde{\beta}}|(\hat{\rho},\hat{\sfm},\hat{\sfe})(k-\xi)_{< N/8}|)\\
\label{bd:AmicmacNLed1}&\quad+\sqrt{\nu\cA\left[ P_{\leq\delta_0^{-1}\nu} g^{(\beta_2,M,j_2)}_{micro}(k-\xi)_{< N/8}\right]}.
\end{align}
As observed previously, since $E^{e.d.}_{M,B}$ and $E^{T.d.}_{M,B}$ have the same structure for microscopic quantities, we can proceed as done to obtain \eqref{bd:goal1} to control the term in \eqref{bd:AmicmacNLed1} (with the simplification that one can always pay derivatives at very low frequencies). On the other hand, for the macroscopic part we need to be more careful due to the degeneracy of the dissipation for very low $k$'s.  In fact, we first need to improve the bound \eqref{bd:NLmacro2} to
\begin{align}
\label{bd:disslow0}&\frac{2^{-\mathtt{C}\tilde{\beta}/2}}{\brak{\nu t}^{\tilde{\beta}}}P_{\leq \delta_0^{-1}\nu}(| t(k-\xi)|^{\tilde{\beta}}|(\hat{\rho},\hat{\sfm},\hat{\sfe})(k-\xi)_{< N/8}|)\lesssim \normL{P_{\leq \delta_0^{-1}\nu}\hat{f}_{< N/8}(k-\xi)}_{L^2_v}\\
\label{bd:disslow}&\qquad +\frac{1}{\sqrt{\nu}} \sum_{1\leq|\tilde{\beta}'|\leq |\tilde{\beta}|-1} \frac{2^{-\mathtt{C}\tilde{\beta}'/2}}{\brak{\nu t}^{\tilde{\beta}'}}\frac{|k-\xi|}{\sqrt{\nu}}\normL{P_{\leq \delta_0^{-1}\nu}Z^{\tilde{\beta}'}\hat{f}_{< N/8}(k-\xi)}_{L^2_v},
\end{align}
where we used 
\begin{equation}
\label{bd:stupidtrick2}
|t (k-\xi)|^{\tilde{\beta}'}= \frac{1}{\sqrt{\nu}}(\nu t) \frac{|k-\xi|}{\sqrt{\nu}}|t (k-\xi)|^{\tilde{\beta}'-1}
\end{equation}
and the fact that $1\leq|\tilde{\beta}'|\leq |\tilde{\beta}|-1$ to absorb the $\nu t$ loss. 
Notice that we can control a term like $\frac{|k|}{\sqrt{\nu}}\norm{Z^\beta f}$ with $\mathcal{D}^{T.d}_{M,B}$ \eqref{def:DTD}. In particular, from \eqref{bd:disslow0} we infer
\begin{align}
\notag \sqrt{\nu}\sum_{0\leq|\tilde{\beta}|\leq|\beta_2|}&\frac{2^{-\mathtt{C}\tilde{\beta}/2}}{\brak{\nu t}^{\tilde{\beta}}}P_{\leq \delta_0^{-1}\nu}(| t(k-\xi)|^{\tilde{\beta}}|(\hat{\rho},\hat{\sfm},\hat{\sfe})(k-\xi)_{< N/8}|)\\
\label{bd:disslow1}
&\lesssim \sqrt{\nu}\normL{P_{\leq \delta_0^{-1}\nu}\hat{f}_{< N/8}(k-\xi)}_{L^2_v}+\sqrt{P_{\leq \delta_0^{-1}\nu}\mathcal{D}^{T.d.}_{M,B}(k-\xi)_{< N/8}}.
\end{align}
Then, the last term on the right-hand side of \eqref{bd:disslow1} gives rise to terms that are bounded by $\sqrt{\mathcal{E}}\mathcal{D}$. However, for the other term we cannot exploit the dissipation functional since we would need a factor $|k-\xi|$ that can be extremely small in this regime. Thus, with the strategy outlined in \eqref{bd:micmacNLed}-\eqref{bd:disslow1} we are able to prove 
\begin{align}
\label{bd:sqrtED+int}
\nu \int_{|k|<\delta_0^{-1}\nu} \brak{\lambda_{\nu,k} t}^{2J}\mathcal{I}^{\alpha,\beta}_{M,\lesssim \nu}\dd k&\lesssim 
\sqrt{\mathcal{E}}\mathcal{D}+\sqrt{\mathcal{E}}\sqrt{\mathcal{D}} \normL{P_{\leq \delta_0^{-1}\nu}\hat{f}}_{L^1_kL^2_v}.
\end{align}
Then, since $\lambda_{\nu,k}\sim |k|^2/\nu$ for $|k|\leq\delta_0^{-1}\nu$, notice that
\begin{align}
\label{bd:integration}
\normL{P_{\leq \delta_0^{-1}\nu}\hat{f}}_{L^1_kL^2_v}&\lesssim \normL{P_{\leq \delta_0^{-1}\nu}\brak{\lambda_{\nu,k} t}^{J'} \hat{f}}_{L^\infty_kL^2_v}\int_{|k|\lesssim \nu} \frac{\dd \xi}{\brak{ \nu^{-1}|\xi|^2 t}^{J'}}\\
&\lesssim \left(\nu^d\mathbbm{1}_{t\leq \nu^{-1}}+\brak{\nu/t}^{\frac{d}{2}}\mathbbm{1}_{t> \nu^{-1}}\right)\sqrt{\mathcal{E}_{LF}}.
\end{align}
Inserting this bound in \eqref{bd:sqrtED+int} we finally prove \eqref{bd:goal2}. Combining \eqref{bd:goal1} and \eqref{bd:goal2} we finish the proof of the bound \eqref{bd:NLL2}  for the term $\mathcal{I}^{\alpha,\beta}_{M}$  \eqref{def:Iaed} in $NL^{e.d.}_{M,B}$ \eqref{def:NLed}.

\begin{remark}
\label{remL2dgeq3}
For $d\geq 3$ it is not necessary to control the energy functional $\mathcal{E}_{LF}$ in order to bound the nonlinear terms coming from the  $L^2_k$ energy functional, i.e. $\mathcal{E}$. Indeed, we can use the $L^2_k$ norm instead of the $L^\infty_k$ norm in \eqref{bd:integration} to get  
\begin{align}
\label{bd:integrationL2}
\normL{P_{\leq \delta_0^{-1}\nu}\hat{f}}_{L^1_kL^2_v}\lesssim \left(\nu^{\frac{d}{2}}\mathbbm{1}_{t\leq \nu^{-1}}+\brak{\nu/t}^{\frac{d}{4}}\mathbbm{1}_{t> \nu^{-1}}\right)\sqrt{\mathcal{E}}.
\end{align}
However, for $d=2$ this term would generate a logarithmic loss when doing estimates as in \eqref{bd:pfmain1}. Besides getting sharp decay estimate, this is the other main reason why we directly control $\mathcal{E}$ in terms of the $\mathcal{E}_{LF}$ even for $d\geq3$.
\end{remark}

Regarding the other terms in $NL^{e.d.}_{M,B}$, notice that the first term on the right-hand side of \eqref{def:NLed} is easier to control with respect to \eqref{def:Iaed} since no $v$ or $x$ derivatives are involved.
To handle the terms with $b_{\nu,k}$ in \eqref{def:NLed}, recalling the definition of $b_{\nu,k}$ \eqref{def:abnuk}, it is not hard to check that the factor $|k|^{-1}$ balances the $x$-derivative whereas the remaining $(\nu/|k|)^{\frac{1}{1+2s}}$ is exactly $\sqrt{a_{\nu,k}}$, which is the coefficient we need to reconstruct the energy and dissipation in the terms involving $v$-derivatives. Indeed, for these terms at most one factor has a $v$-derivative as opposed to $\mathcal{I}^{\alpha,\beta}_M$ where two factors can have a $v$-derivative. Therefore, taking into account of the scalings of the coefficients, the estimates for these terms are also analogous to one for $\mathcal{I}^{\alpha,\beta}_M$. This finishes the proof of the bounds \eqref{bd:NLL2} and \eqref{bd:NLsup} for the nonlinearities coming from the enhanced dissipation energy functional.

\subsubsection{\textbf{Nonlinear Taylor dispersion errors}}
\label{subsec:NLTDL2}
We now consider the nonlinear error terms related to the Taylor dispersion energy functional, see \eqref{def:NLTd}. In this regime we have $|k|\leq \delta_0^{-1}\nu$. We use again the paraproduct decomposition \eqref{eq:paraNL}. When studying the terms containing $\Gamma(f_N,f_{< N/8})$, thanks to \eqref{bd:simN} we know that both $f_N$ and $f_{< N/8}$ are in the Taylor dispersion regime (namely $|\xi|, |k-\xi|\lesssim \delta_0^{-1}\nu$). Instead, for $\Gamma(f_N,f_{N'})$ we can have 
\begin{equation}
|\xi|\leq2\delta_0^{-1}\nu \quad \Longrightarrow \quad|k-\xi|\leq 3\delta_0^{-1}\nu \qquad \text{or} \qquad |\xi|>2\delta_0^{-1}\nu \quad \Longrightarrow \quad|k-\xi|> 3\delta_0^{-1}\nu.
\end{equation}
 This means that $f_N$ and $f_{N'}$ are either in the Taylor dispersion regime or in the enhanced dissipation one, namely 
\begin{equation}
N' \sim N \gtrsim \nu, \qquad \text{or} \qquad N' \sim  N \lesssim \nu.
\end{equation}
 Consequently, we will only detail the treatment of the error terms arising from $\Gamma(f_N,f_{N'})$ since the bounds when $N\lesssim \nu$ easily adapt to $\Gamma(f_N, f_{< N/8})$. 
 
 Let us consider first the term from \eqref{def:NLTd2}, that is the analogue of \eqref{def:Iaed}, namely
	\begin{equation}
	\label{def:IaTd}
		 \mathcal{J}^{\alpha,\beta}_{M}:=\sum_{N\sim N'}\mathbbm{1}_{|k|\leq \delta_0^{-1}\nu} \frac{2^{-\mathtt{C}_1\beta}}{\brak{\nu t}^{2\beta}}\left| \brak{ \brak{v}^{M+q_{\gamma,s}} \grad_v Z^\beta \cF(  \Gamma(f_N,f_{N'})), \brak{v}^{M+q_{\gamma,s}} \grad_v Z^\beta (I-\bP)\hat{f}}_{L^2_v}\right|.
	\end{equation}
	We define 
	\begin{equation}
		\label{def:splitJM}
	\mathcal{J}^{\alpha,\beta}_{M}=\mathcal{J}^{\alpha,\beta}_{M,\lesssim \nu}+\mathcal{J}^{\alpha,\beta}_{M,\gtrsim \nu},
	\end{equation} where
	$\mathcal{J}^{\alpha,\beta}_{M,\lesssim \nu}$ and $\mathcal{J}^{\alpha,\beta}_{M,\gtrsim \nu}$ are \eqref{def:IaTd} with $N\sim N'\lesssim \nu$ and $N\sim N'\gtrsim\nu$ respectively.

\smallskip \noindent \textbf{Very low-very low frequency interactions}.	
When $N\lesssim \nu$, thanks to \eqref{bd:equivalencenu} we can consider $f_N, f_{N'}$ and $(I-\bP)f$ in the Taylor dispersion regime.  We can then apply Lemma \ref{lem:trilBd} to obtain estimates as in \eqref{bd:Ia1}. Having that $$\nu \cA[\brak{v}^{M+q_{\gamma,s}} \grad_v Z^\beta (I-\bP)\hat{f}]$$ is a term included in $\cD^{T.d}_{M,B}$ \eqref{def:DTD}, we can proceed as done to obtain the bound \eqref{bd:goal2} in the previous subsection to get 
\begin{equation}
\label{bd:sqrtTD+int}
\nu\int_{|k|<\delta_0^{-1}\nu} \sum_{\alpha+|\beta|\leq B}\brak{\lambda_{\nu,k} t}^{2J}\mathcal{J}^{\alpha,\beta}_{M,\lesssim \nu}\dd k\lesssim 
\sqrt{\mathcal{E}}\sqrt{\mathcal{D}}\bigg(\sqrt{\mathcal{D}}+\left(\nu^d\mathbbm{1}_{t\leq \nu^{-1}}+\brak{\nu/t}^{\frac{d}{2}}\mathbbm{1}_{t> \nu^{-1}}\right) \sqrt{\mathcal{E}_{LF}}\bigg).
\end{equation}
 \noindent \textbf{Very low-high frequency interactions}.	
On the other hand, for $N\gtrsim \nu$ we have to be more careful since now we need to reconstruct the $a_{\nu,k}$ coeffiecients in  $E^{e.d.}_{M,B}$ and $\mathcal{D}^{e.d.}_{M,B}$ which could potentially lead to losses of order $\nu^{-1/(1+2s)}$. This is because $v$-derivatives in the Taylor dispersion regime are not weighted by factors $\nu^{-1/(1+2s)}$ as in the enhanced dissipation one. To overcome this difficulty,  using the same notation introduced in \eqref{def:notg}, appealing to Lemma \ref{lem:trilBd} we obtain 
\begin{equation}
\begin{aligned}
\label{bd:IgtrnuTD} 
&\mathcal{J}^{\alpha,\beta}_{M,\gtrsim\nu}\lesssim \frac{1}{\nu}\sum_{\substack{N\sim N'\gtrsim \nu\\ |\beta_1|+|\beta_2|\leq|\beta|, \, j_1+j_2\leq 1}}\mathbbm{1}_{|k|\leq \delta_0^{-1}\nu} \jap{k}^{\alpha}\frac{2^{-\mathtt{C}_1\beta/2}}{\brak{\nu t}^\beta}\sqrt{\nu \cA[\brak{v}^{M+q_{\gamma,s}}\nabla_vZ^\beta (I-\bP)\hat{f}(k)]}\\
&\times\int_{\mathbb{R}^d} \bigg(\left(\frac{\jap{\xi}}{\nu}\right)^{\frac{j_1}{1+2s}}\sqrt{\nu(a_{\nu,\xi})^{j_1} \cA[g^{(\beta_1,M,j_1)}_N(\xi)]}\left(\frac{\jap{k-\xi}}{\nu}\right)^{\frac{j_2}{1+2s}} \sqrt{(a_{\nu,k-\xi})^{j_2}}\normL{g^{(\beta_2,M,j_2)}_{N'}(k-\xi)}_{L^2_v}\\
&  +\left(\frac{\jap{\xi}}{\nu}\right)^{\frac{j_1}{1+2s}}\sqrt{(a_{\nu,\xi})^{j_1}}\normL{g^{(\beta_1,M,j_1)}_N(\xi)}_{L^2_v}\left(\frac{\jap{k-\xi}}{\nu}\right)^{\frac{j_2}{1+2s}}\sqrt{\nu (a_{\nu,k-\xi})^{j_2}\cA[g^{(\beta_2,M,1)}_{N'}(k-\xi)]}\bigg)\dd \xi.
\end{aligned}
\end{equation}
To absorb the loss in $\nu$ when estimating the $L^2_k$ norm, we use the fact that we are integrating on frequencies $|k|\lesssim \nu$ to gain powers of $\nu$ from the integration. Namely, we eploit the inequality 
\begin{align}
\label{bd:easyTD}
\int_{|k|\lesssim \nu} |{G}(k)({H}*Q)(k)|\dd k\lesssim \norm{H*Q}_{L^\infty_k}\norm{G}_{L^2_k}\nu^{\frac{d}{2}}\lesssim \norm{H}_{L^2_k}\norm{Q}_{L^2_k}\norm{G}_{L^2_k}\nu^{\frac{d}{2}}.
\end{align}
Using that $N\sim N'$, we can distribute the $x$-derivative in \eqref{bd:IgtrnuTD} on the term with the lowest number of $Z$-derivatives (smaller $|\beta_i|$). 
Hence, combining the inequality \eqref{bd:exlambda} with \eqref{bd:IgtrnuTD} and using \eqref{bd:easyTD}, we get
 \begin{align}
\label{bd:sqrtTD0}
& \nu \int_{|k|\leq \delta_0^{-1}\nu}\sum_{\alpha+|\beta|\leq B}\brak{\lambda_{\nu,k} t}^{2J}\mathcal{J}^{\alpha,\beta}_{M,\gtrsim \nu}\dd k\lesssim \nu^{\frac{d}{2}-\frac{1}{1+2s}}\norm{P_{\leq \delta_0^{-1}\nu} \brak{\lambda_{\nu,k} t}^{J}\sqrt{\cD^{T.d.}_{M,B}}}_{L^2_k}\\
&\times  \bigg(  \norm{P_{> \delta_0^{-1}\nu} \brak{\lambda_{\nu,k} t}^{J}\sqrt{\cD^{e.d.}_{M,B}}}_{L^2_k} \norm{P_{> \delta_0^{-1}\nu} \sqrt{E^{e.d.}_{M,B}}}_{L^2_k} \\
&+\norm{P_{> \delta_0^{-1}\nu} \brak{\lambda_{\nu,k} t}^{J}\sqrt{E^{e.d.}_{M,B}}}_{L^2_k}\norm{P_{> \delta_0^{-1}\nu} \sqrt{\cD^{e.d.}_{M,B}}}_{L^2_k} \bigg)\\
\label{bd:sqrtTD1}&\lesssim \sqrt{\mathcal{E}}\mathcal{D},
\end{align}
where we also used that $d/2-1/(1+2s)\geq 0$ since $d\geq2$ and $s\in (0,1]$.
Combining \eqref{bd:sqrtTD+int} and \eqref{bd:sqrtTD1} we see that from the term $\mathcal{J}^{\alpha,\beta}_M$ we get bounds in agreement with \eqref{bd:NLL2}. Following the same arguments outlined for the proof of the bounds for $\mathcal{J}^{\alpha,\beta}_M$, it is not difficult to control in an analogous way the terms in \eqref{def:NLTd}-\eqref{def:NLTd4} containing at least one $(I-\bP)f$.

\smallskip
\noindent  \textbf{Macroscopic error terms.}
We are thus left with the macroscopic error terms given by
\begin{align}
\label{def:errmacro}
& \mathcal{J}^{\alpha,\beta}_{macro, N\sim N'}+\mathbbm{1}_{|k|\leq \delta_0^{-1}\nu}\frac{c_0}{\nu}\mathcal{M}_{\Gamma, N\sim N'},\\
&\mathcal{J}^{\alpha,\beta}_{macro,N\sim N'}:=\sum_{N\sim N'}\mathbbm{1}_{|k|\leq \delta_0^{-1}\nu}\frac{2^{-\mathtt{C}_0\beta}}{\brak{\nu t}^{2\beta}} \jap{k}^{2\alpha}\bigg|\frac{|k|}{\nu}\brak{ Z^\beta\cF(  \Gamma(f_{N},f_{N'})), Z^\beta \bP\hat{f}}_{L^2_v}\bigg|
\end{align}
where $\mathcal{M}_{\Gamma,N\sim N'}$ is defined as in \eqref{bd:mixedNL} with $\Gamma(f,f)$ replaced by $\sum_{N\sim N'}\Gamma(f_{N},f_{N'})$. We recall that the proof of the bounds for the terms with $\Gamma(f_{N},f_{< N/8})$ can be treated as a subcase of the one we present below, see also the discussion at beginning of the proof for the Taylor dispersion errors. 

We claim that 
\begin{align}
\label{bd:sqrtTDmacro}
\nu\int_{|k|\leq \delta_0^{-1}\nu} \sum_{\alpha+|\beta|\leq B}\brak{\lambda_{\nu,k} t}^{2J}\big(\mathcal{J}^{\alpha,\beta}_{macro,N\sim N'}+\frac{c_0}{\nu}\mathcal{M}_{\Gamma, N\sim N'}\big)\dd k&\lesssim 
\sqrt{\mathcal{E}}\mathcal{D}.
\end{align}
Indeed, using the trilinear estimate in Lemma \ref{lem:trilBd} we get
\begin{align}
 &\mathcal{J}^{\alpha,\beta}_{macro, N\sim N'}\lesssim \frac{1}{\nu} \sum_{\substack{N\sim N' \\ |\beta_1|+|\beta_2|\leq|\beta|}}\mathbbm{1}_{|k|\leq \delta_0^{-1}\nu}  \frac{2^{-\mathtt{C}_0\beta/2}}{\brak{\nu t}^{\beta}}\brak{k}^{\alpha}\frac{|k|}{\sqrt{\nu}}\normL{Z^\beta \bP \hat{f}(k)}_{L^2_v}\\
 \times&\int_{\mathbb{R}^d}\bigg(\frac{2^{-\mathtt{C}_0\beta_1/2}}{\brak{\nu t}^{\beta_1}}\sqrt{\nu \cA[Z^{\beta_1}\hat{f}_N(\xi)]}\frac{2^{-\mathtt{C}_0\beta_2/2}}{\brak{\nu t}^{\beta_2}}\norm{Z^{\beta_2}\hat{f}_N(k-\xi)}_{L^2_v} \\
 &\qquad \qquad \qquad \qquad +\frac{2^{-\mathtt{C}_0\beta_1/2}}{\brak{\nu t}^{\beta_1}}\norm{Z^{\beta_1}\hat{f}_N(\xi)}_{L^2_v}\frac{2^{-\mathtt{C}_0\beta_2/2}}{\brak{\nu t}^{\beta_2}}\sqrt{\nu \cA[Z^{\beta_2}\hat{f}_N(k-\xi)]}\bigg)\dd \xi
\end{align}
where we used that $\brak{k}^{\alpha}\lesssim 1$ for $|k|\lesssim \nu$. From the bound above, exploiting the inequality \eqref{bd:easyTD} we get
 \begin{align}
\label{bd:sqrtTD2}
 \nu \int_{|k|\leq \delta_0^{-1}\nu}\sum_{\alpha+|\beta|\leq B}\brak{\lambda_{\nu,k} t}^{2J}\mathcal{J}^{\alpha,\beta}_{macro,N\sim N'}\dd k&\lesssim \nu^{\frac{d}{2}}\norm{P_{\leq \delta_0^{-1}\nu} \brak{\lambda_{\nu,k} t}^{J}\sqrt{\cD^{T.d.}_{M,B}}}_{L^2_k}\sqrt{\mathcal{E}}\sqrt{\mathcal{D}}\\
 &\lesssim \sqrt{\mathcal{E}}\mathcal{D}.
\end{align}
Notice that here we never lose factors $\nu^{-1/(1+2s)}$ because we do not have $v$-derivatives. 
For $\mathcal{M}_{\Gamma, N\sim N'}$, in view of Lemma \eqref{lem:equivZP}, it is not hard to see that 
\begin{align}
&\mathcal{M}_{\Gamma, N\sim N'}\lesssim \frac{1}{\nu}\sum_{\substack{N\sim N' \\ |\beta_1|+|\beta_2|\leq|\beta|}}\mathbbm{1}_{|k|\leq \delta_0^{-1}\nu}  \sum_{\widetilde{\beta}\leq \beta}\frac{2^{-\mathtt{C}_1\widetilde{\beta}/2}}{\brak{\nu t}^{\widetilde{\beta}}}\brak{k}^{\alpha}\frac{|k|}{\sqrt{\nu}}\normL{Z^\beta \bP \hat{f}(k)}_{L^2_v}\\
 \times&\int_{\mathbb{R}^d}\bigg(\frac{2^{-\mathtt{C}_1\beta_1}}{\brak{\nu t}^{\beta_1}}\sqrt{\nu \cA[Z^{\beta_1}\hat{f}_N(\xi)]}\frac{2^{-\mathtt{C}_1\beta_2}}{\brak{\nu t}^{\beta_2}}\norm{Z^{\beta_2}\hat{f}_N(k-\xi)}_{L^2_v} \\
 &\qquad \qquad \qquad \qquad +\frac{2^{-\mathtt{C}_1\beta_1}}{\brak{\nu t}^{\beta_1}}\norm{Z^{\beta_1}\hat{f}_N(\xi)}_{L^2_v}\frac{2^{-\mathtt{C}_1\beta_2}}{\brak{\nu t}^{\beta_2}}\sqrt{\nu \cA[Z^{\beta_2}\hat{f}_N(k-\xi)]}\bigg)\dd \xi.
\end{align}
Therefore, also for this term we deduce a bound in agreement with \eqref{bd:sqrtTDmacro}, whence concluding the proof of Proposition \ref{prop:NLerrorL2}.
\end{proof}

\subsection{Nonlinear errors for the main $L^\infty_k$ energy}
\label{sec:NLerrLinfty}
In this section we aim at proving Proposition \ref{prop:NLerrorLinfty}. The strategy of proof is the same followed to prove Proposition \ref{prop:NLerrorL2} and many steps will actually be identical. The main differences appear when using Young's convolution inequality in Fourier space. We, therefore, provide the details only for the steps that differs from the proof of Proposition \ref{prop:NLerrorL2}. 
\begin{proof}
We split the nonlinear term as in \eqref{eq:paraNL} and we study separately the nonlinear error terms $NL^{e.d.}_{M,B}$ \eqref{def:NLed}  and $NL^{T.d.}_{M,B}$ \eqref{def:NLTd}.
\subsubsection{\textbf{Nonlinear enhanced dissipation errors}}
In light of the discussion before the inequality \eqref{def:Iaed}, we study the term
	\begin{equation}
	\label{def:Iaedsup}
		 \mathcal{I}^{\alpha,\beta}_{M'}:=\mathbbm{1}_{|k|\geq \delta_0^{-1}\nu} \frac{2^{-\mathtt{C}\beta}}{\brak{\nu t}^{2\beta}}\jap{k}^{2\alpha}a_{\nu,k}\sum_{N\sim |k|}\left| \brak{ \brak{v}^{M'+q_{\gamma,s}} \grad_v Z^\beta \cF(  \Gamma(f_N,f_{< N/8})), \brak{v}^{M'+q_{\gamma,s}} \grad_v Z^\beta \hat{f}}_{L^2_v}\right|. 
	\end{equation}
	As in \eqref{def:splitIM}, we split this term as $\mathcal{I}^{\alpha,\beta}_{M'}=\mathcal{I}^{\alpha,\beta}_{M',\gtrsim \nu}+\mathcal{I}^{\alpha,\beta}_{M',\lesssim \nu},$ where $\mathcal{I}^{\alpha,\beta}_{M',\lesssim \nu}$ and $\mathcal{I}^{\alpha,\beta}_{M',\gtrsim \nu}$ are the right-hand side of \eqref{def:Iaedsup} with $f_{< N/8}$ replaced by $P_{\leq \delta_0^{-1}\nu}f_{< N/8}$ and $P_{> \delta_0^{-1}\nu}f_{< N/8}$ respectively.
	
\smallskip \noindent \textbf{High-moderately low frequency interactions.}
We can repeat all the arguments done in \eqref{bd:Ia1}-\eqref{bd:Igtrnu3} by replacing $M$ with $M'$, since none of these bounds depend on the specific choice of the velocity weight. Hence, recalling the definition \eqref{def:notg} 
\begin{equation}
\label{def:notg0}
g^{(\beta,M,j)}=\frac{2^{-\mathtt{C}\beta/2}}{\brak{\nu t}^{\beta}}\brak{v}^{M+q_{\gamma,s}} (\grad_v)^{j} Z^{\beta} \hat{f},
\end{equation}
we have
\begin{align}
\label{bd:Igtrnusup} 
&\mathcal{I}^{\alpha,\beta}_{M',\gtrsim\nu}\lesssim \frac{1}{\nu}\sum_{\substack{N\sim |k|\\ |\beta_1|+|\beta_2|\leq|\beta|\\ j_1+j_2\leq 1}}\mathbbm{1}_{|k|\geq \delta_0^{-1}\nu} \jap{k}^{\alpha}\sqrt{\nu a_{\nu,k}\cA[g^{(\beta,M',1)}(k)]}\\
\label{bd:Igtrnu1sup} 
\times&\int_{\mathbb{R}^d} \bigg(\jap{\xi}^{\alpha}\sqrt{\nu( a_{\nu,\xi})^{j_1}\cA[g^{(\beta_1,M',j_1)}_N(\xi)]}\sqrt{(a_{\nu,k-\xi})^{j_2}}\normL{P_{>\delta_0^{-1}\nu}g^{(\beta_2,M',j_2)}_{< N/8}(k-\xi)}_{L^2_v}\\
\label{bd:Igtrnu3sup} 
&  +\jap{\xi}^{\alpha}(\sqrt{a_{\nu,\xi}})^{j_1}\normL{g^{(\beta_1,M',j_1)}_N(\xi)}_{L^2_v}\sqrt{\nu (a_{\nu,k-\xi})^{j_2}\cA[P_{>\delta_0^{-1}\nu}g^{(\beta_2,M',j_2)}_{< N/8}(k-\xi)]}\bigg)\dd \xi.
\end{align}
We then claim that 
\begin{equation}
\label{bd:goal1sup}
\nu \sup_{k\,:\,|k|>\delta_0^{-1}\nu} \sum_{\alpha+|\beta|\leq B'}\jap{\lambda_{\nu,k} t}^{2J'} \mathcal{I}^{\alpha,\beta}_{M',\gtrsim \nu} \dd k\lesssim \sqrt{\mathcal{E}}\sqrt{\mathcal{D}}\sqrt{\mathcal{D}_{LF}}.
\end{equation}
To prove this bound, when applying Young's convolution inequalities to the terms in \eqref{bd:Igtrnu1sup}-\eqref{bd:Igtrnu3sup} we can use the $L^2_k$ norm in both factors. This simplifies the proof compared to the one for Proposition \ref{prop:NLerrorL2} since we do not need to distinguish cases depending on the number of $Z$-derivatives. Indeed,
having that 
\begin{equation}
J'\leq J, \quad M'\leq M, \quad \alpha+|\beta_1|\leq B'\leq B, \quad |\beta_2|+1/2\leq B'+1\leq B,
\end{equation}
 since $N\gtrsim \nu$, we get 
 \begin{align}
\label{bd:LinftyDNsup}&\norm{\brak{\lambda_{\nu,k} t}^{J'}\brak{k}^\alpha \sqrt{\nu (a_{\nu,k})^{j_1}\cA[g^{(\beta_1,M',j_1)}_{ N}]}}_{L^2_k}\lesssim \norm{\brak{\lambda_{\nu,k} t}^{J}\sqrt{\mathcal{D}^{e.d.}_{M,B}(k)_{N}}}_{L^2_k}\\
\label{bd:LinftyENlowsup}&\norm{\sqrt{(a_{\nu,k})^{j_2}}P_{>\delta_0^{-1}\nu}g^{(\beta_2,M',j_2)}_{< N/8}}_{L^2_kL^2_v}\lesssim \norm{\sqrt{P_{>\delta_0^{-1}\nu}E^{e.d.}_{M,B}(k)_{< N/8}}}_{L^2_k}\\
\label{bd:LinftyENsup}&\norm{\brak{\lambda_{\nu,k} t}^{J'}\brak{k}^\alpha(\sqrt{a_{\nu,k}})^{j_1}g^{(\beta_1,M',j_1)}_{N}}_{L^2_kL^2_v}\lesssim \norm{\brak{\lambda_{\nu,k} t}^{J}\sqrt{E^{e.d.}_{M,B}(k)_{ N}}}_{L^2_k}\\
\label{bd:LinftyDNlowsup}&\norm{\sqrt{\nu (a_{\nu,k})^{j_2}\cA[P_{>\delta_0^{-1}\nu}g^{(\beta_2,M',j_2)}_{< N/8}]}}_{L^2_k}\lesssim \norm{\sqrt{P_{>\delta_0^{-1}\nu}\mathcal{D}^{e.d.}_{M,B}(k)_{< N/8}}}_{L^2_k}.
\end{align}
Using the Young's convolution inequality in \eqref{bd:Igtrnusup} and appealing to \eqref{bd:LinftyDNsup}-\eqref{bd:LinftyDNlowsup}, we have
 \begin{align}
&  \nu\sup_{k : \, |k|\geq \delta_0^{-1}\nu} \sum_{\alpha+|\beta|\leq B'}(\brak{\lambda_{\nu,k} t}^{2J'}\mathcal{I}^{\alpha,\beta}_{M',\gtrsim \nu})\lesssim \sum_{N\gtrsim \nu} \norm{ \brak{\lambda_{\nu,k} t}^{J'}\sqrt{\cD^{e.d.}_{M',B'}(k)_{\sim N}}}_{L^\infty_k}\\
&\times \bigg(  \norm{ \brak{\lambda_{\nu,k} t}^{J}\sqrt{\cD^{e.d.}_{M,B}(k)_{N}}}_{L^2_k} \norm{ \sqrt{P_{> \delta_0^{-1}\nu}E^{e.d.}_{M,B}(k)_{< N/8}}}_{L^2_k}\\
&\quad +\norm{ \brak{\lambda_{\nu,k} t}^{J}\sqrt{E^{e.d.}_{M,B}(k)_{N}}}_{L^2_k}\norm{ \sqrt{P_{> \delta_0^{-1}\nu}\cD^{e.d.}_{M,B}(k)_{< N/8}}}_{L^2_k} \bigg)\\
\label{bd:sqrtELF0}&\lesssim \sqrt{\mathcal{E}}\sqrt{\mathcal{D}}\sqrt{\mathcal{D}_{LF}},
\end{align}
where we also used the quasi-orthogonality of the Littlewood-Payley projections. The claim \eqref{bd:goal1sup} is proved.

\smallskip \noindent \textbf{High-very low frequency interactions.}
We now turn our attention to $\mathcal{I}^{\alpha,\beta}_{M',\lesssim \nu}$. We can repeat the computations in \eqref{bd:Ilesnu}-\eqref{bd:stupidtrick2} with $M$ replaced by $M'$. Using the Young's convolution inequality 
\begin{equation}
\norm{g*h}_{L^\infty_k}\leq\norm{g}_{L^\infty_k}\norm{h}_{L^1_k},
\end{equation}
instead of \eqref{bd:sqrtED+int} we arrive at 
\begin{equation}
\label{bd:sqrtELF+int}
 \sup_{k: \, |k|\leq \delta_0^{-1}\nu}\sum_{\alpha+|\beta|\leq B'}\nu\brak{\lambda_{\nu,k} t}^{2J'}\mathcal{I}^{\alpha,\beta}_{M',\lesssim \nu}\lesssim 
\sqrt{\mathcal{E}}\sqrt{\mathcal{D}}\sqrt{\mathcal{D}_{LF}}+\sqrt{\mathcal{E}_{LF}}\sqrt{\mathcal{D}_{LF}} \normL{P_{\leq \delta_0^{-1}\nu}\hat{f}}_{L^1_kL^2_v}.
\end{equation}
Also in this case we can argue as in \eqref{bd:integration} to get
\begin{align}
\label{bd:integrationsup}
\normL{P_{\leq \delta_0^{-1}\nu}\hat{f}}_{L^1_kL^2_v}\lesssim \left(\nu^d\mathbbm{1}_{t\leq \nu^{-1}}+\brak{\nu/t}^{\frac{d}{2}}\mathbbm{1}_{t> \nu^{-1}}\right)\sqrt{\mathcal{E}_{LF}}.
\end{align}
Plugging the bound above in \eqref{bd:sqrtELF+int} and using \eqref{bd:goal1sup} we finally prove the bound \eqref{bd:NLsup}  for the term $\mathcal{I}^{\alpha,\beta}_{M'}$  \eqref{def:Iaedsup} in $NL^{e.d.}_{M',B'}$ \eqref{def:NLed}. The other terms in $NL^{e.d.}_{M',B'}$ can be treated analogously, see the discussion at the end of Section \ref{subsec:NLEDL2}.

\subsubsection{\textbf{Nonlinear Taylor dispersion errors}}
Following the discussion in Section \ref{subsec:NLTDL2}, we study the term 
	\begin{equation}
	\label{def:IaTdsup}
		 \mathcal{J}^{\alpha,\beta}_{M'}:=\sum_{N\sim N'}\mathbbm{1}_{|k|\leq \delta_0^{-1}\nu} \frac{2^{-\mathtt{C}_1\beta}}{\brak{\nu t}^{2\beta}}\left| \brak{ \brak{v}^{M'+q_{\gamma,s}} \grad_v Z^\beta \cF(  \Gamma(f_N,f_{N'})), \brak{v}^{M'+q_{\gamma,s}} \grad_v Z^\beta (I-\bP)\hat{f}}_{L^2_v}\right|.
	\end{equation}
We split it as	 $\mathcal{J}^{\alpha,\beta}_{M}=\mathcal{J}^{\alpha,\beta}_{M,\lesssim}+\mathcal{J}^{\alpha,\beta}_{M,\gtrsim \nu}$ where
	$\mathcal{J}^{\alpha,\beta}_{M,\lesssim \nu}$ and $\mathcal{J}^{\alpha,\beta}_{M,\gtrsim \nu}$ are \eqref{def:IaTdsup} with $N\sim N'\leq \delta_0^{-1}\nu$ and $N\sim N'> \delta_0^{-1}\nu$ respectively.
	
	\smallskip \noindent \textbf{Very low-very low frequency interactions}. When $N\lesssim \nu$, we can proceed as done in the previous section (compare with the observation made before \eqref{bd:sqrtTD+int}). Thus, we get 
	\begin{align}
	\label{bd:JVLVL}
	\nu\sum_{\alpha+|\beta|\leq B'}&\sup_{k: \, |k|\leq \delta_0^{-1}\nu}\brak{\lambda_{\nu,k} t}^{2J'}\mathcal{J}^{\alpha,\beta}_{M',\lesssim \nu}\\
	&\lesssim \sqrt{\mathcal{D}_{LF}}\left(\sqrt{\mathcal{E}}\sqrt{\mathcal{D}}+\big(\nu^d\mathbbm{1}_{t\leq \nu^{-1}}+\brak{\nu/t}^{\frac{d}{2}}\mathbbm{1}_{t> \nu^{-1}}\big)\mathcal{E}_{LF}\right).
	\end{align}

 \noindent \textbf{Very low- moderately high frequency interactions}.	
For $N\gtrsim \nu$, we have the main difference with respect to the $L^2_k$ case. Since we are considering the $L^\infty_k$-norm, here we cannot gain a smallness parameter as in \eqref{bd:easyTD}. 
Hence, for the $L^\infty_k$ norm, we have to crucially use the fact that $B'<B$. Indeed, since 
\begin{equation}
\nabla_vZ^{\beta}=Z^{\beta+1}-t\nabla_x Z^\beta,
\end{equation}
by \eqref{def:IaTdsup} we can bound $\mathcal{J}^{\alpha,\beta}_{M',\gtrsim \nu}$ as
	\begin{align}
	\notag&\mathbbm{1}_{|k|\leq \delta_0^{-1}\nu} \frac{2^{-\mathtt{C}_1\beta}}{\brak{\nu t}^{2\beta}}\jap{k}^{2\alpha}\sum_{N\sim N'\gtrsim \nu}\bigg(\left| \brak{ \brak{v}^{M'+q_{\gamma,s}}  Z^{\beta+1} \cF(  \Gamma(f_N,f_{N'})), \brak{v}^{M'+q_{\gamma,s}} \grad_v Z^\beta (I-\bP)\hat{f}}_{L^2_v}\right|\\
		\label{def:IaTdsup1} & +\brak{\nu t}\left| \brak{ \brak{v}^{M'+q_{\gamma,s}} Z^\beta \cF(  \Gamma(f_N,f_{N'})), \brak{v}^{M'+q_{\gamma,s}} \grad_v Z^\beta (I-\bP)\hat{f}}_{L^2_v}\right|\bigg)
	\end{align}
	where in the last line we used that $t|k|\lesssim \brak{\nu t}$ in this regime. Thus, as opposed to \eqref{bd:IgtrnuTD}, we never have to pay negative powers of $\nu$ since we never take a $v$-derivative of the terms $f_{N}, f_{N'}$ (which are in the enhanced dissipation regime). However, we need to use one $Z$-derivative more but we can do so since $B\geq B'+1$ and we aim at controlling terms with $f_{N}, f_{N'}$ with $L^2_k$ norms. 
	\begin{remark}
	This is a term where we are exploiting the phase mixing. Namely, we are controlling a $v$-derivative in terms of a $Z$-derivative.
	\end{remark}
	Notice that in \eqref{def:IaTdsup1} we have to lose a factor $\brak{\nu t}$ also in the term containing $Z^{\beta+1}$ and not only in \eqref{def:IaTdsup}. This is because we have to divide $Z^{\beta+1}$ by $\brak{\nu t}^{\beta+1}$. More precisely, instead of 
\eqref{bd:IgtrnuTD} we get
\begin{equation}
\begin{aligned}
\label{bd:IgtrnuTDsup} 
&\mathcal{J}^{\alpha,\beta}_{M',\gtrsim\nu}\lesssim \frac{1}{\nu} \brak{\nu t}\sum_{\substack{N\sim N'\gtrsim \nu\\ |\beta_1|+|\beta_2|\leq|\beta|+1}}\mathbbm{1}_{|k|\leq \delta_0^{-1}\nu}\jap{k}^{\alpha}\frac{2^{-\mathtt{C}_1\beta}}{\brak{\nu t}^\beta}\sqrt{\nu \cA[\brak{v}^{M'+q_{\gamma,s}}\nabla_vZ^\beta (I-\bP)\hat{f}(k)]}\\
&\qquad\qquad\qquad\times\int_{\mathbb{R}^d} \bigg(\sqrt{\nu \cA[g^{(\beta_1,M',0)}_N(\xi)]}\normL{g^{(\beta_2,M',0)}_{N'}(k-\xi)}_{L^2_v}\\
&\qquad \qquad+\normL{g^{(\beta_1,M',0)}_N(\xi)}_{L^2_v}\sqrt{\nu \cA[g^{(\beta_2,M',0)}_{N'}(k-\xi)]}\bigg)\dd \xi.
\end{aligned}
\end{equation}
We rely on \eqref{bd:exlambdaJ} to absorb the $
\brak{\nu t}$ loss in \eqref{bd:IgtrnuTDsup}. In particular, we have 
 \begin{align}
\label{bd:sqrtTD0sup}
& \nu \sum_{\alpha+|\beta|\leq B'}\sup_{k:\, |k|\leq \delta_0^{-1}\nu}\brak{\lambda_{\nu,k} t}^{2J'}\mathcal{J}^{\alpha,\beta}_{M',\gtrsim \nu}\lesssim \norm{P_{\leq \delta_0^{-1}\nu} \brak{\lambda_{\nu,k} t}^{J'}\sqrt{\cD^{T.d.}_{M',B'}}}_{L^\infty_k}\\
&\times  \bigg(  \norm{P_{> \delta_0^{-1}\nu} \brak{\lambda_{\nu,k} t}^{J}\sqrt{\cD^{e.d.}_{M,B}}}_{L^2_k} \norm{P_{> \delta_0^{-1}\nu} \sqrt{E^{e.d.}_{M,B}}}_{L^2_k} \\
&+\norm{P_{> \delta_0^{-1}\nu} \brak{\lambda_{\nu,k} t}^{J}\sqrt{E^{e.d.}_{M,B}}}_{L^2_k}\norm{P_{> \delta_0^{-1}\nu} \sqrt{\cD^{e.d.}_{M,B}}}_{L^2_k} \bigg)\\
\label{bd:sqrtTD15}&\lesssim \sqrt{\mathcal{E}}\sqrt{\mathcal{D}}\sqrt{\mathcal{D}_{LF}}.
\end{align}
Combining together \eqref{bd:JVLVL} and \eqref{bd:sqrtTD15}, we see that for the term $\mathcal{J}^{\alpha,\beta}_{M'}$ we get bounds in agreement with \eqref{bd:NLsup}. The other terms in \eqref{def:NLTd}-\eqref{def:NLTd4} that contain at least one $(I-\bP)\hat{f}$ are analogous. 

The macroscopic error terms defined in \eqref{def:errmacro} do not depend on the velocity weights. Thus, we can combine the arguments done to obtain \eqref{bd:sqrtTDmacro} with the ones to get \eqref{bd:sqrtTD15} to prove
\begin{align}
\label{bd:sqrtTDmacroLF}
\nu\sup_{k:|k|\leq \delta_0^{-1}\nu} \sum_{\alpha+|\beta|\leq B}\brak{\lambda_{\nu,k} t}^{2J}\big(\mathcal{J}^{\alpha,\beta}_{macro,N\sim N'}+\frac{c_0}{\nu}\mathcal{M}_{\Gamma, N\sim N'}\big)\dd k&\lesssim 
\sqrt{\mathcal{E}}\sqrt{\mathcal{D}}\sqrt{\mathcal{D}_{LF}},
\end{align}
whence concluding the proof of Proposition \ref{prop:NLerrorLinfty}.
\end{proof}

\subsection{Nonlinear errors for the higher order moments}
\label{sec:NLmom}
The control for the higher moments in the linearized problem is a straightforward consequence of the estimates \eqref{bd:energyED} and \eqref{bd:energyTD}. For the nonlinear problem, it is also not difficult to adapt the estimates leading to the bounds \eqref{bd:NLL2} and \eqref{bd:NLsup} to obtain \eqref{bd:NLL2mom} and \eqref{bd:NLsupmom} respectively. Since we are using higher order velocity weights, we only have to check where we need the functionals $\mathcal{E}, \mathcal{E}_{LF}$ in \eqref{bd:NLL2mom} and \eqref{bd:NLsupmom}.

\begin{proof}[Proof of Proposition \ref{lem:momerrL2}]
To prove the $L^2_k$ bound \eqref{bd:NLL2mom}, we can argue as done to prove \eqref{bd:NLL2} simply by replacing $(M,J)$ with $(M+M_J,0)$ in all the estimates. For instance, in the enhanced dissipation regime we have to bound 
\begin{equation}
\nu \int_{|k|\geq \delta_0^{-1}\nu}\sum_{\alpha+|\beta|\leq B} \mathcal{I}^{\alpha,\beta}_{M+M_J} \dd k
\end{equation}
where $\mathcal{I}^{\alpha,\beta}_{M+M_J}$ is defined in \eqref{def:Iaed}. With the same splitting introduced in \eqref{def:splitIM}, we first prove \eqref{bd:Igtrnu} with $M\to M+M_J$ and then, as in \eqref{bd:sqrtED1} with $J=0$, we get
\begin{equation}
\nu \int_{|k|\geq \delta_0^{-1}\nu}\sum_{\alpha+|\beta|\leq B} \mathcal{I}^{\alpha,\beta}_{M+M_J,\gtrsim \nu} \dd k\lesssim \sqrt{\mathcal{E}_{mom}}\mathcal{D}_{mom}.
\end{equation}
For the term $\mathcal{I}^{\alpha,\beta}_{M+M_J,\lesssim \nu}$, we also have \eqref{bd:Ilesnu} with $M\to M+M_J$. For the bound \eqref{bd:micmacNLed}, notice that for the piece with $\bP f$ the weights are irrelevant thanks to the decay of the Maxwellian. These macroscopic terms are the one that needs to be controlled with $\mathcal{E}_{LF}$. We can argue as done to obtain \eqref{bd:sqrtED+int} and \eqref{bd:integration} to prove that
\begin{align}
\nu \int_{|k|\leq \delta_0^{-1}\nu}&\sum_{\alpha+|\beta|\leq B} \mathcal{I}^{\alpha,\beta}_{M+M_J,\lesssim \nu} \dd k\\
&\lesssim \sqrt{\mathcal{E}_{mom}}\sqrt{\mathcal{D}_{mom}}\left(\sqrt{\mathcal{D}_{mom}}+\big(\nu^d\mathbbm{1}_{t\leq \nu^{-1}}+\brak{\nu/t}^{\frac{d}{2}}\mathbbm{1}_{t> \nu^{-1}}\big)\sqrt{\mathcal{E}_{LF}}\right).
\end{align}
In the Taylor dispersion regime, consider the term $\mathcal{J}^{\alpha,\beta}_{M+M_J}$ as in \eqref{def:IaTd} with its splitting \eqref{def:splitJM}. Following the reasoning before \eqref{bd:sqrtTD+int} and the estimates \eqref{bd:sqrtTD1} with $(M,J)\to (M+M_J,0)$, we get 
\begin{align}
&\sum_{\alpha+|\beta|\leq B}\int_{\mathbb{R}^d}\nu\mathcal{J}^{\alpha,\beta}_{M+M_J,\lesssim \nu}\dd k\\
&\lesssim 
\sqrt{\mathcal{E}_{mom}}\sqrt{\mathcal{D}_{mom}}\left(\sqrt{\mathcal{D}_{mom}}+\big(\nu^d\mathbbm{1}_{t\leq \nu^{-1}}+\brak{\nu/t}^{\frac{d}{2}}\mathbbm{1}_{t> \nu^{-1}}\big)\sqrt{\mathcal{E}_{LF}}\right).
\end{align}
Also here, the $\mathcal{E}_{LF}$ factor comes from macroscopic error terms, therefore insensitive to the weights.
\end{proof}

\begin{proof}[Proof of Proposition \ref{lem:momerrLinfty}]
For the proof of the $L^\infty_k$ bound \eqref{bd:NLsupmom}, we can readily follow the proof of \eqref{bd:NLsup} since the only properties we used are $M'\leq M-1, J' \leq J-1$ and $B'\leq B-1$ which are still true if we replace $M'\to M'+M_{J'}$ and $J'\to 0$ thanks to \eqref{eq:constMJB}. For instance, following \eqref{def:Iaedsup}-\eqref{bd:integrationsup} and the arguments to get \eqref{bd:JVLVL}  we obtain 
 \begin{align}
 \nu&\sup_{k : \, |k|\geq \delta_0^{-1}\nu} \sum_{\alpha+|\beta|\leq B'}\mathcal{I}^{\alpha,\beta}_{M'+M_{J'}}+\mathcal{J}^{\alpha+\beta}_{M'+M_{J'},\lesssim \nu}\\
 &\lesssim \sqrt{\mathcal{D}_{mom,LF}}\left(\sqrt{\mathcal{E}_{mom}}\sqrt{\mathcal{D}_{mom}}+\big(\nu^d\mathbbm{1}_{t\leq \nu^{-1}}+\brak{\nu/t}^{\frac{d}{2}}\mathbbm{1}_{t> \nu^{-1}}\big)\sqrt{\mathcal{E}_{LF}}\sqrt{\mathcal{E}_{mom,LF}}\right).
\end{align}
On the other hand, for $\mathcal{J}^{\alpha+\beta}_{M'+M_{J'},\gtrsim \nu}$ we get the bounds in \eqref{bd:IgtrnuTDsup} with $M'\to M'+M_{J'}$. To absorb the $\brak{\nu t}$ we have to use $\mathcal{E}$ (or $\mathcal{D}$) instead of $\mathcal{E}_{mom}$ since the latter does not contain the factor $\brak{\lambda_{\nu,k} t}$. More precisely, as done to get \eqref{bd:sqrtTD0sup}, we have
 \begin{align}
\label{bd:sqrtTDsupmom}
&  \sup_{k:\, |k|\leq \delta_0^{-1}\nu}\sum_{\alpha+|\beta|\leq B'}\nu\mathcal{J}^{\alpha,\beta}_{M'+M_{J'},\gtrsim \nu}\lesssim \norm{P_{\leq \delta_0^{-1}\nu} \sqrt{\cD^{T.d.}_{M'+M_{J'},B'}}}_{L^\infty_k}\\
&\times  \bigg(  \norm{P_{> \delta_0^{-1}\nu} \sqrt{\cD^{e.d.}_{M'+M_{J'},B'}}}_{L^2_k} \norm{P_{> \delta_0^{-1}\nu} \brak{\lambda_{\nu,k} t}\sqrt{E^{e.d.}_{M'+M_{J'},B'}}}_{L^2_k} \\
&+\norm{P_{> \delta_0^{-1}\nu} \brak{\lambda_{\nu,k} t}\sqrt{E^{e.d.}_{M'+M_{J'},B'}}}_{L^2_k}\norm{P_{> \delta_0^{-1}\nu} \sqrt{\cD^{e.d.}_{M'+M_{J'},B'}}}_{L^2_k} \bigg)\\
\label{bd:sqrtTD1mom}&\lesssim \sqrt{\mathcal{E}}\sqrt{\mathcal{D}_{mom}}\sqrt{\mathcal{D}_{mom,LF}},
\end{align}
where in the last inequality we used $M'+M_{J'}\leq M$ and $J\geq 1$.
Finally, all the error terms that do not have velocity weights are clearly bounded in the same way, for instance the ones in \eqref{def:errmacro}. Hence, the proof of Proposition \ref{lem:momerrLinfty} is over.
\end{proof}

\appendix

\section{Basic inequalities}
To handle to anisotropic dissipation in the problem under consideration in this paper, we need the following lower bound.
\begin{lemma}
	\label{lemma:weightedHs}
	Let $R>1$, $0<s\leq1$ and $\gamma<0$. Then
	\begin{equation}
		\label{bd:wHs}
		\norm{\mathbbm{1}_{|v|\leq R}g}^2_{H^s_{\gamma/2}}\gtrsim  \frac{1}{R^{|\gamma|(2-s)}}\norm{\mathbbm{1}_{|v|\leq R}g}^2_{H^s}.
	\end{equation} 
\end{lemma}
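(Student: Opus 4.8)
The plan is to convert the asserted lower bound into an \emph{upper} bound for multiplication by the growing weight $\brak{v}^{|\gamma|/2}$ acting on functions supported in the ball $B_R=\{|v|\le R\}$. Set $h=\mathbbm{1}_{|v|\le R}g$, which is supported in $B_R$, and let $u=\brak{v}^{\gamma/2}h$, which is also supported in $B_R$; then $h=\brak{v}^{|\gamma|/2}u$, and \eqref{bd:wHs} is equivalent to
\[
\norm{\brak{v}^{|\gamma|/2}u}_{H^s}^2\lesssim R^{|\gamma|(2-s)}\norm{u}_{H^s}^2
\qquad\text{for every }u\text{ supported in }B_R .
\]
In fact I will prove the stronger bound with $R^{|\gamma|}$ in place of $R^{|\gamma|(2-s)}$, which suffices since $R>1$ and $2-s\ge 1$. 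The $L^2$ part is immediate: $\norm{\brak{v}^{|\gamma|/2}u}_{L^2}^2=\int_{B_R}\brak{v}^{|\gamma|}|u|^2\le\brak{R}^{|\gamma|}\norm{u}_{L^2}^2\lesssim R^{|\gamma|}\norm{u}_{L^2}^2$, so only the $\dot H^s$ seminorm requires genuine work.

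For $0<s<1$ I would use the Gagliardo characterization $\norm{\phi}_{H^s}^2\approx\norm{\phi}_{L^2}^2+\iint\frac{|\phi(v)-\phi(w)|^2}{|v-w|^{d+2s}}\dd v\,\dd w$ with $\phi=\brak{v}^{|\gamma|/2}u$, splitting the double integral according to whether $v,w$ lie in $B_{2R}$. Since $\phi$ is supported in $B_R\subset B_{2R}$, the region $\{|v|,|w|>2R\}$ contributes nothing; on $\{|v|>2R,\ |w|\le 2R\}$ (and the symmetric region) the integrand equals $|\brak{w}^{|\gamma|/2}u(w)|^2/|v-w|^{d+2s}$ with $w\in B_R$, and there $|v-w|\ge |v|/2\ge R$, so integrating in $v$ produces a factor $\lesssim R^{-2s}$ and this part is $\lesssim R^{-2s}\norm{\brak{v}^{|\gamma|/2}u}_{L^2}^2\lesssim R^{|\gamma|}\norm{u}_{L^2}^2$. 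The main term is the region $\{v,w\in B_{2R}\}$, where I would use the pointwise splitting $\brak{v}^{|\gamma|/2}u(v)-\brak{w}^{|\gamma|/2}u(w)=\brak{v}^{|\gamma|/2}\big(u(v)-u(w)\big)+\big(\brak{v}^{|\gamma|/2}-\brak{w}^{|\gamma|/2}\big)u(w)$: the first piece contributes $\le\norm{\brak{\cdot}^{|\gamma|/2}}_{L^\infty(B_{2R})}^2[u]_{\dot H^s}^2\lesssim R^{|\gamma|}[u]_{\dot H^s}^2$, and for the second I would, for each fixed $w\in B_R$, bound $\int_{B_{2R}}\frac{|\brak{v}^{|\gamma|/2}-\brak{w}^{|\gamma|/2}|^2}{|v-w|^{d+2s}}\dd v$ by splitting at $|v-w|=1$: for $|v-w|\le1$ use the mean value bound $|\brak{v}^{|\gamma|/2}-\brak{w}^{|\gamma|/2}|\le|v-w|\,\sup_{B_{2R}}|\grad\brak{\cdot}^{|\gamma|/2}|$ together with the convergence of $\int_{|z|\le1}|z|^{2-d-2s}\dd z$ (valid since $2-2s\ge0$), and for $|v-w|\ge1$ use $|\brak{v}^{|\gamma|/2}-\brak{w}^{|\gamma|/2}|\le2\norm{\brak{\cdot}^{|\gamma|/2}}_{L^\infty(B_{2R})}$ and the convergence of $\int_{|z|\ge1}|z|^{-d-2s}\dd z$. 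Since both $\sup_{B_{2R}}|\grad\brak{\cdot}^{|\gamma|/2}|$ and $\norm{\brak{\cdot}^{|\gamma|/2}}_{L^\infty(B_{2R})}$ are $\lesssim R^{|\gamma|/2}$, this inner integral is $\lesssim R^{|\gamma|}$, so the second piece is $\lesssim R^{|\gamma|}\int_{B_R}|u(w)|^2\dd w$. Collecting the three contributions yields $\norm{\brak{v}^{|\gamma|/2}u}_{H^s}^2\lesssim R^{|\gamma|}\norm{u}_{H^s}^2$.

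The endpoint $s=1$ is handled directly: $\grad(\brak{v}^{|\gamma|/2}u)=\brak{v}^{|\gamma|/2}\grad u+\tfrac{|\gamma|}{2}\brak{v}^{|\gamma|/2-2}v\,u$, and bounding the two weights by their $L^\infty(B_R)$ norms (both $\lesssim R^{|\gamma|/2}$) gives $\norm{\brak{v}^{|\gamma|/2}u}_{H^1}^2\lesssim R^{|\gamma|}\norm{u}_{H^1}^2$. The only genuinely delicate point, and the step I would write most carefully, is the bookkeeping of the support cutoff in the Gagliardo double integral — making sure the far region $|v|>2R$ truly contributes only the harmless factor $R^{-2s}$ and that no divergent integral appears when the near region is split at $|v-w|=1$ (here it is essential that $2-2s\ge 0$); everything else is a routine use of the $L^\infty$ and Lipschitz bounds for $\brak{\cdot}^{|\gamma|/2}$ on $B_{2R}$.
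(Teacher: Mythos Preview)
Your proof is correct and takes a genuinely different, cleaner route than the paper's. You recast the lower bound as an \emph{upper} bound for multiplication by $\brak{v}^{|\gamma|/2}$ on functions supported in $B_R$, then control the Gagliardo seminorm by a product-rule splitting plus a near/far dichotomy at $|v-w|=1$; this requires no auxiliary parameters. The paper instead works directly with the lower bound: it weakens the $\dot H^s_{\gamma/2}$ contribution by a small factor $\delta$, applies $(a+b)^2\ge a^2/2-b^2$, then splits the resulting commutator error at scale $|v-v'|=R^q$, finally optimizing over $\delta$ and $q$. That optimization is where the exponent $|\gamma|(2-s)$ appears, whereas your argument delivers the sharper exponent $|\gamma|$ (since $2-s\ge1$ and $R>1$, your bound is strictly stronger). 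In the paper this would propagate to a smaller $\varpi$ in Lemma~\ref{lem:splitdecay} and hence marginally better decay/threshold constants, though nothing structural changes. The one point to state with a bit more precision is the mean-value step: for $w\in B_R$ and $|v-w|\le1$ the segment lies in $B_{R+1}$ rather than $B_{2R}$, and you need $|\nabla\brak{\cdot}^{|\gamma|/2}|\lesssim\brak{\cdot}^{|\gamma|/2-1}\lesssim R^{|\gamma|/2}$ there in both cases $|\gamma|\ge2$ and $|\gamma|<2$; this is true but worth making explicit.
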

\begin{proof}
Consider first $s<1$. Let $0<\delta<1$ be a small parameter to be chosen. Then
	\begin{align}
		\label{bd:recovery0}
		\norm{\mathbbm{1}_{|v|\leq R}g}^2_{H^s_{\gamma/2}}&\geq \norm{\mathbbm{1}_{|v|\leq R}g}^2_{L^2_{\gamma/2}}+\delta \norm{\mathbbm{1}_{|v|\leq R}g}^2_{\dot{H}^s_{\gamma/2}}
	\end{align}
	Since $(a+b)^2\geq a^2/2-b^2$, we get
	\begin{align}
		\norm{\mathbbm{1}_{|v|\leq R}g}^2_{\dot{H}^s_{\gamma/2}}&=\iint
		\frac{\left(\jap{v}^\frac{\gamma}{2}(\mathbbm{1}_{|v|\leq R} g-\mathbbm{1}_{|v'|\leq R} g') +\mathbbm{1}_{|v'|\leq R} g'(\jap{v}^\frac{\gamma}{2}-\jap{v'}^\frac{\gamma}{2})\right)^2}{|v-v'|^{d+2s}}\dd v\dd v'\\
		&\geq \frac12\iint \jap{v}^\gamma\frac{(\mathbbm{1}_{|v|\leq R} g-\mathbbm{1}_{|v'|\leq R} g')^2}{|v-v'|^{d+2s}}\dd v\dd v'\\
		&\quad -\iint \left( \mathbbm{1}_{|v-v'|<R^q}+\mathbbm{1}_{|v-v'|\geq R^q}\right)\mathbbm{1}_{|v'|\leq R}(g')^2\frac{(\jap{v}^\frac{\gamma}{2}- \jap{v'}^\frac{\gamma}{2})^2}{|v-v'|^{d+2s}}\dd v\dd v'\\
		&:= I^1-I_{<R^q}^2-I_{\geq R^q}^2,
	\end{align}
where $q>0$ is a constant to be chosen later.	For $I_1$, recalling that  $\gamma<0$, one has 
	\begin{equation}
	\label{bd:I1A}
		I_1= \frac14\iint (\jap{v}^\gamma+\langle{v'}\rangle^\gamma)\frac{(\mathbbm{1}_{|v|\leq R} g-\mathbbm{1}_{|v'|\leq R} g')^2}{|v-v'|^{d+2s}}\dd v\dd v'\gtrsim \frac{1}{R^{|\gamma|}} \norm{\mathbbm{1}_{|v|\leq R}g}^2_{\dot{H}^s},
	\end{equation}
where in the last bound we used that on the support of the integral we cannot have  $|v|\geq R$ and $|v'|\geq R$.
To control $I^2_{<R^q}$, by the mean value theorem we get
	\begin{equation}
		\mathbbm{1}_{|v-v'|<1}(\jap{v}^\frac{\gamma}{2}- \jap{v'}^\frac{\gamma}{2})^2\lesssim 	\mathbbm{1}_{|v-v'|<1}|v-v'|^2(\jap{v}^{\gamma-2}+\jap{v'}^{\gamma-2})\lesssim \mathbbm{1}_{|v-v'|<1}|v-v'|^2 \jap{v'}^{\gamma}.
	\end{equation}
	Therefore, since $s<1$ we have
	\begin{equation}
	\label{bd:I2A}
		|I_{<R^q}^2|\lesssim \int \mathbbm{1}_{|v'|\leq R}\jap{v'}^{\gamma}(g')^2\dd v'\int\mathbbm{1}_{|v-v'|<R^q}\frac{1}{|v-v'|^{3-2(1-s)}}\dd v\lesssim R^{2q(1-s)}\norm{\mathbbm{1}_{|v|\leq R}g}_{L^2_{\gamma/2}}^2.
	\end{equation}
	For the remaining term 
	\begin{align}
	\label{bd:I2A1}
		|I_{\geq R^q}^2|&\lesssim  \int \mathbbm{1}_{|v'|\leq R}(g')^2\dd v'\int\mathbbm{1}_{|v-v'|>R^q}\frac{1}{|v-v'|^{d+2s}}\dd v\lesssim R^{-2qs}\norm{\mathbbm{1}_{|v|\leq R}g}_{L^2_v}^2.
	\end{align}
	 Thus, combining \eqref{bd:I1A}, \eqref{bd:I2A} and \eqref{bd:I2A1} we deduce that there is a constant $C$ such that 
	\begin{equation}
	\label{bd:HsA}
		\norm{\mathbbm{1}_{|v|\leq R}g}^2_{\dot{H}_{\gamma/2}}\geq \frac{1}{CR^{|\gamma|}}\norm{\mathbbm{1}_{|v|\leq R}g}^2_{\dot{H}^s}-CR^{2q(1-s)}\norm{\mathbbm{1}_{|v|\leq R}g}^2_{L^2_{\gamma/2}}-C R^{-2qs}\norm{\mathbbm{1}_{|v|\leq R}g}_{L^2_v}^2.
	\end{equation}
	Choosing
	\begin{equation}
	\delta=cR^{-2q(1-s)}, \qquad c=1/(4C), \qquad q=|\gamma|/2,
	\end{equation}
	and plugging \eqref{bd:HsA} in \eqref{bd:recovery0}, we get
	\begin{align}
		\notag	\norm{\mathbbm{1}_{|v|\leq R}g}^2_{H^s_{\gamma/2}}&\geq \frac34\norm{\mathbbm{1}_{|v|\leq R}g}^2_{L^2_{\gamma/2}}+\frac{1}{4C^2R^{|\gamma|(2-s)}}\norm{\mathbbm{1}_{|v|\leq R}g}^2_{\dot{H}^s}-\frac{1}{4R^{|\gamma|}}\norm{\mathbbm{1}_{|v|\leq R}g}^2_{L^2_v}\\
		\notag &\geq \frac{1}{2R^{|\gamma|}}\norm{\mathbbm{1}_{|v|\leq R}g}^2_{L^2}+\frac{1}{4C^2R^{|\gamma|(2-s)}}\norm{\mathbbm{1}_{|v|\leq R}g}^2_{\dot{H}^s}.
	\end{align}
	Since $2-s> 1$, the proof for $s<1$ is over. When $s=1$, we can explicitly compute the commutator $[\nabla_v,\brak{v}^{\gamma/2}]$ and we omit the details of this simpler proof. 
\end{proof}
For convenience of the reader, we recall here some basic inequalities used in \cite{alexandre2011global,alexandre2012boltzmann} and that we exploit in Section \ref{sec:preliminaries}.
\begin{lemma}[\cite{alexandre2012boltzmann}*{Lemma 2.5 }]
	\label{lemma:exp}
	For any $\alpha>-3$, $\rho>0,\, \alpha\in \RR$ one has 
	\begin{equation}
		\int_{\RR^3} |w|^\alpha \jap{w}^\beta\jap{w+u}^\alpha \mu^{\rho}(w+u)\dd w\approx \jap{u}^{\alpha+\beta}.
	\end{equation}
\end{lemma}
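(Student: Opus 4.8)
The plan is to prove both bounds by exploiting that the Gaussian factor $\mu^\rho(w+u)$ concentrates the variable $w$ near $-u$. It suffices to treat $\abs{u}\geq 1$: for $\abs{u}\lesssim 1$ the integrand is a nonnegative function of $w$ whose only singularity is $\abs{w}^\alpha$ at the origin, integrable precisely because $\alpha>-3$, and it depends continuously on $u$, so the integral is bounded above and below by positive constants, which matches $\jap{u}^{\alpha+\beta}\approx 1$ on that range.

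For the upper bound with $\abs{u}\geq 1$, I split $\RR^3=A\cup A^c$ with $A=\{w:\abs{w+u}\leq\abs{u}/2\}$. On $A$ one has $\abs{w}\approx\abs{u}$ and $\jap{w}\approx\jap{u}$, so the change of variables $z=w+u$ gives
\begin{equation}
\int_A\abs{w}^\alpha\jap{w}^\beta\jap{w+u}^\alpha\mu^\rho(w+u)\dd w\lesssim\abs{u}^\alpha\jap{u}^\beta\int_{\abs{z}\leq\abs{u}/2}\jap{z}^\alpha\mu^\rho(z)\dd z\lesssim\jap{u}^{\alpha+\beta},
\end{equation}
since $\jap{z}^\alpha\mu^\rho(z)\in L^1(\RR^3)$. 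On $A^c$ one has $\abs{w+u}>\abs{u}/2$, hence $\mu^\rho(w+u)\leq C_\rho\,\e^{-\rho\abs{u}^2/16}\,\mu^{\rho/2}(w+u)$; bounding $\abs{w}^\alpha\jap{w}^\beta\leq\abs{w}^\alpha\mathbbm{1}_{\abs{w}\leq 1}+\jap{w}^{2\abs{\alpha}+\abs{\beta}}$ and $\jap{w+u}^\alpha\mu^{\rho/2}(w+u)\lesssim\mu^{\rho/4}(w+u)$, one reduces to integrals of the form $\int_{\RR^3}(\abs{w}^\alpha\mathbbm{1}_{\abs{w}\leq1}+\jap{w}^{N'})\mu^{\rho/4}(w+u)\dd w$, which (splitting into $\abs{w+u}\lesssim\abs{u}$ and its complement) are $\lesssim\jap{u}^{N}$ for some finite $N$. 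Since $\e^{-\rho\abs{u}^2/16}\jap{u}^{N}\lesssim\jap{u}^{\alpha+\beta}$ for all $\abs{u}\geq1$, the contribution of $A^c$ is harmless and the upper bound follows.

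For the lower bound with $\abs{u}\geq 1$, I simply restrict the (nonnegative) integrand to $B=\{w:\abs{w+u}\leq 1/4\}$, on which $\abs{w}\approx\abs{u}$, $\jap{w}\approx\jap{u}$, $\jap{w+u}^\alpha\gtrsim 1$ and $\mu^\rho(w+u)\gtrsim 1$, so that
\begin{equation}
\int_{\RR^3}\abs{w}^\alpha\jap{w}^\beta\jap{w+u}^\alpha\mu^\rho(w+u)\dd w\geq\int_B\abs{w}^\alpha\jap{w}^\beta\jap{w+u}^\alpha\mu^\rho(w+u)\dd w\gtrsim\abs{u}^\alpha\jap{u}^\beta\gtrsim\jap{u}^{\alpha+\beta}.
\end{equation}
There is no serious obstacle: the statement is a change-of-variables-plus-Gaussian-concentration computation, and the only points needing minor care are the mild singularity $\abs{w}^\alpha$ near $w=0$ when $\alpha\in(-3,0)$ and the fact that the polynomial factors $\jap{w}^\beta$ and $\jap{w+u}^\alpha$ are not integrable on their own — both are absorbed, respectively, by the hypothesis $\alpha>-3$ and by spending a fraction of the Gaussian $\mu^\rho$.
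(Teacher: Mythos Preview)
Your argument is correct: the Gaussian localization plus the splitting into $\{|w+u|\leq |u|/2\}$ and its complement handles the upper bound cleanly, and restricting to $\{|w+u|\leq 1/4\}$ gives the lower bound. The paper does not supply its own proof of this lemma; it is simply recalled from \cite{alexandre2012boltzmann}, so there is nothing to compare against.
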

\begin{lemma}[\cite{alexandre2011global}*{Lemma 2.5}]
	\label{lemma:angular}
	For any $g \in C^1_b$ one has 
	\begin{equation}
		\int_{\SS^2}b(\cos(\theta))|g(v_*)-g(v_*')|\dd \sigma \leq C_g \jap{v-v_*}^{2s}.
	\end{equation}
When $0<s\leq 1/2$ one can replace $\jap{v-v_*}$ with $|v-v_*|$.
\end{lemma}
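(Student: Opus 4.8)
The plan is to reduce the estimate to the elementary geometry of the collision together with the two basic angular bounds available for $b$ under \eqref{eq:propb}: its \emph{non-integrable tail} obeys $\int_{\{\theta>\epsilon\}}b(\cos\theta)\,\dd\sigma\lesssim \epsilon^{-2s}$, while its \emph{second angular moment is finite}, $\int_{\SS^{d-1}}b(\cos\theta)\sin^2(\theta/2)\,\dd\sigma<\infty$ (this last fact uses only $s<1$). Both follow after writing $\dd\sigma=c_d\sin^{d-2}\theta\,\dd\theta\,\dd\phi$ with $\phi\in\SS^{d-2}$ and using $b(\cos\theta)\sin^{d-2}\theta\approx\theta^{-1-2s}$ as $\theta\to0^+$, which integrates the azimuthal variable away.

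First I would record the geometry. From \eqref{def:v'v*}, setting $\omega:=(v-v_*)/|v-v_*|$ so that $\cos\theta=\omega\cdot\sigma$, one has
\begin{equation*}
v_*'-v_*=\tfrac{1}{2}|v-v_*|\,(\omega-\sigma),\qquad |v_*'-v_*|=|v-v_*|\sin(\theta/2),
\end{equation*}
exactly as in the computation behind \eqref{bd:anvv'}. Since $g$ is Lipschitz and bounded this yields the two competing pointwise bounds $|g(v_*)-g(v_*')|\le 2\norm{g}_{L^\infty}$ and $|g(v_*)-g(v_*')|\le \norm{\grad g}_{L^\infty}|v-v_*|\sin(\theta/2)$. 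I then split $\SS^{d-1}$ at the threshold $\theta_\star:=\min\{1,|v-v_*|^{-1}\}$. On $\{\theta>\theta_\star\}$ (which is nonempty only if $|v-v_*|\ge1$) use the $L^\infty$ bound and the tail estimate: $\int_{\{\theta>\theta_\star\}}b(\cos\theta)\,\dd\sigma\lesssim\theta_\star^{-2s}\approx\max\{1,|v-v_*|^{2s}\}\lesssim\jap{v-v_*}^{2s}$, so this region contributes $\lesssim\norm{g}_{L^\infty}\jap{v-v_*}^{2s}$.

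On $\{\theta\le\theta_\star\}$ the treatment depends on $s$. If $s<1/2$ one uses the Lipschitz bound directly, costing $\norm{\grad g}_{L^\infty}|v-v_*|\int_{\{\theta\le\theta_\star\}}b(\cos\theta)\sin(\theta/2)\,\dd\sigma\simeq\norm{\grad g}_{L^\infty}|v-v_*|\,\theta_\star^{1-2s}$; distinguishing the cases $|v-v_*|\lessgtr1$ this is $\lesssim\norm{\grad g}_{L^\infty}|v-v_*|^{2s}$, which already gives the homogeneous refinement claimed for $s\le1/2$. If $s\ge1/2$ this estimate diverges at $\theta=0$, and \textbf{this is the main obstacle}: one must exploit the cancellation of the linear term. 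Writing $g(v_*')-g(v_*)=\grad g(v_*)\cdot(v_*'-v_*)+\mathcal R$ with $|\mathcal R|\lesssim|v_*'-v_*|^2$ and integrating the linear term over the azimuthal variable $\phi\in\SS^{d-2}$ first, the component of $\sigma$ orthogonal to $\omega$ averages to zero, leaving
\begin{equation*}
\int_{\{\theta\le\theta_\star\}}b(\cos\theta)\,\grad g(v_*)\cdot(v_*'-v_*)\,\dd\sigma=\grad g(v_*)\cdot\omega\;\tfrac{|v-v_*|}{2}\int_{\{\theta\le\theta_\star\}}b(\cos\theta)\,2\sin^2(\theta/2)\,\dd\sigma,
\end{equation*}
which is $\lesssim\norm{\grad g}_{L^\infty}|v-v_*|\,\theta_\star^{2-2s}$; and since $|\mathcal R|\lesssim|v-v_*|^2\sin^2(\theta/2)$ the remainder contributes $\lesssim\norm{D^2g}_{L^\infty}|v-v_*|^2\,\theta_\star^{2-2s}$. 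Distinguishing once more $|v-v_*|\lessgtr1$, both are $\lesssim\jap{v-v_*}^{2s}$. (For $s\ge1/2$ this step uses a bounded Hessian rather than merely $C^1_b$; in all the applications in Section~\ref{sec:preliminaries} the functions $g$ are Schwartz-class, so this is harmless, and for $s<1/2$ it is not needed.) Summing the two regions gives $\int_{\SS^{d-1}}b(\cos\theta)|g(v_*)-g(v_*')|\,\dd\sigma\lesssim\jap{v-v_*}^{2s}$, with $\jap{\cdot}$ replaceable by $|\cdot|$ when $s\le1/2$, which is the claim.
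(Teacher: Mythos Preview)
For $s<1/2$ your argument is correct and is essentially the paper's: split at a threshold angle, use the Lipschitz bound for small angles and the $L^\infty$ bound on the tail, then optimize. The paper leaves the threshold $\delta$ free and chooses it at the end, while you hardwire $\theta_\star=\min\{1,|v-v_*|^{-1}\}$; the content is the same.

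The gap is in the range $s\ge 1/2$. Your cancellation argument cannot work as written because the integrand carries $|g(v_*)-g(v_*')|$: azimuthal averaging kills the linear Taylor piece only when it appears \emph{without} modulus, and you cannot isolate $\nabla g(v_*)\cdot(v_*'-v_*)$ inside an absolute value and then average it to zero over $\phi$. Concretely, take $g(w)=\sin(w_1)$, $v_*=0$, $\omega=e_3$: near $\theta=0$ one has $|g(v_*')-g(v_*)|\approx\tfrac12|v-v_*|\sin\theta\,|\cos\phi|$, and since $\int_0^{2\pi}|\cos\phi|\,\dd\phi>0$ the $\theta$-integrand after the $\phi$-integration behaves like $\theta^{-2s}$, which is not integrable at $\theta=0$ for $s\ge 1/2$. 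So the inequality with the absolute value is in fact \emph{false} for $s>1/2$, and no extra smoothness on $g$ repairs it. The paper's own proof carries the same defect (the bound $\int_0^\delta\sin(\theta/2)\,\theta^{-1-2s}\,\dd\theta\lesssim\delta^{1-2s}$ written there is valid only for $s<1/2$). In the actual applications in Lemma~\ref{lemma:commutation} the relevant $\sigma$-integrals enter without modulus, and there your symmetry-plus-remainder idea is exactly the right mechanism.
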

\begin{proof}
	By \eqref{def:v'v*}, observe that 
	\begin{equation}
		\label{eq:v*-v*'}
		|v_*'-v_*|^2=\frac{|v-v_*|^2}{2}\left(1-\frac{(v-v_*)\cdot \sigma}{|v-v_*|}\right)=\frac{|v-v_*|^2}{2}\left(1-\cos(\theta)\right)=\sin(\theta/2)^2|v-v_*|^2.
	\end{equation}
Using Taylor's formula 
\begin{equation}
	|g(v_*)-g(v_*')|\leq C_g|v_*-v_*'|= C_g\sin(\theta/2)|v-v_*|.
\end{equation}
Let $0<\delta<\pi/2$. Combining the inequality above with the property \eqref{eq:hypb}, we infer 
\begin{align*}
	\int_{\SS^2}b(\cos(\theta))|g(v_*)-g(v_*'))|\dd \sigma &\lesssim_g |v-v_*|\int_0^\delta \frac{\sin(\theta/2)}{\theta^{2s+1}}\dd \theta+\int_\delta^{\pi/2}\frac{1}{\theta^{1+2s}}\dd \theta\\
	&\lesssim_g |v-v_*|\delta^{-2s+1}+\delta^{-2s}
\end{align*} 
If $|v-v_*|<1$ then choose $\delta=1/10$ and use $|v-v_*|\leq 1\leq \jap{v-v_*}^{2s}$ . If $s\leq 1/2$ then it is also true $|v-v_*|\leq |v-v_*|^{2s}$. When $|v-v_*|>1$, take $\delta=|v-v_*|^{-1}$. 
\end{proof}

 \section*{Acknowledgments} 
The research of JB was supported by NSF Award DMS-2108633.
The research of MCZ was supported by the Royal
Society through a University Research Fellowship (URF\textbackslash
R1\textbackslash 191492).
The research of MD was supported by GNAMPA-INdAM  through the grant D86-ALMI22SCROB\_01 acronym DISFLU. 
This research was started while visiting the Newton Institute during the 2022 Frontiers in Kinetic Theory Programme; the authors would like to thank the Newton Institute for their hospitality during this time. 

\bibliographystyle{siam}
\bibliography{bibboltzmann}
\end{document}